\let\oldtocsection=\tocsection
\let\oldtocsubsection=\tocsubsection
\let\oldtocsubsubsection=\tocsubsubsection
\newcommand{\specialcell}[1]{\ifmeasuring@#1\else\omit$\displaystyle#1$\ignorespaces\fi}
\renewcommand{\tocsection}[2]{\hspace{0em}\oldtocsection{#1}{#2}}
\renewcommand{\tocsubsection}[2]{\hspace{2em}\oldtocsubsection{#1}{#2}}
\renewcommand{\tocsubsubsection}[2]{\hspace{4em}\oldtocsubsubsection{#1}{#2}}
\newtheorem{theorem}{Theorem}[section]
\newtheorem{corollary}[theorem]{Corollary}
\newtheorem{question}[theorem]{Question}
\newtheorem{conjecture}[theorem]{Conjecture}
\newtheorem{proposition}[theorem]{Proposition}
\newtheorem{lemma}[theorem]{Lemma}
\newtheorem{expectation}[theorem]{Expectation}
\theoremstyle{definition}
\newtheorem{definition}[theorem]{Definition}
\newtheorem{example}[theorem]{Example}
\theoremstyle{remark}
\newtheorem{remark}[theorem]{Remark}
\theoremstyle{plain}
\newcommand{\thistheoremname}{}
\newtheorem{genericthm}[theorem]{\thistheoremname}
\newtheorem*{genericthm*}{\thistheoremname}
\newenvironment{namedthm*}[1]
  {\renewcommand{\thistheoremname}{#1}%
   \begin{genericthm*}}
  {\end{genericthm*}}
\newcommand\cA{\mathcal{A}}
\newcommand\cB{\mathcal{B}}
\newcommand\cC{\mathcal{C}}
\newcommand\cD{\mathcal{D}}
\newcommand\cE{\mathcal{E}}
\newcommand\cI{\mathcal{I}}
\newcommand\cJ{\mathcal{J}}
\newcommand\cL{\mathcal{L}}
\newcommand\cM{\mathcal{M}}
\newcommand\cO{\mathcal{O}}
\newcommand\cS{\mathcal{S}}
\newcommand{\bC}{\mathbb{C}}
\newcommand{\bF}{\mathbb{F}}
\newcommand{\bH}{\mathbb{H}}
\newcommand{\bK}{\mathbb{K}}
\newcommand{\bR}{\mathbb{R}}
\newcommand{\bZ}{\mathbb{Z}}
\newcommand{\bRP}{\mathbb{RP}}
\newcommand{\bCP}{\mathbb{CP}}
\newcommand\bk{\mathbf{k}}
\newcommand\bm{\mathbf{m}}
\newcommand\bn{\mathbf{n}}
\newcommand\bzero{\mathbf{0}}
\newcommand{\btB}{{\mathbf{2B}}}
\newcommand{\sB}{\mathscr{B}}
\newcommand{\stB}{2\mathscr{B}}
\newcommand{\sA}{\mathscr{A}}
\newcommand{\sC}{\mathscr{C}}
\newcommand{\sE}{\mathscr{E}}
\newcommand{\sF}{\mathscr{F}}
\newcommand{\sM}{\mathscr{M}}
\newcommand{\sP}{\mathscr{P}}
\newcommand{\sQ}{\mathscr{Q}}
\newcommand{\sR}{\mathscr{R}}
\newcommand{\sY}{\mathscr{Y}}
\newcommand{\fX}{\mathfrak{X}}
\newcommand{\fg}{\mathfrak{g}}
\newcommand{\ro}{\mathrm{o}}
\newcommand{\on}{\operatorname}
\newcommand\pt{\on{pt}}
\newcommand\dbar{\ol\partial}
\newcommand\id{\on{id}}
\newcommand\loc{{\on{loc}}}
\newcommand{\Fuk}{\on{Fuk}}
\newcommand{\Symp}{\mathsf{Symp}}
\newcommand{\Ch}{\textsf{Ch}}
\newcommand{\Ab}{\textsf{Ab}}
\newcommand{\CO}{{CO}}
\newcommand{\OC}{{OC}}
\newcommand{\CC}{{CC}}
\newcommand{\SC}{{SC}}
\newcommand{\Id}{\on{Id}}
\newcommand{\Top}{\textsf{Top}}
\newcommand{\Set}{\textsf{Set}}
\newcommand{\tree}{{\on{tree}}}
\newcommand{\Ob}{\on{Ob}}
\newcommand{\Mor}{\on{Mor}}
\newcommand{\Hom}{\on{Hom}} 
\newcommand{\br}{{\on{br}}}
\newcommand{\cell}{{\on{cell}}}
\newcommand{\sing}{{\on{sing}}}
\newcommand{\Cl}{{\on{Cl}}}
\newcommand{\rk}{{\on{rk}\:}}
\newcommand{\Aut}{{\on{Aut}}}
\newcommand{\eq}{{\on{eq}}}
\newcommand{\perf}{{\on{perf}}}
\renewcommand{\mod}{{\on{mod}}}
\newcommand{\ind}{{\on{ind}}\,}
\newcommand{\poly}{{\on{poly}}}
\newcommand{\cone}{{\on{cone}}}
\newcommand{\ol}{\overline}
\newcommand{\ul}{\underline}
\newcommand{\sr}{\stackrel}
\newcommand{\wh}{\widehat}
\newcommand{\wt}{\widetilde}
\newcommand{\eps}{\epsilon}
\def\rd{{\rm d}}
\def\bi{\mathbf{i}}
\def\hra{\hookrightarrow}
\def\lra{\longrightarrow}
\begin{document}
\sloppy

\title{Functoriality in categorical symplectic geometry}
\author{Mohammed Abouzaid}
\address{Department of Mathematics, Columbia University,
2990 Broadway, New York, NY 10027, USA}
\email{\href{mailto:abouzaid@math.columbia.edu}{abouzaid@math.columbia.edu}}
\author{Nathaniel Bottman}
\address{Max Planck Institute for Mathematics,
Vivatsgasse 7, 53111 Bonn, Germany}
\email{\href{mailto:bottman@mpim-bonn.mpg.de}{bottman@mpim-bonn.mpg.de}}
%\date{\today}

\maketitle
\begin{abstract}
Categorical symplectic geometry is the study of a rich collection of invariants of symplectic manifolds, including the Fukaya $A_\infty$-category, Floer cohomology, and symplectic cohomology.
Beginning with work of Wehrheim and Woodward in the late 2000s, several authors have developed techniques for functorial manipulation of these invariants.
We survey these functorial structures, including Wehrheim--Woodward's quilted Floer cohomology and functors associated to Lagrangian correspondences, Fukaya's alternate approach to defining functors between Fukaya $A_\infty$-categories, and the second author's ongoing construction of the symplectic $(A_\infty,2)$-category.
In the last section, we describe a number of direct and indirect applications of this circle of ideas, and propose a conjectural version of the Barr--Beck Monadicity Criterion in the context of the Fukaya $A_\infty$-category.
\end{abstract}

{\small
\setcounter{tocdepth}{1}
\tableofcontents
}

\section{Introduction}
\label{s:intro}

A \emph{symplectic manifold} $(M,\omega)$ is a smooth even-dimensional manifold $M^{2n}$, together with a 2-form $\omega \in \Omega^2(M;\bR)$ that is closed ($\rd\omega = 0$) and non-degenerate in the sense that its top exterior power is a volume form ($\omega^{\wedge n} \neq 0$ pointwise).
The original motivation for this definition came from celestial mechanics, but much of modern symplectic geometry is independent of these physical origins.

\begin{example}
  The fundamental example of a symplectic manifold is Euclidean space with the \emph{Darboux symplectic form}:
\begin{equation}
\Bigl(
\bR^{2n},
\omega_0
\coloneqq
\sum_{i=1}^n
\,\rd p_i \wedge	 \,\rd q_i
\Bigr),
\end{equation}
where $\bR^{2n}$ is equipped with coordinates $(q_1, \ldots, q_n, p_1, \ldots, p_n)$.
  This choice of notation goes back to classical mechanics, where the coordinates $q_i$ record the position of a particle, and $p_i$ its momentum.
  From the point of view of a mathematician, the $q_i$ might as well represent local coordinates on a smooth manifold, in which case the coordinates $p_i$ can be understood as coordinates on the cotangent fibre.
  In this way, one obtains the \emph{canonical symplectic form} 
  \begin{equation}
    \Bigl(
T^*Q,
\omega_{\mathrm{can}}
\coloneqq
\sum_{i=1}^n \,\rd p_i \wedge \,\rd q_i
\Bigr),
  \end{equation}
  on the total space of the cotangent bundle $T^*Q$ of any smooth manifold.

Starting with the Darboux symplectic form, one constructs a large class of examples as follows: identify $\bR^{2n}$ with complex affine space $\bC^{n}$, by setting $p_i = \mathrm{Re}(z_i)$ and $q_i = \mathrm{Im}(z_i)$, and observe that the symplectic form is given, in terms of the $\partial$ and $\dbar$ operators of complex analysis ($\partial f = \sum \frac{\partial f}{\partial z_i} dz_i$ and $ \dbar f = \sum \frac{\partial f}{\partial \bar{z}_i} d\bar{z}_i$) as
\begin{equation}
\omega_0
\coloneqq
\frac\bi2\partial\dbar\bigl(|z|^2\bigr),
\end{equation}
with $|z|^2 = \sum |z_i|^2$ (this amount to the statement that the Darboux form is the real part of the standard K\"ahler form).
Since the norm of a vector is invariant under rotation, one obtains an induced symplectic form on the quotient $\bCP^{n-1}$ of the unit sphere $S^{2n-1}$ by the circle action.
This symplectic form on projective space is known as the \emph{Fubini-Study form} $\omega_{\text{FS}}$, and may be expressed directly in terms of coordinates on a standard affine chart of projective space as
\begin{equation}
\frac\bi2\partial\dbar\log\bigl(|z|^2\bigr)
=
\frac\bi{2|z|^4}\sum_{j,k=0}^{n-1}
\bigl(
|z_j|^2 \,\rd z_k\wedge\,\rd\ol z_k
-
\ol z_jz_k\,\rd z_j\wedge\,\rd\ol z_k
\bigr).
\end{equation}
Via the complex geometry result that K\"ahlerness is preserved by restriction to complex submanifolds, one then obtains from complex submanifolds of projective space (i.e.\ projective algebraic varieties) a large class of compact symplectic manifolds.
\null\hfill$\triangle$  
\end{example}

One of the fundamental questions in symplectic geometry is to understand the geometry of the \emph{Lagrangian submanifolds} (or simply \emph{Lagrangians}), i.e.\ those embedded submanifolds $L \subset M$ along which the symplectic form vanishes.
(In this paper, we will assume that all Lagrangians are oriented.)

\begin{example}
The fundamental examples of Lagrangians are the $\ul p$- and $\ul q$- planes in $\bR^{2n}$, equipped with the standard symplectic form.
This naturally generalises to the cotangent fibre and the zero section of the cotangent bundle $T^*Q$.
The zero section is an example of a more general class: the graph $\Gamma(\alpha) \subset T^*Q$ of any closed 1-form $\alpha$ on $Q$.
  
In the examples which arise from complex geometry, one may use real geometry to produce examples by: fix a smooth projective variety $Z \subset \bCP^n$ that is defined by a set of equations with real coefficients.
Considering $Z$ as a symplectic manifold equipped with the restriction of the Fubini--Study form, the real locus $Z \cap \bRP^n$ (whenever it is smooth) is a Lagrangian submanifold.
\null\hfill$\triangle$  
\end{example}

\subsection{Symplectic invariants from pseudoholomorphic curves}

Unlike Riemannian geometry, symplectic geometry has no local symplectic invariants as a consequence of Darboux's theorem \cite{darboux}.
Below, we state this theorem in combination with Weinstein's Lagrangian neighborhood theorem \cite{weinstein:lag_submfds}, which is the analogous result for the local geometry of near a Lagrangian submanifold.
Weinstein's theorem involves the notion of a \emph{symplectomorphism}, which is a diffeomorphism $\varphi\colon M \sr{\cong}{\lra} N$ between two symplectic manifolds that satisfies $\varphi^*\omega_N = \omega_M$.

\begin{theorem}
Any point in a symplectic manifold admits a neighbourhood which is symplectomorphic to a neighbourhood of the origin in $(\bC^n,\omega_0)$.
Similarly, any Lagrangian embedding in $(M,\omega)$ of a closed manifold $L$ extends to a symplectomorphism between a neighborhood of the zero section in $(T^*L, \omega_{\mathrm{can}})$ and a neighborhood of $L$ in $M$.
\null\hfill$\square$
\end{theorem}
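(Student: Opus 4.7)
The plan is to prove both statements by the Moser homotopy trick, which reduces each assertion to finding an isotopy that interpolates between two symplectic forms agreeing to first order along the relevant submanifold.

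For the Darboux statement, I would first choose any smooth chart centered at the point $p \in M$, giving local coordinates identifying a neighborhood of $p$ with a neighborhood $U \subset \bR^{2n}$ of the origin. By the linear algebra of symplectic vector spaces (every symplectic form on $\bR^{2n}$ is conjugate to $\omega_0$ under $\mathrm{GL}_{2n}(\bR)$), I can postcompose with a linear map so that the pulled-back form $\omega_1$ agrees with $\omega_0$ at the origin. Now I would consider the family $\omega_t \coloneqq (1-t)\omega_0 + t\omega_1$; since non-degeneracy is open and $\omega_t$ is non-degenerate at $0$ for every $t$, after shrinking $U$ we may assume $\omega_t$ is symplectic on $U$ for all $t \in [0,1]$. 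The Poincaré lemma yields $\omega_1 - \omega_0 = \rd\alpha$, and since the two forms agree at the origin, I can arrange $\alpha(0) = 0$. Define the time-dependent vector field $X_t$ by $\iota_{X_t}\omega_t = -\alpha$; because $X_t$ vanishes at $0$, its flow $\varphi_t$ exists on a possibly smaller neighborhood for all $t \in [0,1]$. Cartan's formula together with $\tfrac{\rd}{\rd t}\omega_t = \rd\alpha$ gives
\begin{equation}
\tfrac{\rd}{\rd t}\varphi_t^*\omega_t
=
\varphi_t^*\bigl(\rd\iota_{X_t}\omega_t + \tfrac{\rd}{\rd t}\omega_t\bigr)
=
\varphi_t^*(-\rd\alpha + \rd\alpha) = 0,
\end{equation}
so $\varphi_1$ is the desired symplectomorphism.

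For the Weinstein statement, the key preliminary step is to identify the symplectic normal bundle of $L$ with $T^*L$. Because $L$ is Lagrangian, the map $\nu L \to T^*L$ sending $v \in \nu_p L$ to $\omega(v,\cdot)|_{T_p L}$ is a bundle isomorphism. Choosing a Riemannian metric on $M$ and using the exponential map, I get a diffeomorphism $\psi$ from a neighborhood of the zero section in $T^*L$ to a neighborhood of $L$ in $M$ that is the identity on $L$ and whose derivative along $L$ matches the canonical identification $T(T^*L)|_L \cong TL \oplus T^*L$. The two forms $\omega_0 \coloneqq \omega_{\mathrm{can}}$ and $\omega_1 \coloneqq \psi^*\omega$ on $T^*L$ now agree along the zero section: they vanish on $TL$ (both $L$ is Lagrangian), and the pairing between $TL$ and the cotangent fibre direction matches by construction.

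The Moser argument then proceeds as before, but the construction of the primitive $\alpha$ with $\omega_1 - \omega_0 = \rd\alpha$ and $\alpha|_L = 0$ requires a \emph{relative} Poincaré lemma in a tubular neighborhood of the zero section: one uses the deformation retract of $T^*L$ onto $L$ and the usual cone-construction homotopy operator, which automatically produces an $\alpha$ that vanishes along $L$ because $(\omega_1 - \omega_0)|_L = 0$. The time-dependent vector field $X_t$ defined by $\iota_{X_t}\omega_t = -\alpha$ then vanishes along $L$, so its flow fixes $L$ pointwise and is defined on a neighborhood of $L$ for all $t \in [0,1]$; composing $\psi$ with $\varphi_1$ yields the Weinstein symplectomorphism.

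The main obstacle in both parts is not the Moser computation itself but the normal-form setup that precedes it: one must arrange the two forms to agree to sufficient order along the relevant submanifold (a point for Darboux, $L$ for Weinstein) so that the primitive $\alpha$ vanishes there and the vector field $X_t$ actually has a flow fixing that submanifold. In the Weinstein case this requires the identification $\nu L \cong T^*L$ via $\omega$, which is the genuinely symplectic input distinguishing it from a purely smooth tubular-neighborhood statement.
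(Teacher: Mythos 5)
The paper itself gives no proof of this statement (it is quoted from Darboux and Weinstein with citations), so the comparison is with the standard literature; your Moser-trick strategy is indeed the standard one, and the Darboux half of your argument is complete and correct. The Weinstein half, however, has a genuine gap at the normal-form step. You take an arbitrary Riemannian metric on $M$, exponentiate the identification $\nu L \cong T^*L$, and assert that $\omega_{\mathrm{can}}$ and $\psi^*\omega$ ``agree along the zero section,'' checking only the $TL\times TL$ block and the pairing between $TL$ and the fibre direction. The missing block is fibre$\times$fibre: $\omega_{\mathrm{can}}$ vanishes on the vertical subspace, but $\psi^*\omega$ restricted to the vertical subspace at a point $x\in L$ is $\omega$ restricted to the image of that subspace, which is the metric normal space $T_xL^{\perp_g}$ --- and for a generic metric this complement of $T_xL$ is \emph{not} $\omega$-Lagrangian. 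So the two forms need not agree at points of $L$, and then the linear interpolation $\omega_t=(1-t)\omega_{\mathrm{can}}+t\,\psi^*\omega$ can degenerate at points of $L$ for intermediate $t$; no shrinking of the neighbourhood repairs this, since the failure is already on $L$ itself. (The relative Poincar\'e lemma and the vanishing of $\alpha$ along $L$ are actually fine --- the radial vector field of the retraction vanishes on the zero section --- but nondegeneracy of $\omega_t$ is where the argument breaks.)

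The fix is the genuinely symplectic input you gesture at but do not carry out: you must arrange the tubular neighbourhood so that the vertical subspace is sent to a \emph{Lagrangian} complement of $T_xL$ in $T_xM$. The standard way is to choose an $\omega$-compatible almost complex structure $J$ and use the associated metric $g=\omega(\cdot,J\cdot)$, for which $T_xL^{\perp_g}=J(T_xL)$ is Lagrangian; identifying $T^*_xL\cong J(T_xL)$ via $v\mapsto \omega(v,\cdot)|_{T_xL}$ and exponentiating then makes the two forms agree on all of $T_x(T^*L)\cong T_xL\oplus T^*_xL$ at every $x\in L$ (alternatively, correct an arbitrary tubular neighbourhood by a fibrewise linear symplectic adjustment). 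With that agreement in hand, your Moser argument goes through verbatim.
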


The reader new to this field may get the sense from these theorems that symplectic geometry is similar in flavor to differential topology, but this is not the case.
Indeed, a motif in symplectic geometry is the interplay between flexibility and rigidity.
In the foundational paper \cite{gromov}, Mikhail Gromov opened the floodgates to a wide variety of rigidity results, by importing holomorphic techniques from complex geometry.
Consider, for instance, the following result.

\begin{theorem}[Theorem 0.4.A$_2$, \cite{gromov}]
For any closed embedded Lagrangian $L \subset \bC^n$, there exists a non-constant map $u : D^2 \to \bC^{n}$, mapping the boundary to $L$, and which is holomorphic with respect to the standard complex structures on $D^2$ and $\bC^n$.
\null\hfill$\square$
\end{theorem}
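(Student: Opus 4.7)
The plan is to argue by contradiction, using the parametric moduli-space method that Gromov introduces for this purpose. Suppose no non-constant $J_0$-holomorphic disk $(D^2,\partial D^2)\to(\bC^n,L)$ exists. Since $\bC^n$ is simply connected, every loop in $L$ bounds a smooth disk $v:(D^2,\partial D^2)\to(\bC^n,L)$, and writing $\omega_0=\rd\lambda$ with $\lambda=\sum p_i\,\rd q_i$, the symplectic area of $v$ equals $\int_{\partial D^2}v^*\lambda$, which depends only on the class of $\partial v$ in $H_1(L;\bR)$. If $[\lambda|_L]\neq 0$ in $H^1(L;\bR)$, some loop has nonzero period, yielding a $v$ with positive symplectic area in a fixed relative homotopy class $A$; the complementary case, in which $L$ is exact, must be treated separately and is in fact the essential content of the theorem.

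Next, for each $\omega_0$-tame almost complex structure $J$ on $\bC^n$ that agrees with $J_0$ outside a large ball containing $L$, let $\cM(J,A)$ denote the moduli space of $J$-holomorphic disks with boundary on $L$ in the relative class $A$. Standard Fredholm theory gives, for generic $J$, a smooth manifold of the expected dimension, while Gromov compactness compactifies it by stable disk trees. Sphere bubbles are excluded since every holomorphic sphere in $\bC^n$ is constant by the maximum principle, and disk bubbles at $J=J_0$ are constant by our standing assumption. Moreover, a maximum-principle argument applied to coordinate functions beyond the region where $J$ is non-standard ensures that $J$-holomorphic disks with boundary on the compact $L$ remain inside a fixed large ball, controlling non-compactness at infinity.

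The final step is a cobordism argument. Choose a generic path $\{J_t\}_{t\in[0,1]}$ of such almost complex structures from $J_0$ to a $J_1$ built so that $\cM(J_1,A)\neq\emptyset$: concretely, arrange that the topological disk $v$ is $J_1$-holomorphic by taking $J_1$ to be an $\omega_0$-compatible integrable structure near the image of $v$ and then extending. Parametric transversality, together with the bubbling control of Step 2, produces a compact one-dimensional cobordism with boundary $\cM(J_0,A)\sqcup\cM(J_1,A)$. Non-emptiness of $\cM(J_1,A)$ therefore forces non-emptiness of $\cM(J_0,A)$, contradicting the hypothesis.

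The main obstacle is the dichotomy in the first step: ruling out the exact case already uses the conclusion of the theorem. In Gromov's original treatment this is circumvented by working throughout with a moduli space of disks through a marked boundary point and running an intersection-theoretic cobordism, so no positive-area topological disk need be supplied in advance. Formalising this intersection argument, together with verifying parametric transversality and eliminating bubbling and loss of compactness in the family $\{J_t\}$, is the technical heart of the proof.
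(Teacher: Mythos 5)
The paper does not actually prove this statement—it is quoted from Gromov's paper with a $\square$—so your sketch has to stand on its own, and it does not. The decisive gap is the one you flag yourself: the case $[\lambda|_L]=0$ in $H^1(L;\bR)$ is not a "separate case'' but the entire content of the theorem, since a non-constant holomorphic disk with boundary on $L$ has positive area $\int_{\partial D^2}u^*\lambda>0$, so the theorem is equivalent to the non-existence of closed exact Lagrangians in $\bC^n$. Deferring that case and then gesturing at ``an intersection-theoretic cobordism through a marked boundary point'' leaves the theorem unproved; the non-exact case you do treat is the easy half, and even there your argument is circular in spirit, since it presupposes a positive-area topological disk whose existence is exactly what exactness would forbid.

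Even in the non-exact case the cobordism step fails as written. First, your contradiction hypothesis only excludes non-constant $J_0$-holomorphic disks, so it gives no control over disk bubbling along the path $J_t$ for $t>0$; the parametric moduli space is therefore not compact and no cobordism between $\cM(J_0,A)$ and $\cM(J_1,A)$ is produced. Second, non-emptiness is not a cobordism invariant: to transfer it from $J_1$ to $J_0$ you need a well-defined count or degree of an evaluation map on a moduli space of the correct dimension, which you never set up ($\cM(J,A)$ has expected dimension roughly $n+\mu(A)$, not zero). Third, the construction of $J_1$ is unjustified: making a given topological disk $v$ pseudoholomorphic for an $\omega_0$-tame $J_1$ requires $\omega_0$ to be positive on its tangent planes, i.e.\ a symplectically embedded representative, which you have not supplied. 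Gromov's actual mechanism is different and avoids the exactness dichotomy entirely: one perturbs the equation rather than the boundary class, studying $\dbar u=g$ for disks in the trivial relative class with boundary on $L$. There the energy is a priori bounded (the trivial class has zero symplectic area), at $g=0$ the solutions are exactly the constants, forming a compact family evaluating with degree one onto $L$, while for $|g|$ large a mean-value estimate shows there are no solutions at all; compactness must therefore break along the family, and since holomorphic spheres in $\bC^n$ are constant, the only possible failure is the bubbling of a non-constant $J_0$-holomorphic disk with boundary on $L$. None of these ingredients—the a priori energy bound in the trivial class, the degree-one evaluation at the unperturbed end, the vanishing of the moduli space for large perturbation—appears in your sketch, and they are precisely what makes the proof work.
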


\noindent
If we write $J$ for the standard complex structure on $\bC^n$, and $j$ for complex structure on the disc, the holomorphicity condition on $u$ amounts to the requirement that the operator
\begin{equation} \label{eq:dbar}
  \dbar u \equiv du - J \circ du \circ j  
\end{equation}
vanish pointwise on the domain.
We can easily deduce the following corollary, which establishes a topological obstruction to Lagrangian embeddings into $\bCP^n$.

\begin{corollary}
Suppose that $L$ is a closed $n$-manifold with $H^1(L;\bR) = 0$.
Then $L$ does not admit a Lagrangian embedding into $\bC^n$.	
\end{corollary}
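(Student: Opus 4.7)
The plan is to derive a contradiction from the existence of a closed Lagrangian embedding $L \hookrightarrow \bC^n$ by combining Gromov's theorem with the fact that $\bC^n$ is exact symplectic, exploiting the hypothesis $H^1(L;\bR) = 0$.

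First, assume for contradiction that $L \subset \bC^n$ is a closed Lagrangian embedding, and apply the quoted theorem of Gromov to obtain a non-constant holomorphic map $u \colon (D^2, \partial D^2) \to (\bC^n, L)$. The key quantity to compute is the symplectic area $A(u) \coloneqq \int_{D^2} u^* \omega_0$. I would then use the two standard facts that, for the standard $J$-holomorphic equation $\dbar u = 0$, the symplectic area agrees with the Dirichlet energy,
\begin{equation}
A(u) = \int_{D^2} u^*\omega_0 = \tfrac12 \int_{D^2} |du|^2 \,\rd\mathrm{vol},
\end{equation}
so in particular $A(u) > 0$ since $u$ is non-constant.

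Next, I would compute $A(u)$ a second way and get $0$. Observe that $\omega_0 = \rd\lambda$ for the primitive $\lambda \coloneqq \sum p_i \,\rd q_i$ on $\bC^n$. By Stokes' theorem,
\begin{equation}
A(u) = \int_{D^2} u^*\rd\lambda = \int_{\partial D^2} u^*\lambda = \int_{\partial D^2} (u|_{\partial D^2})^*\bigl(\lambda|_L\bigr).
\end{equation}
Because $L$ is Lagrangian, $\rd(\lambda|_L) = \omega_0|_L = 0$, so $\lambda|_L \in \Omega^1(L)$ is closed. Here the topological hypothesis enters: since $H^1(L;\bR) = 0$, the closed form $\lambda|_L$ is exact, say $\lambda|_L = \rd f$ for some $f \in C^\infty(L;\bR)$. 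Then
\begin{equation}
A(u) = \int_{\partial D^2} (u|_{\partial D^2})^* \rd f = \int_{\partial D^2} \rd\bigl(f \circ u|_{\partial D^2}\bigr) = 0,
\end{equation}
since $\partial D^2$ is closed. This contradicts $A(u) > 0$, completing the proof.

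The argument is essentially a formal consequence of Gromov's theorem together with the exactness of $\omega_0$ and $\lambda|_L$, so there is no real obstacle; the only substantive input is Gromov's theorem itself. The takeaway worth emphasizing in the writeup is that the hypothesis $H^1(L;\bR) = 0$ is used exactly to promote the tautologically closed form $\lambda|_L$ to an exact one, which kills the boundary integral.
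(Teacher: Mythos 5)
Your proof is correct and follows essentially the same route as the paper: apply Gromov's theorem, use a primitive $\lambda$ of $\omega_0$ together with Stokes' theorem, and derive a contradiction between the strict positivity of $\int_{D^2} u^*\omega_0$ (from holomorphicity and non-constancy) and the vanishing of the boundary integral forced by $H^1(L;\bR)=0$. The only differences are cosmetic: you use the primitive $\sum p_i\,\rd q_i$ rather than $\tfrac12\sum(x_i\,\rd y_i - y_i\,\rd x_i)$, and you spell out explicitly the final step (closedness of $\lambda|_L$ from the Lagrangian condition, exactness from $H^1(L;\bR)=0$) that the paper leaves implicit.
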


\begin{proof}
Define $\lambda \in \Omega^1(\bC^n;\bR)$ by $\lambda \coloneqq \tfrac12\sum_{i=1}^n (x_i\,\rd y_i - y_i \,\rd x_i)$, and note that $\lambda$ is a primitive of $\omega_0$.
Denote by $\gamma$ the restriction of $u$ to $S^1$.
By Stokes's theorem, we have:
\begin{align}
\int_{S^1} \gamma^*\lambda
=
\int_{D^2} u^*\omega_0
>
0,
\end{align}
where the inequality follows from the fact that the latter integral is equal to the area of the image of $u$, which is nonnegative by the holomorphicity condition, and strictly so by non-constancy.
\end{proof}

Gromov's key insight in \cite{gromov} is that one can use similar ideas even in the case of symplectic manifolds that are not K\"ahler: if $(M,\omega)$ is any symplectic manifold, there is a contractible (in particular, nonempty) space of \emph{$\omega$-compatible almost complex structures}, i.e.\ endomorphisms $J\colon TM \to TM$ of the tangent bundle satisfying the following properties:
\begin{itemize}
\item[]
{\bf (AC structure)}
$J^2 = -\Id$.

\smallskip

\item[]
{\bf ($\omega$-compatible)}
The contraction $\omega(-,J-)$ defines a Riemannian metric on $M$.
\end{itemize}
While one can study holomorphic maps from a Riemann surface to an arbitrary almost complex manifold, the fundamental result proved by Gromov in \cite{gromov} is that the moduli space of such maps admits a natural compactification whenever the target is symplectic.
This is the foundation of all later developments extracting symplectic invariants from moduli spaces of holomorphic maps.

In this article, we will be primarily concerned with two symplectic invariants.
The first is the \emph{Floer cohomology group}\footnote{This group is not defined for arbitrary pairs $(L,K)$, as its construction depends on a choice of additional data which may not always exist.
We suppress this point until \S\ref{ss:anom-lagr-floer} below.} $HF^*(L,K)$ associated to a pair $L$ and $K$ of appropriate Lagrangians in a symplectic manifold $M$, which categorifies their intersection number.
Andreas Floer introduced this invariant in the 1980s, and as an immediate consequence obtained a proof of one version of the \emph{Arnold--Givental conjecture}.
Briefly, $HF^*(L,K)$ is the homology of a chain complex $CF^*(L,K)$ freely generated by the elements of $L \cap K$.
The differential is defined by counting holomorphic maps from $\bR \times [0,1]$ to $M$, with boundary conditions defined by $L$ and $K$.
We will discuss Floer cohomology in more detail in \S\ref{ss:HF}.

\medskip

Lagrangian Floer cohomology groups form the morphism spaces of the second invariant which we will consider, the \emph{Fukaya $A_\infty$-category} $\Fuk M$, whose objects are Lagrangians (appropriately decorated).
Its definition originated in work of Simon Donaldson and Kenji Fukaya in the early 1990s, and over the intervening three decades its structure and properties have been steadily developed.
It plays a central role in Maxim Kontsevich's \emph{Homological Mirror Symmetry (HMS) conjecture} \cite{kontsevich_hms}, which posits that in certain situations there are pairs $(M,X)$ of a symplectic manifold $M$ and a complex algebraic variety $X$ for which a ``derived'' version of $\Fuk M$ is equivalent to an invariant of $X$ called the \emph{derived category of coherent sheaves on $X$}.
A great deal of work has gone into proving and refining the HMS conjecture in various settings.
In \S\ref{ss:fuk} we will give an overview of the definition and of some of the properties of $\Fuk M$.

\subsection{Functorial properties of pseudoholomorphic curve invariants, the Operadic Principle, and the plan for this paper}
\label{ss:functorial_properties}

If one wants to develop a toolbox for computing pseudoholomorphic curve invariants, the following is an obvious question:

\medskip

\begin{center}
\fbox{\parbox{0.9\columnwidth}{
If one understands the Fukaya category of a symplectic manifold $M$, which is geometrically related to a possibly-different manifold $N$, is it possible to then compute $\Fuk N$?
}}
\end{center}

\medskip

\noindent
Until the late 2000s, the only answer with any of degree of generality was given by Seidel in \cite{seidel_picard-lefschetz}, where he demonstrated an inductive method for computing the Fukaya $A_\infty$-category of the total space of a Lefschetz fibration $M \hra E \twoheadrightarrow D^2$ in terms of the Fukaya category of $M$.
While Seidel's work provides a powerful toolbox, it is limited to the setting of Lefschetz fibrations, and we might want a more flexible framework.
To that end, consider this variant on the above question:

\medskip

\begin{center}
\fbox{\parbox{0.9\columnwidth}{
What functorial properties are enjoyed by Floer cohomology, the Fukaya $A_\infty$-category, and other symplectic invariants defined by counting pseudoholomorphic curves?
}}
\end{center}

\medskip

\noindent
One approach to this question is given by Weinstein's \emph{symplectic creed} \cite{weinstein_symplectic_category}, which states that ``Everything is a Lagrangian submanifold.''
In particular, this suggests that when it comes to the Fukaya category, we should attempt to associate functors to Lagrangian correspondences, i.e.\ Lagrangians $L_{12} \subset M_1^- \times M_2 \coloneqq (M_1\times M_2, (-\omega_{M_1}) \oplus \omega_{M_2})$.
In the late 2000s, Wehrheim and Woodward pursued this approach, which led them to develop their theory of \emph{pseudoholomorphic quilts}.
In \S\ref{s:quilted_floer}, we will describe this work in detail.
In \S\ref{s:symp}, we will describe the second author's development of the symplectic $(A_\infty,2)$-category.
Finally, in \S\ref{s:applications}, we will survey a variety of applications of the theory of pseudoholomorphic quilts.

Throughout this paper, we will emphasize the following principle:

\medskip

\begin{center}
\fbox{\parbox{0.9\columnwidth}{
{\bf The operadic principle in symplectic geometry:}
The algebraic nature of a symplectic invariant defined by counting rigid pseudoholomorphic maps is inherited from the operadic structure of the underlying collection of domain moduli spaces.
}}
\end{center}

\medskip

Finally, we note that Kenji Fukaya made a major contribution to this field in his 2017 preprint \cite{fukaya2017unobstructed}.
Specifically, he associates functors to Lagrangians correspondences under very general hypotheses.
Fukaya used quilts to accomplish this, but he took a quite different approach from that taken by Wehrheim and Woodward.
See \S\ref{ss:fukaya} for an account of Fukaya's work.

\medskip

\subsection{Acknowledgments}
N.B.\ was supported by an NSF Standard Grant (DMS-1906220) during the preparation of this article.
He is grateful to the Max Planck Institute for Mathematics in Bonn for its hospitality and financial support.
M.A.\ would like to thank Kobi Kremnizer for asking him, many years ago, about whether there is a place for the Barr--Beck theorem in Floer theory. He was supported by an NSF Standard Grant (DMS-2103805), the Simons Collaboration on Homological Mirror Symmetry, a Simons Fellowship award, and the Poincaré visiting professorship at Stanford University.
The authors thank Kenji Fukaya, Yank{\i} Lekili, and Paul Seidel for useful conversations.
%%% Local Variables:
%%% mode: latex
%%% TeX-master: "functoriality_in_categorical_symplectic_geometry"
%%% End:

\section{Floer cohomology, the Fukaya $A_\infty$-category, and the Operadic Principle}
\label{s:fuk}

\begin{center}
\fbox{\parbox{0.9\columnwidth}{
{\bf Default geometric hypotheses:}
In \S\S\ref{ss:HF}--\ref{ss:fuk}, we assume our symplectic manifolds and Lagrangians are closed and aspherical, i.e.\ satisfy $\pi_2(M,L) = 0$, unless otherwise stated.
In \S\ref{ss:anom-lagr-floer}, we relax this hypothesis and work with general closed symplectic manifolds.
}}
\end{center}

\smallskip

In this section, we will introduce some fundamental objects in categorical symplectic geometry.
After introducing Floer cohomology in \S\ref{ss:HF} and the Fukaya $A_\infty$-category in \S\ref{ss:fuk}, we will explain in \S\ref{ss:associahedra_and_OP} that $\Fuk M$ is the first instance of the Operadic Principle mentioned in \S\ref{ss:functorial_properties}.

\subsection{Floer cohomology}
\label{ss:HF}

Given two Lagrangians $L$ and $K$ in a symplectic manifold, their Lagrangian Floer cohomology group $HF^*(L,K)$, when defined, categorifies their intersection number.
Shortly, we will mention a major result that motivated Floer to define this invariant.
Before this, we need to introduce the notion of a Hamiltonian diffeomorphism.

Given symplectic manifolds $M$ and $N$, a \emph{symplectomorphism} $\varphi\colon M \to N$ is a diffeomorphism satisfying $\varphi^*\omega_N = \omega_M$.
The infinitesimal version of self-symplectomorphisms of $M$ is given by the \emph{symplectic vector fields}, i.e.\ those $X \in \fX(M)$ with the property that $\omega(X,-)$ is closed.
Indeed, this follows from Cartan's magic formula:
\begin{align}
\cL_X\omega = \rd(\iota_X\omega) + \iota_X\rd\omega = \rd(\iota_X\omega).
\end{align}
An important class of symplectic vector fields is formed by the \emph{Hamiltonian vector fields}, i.e.\ those $X$ for which $\omega(X,-)$ is not only closed, but exact.
Note that since $\omega$ is nondegenerate, we can associate to any smooth function $H\colon M \to \bR$ a Hamiltonian vector field $X_H$ defined by solving the equation $\omega(X_H,-) = \rd H$.
Given a path of functions $H_t\colon M \to \bR$, we can integrate the associated vector fields $X_{H_t}$ to obtain a symplectomorphism $\varphi\colon M \to M$.
Such a map is called a \emph{Hamiltonian diffeomorphism}.

In 1988, Floer introduced Lagrangian Floer cohomology $HF^*(L,K)$ in order to prove the following case of a conjecture due to Arnold (\cite[Appendix 9]{arnold_methods}, \cite{arnold_sur_une_propriete}) and typically referred to as the Arnold--Givental conjecture.

\begin{theorem}[$t=1$ case of Theorem 1, \cite{floer_lag_int}]
\label{thm:arnold-givental}
Suppose $M$ is a closed symplectic manifold and that $L \subset M$ is a Lagrangian with $\pi_2(M,L) = 0$.
Fix a Hamiltonian diffeomorphism $\phi$ of $M$ such that $L$ and $\phi(L)$ intersect transversely.
Then the following estimate holds:
\begin{align}
\#|L \cap \phi(L)|
\geq
\sum_{i=0}^{\dim L}
\rk H^i(L; \bZ/2\bZ).
\end{align}
\null\hfill$\square$
\end{theorem}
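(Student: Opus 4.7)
The plan is to build the Lagrangian Floer cochain complex $CF^*(L, \phi(L); \bZ/2\bZ)$ as the $\bZ/2\bZ$-vector space freely generated by the transverse intersection points $L \cap \phi(L)$, whose total dimension is by construction equal to $\#|L \cap \phi(L)|$. The desired inequality will follow once I produce an isomorphism $HF^*(L,\phi(L)) \cong H^*(L;\bZ/2\bZ)$, since the rank of a cohomology group is bounded above by the rank of any generating cochain complex.

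First I would define the differential $d\colon CF^*(L,\phi(L)) \to CF^{*+1}(L,\phi(L))$ by counting, modulo $2$, rigid pseudoholomorphic strips $u\colon \bR \times [0,1] \to M$ with $u(\bR \times \{0\}) \subset L$, $u(\bR \times \{1\}) \subset \phi(L)$, and prescribed asymptotic intersection points at $\pm\infty$, where the Cauchy--Riemann equation $\dbar u = 0$ is set up with respect to a generic $\omega$-compatible almost complex structure. The crucial analytic input is that the hypothesis $\pi_2(M,L) = 0$ rules out both disc bubbling and sphere bubbling on energy grounds, since any nonconstant bubble would contribute positive symplectic area but asphericity forces those areas to vanish. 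Gromov's compactness theorem then yields compactified moduli spaces of strips whose codimension-one boundary consists of pairs of broken strips, and a standard gluing argument upgrades this to the identity $d^2=0$.

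Next I would establish the invariance $HF^*(L,\phi(L)) \cong HF^*(L,L)$ via continuation maps induced by a Hamiltonian isotopy connecting $\id$ to $\phi$; these maps are constructed by counting solutions to a perturbed Cauchy--Riemann equation whose Hamiltonian term interpolates between the two setups, and a second application of the compactness/gluing package shows they are chain homotopy equivalences. To identify $HF^*(L,L)$ with singular cohomology, I would perturb by a $C^2$-small Morse function $f\colon L \to \bR$ extended via the Weinstein tubular neighborhood theorem to a small time-dependent Hamiltonian on $M$. Floer's fundamental estimate then shows that for sufficiently small perturbation every rigid strip is independent of the $t$-coordinate and coincides with a Morse trajectory of $f$, yielding an identification of $CF^*(L,L)$ with the Morse cochain complex of $f$ and hence $HF^*(L,L) \cong H^*(L;\bZ/2\bZ)$.

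The principal obstacles are analytic: establishing transversality of the moduli spaces of strips for generic $\omega$-compatible almost complex structures, proving Gromov compactness in the strip setting with Lagrangian boundary, and carrying out the gluing analysis that underlies both $d^2=0$ and the invariance under continuation. The asphericity hypothesis $\pi_2(M,L) = 0$ is precisely what makes this program go through without virtual perturbations or obstruction theory, since bubbling is excluded at the outset and the Floer energy of a strip is controlled purely by the symplectic action of its ends.
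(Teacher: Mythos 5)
Your proposal is correct and follows essentially the same route the paper attributes to Floer's cited proof: build $CF^*(L,\phi(L))$ on the intersection points, use $\pi_2(M,L)=0$ to exclude bubbling so that $d^2=0$, invoke Hamiltonian invariance via continuation maps, and identify $HF^*(L,L)$ with $H^*(L;\bZ/2\bZ)$ through a $C^2$-small Morse perturbation whose rigid strips are $t$-independent Morse trajectories. The rank inequality then follows exactly as you state, so there is nothing substantive to correct.
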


\noindent
At the beginning of this subsection we called the Floer cohomology $HF^*(L,K)$ an ``invariant'', but we did not specify what it is invariant with respect to.
In fact, $HF^*(L,K)$ is built so that there is a canonical isomorphism $HF^*(L,K) \sr{\simeq}{\lra} HF^*(L,\phi(K))$ for $\phi$ a Hamiltonian diffeomorphism, and this isomorphism was the key ingredient in the proof of this result.

\begin{remark}
We should think of this result as saying that deforming Lagrangians by Hamiltonian vector fields is a less flexible operation than we might expect from purely differential-topological considerations.
Indeed, the normal and tangent bundles $\nu L, TL$ are isomorphic (an $\omega$-compatible almost complex structure, as introduced later in this subsection, defines such an isomorphism).
Choose a vector field $X \in \fX(L)$ whose zeroes are isolated and have index $\pm 1$.
On one hand, the Poincar\'{e}--Hopf index theorem implies that the sum of the indices of the zeroes of $X$ is equal to the Euler characteristic $\chi(L)$.
On the other hand, our identification $\nu L \simeq TL$ allows us to interpret this same sum as the signed intersection number of $L$ with a transverse pushoff of itself.
We can summarize this reasoning in the following inequality:
\begin{align}
\#|L \cap \phi(L)|
\geq
\sum_{i=0}^{\dim L}
(-1)^i\rk H^i(L; \bZ).
\end{align}
\null\hfill$\triangle$
\end{remark}

\subsubsection{The definition of $HF^*(L,K)$.}

Fix transversely-intersecting Lagrangians $L$ and $K$.
In this subsubsection, we will sketch the definition of $HF^*(L,K)$.
$HF^*(L,K)$ is the homology of a chain complex whose chain group is generated freely by intersection points:
\begin{align}
\label{eq:HF_definition}
HF^*(L,K)
\coloneqq
H\bigl(
CF^*(L,K)
\coloneqq
\Lambda\langle p\rangle_{p \in L \cap K},
d
\bigr).
\end{align}
Given a base field $\bk$, the chain group $CF^*(L,K) $ is generated freely over the \emph{Novikov field},
\begin{align}
\Lambda
\coloneqq
\left\{
\sum_{k=0}^\infty
\left.
a_kT^{\lambda_k}
\:\right|\:
a_k \in \bk,
\:
\lambda_k \in \bR,
\:
\lim_{k\to\infty} \lambda_k = +\infty
\right\}.
\end{align}
For simplicity, we will take $\bk = \bZ/2\bZ$; one can arrange to work with other base fields, assuming additional hypotheses on $M$, $L$, and $K$.
We work with a power series ring because if we did not, the differential would not necessarily converge.

\begin{remark}
The complex $CF^*(L,K)$ is typically graded by $\bZ$ or a finite cyclic group.
The grading will not be important to us in this paper.
\null\hfill$\triangle$
\end{remark}

The matrix coefficients of $d$ are defined in terms of the cardinalities of certain moduli spaces of pseudoholomorphic curves in $M$.
To set this up, we define the notion of an $\omega$-compatible almost complex structure.

\begin{definition}
An \emph{$\omega$-compatible almost complex structure on $M$} is an endomorphism $J\colon TM \to TM$ of the tangent bundle with $J^2 = -\Id$ and such that $\omega(-,J-)$ is a Riemannian metric on $M$.
The space of such $J$'s is denoted $\cJ(M,\omega)$.
We also allow for time-dependence, by taking paths $[0,1] \to \cJ(M,\omega)$.
\null\hfill$\triangle$
\end{definition}

\noindent
There always exists an $\omega$-compatible $J$; in fact $\cJ(M,\omega)$ is homotopy-equivalent to a point (cf.\ \cite[Proposition 2.50(iii)]{mcduff_salamon:small}).

Given a time-dependent $\omega$-compatible $J_t$, generators $p_-, p_+ \in L \cap K$, and a homotopy class $[v] \in \pi_2(M,L\cap K)$ of Floer strips, define the following moduli space of $J_t$-holomorphic strips:
\begin{gather}
\cM(p_-,p_+;[v])
\coloneqq
\bigl\{
u
\colon
\bR\times[0,1]
\to
M
\:\big|\:
(*)
\bigr\}/\bR,
\\
(*)
\quad\coloneqq\quad
u(s,0) \in L,
\quad
u(s,1) \in K,
\quad
\lim_{s\to\pm\infty} u(s,t) = p_\mp,
\quad
\partial_s u + J_t(u)\partial_t u = 0,
\quad
[u] = [v],
\nonumber
\end{gather}
where the $\bR$-action used on the first line is defined by an overall translation in $s$.
We can depict these \emph{Floer strips} either by drawing the domain of $u$ with the different parts labeled by where they map to, or by drawing a representation of the image of $u$, as in the following figure:
\begin{figure}[H]
\centering
\def\svgwidth{0.65\columnwidth}
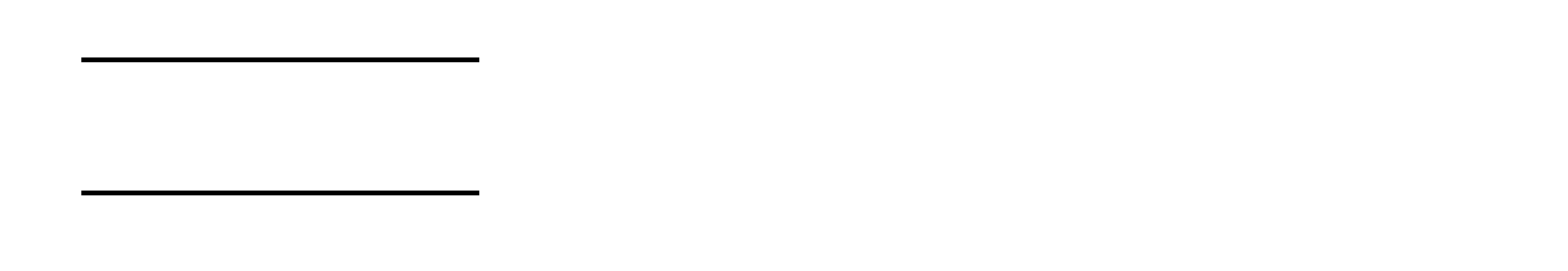
\caption{
\label{fig:floer_strip}
}
\end{figure}
\noindent
As long as $J_t$ is chosen generically, Floer--Hofer--Salamon proved in \cite{floer_hofer_salamon} that $\cM(p_-,p_+,[v])$ is a smooth manifold of dimension $\ind v - 1$, where $\ind v$ is the \emph{Maslov index} of $u$.

We can now define $d\colon CF^*(L,K) \to CF^*(L,K)$:
\begin{align}
\label{def:HF_d_definition}
dp_-
\coloneqq
\sum_{p_+ \in L\cap K}
\sum_{[v]: \ind v = 1}
\#\cM(p_-,p_+;[v])\cdot T^{\omega(v)}\cdot p_+,
\end{align}
where $\omega(v) \coloneqq \int v^*\omega$ is the symplectic area of $v$.
Convergence follows from \emph{Gromov compactness}, which guarantees that for any $p_-$, $p_+$, $[v]$, and $C > 0$, the moduli space $\cM(p_-,p_+;[v]) \cap \{u \:|\: \omega(u) \leq C\}$ is compact up to disks and spheres bubbling off, and strips breaking into chains of strips.

It remains to show $d^2=0$.
This is the step which fails for general pairs of Lagrangian submanifolds, and which will require us to use the topological assumption in Theorem \ref{thm:arnold-givental}.
According to \eqref{def:HF_d_definition}, $d^2=0$ is equivalent to the following equality holding for every $p_-$ and $p_+$:
\begin{align}
\label{eq:d^2=0_identity}
\sum_{q \in L \cap K}
\sum_{\ind v_1 = \ind v_2 = 1}
\#\cM(p_-,q;[v_1])\cdot\#\cM(q,p_+;[v_2])\cdot T^{\omega(v_1)+\omega(v_2)}
=
0.
\end{align}
Floer proved \eqref{eq:d^2=0_identity} by showing that for every $[v]$ of index 2, the discrete space
\begin{align}
\bigsqcup_{q\in L\cap K}
\bigsqcup_{{[v_1]+[v_2]=[v],}
\atop{\ind[v_i] = 1}}
\cM(p_-,q;[v_1]) \times \cM(q,p_+;[v_2])
\end{align}
is nullcobordant.
In fact, $\ol\cM(p_-,p_+;[v])$ provides such a nullcobordism, where this space is the result of compactifying $\cM(p_-,p_+;[v])$ by allowing bubbling and breaking phenomena.
Indeed, $\ol\cM(p_-,p_+;[v])$ is a moduli space of dimension 1, and the codimension-1 boundary strata are formed by breaking and disk bubbling, as illustrated in the following figure:

\begin{figure}[H]
\centering
\def\svgwidth{0.9\columnwidth}
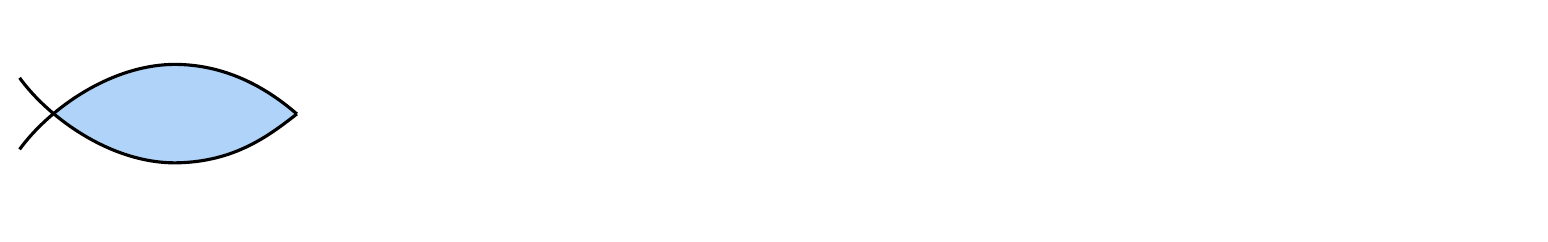
\caption{
\label{fig:first_curved_equation}
}
\end{figure}

\noindent
By the hypothesis of Theorem \ref{thm:arnold-givental}, there are no nontrivial disks with boundary on $L$ or $K$, hence \eqref{eq:d^2=0_identity} holds.

\begin{remark}[allowing non-transverse intersections]
\label{rmk:self-HF_models}
We have only described the definition of $HF^*(L,K)$ when $L, K$ intersect transversely.
There are various ways to extend this definition to general $L, K$.
For instance, one important task is to define the self-Floer cohomology $HF^*(L,L)$.
\begin{enumerate}
\item
One approach is to choose a Hamiltonian diffeomorphism $\phi$ such that $L$ and $\phi(L)$ intersect transversely, and to then set $HF^*(L,L) \coloneqq HF^*(L,\phi(L))$.
This is the tack taken in \cite{seidel_picard-lefschetz}, for instance; it is straightforward, but comes at the price of keeping careful track of all the perturbations that one has introduced.

\smallskip

\item
If one is willing to work with higher-dimensional moduli spaces of Floer strips, one can set $CF^*(L,L)$ to be the chain group for some model of the cohomology of $L$ --- e.g.\ $C_*^\sing(L)$ as in \cite{auroux_T-duality} or $C^*_{\text{dR}}(L)$.
A prominent example is \cite{fooo_1}.

\smallskip

\item
By extending the moduli space of Floer strips, one can avoid both choosing a pushoff of $L$ and working with an infinitely-generated chain group.
This is the ``cluster model'' for self-Floer cohomology, in which we choose a Morse function on $L$ and set $CF^*(L,L) \coloneqq C^*_{\text{Morse}}(L)$.
See \cite{fukaya:quantization} and \cite{oh:quantum} for foundational work on this topic.
Cornea--Lalonde later used this as the basis for their definition of ``cluster homology'' in \cite{cornea_lalonde_cluster}.
See \cite{li_thesis} for a very detailed construction of this model for $HF^*(L,L)$.
\null\hfill$\triangle$
\end{enumerate}
\end{remark}

\begin{remark}[other geometric settings where one can define $HF^*(L,K)$]
\label{rmk:geometric_settings}
We have sketched the definition of $HF^*(L,K)$ when the only disks on $L$ resp.\ $K$ are nullhomotopic, which was the setting originally considered by Floer.
(An immediate extension is to only disallow those disks of positive symplectic area.)
Here are some other typical settings for defining $HF^*(L,K)$:
\begin{enumerate}
\item
{\it The exact setting:}
$\omega = d\alpha$ is an exact 2-form, and $\alpha|_L = df$ and $\alpha|_K = df$ are exact 1-forms.
(Note that Stokes' theorem implies that $M$ is necessarily noncompact.)
In this setting, Stokes' theorem excludes all nonconstant pseudoholomorphic spheres and disks with boundary on $L$ or $K$.
This greatly simplifies the analysis, and $HF^*(L,K)$ is defined for a generic choice of $J_t$.
See \cite{seidel_picard-lefschetz} for a construction of $\Fuk$ in the exact setting.

\smallskip

\item
{\it The monotone setting:}
The homomorphisms
\begin{align}
\langle c_1(TM), -\rangle\colon \pi_2(M) \to \bZ,
\qquad
\int_{S^2} -^*\omega\colon \pi_2(M) \to \bR
\end{align}
are positively proportional, and a similar condition holds for $L$ and $K$.
In this setting, sphere and disk bubbling are not a priori excluded.
It is still possible to define the Floer differential $d$, but it does not necessarily square to zero: in fact, $d^2 = (\pm m_0(L) \mp m_0(K)) \id$, where $m_0(L)$ denotes the number of pseudoholomorphic disks that pass through a generic point on $L$.
As long as $m_0(L) = m_0(K)$, $HF^*(L,K)$ is well-defined.
See \cite{auroux_T-duality} for a survey of the definition of the Fukaya category in this setting.
(Originally, this approach is due to Oh, \cite{oh_monotone}.)

\smallskip

\item
{\it The general compact setting:}
When $M, L, K$ are assumed only to be closed, it is no longer possible to produce well-behaved moduli spaces by choosing $J_t$ appropriately.
One must rely on an \emph{abstract perturbation scheme}, see e.g.\ \cite{fooo_2}.
\null\hfill$\triangle$
\end{enumerate}
\end{remark}

\subsubsection{A first example: two great circles on $S^2$}
\label{sss:S2_HF}

Equip $S^2$ with an area form, and regard this as a symplectic manifold.
Define $L_\eq$ to be a great circle, by which we mean an embedded circle which divides $S^2$ into two halves of equal area.
Note that the hypotheses of Theorem \ref{thm:arnold-givental} fail, because the relative homology group $\pi_2(M,L)$ does not vanish.
Nonetheless, due to work of Oh \cite{oh_monotone}, this example falls within a larger class of examples for which the self-Floer cohomology is well-defined.
($M$ is monotone, as in Remark \ref{rmk:geometric_settings}.)
We now explain how to compute it.

We will use the first model for self-Floer cohomology described in Remark \ref{rmk:self-HF_models}, and compute $HF^*(L_\eq, L_\eq')$, where $L_\eq'$ is another great circle which intersects $L_\eq$ transversely, in two points.
Calling these points $p$ and $q$, our setup looks like this:

\begin{figure}[H]
\centering
\def\svgwidth{0.2125\columnwidth}
%% Creator: Inkscape 1.1 (c4e8f9e, 2021-05-24), www.inkscape.org
%% PDF/EPS/PS + LaTeX output extension by Johan Engelen, 2010
%% Accompanies image file '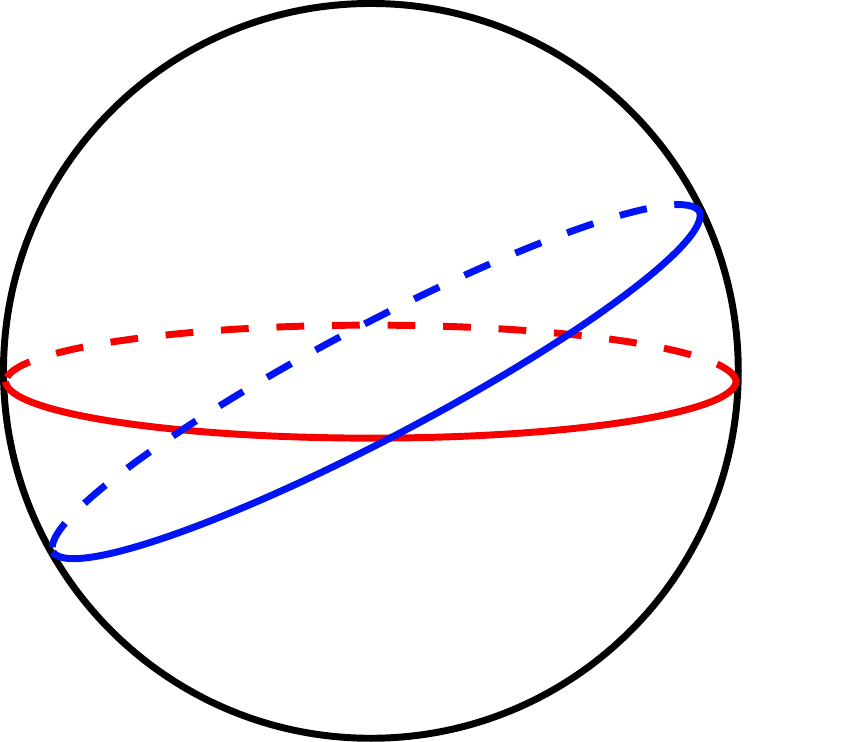' (pdf, eps, ps)
%%
%% To include the image in your LaTeX document, write
%%   \input{<filename>.pdf_tex}
%%  instead of
%%   \includegraphics{<filename>.pdf}
%% To scale the image, write
%%   \def\svgwidth{<desired width>}
%%   \input{<filename>.pdf_tex}
%%  instead of
%%   \includegraphics[width=<desired width>]{<filename>.pdf}
%%
%% Images with a different path to the parent latex file can
%% be accessed with the `import' package (which may need to be
%% installed) using
%%   \usepackage{import}
%% in the preamble, and then including the image with
%%   \import{<path to file>}{<filename>.pdf_tex}
%% Alternatively, one can specify
%%   \graphicspath{{<path to file>/}}
%% 
%% For more information, please see info/svg-inkscape on CTAN:
%%   http://tug.ctan.org/tex-archive/info/svg-inkscape
%%
\begingroup%
  \makeatletter%
  \providecommand\color[2][]{%
    \errmessage{(Inkscape) Color is used for the text in Inkscape, but the package 'color.sty' is not loaded}%
    \renewcommand\color[2][]{}%
  }%
  \providecommand\transparent[1]{%
    \errmessage{(Inkscape) Transparency is used (non-zero) for the text in Inkscape, but the package 'transparent.sty' is not loaded}%
    \renewcommand\transparent[1]{}%
  }%
  \providecommand\rotatebox[2]{#2}%
  \newcommand*\fsize{\dimexpr\f@size pt\relax}%
  \newcommand*\lineheight[1]{\fontsize{\fsize}{#1\fsize}\selectfont}%
  \ifx\svgwidth\undefined%
    \setlength{\unitlength}{248.40347896bp}%
    \ifx\svgscale\undefined%
      \relax%
    \else%
      \setlength{\unitlength}{\unitlength * \real{\svgscale}}%
    \fi%
  \else%
    \setlength{\unitlength}{\svgwidth}%
  \fi%
  \global\let\svgwidth\undefined%
  \global\let\svgscale\undefined%
  \makeatother%
  \begin{picture}(1,0.85998419)%
    \lineheight{1}%
    \setlength\tabcolsep{0pt}%
    \put(0,0){\includegraphics[width=\unitlength,page=1]{HF_eq_setup.pdf}}%
    \put(0.3960452,0.53888074){\makebox(0,0)[lt]{\lineheight{1.25}\smash{\begin{tabular}[t]{l}$q$\end{tabular}}}}%
    \put(0.43599166,0.26173718){\makebox(0,0)[lt]{\lineheight{1.25}\smash{\begin{tabular}[t]{l}$p$\end{tabular}}}}%
    \put(0.83367605,0.6628689){\makebox(0,0)[lt]{\lineheight{1.25}\smash{\begin{tabular}[t]{l}$L'$\end{tabular}}}}%
    \put(0.90081953,0.39020392){\makebox(0,0)[lt]{\lineheight{1.25}\smash{\begin{tabular}[t]{l}$L$\end{tabular}}}}%
  \end{picture}%
\endgroup%

\caption{
\label{fig:HF_eq_setup}
}
\end{figure}

\noindent
The chain complex $CF^*(L_\eq, L_\eq')$ is freely generated by $p$ and $q$.
The Floer differential involves the following four strips, with the green strips on the left resp.\ the orange strips on the right contributing to $dp$ resp.\ $dq$:

\begin{figure}[H]
\centering
\def\svgwidth{1.0\columnwidth}
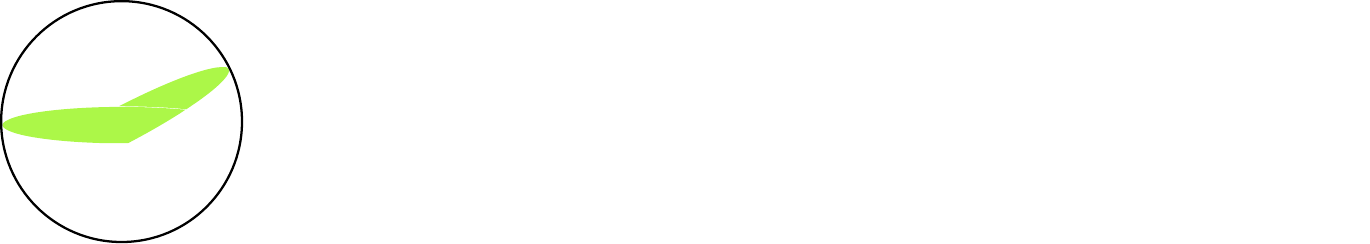
\caption{
\label{fig:HF_eq_strips}
}
\end{figure}

\noindent
This yields the following computation of $d$:
\begin{align}
dp
=
T^bq + T^bq
=
0,
\qquad
dq
=
T^aq + T^aq
=
0.
\end{align}
Here we have denoted by $a$ resp.\ $b$ the area of each of the two smaller resp.\ each of the two larger of the four portions that $S^2$ is divided into.
It follows that $HF^*(L_\eq, L_\eq') = \Lambda^2$.

Note that by the invariance of Floer cohomology, $L_\eq$ cannot be displaced from itself by a Hamiltonian diffeomorphism.
Note also that if we had made the same computation but for a non-great circle, then the differential would not have been zero.

\begin{remark}
There are several introductory texts that contain more detailed introductions to $HF^*(L,K)$.
We will therefore refrain from discussing the basic examples that illustrate the phenomena we have described.
\cite{auroux_beginners_guide} is a good starting point for the interested reader.
In particular, see Example 1.11 in that text to see an illustration of the fact that $d^2=0$ can fail if we remove Floer's hypothesis that $\pi_2(M,L) = \pi_2(M,K) = 0$.
\null\hfill$\triangle$
\end{remark}

\subsection{The Fukaya $A_\infty$-category}
\label{ss:fuk}

As we discussed in \S\ref{ss:HF}, Floer defined $HF^*(L,K)$ assuming that there are no nontrivial topological disks on $L$ and $K$, because this allowed him to exclude disk bubbling in his proof that $d^2 = 0$.
In the early 1990s, Fukaya (influenced by Donaldson) categorified Floer cohomology in such a way that disk bubbling can be interpreted as defining an algebraic operation (the curvature) which, together with Floer's differential $d$, form the first two operations in a coherent hierarchy.
From this data, one extracts (by a purely algebraic procedure), a categorification called the \emph{Fukaya $A_\infty$-category}.
It is an invariant of certain symplectic manifolds $M$, and it is denoted $\Fuk M$.

The Fukaya category has been defined in many settings.
(For instance, \cite[Part II]{seidel_picard-lefschetz} defines $\Fuk M$ in the exact setting, and serves as the classic introduction to $\Fuk$.
Fukaya--Oh--Ohta--Ono relied on the theory of Kuranishi structures to define $\Fuk M$ in the general compact setting in the formidable \cite{fooo_1,fooo_2}.)
Each incarnation is some variant of the following.

\begin{definition} \label{def:Fukaya_category}
The Fukaya $A_\infty$-category $\Fuk M$ consists of the following data:
\begin{itemize}
\item
An object set $\Ob$ consisting of Lagrangians $L \subset M$, equipped with some auxiliary data.

\smallskip

\item
For $L, K \in \Ob$, a graded $\bK$-vector space $\hom(L,K)\coloneqq CF^*(L,K)$.

\smallskip

\item
For $r \geq 1$ and $L^0,\ldots, L^r \in \Ob$, a $\bK$-linear map
\begin{align}
\mu_r
\colon
CF^*(L^0,L^1)
\otimes
\cdots
\otimes
CF^*(L^{r-1},L^r)
\to
CF^{*-2+r}(L^0,L^r)
\end{align}
called the $r$-ary composition map, and whose matrix coefficients are defined by counting rigid pseudoholomorphic $(r+1)$-gons with boundary on $L^0,\ldots,L^r$.
\end{itemize}
The operations satisfy the \emph{$A_\infty$-equations}:
\begin{align}
\label{eq:A-infinity_equations}
\sum_{{1 \leq a \leq r}
\atop
{1 \leq i \leq r-a+1}}
\pm\mu_{r-a+1}\bigl(x_1,\ldots,x_{i-1},\mu_a(x_i,\ldots,x_{i+a-1}),x_{i+a},\ldots,x_r\bigr)
=
0
\quad\forall\quad
r \geq 1.
\end{align}
As a result, $\Fuk M$, is a \emph{$\bK$-linear $A_\infty$-category}.
\null\hfill$\triangle$
\end{definition}

\begin{remark}
In our definition of Floer cohomology in \S\ref{ss:HF}, we stratified the compactified space of Floer strips using the Maslov index, then defined the Floer differential in terms of the cardinalities of the dimension-0 strata.
In an analogous way, in the definition of the Fukaya category, we stratify the spaces of pseudoholomorphic polygons, and count only the elements of the 0-dimensional strata.
This is what we meant when we referred to \emph{rigid} pseudoholomorphic polygons in the preceding definition.
\null\hfill$\triangle$
\end{remark}

\begin{remark}
In its most fundamental form, the Fukaya category is defined with Novikov coefficients.
(This actually leads to a definition of $\Fuk M$ as a \emph{filtered gapped $A_\infty$-category}, as we discuss in \S\ref{ss:anom-lagr-floer}.)
However, under certain assumptions, such as monotonicity or exactness, one can take coefficients in the base field $\bk$.
We have therefore left the choice of coefficients vague, and one should take $\bK$ to denote either $\Lambda$ or $\bk$.

Let us elaborate a bit about the monotone or exact case.
Under either of these hypotheses, $\Fuk(M; \Lambda)$ (where we are making the coefficient field explicit) can actually be defined over a polynomial version of the Novikov field, $\Lambda_\poly$, in which the power series in $T$ are finite.
(In the monotone case, one needs an additional ``balancedness'' hypothesis, which we will not explain here.)
This leads to the following diagram, in which the left pointing arrow is  the result of specializing the Novikov parameter $T$ to a chosen finite value in $\bk$, and the right pointing arrow is induced by the inclusion of $\Lambda_\poly$ in $\Lambda$.
\begin{align}
\xymatrix{
 & \Fuk(M; \Lambda_\poly) \ar@{-->}[ld] \ar@{-->}[rd] &
\\
\Fuk(M; \bk) & & \Fuk(M; \Lambda).
}
\end{align}
\null\hfill$\triangle$
\end{remark}

Note that we can interpret the first few $A_\infty$-equations like so:
\begin{itemize}
\item
$(\mu_1)^2 = 0$, i.e.\ $\mu_1$ is a differential on $CF^*(L^0,L^1)$ (which we denoted $d$ earlier).

\smallskip

\item
$\mu_2$ is a chain map with respect to $\mu_1$.

\smallskip

\item
$\mu_2$ is associative up to a chain homotopy given by $\mu_3$.
\end{itemize}
We can interpret an $A_\infty$-category as a $\bK$-linear dg-category in which composition is only homotopy-associative, but where we have a hierarchy of homotopies to control this lack of strict associativity.

To give the idea of how one verifies the $A_\infty$-equations for the Fukaya $A_\infty$-category, we will now sketch the proof of the fact that $\mu_3$ is a chain homotopy between the two different ways of composing $\mu_2$ with itself.
The following figure illustrates the codimension-1 degenerations of the 1-dimensional moduli space of pseudoholomorphic 4-gons:
\begin{figure}[H]
\centering
\def\svgwidth{0.7\columnwidth}
%% Creator: Inkscape 1.2 (dc2aeda, 2022-05-15), www.inkscape.org
%% PDF/EPS/PS + LaTeX output extension by Johan Engelen, 2010
%% Accompanies image file '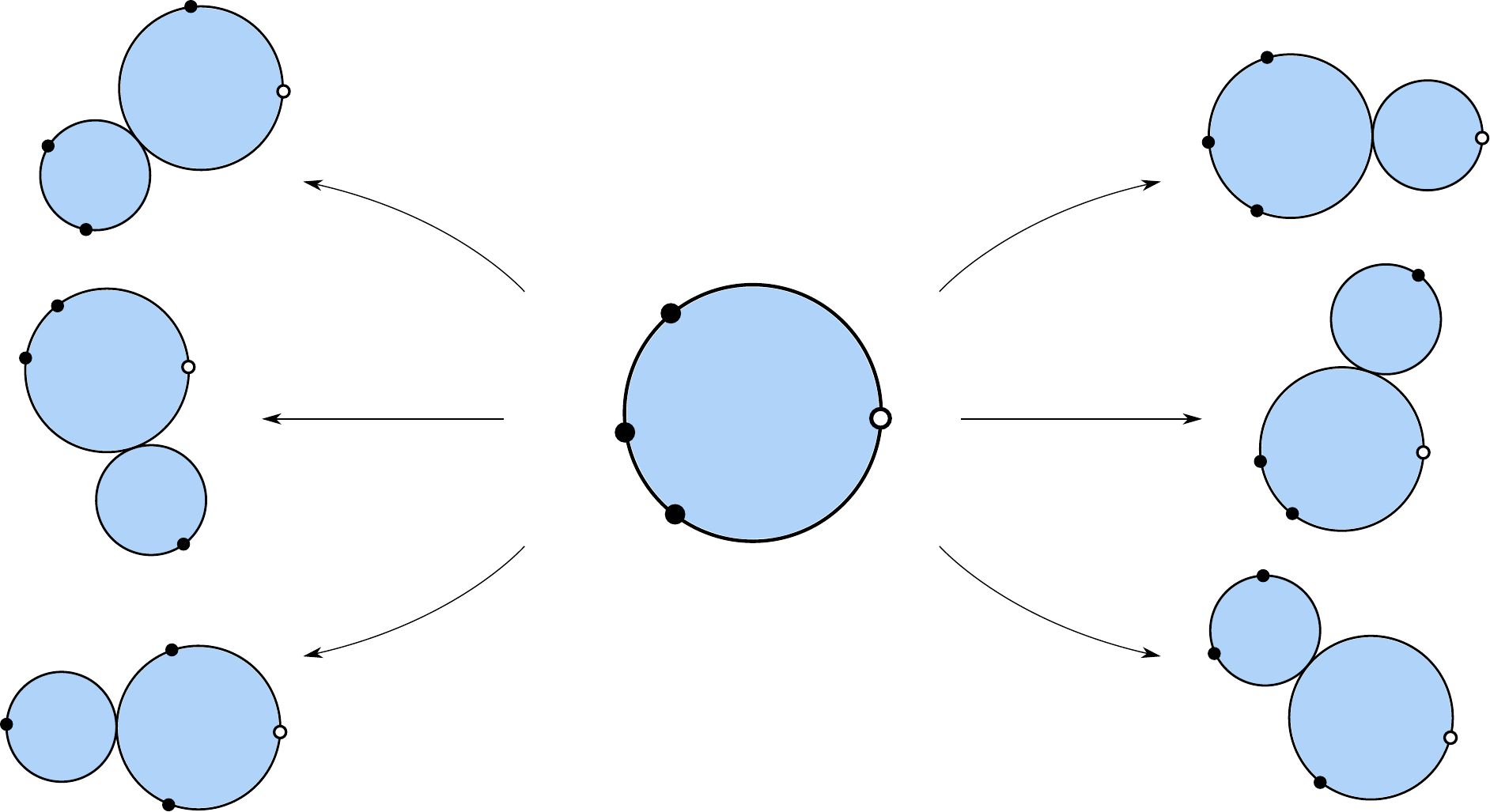' (pdf, eps, ps)
%%
%% To include the image in your LaTeX document, write
%%   \input{<filename>.pdf_tex}
%%  instead of
%%   \includegraphics{<filename>.pdf}
%% To scale the image, write
%%   \def\svgwidth{<desired width>}
%%   \input{<filename>.pdf_tex}
%%  instead of
%%   \includegraphics[width=<desired width>]{<filename>.pdf}
%%
%% Images with a different path to the parent latex file can
%% be accessed with the `import' package (which may need to be
%% installed) using
%%   \usepackage{import}
%% in the preamble, and then including the image with
%%   \import{<path to file>}{<filename>.pdf_tex}
%% Alternatively, one can specify
%%   \graphicspath{{<path to file>/}}
%% 
%% For more information, please see info/svg-inkscape on CTAN:
%%   http://tug.ctan.org/tex-archive/info/svg-inkscape
%%
\begingroup%
  \makeatletter%
  \providecommand\color[2][]{%
    \errmessage{(Inkscape) Color is used for the text in Inkscape, but the package 'color.sty' is not loaded}%
    \renewcommand\color[2][]{}%
  }%
  \providecommand\transparent[1]{%
    \errmessage{(Inkscape) Transparency is used (non-zero) for the text in Inkscape, but the package 'transparent.sty' is not loaded}%
    \renewcommand\transparent[1]{}%
  }%
  \providecommand\rotatebox[2]{#2}%
  \newcommand*\fsize{\dimexpr\f@size pt\relax}%
  \newcommand*\lineheight[1]{\fontsize{\fsize}{#1\fsize}\selectfont}%
  \ifx\svgwidth\undefined%
    \setlength{\unitlength}{542.36965472bp}%
    \ifx\svgscale\undefined%
      \relax%
    \else%
      \setlength{\unitlength}{\unitlength * \real{\svgscale}}%
    \fi%
  \else%
    \setlength{\unitlength}{\svgwidth}%
  \fi%
  \global\let\svgwidth\undefined%
  \global\let\svgscale\undefined%
  \makeatother%
  \begin{picture}(1,0.54484574)%
    \lineheight{1}%
    \setlength\tabcolsep{0pt}%
    \put(0,0){\includegraphics[width=\unitlength,page=1]{mu_2_is_associative_up_to_mu_3.pdf}}%
    \put(0.51878065,0.36266407){\makebox(0,0)[lt]{\lineheight{1.25}\smash{\begin{tabular}[t]{l}$L_0$\end{tabular}}}}%
    \put(0.38961854,0.30102668){\makebox(0,0)[lt]{\lineheight{1.25}\smash{\begin{tabular}[t]{l}$L_1$\end{tabular}}}}%
    \put(0.39652873,0.21037863){\makebox(0,0)[lt]{\lineheight{1.25}\smash{\begin{tabular}[t]{l}$L_2$\end{tabular}}}}%
    \put(0.55076094,0.173072){\makebox(0,0)[lt]{\lineheight{1.25}\smash{\begin{tabular}[t]{l}$L_3$\end{tabular}}}}%
    \put(0.42569258,0.35051396){\makebox(0,0)[lt]{\lineheight{1.25}\smash{\begin{tabular}[t]{l}$p_1$\end{tabular}}}}%
    \put(0.38074307,0.25036709){\makebox(0,0)[lt]{\lineheight{1.25}\smash{\begin{tabular}[t]{l}$p_2$\end{tabular}}}}%
    \put(0.42758545,0.16708499){\makebox(0,0)[lt]{\lineheight{1.25}\smash{\begin{tabular}[t]{l}$p_3$\end{tabular}}}}%
    \put(0.60655403,0.25867255){\makebox(0,0)[lt]{\lineheight{1.25}\smash{\begin{tabular}[t]{l}$q$\end{tabular}}}}%
  \end{picture}%
\endgroup%

\caption{
\label{fig:third_a-infty-equation}
}
\end{figure}
In the same way that considering 1-dimensional moduli space of pseudoholomorphic strips led us to our proof of $d^2=0$ in \S\ref{ss:HF}, this analysis implies the ternary $A_\infty$-equation:
\begin{align}
\label{eq:ternary_A-infinity_eqn}
&\mu_2(\mu_2(x,y),z)
-
\mu_2(x,\mu_2(y,z))
\\
&\hspace{0.25in}=
\pm\mu_3(\mu_1(x),y,z)
\pm\mu_3(x,\mu_1(y),z)
\pm\mu_3(x,y,\mu_1(z))
\pm\mu_1(\mu_3(x,y,z)).
\nonumber
\end{align}

\begin{remark}
The punctilious reader will have noted the following difference between Figures \ref{fig:first_curved_equation} and \ref{fig:third_a-infty-equation}: boundary strata in which a component disc carries no marked point other than the node appear in the former but not the latter.
These terms in fact also appear in a general analysis of the boundary moduli space shown in Figure \ref{fig:third_a-infty-equation}, but the point of the auxilliary data alluded to in Definition \ref{def:Fukaya_category} is precisely to algebraically cancel these terms so that their contribution vanishes.
\null\hfill$\triangle$
\end{remark}

\begin{example}
\label{ex:S2_Fuk}
We now return to $S^2$.
In \S\ref{sss:S2_HF}, we deduced that if $L_\eq \subset S^2$ is a great circle, then $HF^*(L_\eq,L_\eq)$ is well-defined and isomorphic to $\Lambda^2$.
We will now turn to the task of determining $H\Fuk S^2$.
(Our treatment will be brief, and we invite the interested reader to consult \S3.4 of the excellent article \cite{ballard_meet_hms}.)

The only Lagrangians in $S^2$ are embedded circles.
If $S^1 \subset S^2$ does not divide $S^2$ into two halves of equal area, then a computation similar to the one made in \S\ref{sss:S2_HF} shows that its self-Floer cohomology is zero, hence that it is zero in $\Fuk S^2$.
On the other hand, Oh shows in \cite{oh_second_variation} that any two great circles are Hamiltonian-isotopic.
It follows that when we use Novikov coefficients over $\bZ/2\bZ$, $\Fuk S^2$ has one object, whose endomorphism algebra is $\Lambda^2$.
A computation of the composition operation would show that
\begin{align}
HF^*(L_\eq, L_\eq)
\simeq
\Lambda\langle x\rangle
\big/
\bigl\langle
x^2
=
T^{\omega(S^2)/2}
\bigr\rangle.
\end{align}
This is a \emph{Clifford algebra}.
(C.f.\ \cite[\S5.5]{smith_prolegomenon} for a discussion of the self-Floer cohomology of the moment fibers of a toric Fano variety.
As Cho proved in \cite{cho:toric_fiber_clifford_algebra}, this self-Floer cohomology is always a Clifford algebra.)
\null\hfill$\triangle$
\end{example}

\begin{remark}
By using $\bZ/2\bZ$ as our base field in this example, we avoided discussing the fact that for more general base fields (such as $\bC$), there are actually two nonzero objects of $\Fuk S^2$: $L_\eq$, equipped with a flat $U(1)$-bundle with monodromy either $1$ or $-1$.

This is an illustration of a more general fact (\cite[Corollary 1.3.1]{evans2019generating}) about compact monotone toric symplectic manifolds: working over a field of arbitrary characteristic, the Fukaya category is split-generated by copies of the unique monotone torus fiber, equipped with the finitely-many choices of flat line bundles for which this torus fiber is a nonzero object.

We also note that when we work over $\bC$, the $A_\infty$-category $\Fuk S^2$ contains no more information than $H\Fuk S^2$.
This follows from the \emph{intrinsic formality} of complex Clifford algebras (c.f.\ \cite[Corollary 6.4]{sheridan_fano}).
On the other hand, over $\bZ/2\bZ$ the Fukaya category of $S^2$ is not formal, c.f.\ \cite[\S\S7.2--3]{evans2019generating}.
\null\hfill$\triangle$
\end{remark}

\subsection{Associahedra and the operadic principle}
\label{ss:associahedra_and_OP}

In \S\ref{ss:fuk}, we sketched the proof of the third $A_\infty$-equation, which was based on analyzing the boundary strata of a 1-dimensional space of pseudoholomorphic 4-gons.
The reason that this works, and the reason that analogous arguments allow one to establish all the $A_\infty$-equations for $\Fuk M$, is that the moduli space of domains have a certain recursive structure.
In order to articulate this, we formally define these domain moduli spaces.

\begin{definition}
For $r \geq 2$, define $K_r$ to be the compactified configuration space of $r$ distinct unlabeled points on the real line, modulo translations and dilations.
Equivalently, we can view such a configuration as a configuration of $r+1$ marked points on the boundary of the unit disk, with one point distinguished, modulo the M\"{o}bius transformations that preserve the unit disk.
\null\hfill$\triangle$
\end{definition}

\noindent
The compactification amounts to a recipe for how to define the limit of a path in which some points collide (in the disk model).
It turns out that the right compactification to use here is essentially the one proposed by Fulton--MacPherson in \cite{fulton_macpherson}: when points collide, we ``zoom in'' in order to remember the relative rates of the collision.
This zoomed-in view produces a ``bubbled-off'' configuration of points on a disk, which is attached to the original disk at the point where the points collided.
We illustrate this in the following depictions of $K_3$ and $K_4$:

\begin{figure}[H]
\centering
\def\svgwidth{0.8\columnwidth}
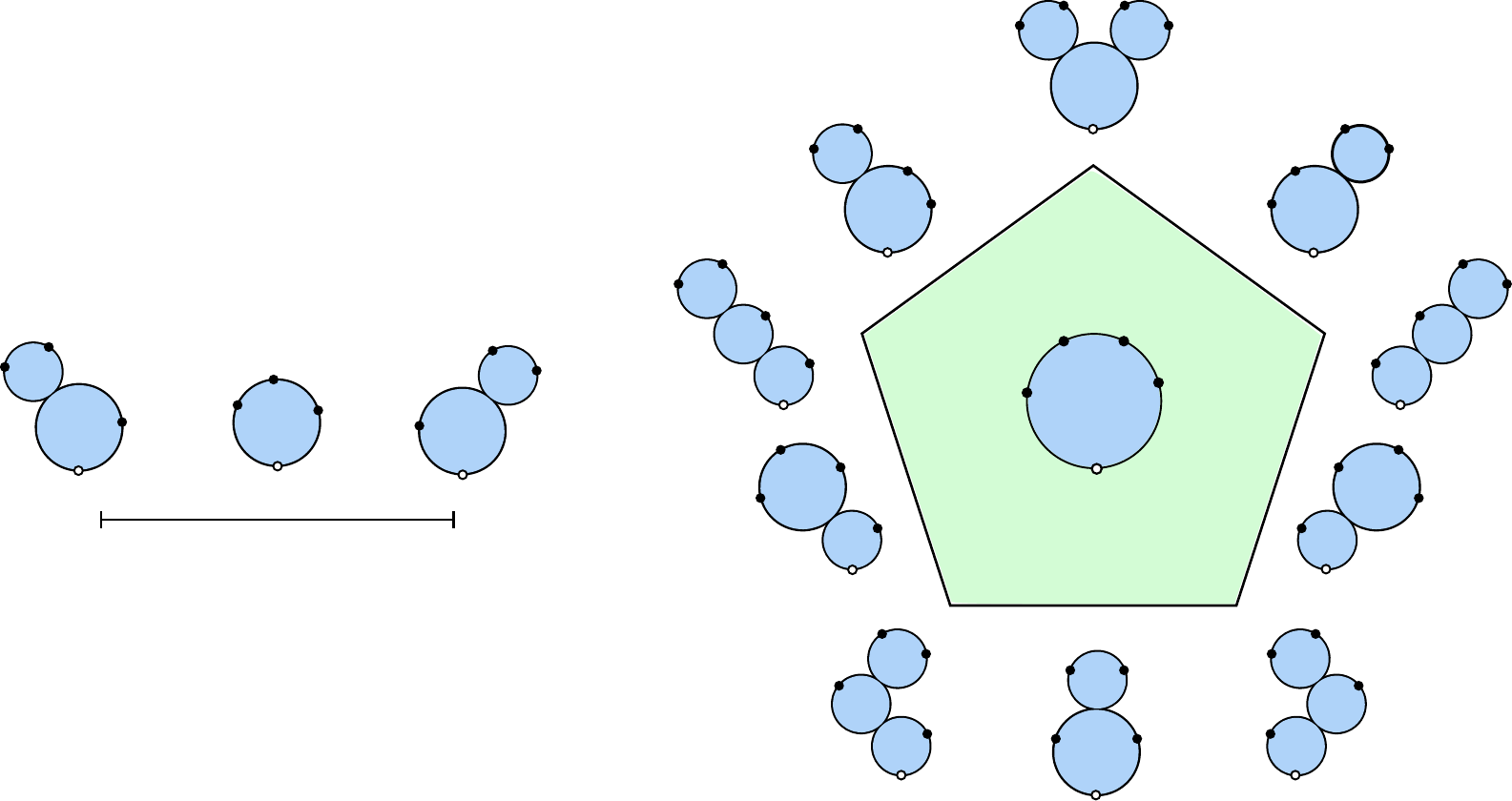
\caption{
\label{fig:K3_K4}
}
\end{figure}

\noindent
Here we have labeled each stratum by a typical example.
It is apparent in these examples that the posets of strata have straightforward combinatorial interpretations.
For instance, we can identify the poset of strata of $K_4$ with posets of stable (i.e., no vertex of valence 2) planted planar trees with four leaves, or with parenthesizations of four letters:

\begin{figure}[H]
\centering
\def\svgwidth{1.0\columnwidth}
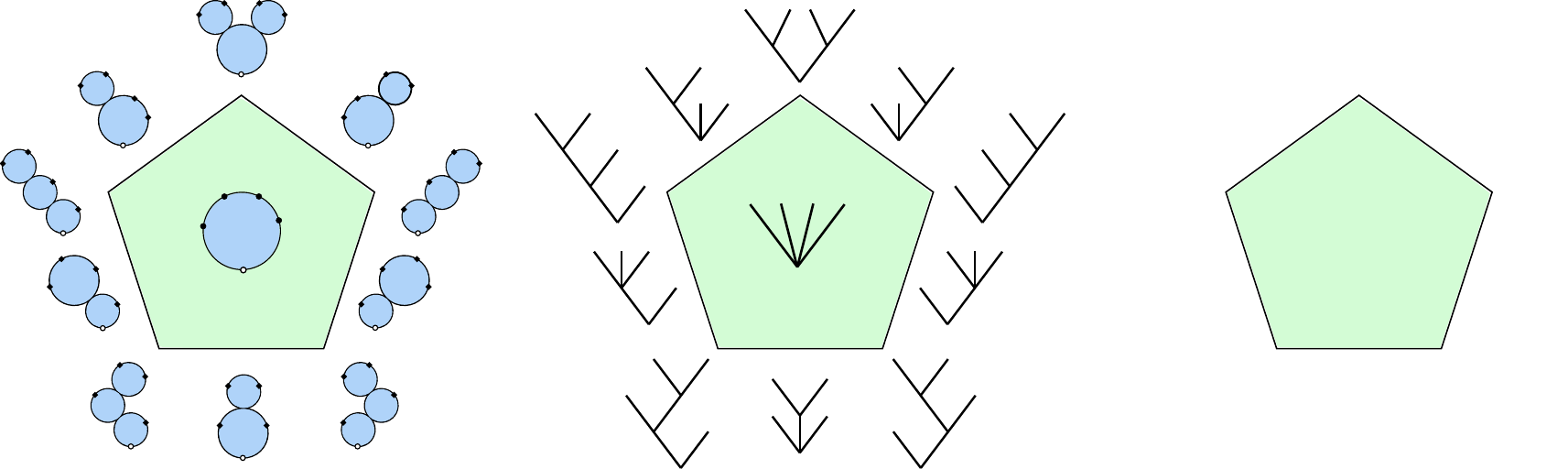
\caption{
\label{fig:K4_models}
}
\end{figure}

\noindent
These spaces are called \emph{associahedra}.
They were originally defined by Stasheff in \cite{stasheff_h-spaces} in the context of recognizing loop spaces, and they arise in many situations that involve homotopy associativity.
They deserve the suffix ``-hedra'' because they can be realized as convex polytopes.
There are now a variety of different polytopal realizations.
In symplectic geometry, their use goes back at least to \cite{fukaya_oh}.

Recall our proof in \S\ref{ss:fuk} of the ternary $A_\infty$ equation \eqref{eq:ternary_A-infinity_eqn}: we produced a 1-dimensional nullcobordism of a disjoint union of finite sets, such that the count over each finite set corresponded to one of the terms in \eqref{eq:ternary_A-infinity_eqn}.
This argument comes into clearer focus when seen through the lens of the associahedra.
For instance, consider the task of proving the quaternary $A_\infty$ equation.
To do so, we fix inputs $a, b, c, d$ and regard the 1-dimensional moduli space of pseudoholomorphic pentagons as a nullcobordism.
The projection of this nullcobordism to the domain moduli space $K_4$ looks something like this:

\begin{figure}[H]
\centering
\def\svgwidth{0.7\columnwidth}
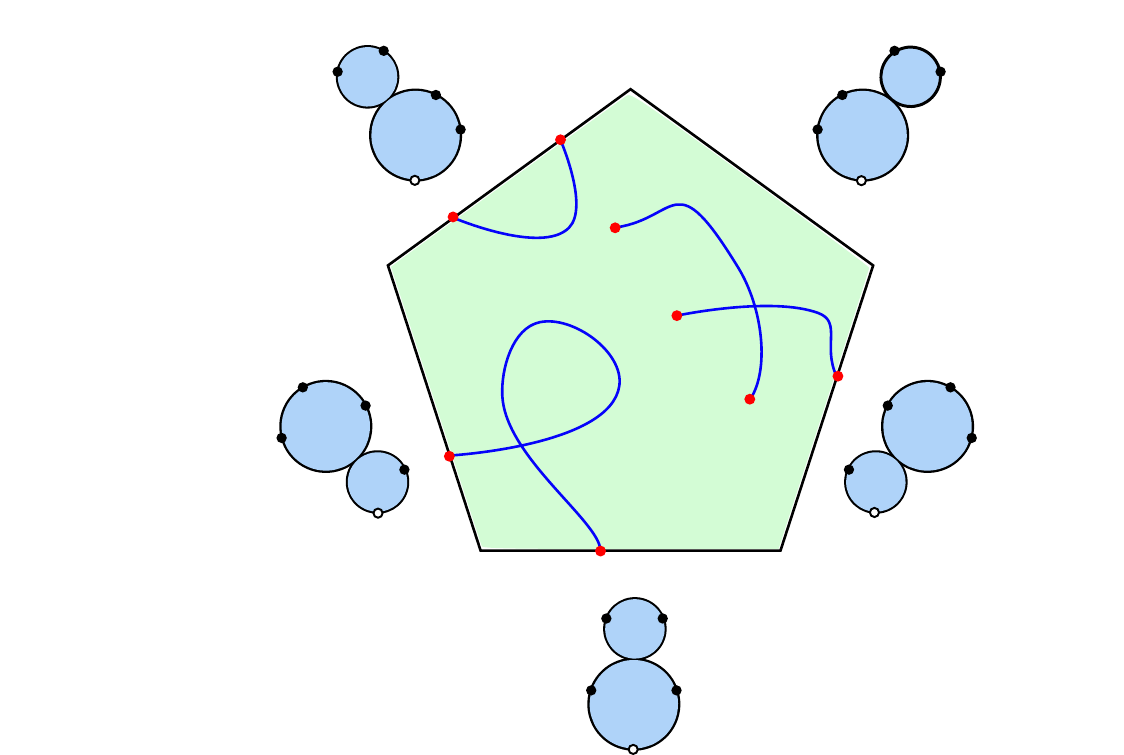
\caption{
\label{fig:K4_with_projected_moduli_spaces}
}
\end{figure}

\noindent
Here we have labeled the ends of this nullcobordism by the terms in the quaternary $A_\infty$ equation that they correspond to.
Note that the ends that lie in the boundary correspond to terms of the form $\mu_i(\cdots,\mu_j(\cdots),\cdots)$ for $i, j \geq 2$, whereas the ends in the interior correspond to terms with either $i=1$ or $j=1$.

This is the first instance we have encountered so far of the Operadic Principle, which we remind the reader of:

\medskip

\begin{center}
\fbox{\parbox{0.9\columnwidth}{
{\bf The Operadic Principle in symplectic geometry:}
The algebraic nature of a symplectic invariant defined by counting rigid pseudoholomorphic maps is inherited from the operadic structure of the underlying collection of domain moduli spaces.
}}
\end{center}

\medskip

\noindent
This is implemented by following the following procedure:

\begin{itemize}
\item
Define stratified and compactified moduli spaces of pseudoholomorphic maps, as well as the with associated domain moduli spaces.

\smallskip

\item
Prove a ``gluing theorem'', which identifies the boundary of the 1-dimensional moduli spaces with spaces constructed from the 0-dimensional moduli spaces.

\smallskip

\item
Interpret this recursive structure ``operadically'', i.e.\ that the curve-counting invariant resulting from counts over the 0-dimensional moduli spaces is a category over the operad of chains on the domain moduli spaces (or some related notion).
\end{itemize}

\begin{remark}
This implementation procedure potentially hides an enormous amount of technical complexity.
For instance, in the first bullet, one needs to define moduli spaces that are well-defined topological objects.
In simple settings, such as the one Floer worked in, one can do this with only a moderate amount of work.
In other settings, such as when $M$ is a general compact symplectic manifold, one needs to choose a framework for virtual counts, such as the virtual approach in \cite{fooo_2}, the polyfolds package defined in a series of papers by Hofer, Fish, Wysocki, and Zehnder, or Pardon's approach \cite{pardon}.
\null\hfill$\triangle$
\end{remark}

\begin{remark}
The notion of exploiting the operadic structure of the domain moduli spaces, especially when considering the $A_\infty$-operad or with variants of the moduli of nodal genus-0 stable curves, is not new in symplectic geometry; c.f.\ for instance \cite{fukaya:operads}.
\null\hfill$\triangle$
\end{remark}

\begin{remark}
Our definition of $K_r$ only makes sense for $r \geq 2$.
In fact, in the Floer-theoretic setting one can set $K_1 \coloneqq \pt \sqcup \pt/\bR$, where the second term should be interpreted as an Artin stack.
In terms of operations, $\pt$ corresponds to a unit in $CF^*(L,L)$, and $\pt/\bR$ corresponds to the differential.
\null\hfill$\triangle$
\end{remark}

We conclude this subsection by introducing the notions of operads and categories over them, which we need in order to make the Operadic Principle precise in the case of the Fukaya $A_\infty$-category.

\begin{definition}
Fix a symmetric monoidal category $\sC$ (such as $\Top$, $\Set$, or $\Ch$).
A \emph{nonsymmetric operad $\cO$ in $\sC$} is the following data:

\begin{itemize}
\item
For every $r \geq 1$, an object $\cO(r)$ in $\sC$, which we think of as a collection of ``$r$-ary operations''.

\smallskip

\item
Morphisms
\begin{align}
\circ\colon \cO(r) \times \cO(s_1)\times\cdots\times\cO(s_r) \to \cO(s_1+\cdots+s_r)
\end{align}
for any choice of $r, s_1, \ldots, s_r \geq 1$.
We think of these as ``composition maps'' and we require their compositions to satisfy an appropriate associativity condition.

\smallskip

\item
An element $1 \in \cO(1)$ that acts as the identity.
\null\hfill$\triangle$
\end{itemize}
\end{definition}

\noindent
For instance, the associahedra $(K_r)_{r\geq 1}$ form an operad in $\Top$, where we define $K_1 \coloneqq \pt$ and where the composition maps $K_r \times K_{s_1} \times \cdots \times K_{s_r} \to K_{s_1+\cdots+s_r}$ are defined by concatenating trees of disks.
An operad that we will need shortly is $\bigl(C_*^\cell(K_r)\bigr)$, which is an operad in $\Ch$.

Given an operad $\cO$, we need a way of turning $\cO(r)$, which consists morally of ``$r$-ary operations'', into genuine operations.
The right receptacle for this sort of procedure is given by the following notion.

\begin{definition}
Fix a (nonsymmetric) operad $\cO$ in $\sC$.
Then a \emph{category over $\cO$} (or an \emph{$\cO$-category}) $\sA$ consists of the following data:
\begin{itemize}
\item
A set of objects $\Ob \sA$.

\smallskip

\item
For every $X, Y \in \Ob \sA$, an object $\hom(X,Y) \in \sC$ called the \emph{morphism space from $X$ to $Y$}.

\smallskip

\item
For every sequence $X_0, \ldots, X_r \in \Ob \sA$, a \emph{composition operation}
\begin{align}
\cO(r)
\times
\hom(X_0,X_1) \times \cdots \times \hom(X_{r-1},X_r)
\to
\hom(X_0,X_r).
\end{align}
\end{itemize}
We require these operations to be compatible with the composition maps in $\cO$, and with the unit $1 \in \cO(1)$.
\null\hfill$\triangle$
\end{definition}

$\Fuk M$ is a category over $\bigl(C_*^\cell(K_r)\bigr)$, because we can associate to a cell $C \subset K_r$ an $r$-ary operation defined by counting pseudoholomorphic maps whose domain is in $C$.
Using the following proposition, this implies that $\Fuk M$ is an $A_\infty$-category.

\begin{proposition}
Linear $A_\infty$-categories can be identified with categories over the operad $C_*^\cell(K) \coloneqq \bigl(C_*^\text{cell}(K_r)\bigr)_{r\geq 1}$.
\end{proposition}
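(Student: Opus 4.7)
The plan is to establish a mutually inverse correspondence between linear $A_\infty$-category structures on a collection of chain complexes $\bigl(\hom(X,Y)\bigr)_{X,Y}$ and $C_*^\cell(K)$-category structures on the same data. The starting point is a combinatorial analysis of the associahedron: the poset of cells of $K_r$ is isomorphic to the poset of stable planted planar trees with $r$ leaves (as in Figure \ref{fig:K4_models}), with a codimension-$k$ cell corresponding to a tree having $k+1$ internal vertices. Every such cell arises as the image of an operadic composition $K_{s_1} \times \cdots \times K_{s_\ell} \to K_r$ applied to top-dimensional cells of smaller associahedra, so that $C_*^\cell(K)$ is generated as a nonsymmetric operad in graded $\bK$-modules by the fundamental chains $[K_r] \in C_{r-2}^\cell(K_r)$.

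The key technical computation is the cellular boundary formula
\begin{align}
\partial [K_r] \;=\; \sum_{\substack{a+b=r+1 \\ a,b \geq 2 \\ 1 \leq i \leq b}} \pm\, [K_b] \circ_i [K_a],
\end{align}
which records the fact that the codimension-1 faces of $K_r$ are precisely the images of the operadic compositions $\circ_i\colon K_b \times K_a \to K_r$. Given a $C_*^\cell(K)$-category structure in $\Ch$, I would then define $\mu_r$ for $r \geq 2$ as the operation associated to $[K_r]$, and take $\mu_1$ to be the internal differential on each $\hom(X,Y)$. Expanding the chain-map condition for the structure map evaluated at $[K_r]$, and combining the Leibniz rule for the differential on the tensor product with the boundary formula above, produces exactly the $r$-th $A_\infty$-equation.

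For the converse, given an $A_\infty$-category I would define a $C_*^\cell(K)$-action cell by cell: the action of a cell of $K_r$ corresponding to a tree $T$ is the iterated composition of $\mu$-operations read off from $T$, while the differential on each hom-complex is $\mu_1$. Compatibility with operadic composition is automatic from the identification of operadic composition with tree grafting, and the chain-map property of the structure maps follows from the $A_\infty$-equations. The main obstacle in the whole argument is sign bookkeeping: one must fix a consistent orientation convention on each $K_r$ (for instance via the Loday realization) and verify that the inclusions $\circ_i$ respect this orientation with the appropriate Koszul signs. Once the boundary formula above is established with the correct signs, the remainder is forced, since the fundamental chains operadically generate all of $C_*^\cell(K)$.
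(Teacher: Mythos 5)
Your proposal is correct and, for the direction the paper actually proves (extracting an $A_\infty$-category from a $C_*^\cell(K)$-category), it takes essentially the same route: define $\mu_r$ by evaluating the structure map on the fundamental chain $[K_r]$, take $\mu_1$ to be the internal differential, and combine the chain-map property with the boundary formula $\partial[K_r]=\sum\pm[K_b]\circ_i[K_a]$ to recover the $A_\infty$-equations. You additionally sketch the converse (assigning to each cell, i.e.\ each stable tree, the corresponding grafted composition of $\mu$'s), which the paper omits; that sketch is sound, with the remaining work being exactly the orientation and sign conventions you flag.
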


\begin{proof}[Proof of the backward direction]
Let us explain a recipe to construct from a category $C$ over $\bigl(C_*^\text{cell}(K_r)\bigr)_{r\geq 1}$ a linear $A_\infty$-category $\sA$.

We define the data of $\sA$ as follows.
\begin{itemize}
\item
The objects of $\sA$ are the same as those of $C$.

\smallskip

\item
We define the unary operation $\mu_1\colon \hom(X_0,X_1)\to\hom(X_0,X_1)$ to be the differential on the chain complex $\hom(X_0,X_1)$.

\smallskip

\item
For $r \geq 2$, we define the $r$-ary operation
\begin{align}
\mu_r
\colon
\hom(X_0,X_1) \otimes \cdots \otimes \hom(X_{r-1},X_r)
\to
\hom(X_0,X_r)
\end{align}
to be the result of feeding the fundamental class $[K_r]$ into the first slot of the operation
\begin{align}
\label{eq:action_of_K_r}
\varphi_r
\colon
C_*^\cell(K_r)
\otimes
\hom(X_0,X_1) \otimes \cdots \otimes \hom(X_{r-1},X_r)
\to
\hom(X_0,X_r).
\end{align}
\end{itemize}

The only thing that we need to check is that these operations satisfy the $A_\infty$-equations.
To see this, note that since \eqref{eq:action_of_K_r} is a chain map, the following equation holds:
\begin{align}
\label{eq:K_r-action_chain_map}
&\pm\varphi_r\bigl(\partial[K_r],x_1,\ldots,x_r\bigr)
+
\sum_{1 \leq i \leq r} \pm\varphi_r\bigl([K_r],x_1,\ldots,x_{i-1},\partial x_i,x_{i+1},\ldots,x_r\bigr)
\nonumber
\\
&\hspace{3.5in}
=
\pm\partial\varphi_r\bigl([K_r],x_1,\ldots,x_r\bigr).
\end{align}
Using the recursive structure of the associahedra, together with the coherences satisfied by the operad structure of $\bigl(C_*^\cell(K_r)\bigr)_r$ and the action of this operad on $C$, we see that \eqref{eq:K_r-action_chain_map} yields the following equation:
\begin{align}
&\sum_{{2 \leq a \leq r-1}
\atop
{1 \leq i \leq r-a+1}}
\pm\mu_{r-a+1}\bigl(x_1,\ldots,x_{i-1},\mu_a(x_i,\ldots,x_{i+a-1}),x_{i+a},\ldots,x_r\bigr)
\nonumber
\\
&\hspace{0.75in}
+
\sum_{1 \leq i \leq r}
\pm\mu_r\bigl(x_1,\ldots,x_{i-1},\mu_1(x_i),x_{i+1},\ldots,x_r\bigr)
\nonumber
\\
&\hspace{3.5in}
=
\pm\mu_1(\mu_r(x_1,\ldots,x_r)).
\end{align}
Rearranging this, we obtain the $r$-th $A_\infty$ equation:
\begin{align}
\sum_{{1 \leq a \leq r}
\atop
{1 \leq i \leq r-a+1}}
\mu_{r-a+1}\bigl(x_1,\ldots,x_{i-1},\mu_a(x_i,\ldots,x_{i+a-1}),x_{i+a},\ldots,x_r\bigr)
=
0.
\end{align}
\end{proof}

\begin{remark}
\label{rem:Floer_theory_break_symmetry_mu_1}
In the above formulation of $A_\infty$-algebras as algebras over the $A_\infty$ operad, we separated the differential $\mu_1$ from the remainder of the operations, and chose to consider it as part of the structure of the target category (of chain complexes), rather than as one of the operations indexed by the operad.
This perspective is unnatural from the point of view of Floer theory because the differential is constructed, like all other operations, by a count of holomorphic curves, but it succeeds precisely because the differential is the first operation (in terms of parity) which we consider.
\null\hfill$\triangle$
\end{remark}

\subsection{Anomaly in Lagrangian Floer theory}
\label{ss:anom-lagr-floer}

On general (closed) symplectic manifolds, the construction of Fukaya $A_\infty$-categories requires significantly more work than indicated above, because the possible presence of holomorphic discs with boundary on a single Lagrangian \emph{obstructs} the equation $\mu_1^2=0$, in the sense that there is an element $\mu_0$ so that the vanishing of the differential is replaced by the equation
\begin{equation}
\label{eq:first_curved_equation}
\mu_1^2(x)
=
\mu_2(x,\mu_0) - \mu_2(\mu_0,x).
\end{equation}
Geometrically, this equation arises from an analysis of degenerations of moduli spaces of holomorphic strips with boundary on a pair $(L_1,L_2)$ of Lagrangians.
As indicated in Figure \ref{fig:first_curved_equation}, there are three boundary components to these moduli spaces; the first, corresponding to energy concentration taking place at a sequence of points which escape the strip along the end (breaking of strips), gives rise to the term $\mu_1^2$, while the two terms on the right hand side of \eqref{eq:first_curved_equation} arise when energy concentration takes place along one of the two boundary components of the strip.

In \cite{fooo_1,fooo_2}, Fukaya, Oh, Ohta, and Ono encoded the full structure of operations arising from the count of holomorphic discs with arbitrary numbers of marked point on a general Lagrangian submanifold into the structure of \emph{filtered gapped curved $A_\infty$-algebra,} and explained how to extract from this data an (ordinary) $A_\infty$-category, which informally consists of ways of (algebraically) correcting the differential so that the curvature vanishes.
We shall presently define this notion from an operadic perspective, by introducing the \emph{curved $A_\infty$ operad} after noting two essential difficulties:
\begin{enumerate}
\item
Unlike the case of ordinary $A_\infty$-categories, the curved $A_\infty$ operad which we shall introduce does not arise from an operad in the category of spaces.

\smallskip

\item
The notion of filtration, which is essential for the theory of curved algebras to produce meaningful answers, requires a technical extension of the notions of the theory of operads to a context where there is an additional label which, from the point of view of Floer theory, records the energy.
\end{enumerate}
\begin{remark} \label{rem:stack-operad}
To elaborate further, the lack  of a space-level model for the curved $A_\infty$ operad can be seen as follows: in \eqref{eq:A-infinity_equations}, the differential $\mu_1$ has degree $1$, while the product $\mu_2$ has degree $0$.
This forces the curvature term $\mu_0$ to have degree $2$.
However, because of our cohomological conventions, operations indexed by a manifold of dimension $k$ have degree $-k$, so this operation is associated to a space of dimension $-2$.
There is a ready explanation for this negative dimension: in the generic situation, the interior of the moduli spaces that define $\mu_0$ are \emph{quotients} of the space of maps with domain a disc with a boundary marked point by the automorphism of the disc preserving this point.
Since any two discs with a boundary marked point are biholomorphic, the abstract space of such discs (i.e.\ in the absence of a target symplectic manifold) is a point, but it should more properly be considered as the \emph{stack} quotient of this point by the automorphism group.
Since the group of biholomorphic automorphisms of the upper-half plane is easily seen to be a contractible $2$-dimensional Lie group, we see that the natural dimensional associated to it is indeed $-2$.

The above discussion suggests that we should construct the curved $A_\infty$ operad in the category of topological stacks.
The first step in this construction was performed by Lurie and Tanaka \cite{lurie_tanaka}, who revisited the formulation of differentials in Floer and Morse theory from this perspective by constructing a topological stack which encodes the differential at the equation $d^2=0$.
\null\hfill$\triangle$
\end{remark}

\begin{remark}
The necessity of imposing the additional structure of a filtration to obtain a good theory of curved categories is more difficult to justify geometrically, but can be algebraically justified as follows: one can associate to each curved category a \emph{category of modules}, which in the special case of trivial curvature recovers the usual notion.
However, it turns out that whenever the curvature is a non-trivial element, this category is completely trivial.
This can be proved by filtering the relevant morphism complexes so that the differential on the associated graded group depends only on the curvature, and using the fact that, for any vector space $V$, equipped with an \emph{non-zero} element $v$, the complex
\begin{equation}
V
\to
V \otimes V
\to
V \otimes V \otimes V
\to
\cdots, 
\end{equation}
with differential $V^{\otimes k} \to V^{\otimes k+1}$ given by the alternate sum of inserting $v$ at the $i$th position, is acyclic.
Heuristically, and as explained in \cite{positselski_weakly_curved_a-infinity}, this is a consequence of the fact that the formalism of $A_\infty$-categories is defined in such a way that operations with smaller number of inputs dominate, but the structure associated to the element $\mu_0$ is too trivial for any information to survive if we allow it to dominate.

A filtration resolves this issue as follows: by introducing a norm on the underlying graded vector spaces, one may require that each operation $\mu_k$ be expressed as a sum of contribution of decreasing norm.
By requiring that all contributions to $\mu_0$ have norm strictly smaller than unity, we ensure that the lowest order contributions of the other operations (in particular, of $\mu_1$) dominate.
In this way, we obtain an analogous theory to the one for ordinary algebras, and are able to formulate all structures in terms of the homotopy of theory of \emph{filtered chain complexes}, and we must take these norms into account when introducing, for example, the category of modules.
\null\hfill$\triangle$
\end{remark}

We now proceed to give the definition of the curved operad in the category of chain complexes:
for each strictly positive integer $r$, define
\begin{equation}
\cC_0(r)
\coloneqq
C_*^\cell(K_r),
\end{equation}
and set $\cC_0(0) \coloneqq 0$ and $\cC_0(1) \coloneqq 0$.
We recall that the underlying vector space of this chain complex is a direct sum indexed by the topological type $T$ of stable discs with $r+1$ marked points, 
\begin{equation}
C_*^\cell(K_r)
\simeq
\bigoplus_{T} \ro_{T}
\end{equation}
with the graded $\ro_{T}$ given by the orientation line of the associated product of Stasheff associahedra indexed by the components of this topological type, which we label $(v_1, \ldots, v_{d_T})$:
\begin{equation}
\ro_{T}
\coloneqq
\ro_{K_{v_1}} \otimes \cdots \otimes  \ro_{K_{v_{d_T}}}.
\end{equation}
Moreover, the differential on this complex can be naturally written in terms of choosing an edge to collapse.

We extend this construction to that of a chain complex $\cC_\lambda(r)$, whose underlying graded vector space is an (infinite) direct sum indexed by the topological type of pre-stable discs $\Sigma$, with $r+1$ boundary marked points, which are labelled by non-negative real numbers, so that the following condition holds:
\begin{center}
\it The label of any unstable component is strictly positive.
\end{center}
We associate to each such (labelled) topological type $T$ the graded vector space
\begin{equation}
\cC_T(r)
\coloneqq
\bigotimes_{v} \ro_{K_{v}},
\end{equation}
where the tensor product, as before, is indexed by the components $v$ of the underlying topological type, and the Stasheff associahedron $K_{v}$ is the one associated to having number of inputs equal to valence of $v$.
Here, we have formally set
\begin{equation}
\ro_{K_0}
\coloneqq
\ro^{-1}_{\Aut(D^2, 1)},
\qquad
\ro_{K_1}
\coloneqq
\ro^{-1}_{\Aut(D^2, \pm 1)},
\end{equation}
as would be expected from Remark \ref{rem:stack-operad}.
The key point is that the direct sum
\begin{equation}
\cC_{\lambda}(r)
\coloneqq
\bigoplus_{T}  \cC_T(r)
\end{equation}
can naturally be equipped with a differential which can be expressed in terms of collapsing an edge.

Finally, we take the direct sum of all these chain complexes, and define
\begin{equation}
\cC(r)
\coloneqq
\bigoplus_{\lambda \in [0,\infty)} \cC_{\lambda}(r).
\end{equation}
Displaying each topological type as a tree, the concatenation of trees defines the structure map
\begin{equation}
  \circ\colon \cC_{\lambda}(r) \otimes \cC_{\lambda_1}(s_1) \otimes\cdots\otimes\cC_{\lambda_r}(s_r) \to \cC_{\lambda + \lambda_1+\cdots+\lambda_r}(s_1 + \cdots + s_r),
\end{equation}
and the direct sum of these operations over all weights yields the desired operation.

This leads to the following operadic notion:

\begin{definition}
A \emph{gapped, filtered, curved $A_\infty$-algebra} is an algebra in over $\cC$ with the property that there is a discrete subset $\Gamma$ of $[0,\infty)$ so that the action of $ \cC_{\lambda}(r) $ vanishes unless $\lambda$ lies in $\Gamma$.
\null\hfill$\triangle$
\end{definition}

To recover Fukaya, Oh, Ohta, and Ono's notion from this construction, one tensors the algebra (over the ground ring), with the Novikov ring, denotes by $\mu^r_\lambda$ the operation associated to the unique generator of $ \cC_{\lambda}(r) $ associated to a tree with no internal edges (after trivialising the corresponding line), and defines the operation
\begin{equation}
  \mu^k = \sum T^{\lambda} \mu^k_\lambda .
\end{equation}

The main result of Fukaya, Oh, Ohta, and Ono \cite{fooo_1} is that a closed embedded Lagrangian $L$ in a closed symplectic manifold determines a curved filtered $A_\infty$-algebra, denoted $CF^*(L,L)$ in the above sense.
Before we indicate how one can extract an ordinary $A_\infty$-category from this data, we discuss a special situation (related to Remark \ref{rmk:geometric_settings}) in which the theory simplifies:  when the Maslov class and the symplectic class in $H^2(M,L)$  are positively proportional
\begin{equation}
  c_1(M,L) = m [\omega]  
\end{equation}
with $m$ strictly larger than $1$, then one can construct $CF^*(L,L)$ so that the curvature element for each energy (which in our formulation is the element of $\cC_{\lambda}(0)$ associated to the unique tree with no interior edge), is a multiple $\mu_0^L \eqqcolon m_0(L)\cdot 1_L$ of the identity.
In this case, one can essentially ignore the curvature term, and obtain an $A_\infty$-algebra in the usual sense.

In general, the procedure to obtain an $A_\infty$-category from such a curved algebra is much more complicated.
Writing $A$ for the underlying module over the Novikov ring, the first step is to consider elements $b \in A$ which satisfy the Maurer-Cartan equation
\begin{equation}
  \mathfrak{P} \cdot \id_{A}  = \mu^0 + \mu^1(b) + \mu^2(b,b) + \mu^3(b,b,b) + \cdots,
\end{equation}
for some scalar $\mathfrak{P}$ (in the Novikov ring), which is called the \emph{potential value}.
Such solutions are called \emph{weak bounding cochains}, and the algebra is called \emph{weakly unobstructed} if a solution exists (the adjective \emph{weak} is dropped if $\mathfrak{P}$ vanishes).
\begin{lemma}
The collection of weak bounding cochains with a given potential value are objects of an $A_\infty$-category in which the morphisms from $b$ to $b'$ are given by a chain complex with underlying module $A$ and differential
\begin{equation}
x
\mapsto
\mu^2(b,x) + \mu^2(x,b') + \mu^3(b,b,x) + \mu^3(b,x,b') + \mu^3(x,b',b') + \cdots.
\end{equation}
\end{lemma}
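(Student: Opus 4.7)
The overall strategy is the standard bounding-cochain construction of Fukaya--Oh--Ohta--Ono, interpreted through the operad $\cC$ introduced above. Given weak bounding cochains $b_0, \ldots, b_k$ of common potential $\mathfrak{P}$, I define
\[
\mu^k_{b_0, \ldots, b_k}(x_1, \ldots, x_k) \coloneqq \sum_{n_0, \ldots, n_k \geq 0} \mu^{k + n_0 + \cdots + n_k}\bigl(b_0^{\otimes n_0}, x_1, b_1^{\otimes n_1}, \ldots, x_k, b_k^{\otimes n_k}\bigr).
\]
For $k = 1$ this reproduces the differential displayed in the lemma's statement (with the leading $\mu^1(x)$ absorbed into the ellipsis). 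Convergence of the infinite sum is automatic: each $b_i$ has strictly positive valuation, so the gapped-filtered hypothesis ensures that only finitely many summands contribute modulo any fixed power of the Novikov parameter $T$.

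Next, I substitute an input string of the form $(b_0^{\otimes n_0}, x_1, \ldots, x_k, b_k^{\otimes n_k})$ into the curved $A_\infty$-equation \eqref{eq:A-infinity_equations} for $\mu^{\bullet}$ and reorganize the resulting double sum by grouping terms according to which $x_i$'s lie inside the inner operation. This produces the identity
\begin{align*}
&\sum_{\substack{1 \leq a \leq k \\ 1 \leq i \leq k-a+1}}
\pm\, \mu^{k-a+1}_{b_0, \ldots, b_{i-1}, b_{i+a-1}, \ldots, b_k}\bigl(x_1, \ldots, \mu^a_{b_{i-1}, \ldots, b_{i+a-1}}(x_i, \ldots, x_{i+a-1}), \ldots, x_k\bigr) \\
&\qquad + \sum_{j=0}^{k} \pm\, \mu^{k+1}_{b_0, \ldots, b_j, b_j, \ldots, b_k}\bigl(x_1, \ldots, x_j, \mathrm{MC}(b_j), x_{j+1}, \ldots, x_k\bigr) = 0,
\end{align*}
where $\mathrm{MC}(b_j) \coloneqq \sum_n \mu^n(b_j, \ldots, b_j)$ is the Maurer--Cartan polynomial evaluated at $b_j$. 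The second sum collects precisely those terms in the curved equation for which a complete subtree consisting only of $b_j$'s is consumed by the inner operation.

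Finally, the weak Maurer--Cartan hypothesis gives $\mathrm{MC}(b_j) = \mathfrak{P} \cdot 1_L$ for each $j$. For $k \geq 2$, strict unitality of $1_L$ in the curved $A_\infty$-algebra forces every summand of the second line to vanish individually, since $\mu^{k+1}(\ldots, 1_L, \ldots) = 0$ when $k+1 > 2$. For $k = 1$ the two surviving contributions are $\mathfrak{P} \cdot \mu^2(1_L, x)$ from $j=0$ and $\mathfrak{P} \cdot \mu^2(x, 1_L)$ from $j=1$; the common-potential hypothesis makes them cancel with opposite signs, yielding $(\mu^1_{b, b'})^2 = 0$. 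What remains is precisely the $k$-th uncurved $A_\infty$-equation for the deformed operations. The main obstacle throughout is the combinatorial and sign bookkeeping in the middle step; using the tree description of the operad $\cC$ this becomes transparent, as each term corresponds to collapsing exactly one internal edge of a planar tree whose leaves are labelled by $x_i$'s or $b_j$'s, and the signs reduce to the Koszul rule on orientation lines of associahedra, exactly as in the uncurved case.
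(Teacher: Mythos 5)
The paper states this lemma without proof (it is a standard result going back to Fukaya--Oh--Ohta--Ono, and the survey simply records it), so there is no in-paper argument to compare against. Your proof is the standard one and is essentially correct: deform the operations by inserting the bounding cochains between inputs, use the gapped/filtered hypothesis for convergence, plug into the curved $A_\infty$-equations, and observe that the only surviving curvature contributions are the Maurer--Cartan values $\mathfrak{P}\cdot 1$, which vanish for $k\geq 2$ by unitality and cancel in pairs for $k=1$ by the common-potential hypothesis --- exactly the mechanism of \eqref{eq:first_curved_equation}. The one point worth making explicit is that your argument leans on \emph{strict} unitality of $1_L$ both for the vanishing of $\mu^{k+1}(\ldots,1_L,\ldots)$ with $k+1>2$ and for $1_L$ remaining a unit after deformation; in the Fukaya--Oh--Ohta--Ono setting the natural units are only homotopy units, so one should either pass to a strictly unital model or note, as the paper implicitly does by writing $\mathfrak{P}\cdot\id_A$ in the weak Maurer--Cartan equation, that this normalization is built into the framework.
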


%%% Local Variables:
%%% mode: latex
%%% TeX-master: "functoriality_in_categorical_symplectic_geometry"
%%% End:

\section{Quilted Floer theory and functors from Lagrangian correspondences}
\label{s:quilted_floer}

\begin{center}
\fbox{\parbox{0.9\columnwidth}{
{\bf Default geometric hypotheses:}
In \S\S\ref{sec:corr-sympl-topol}--\ref{ss:reduction}, we assume our symplectic manifolds and Lagrangians are closed and monotone, and that the Lagrangians have minimal Maslov index 3.
By default, composition $L_1 \circ L_{12}$ of Lagrangians with Lagrangian correspondences is assumed to be cut out transversely and to result in an embedded Lagrangian.
In \S\ref{ss:fukaya} we relax these hypotheses and consider general closed symplectic manifolds.
}}
\end{center}

\smallskip

As we alluded to in the introduction, much of the original development of the theory of Fukaya categories arose in an attempt to formulate a precise statement of Kontsevich's Homological Mirror Symmetry Conjecture \cite{kontsevich_hms}.
In its initial formulation, this conjecture asserted that the mirror phenomena discovered by string theorists starting with \cite{candelas_delaossa_green_parkes}, which relate the enumerature geometry of a complex Calabi--Yau $3$-fold $X$ with period integrals of a conjecturally-existing mirror Calabi--Yau $3$-fold $Y$, are consequences of an equivalence between the (then conjecturally-existing) Fukaya category $\Fuk X$ on of one side of the mirror correspondence, and the \emph{bounded derived category of coherent sheaves} $D^b(Y)$  of the other.

\subsection{A brief overview of derived categories of coherent sheaves}
\label{sec:brief-overview-db}

The classical construction, following Verdier \cite{verdier_asterisque}, describes $D^b(Y)$ as a triangulated category.
This version of $D^b(Y)$ is a category whose objects are complexes of coherent sheaves, and whose morphisms $\Hom_*(-,-)$ are graded complex vector spaces obtained by taking the cohomology of maps between resolutions of these complexes.
In the smooth setting, every complex is equivalent to a complex of holomorphic vector bundles, and one can compute morphisms as the cohomology groups of the associated maps of complexes of smooth vector bundles, equipped with the differential induced by the holomorphic structure on the source and the target.

In addition to the data of linear composition of morphisms, a triangulated category is equipped with the additional datum of a choice of \emph{exact triangles}
\begin{align}
\xymatrix{
F \ar[rr] && \ar[ld] G
\\
& \ar[lu]^{[1]} H, &
}
\end{align}
each of which induces a long exact sequence on morphisms
\begin{equation} \label{eq:LES-exact-triangle}
  \cdots  \to \Hom_*(-,F) \to \Hom_*(-,G) \to \Hom_*(-,H) \to  \Hom_{*+1}(-,F) \to \cdots   
\end{equation}
for any choice of input.
The set of exact triangles satisfies various axioms which we will not discuss.

As the use of derived categories grew in algebraic geometry, the formulation of $D^b(Y)$ as a triangulated category came to be seen as an increasingly cumbersome technicality.
The issue lies in \eqref{eq:LES-exact-triangle}, which shows that $H$ is specified up to isomorphism by the arrow $F \to G$, but does not specify a particular choice of $H$.
This causes particular difficulties when proving gluing results for derived categories.

The solution for this problem is usually to \emph{enhance} the structure of the derived category to that of a \emph{differential graded category}.
In this context, being an exact triangle can be formulated as a property (of a triple of morphisms and null-homotopies for their compositions), which enables gluing.
(This was first proposed by Bondal--Kapranov in \cite{bondal_kapranov_enhanced}.
See also \cite{drinfeld_dg} for a more modern example of this approach.)
Since a differential graded category is a particular example of an $A_\infty$-category, the formulation of Kontsevich's mirror conjecture is usually made using these enhanced categories.

\subsection{Correspondences in algebraic geometry}
\label{sec:corr-sympl-algebr}

A notable deficiency of the categorical formulation of mirror symmetry is that, while a map of schemes induces pullback and pushforward functors on their derived categories, there is no analogous functoriality of Fukaya categories with respect to maps of symplectic manifolds (e.g.\ maps $f : M \to N$ with $f^* \omega_N = \omega_M$).
This can be seen for examples as straightforward as an open inclusion $D^2 \hra \bCP^1$: the equator $\bRP^1$ is a nontrivial Lagrangian in the target, but not in the source, which contradicts an inclusion of unital $A_\infty$-categories $\Fuk D^2 \hra \Fuk S^2$.
\begin{remark}
 The only classes of symplectic maps which the authors know to induce functors on Fukaya categories are (i) unbranched coverings \cite{seidel_HMS_genus-two_curve}, and (ii) codimension $0$ inclusions with contact boundary, whose complement is exact \cite{abouzaid_seidel_open_string}.
 Both of these classes of maps include the class of symplectomorphisms, and fall within the framework which we shall presently describe (c.f.\ \cite[Theorem 1.4]{gao2018functors}).
\null\hfill$\triangle$
\end{remark}

Somewhat surprisingly, one way to arrive at a good notion of functoriality for Fukaya categories is to investigate the coherent sheaf side more carefully: the symplectic automorphism group of a symplectic $2$-torus includes as a subgroup the group $SL_2(\bZ)$ of modular transformations, which therefore acts on its Fukaya category.
One of the first test cases \cite{polishchuk_zaslow} of mirror symmetry identifies the mirror as an elliptic curve --- but there is no elliptic curve with such a large automorphism group, so derived categories must admit many more automorphisms than those arising from automorphisms of the underlying schemes.

In this example, the missing automorphisms can be recovered from the theory of \emph{Fourier--Mukai transforms} which goes back at least to \cite{mukai_duality}, which associates to a coherent sheaf $\sP$ on $X \times Y$  the functor
\begin{align}
\Phi_\sP
\colon
D^b(X) \to D^b(Y),
\qquad
\sE^\bullet
\to
p_*(q^*\sE^\bullet \otimes \sP)
\end{align}
obtained by pulling a sheaf on $X$ back to the product $X \times Y$ along the projection to the first factor, then tensoring the result with $\sP$, and finally pushing forward to $Y$ along the second projection.
The fundamental theorem of Fourier--Mukai theory is:

\begin{theorem}[Theorem 2.2, \cite{orlov_K3}]
Fix smooth projective varieties $X$ and $Y$, and suppose that $F\colon D^b(X) \to D^b(Y)$ is a fully faithful exact functor.
If $F$ has both a left and a right adjoint, then there exists an object $\sP \in D^b(X\times Y)$ such that $F$ is naturally isomorphic to $\Phi_\sP$.
Moreover, $\sP$ is uniquely determined, up to isomorphism.
\null\hfill$\square$
\end{theorem}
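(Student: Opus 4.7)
The plan is to address uniqueness and existence separately, with existence being the main hurdle.

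For uniqueness, suppose $\sP, \sP' \in D^b(X \times Y)$ both represent $F$. For each closed point $x \in X$, evaluating the natural isomorphism $\Phi_\sP \simeq \Phi_{\sP'}$ on the skyscraper sheaf $\cO_x$ yields an isomorphism of derived restrictions $L\iota_x^* \sP \simeq L\iota_x^* \sP'$, where $\iota_x \colon \{x\} \times Y \hookrightarrow X \times Y$. A standard base-change and semicontinuity argument on the smooth projective variety $X \times Y$, together with vanishing of negative-degree $\mathrm{Ext}$ groups in the fiber direction, then globalizes this pointwise family of isomorphisms to an isomorphism $\sP \simeq \sP'$.

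For existence, the strategy I would pursue is to construct $\sP$ by applying $F$ to a resolution of the diagonal on $X \times X$. Since $X$ is smooth projective, a very ample line bundle $\cO_X(1)$ produces an embedding $X \hookrightarrow \bP^N$, and pulling back (a suitable twist of) Beilinson's resolution of the diagonal of $\bP^N$ yields a bounded complex on $X \times X$ whose terms are finite direct sums of external products $\cE_i \boxtimes \cF_i$ of coherent sheaves, and whose convolution is quasi-isomorphic to $\cO_{\Delta_X}$. I would then define $\sP$ by replacing each term $\cE_i \boxtimes \cF_i$ by $\cE_i \boxtimes F(\cF_i) \in D^b(X \times Y)$, transporting the connecting maps via $F$ (using that $F$ is exact and commutes with direct sums, both consequences of the existence of the adjoints), and taking the resulting convolution. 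To verify $\Phi_\sP \simeq F$, I would evaluate both sides on the line bundles $\cE_i$ appearing in the resolution, observe agreement by construction, and extend to all of $D^b(X)$ using that both $\Phi_\sP$ and $F$ are exact and preserve cones.

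The main obstacle is the middle step of the existence argument. At the level of triangulated categories, morphisms --- and in particular the connecting maps of the resolution --- are determined only up to isomorphism, and the operation ``apply $F$ in the second factor'' is not intrinsically defined: there is no workable functor out of a triangulated tensor product $D^b(X) \otimes D^b(Y) \to D^b(X \times Y)$ that would make the construction functorial. The standard fix, as flagged in \S\ref{sec:brief-overview-db}, is to lift the whole situation to the dg-enhancements of the relevant derived categories; Lunts--Orlov uniqueness of the enhancement (which uses smoothness and projectivity crucially, and in which both adjoints of $F$ ensure the necessary finiteness and continuity) then guarantees that the construction is well-defined up to quasi-isomorphism, and the uniqueness paragraph shows that any residual choices wash out.
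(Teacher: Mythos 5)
First, note that the paper does not prove this statement at all: it is quoted verbatim as Orlov's representability theorem (Theorem 2.2 of \cite{orlov_K3}), with the proof deferred to the literature, so the comparison here is with Orlov's argument (see also Chapter 5 of \cite{huybrechts_FM}) rather than with anything in the survey. Your broad outline --- resolve the diagonal, apply $F$ in one factor, handle the non-functoriality of the resulting convolution, then compare with $F$ on an ample sequence --- is indeed the right skeleton, but the two places where you wave your hands are precisely where all of the content of Orlov's proof lives, and as written both contain genuine gaps.

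For uniqueness: knowing $L\iota_x^*\sP \simeq L\iota_x^*\sP'$ for every closed point $x$ cannot yield $\sP \simeq \sP'$ by any semicontinuity argument, because the fiberwise isomorphisms are non-canonical and do not glue; concretely, $\sP$ and $\sP \otimes q^*L$ (with $q\colon X\times Y \to X$ the projection and $L$ a nontrivial line bundle on $X$) have isomorphic restrictions to every $\{x\}\times Y$ but are non-isomorphic kernels, so your method proves too much. The correct argument must use the natural isomorphism of functors on more than skyscrapers to first produce a \emph{morphism} $\sP \to \sP'$ which is fiberwise an isomorphism; only then does the ``cone restricts to zero on every fiber'' argument apply. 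For existence: transporting the differentials of the Beilinson-type resolution through $F$ gives only a sequence of objects and maps in $D^b(X\times Y)$ whose totalization is not defined by triangulated-category axioms; Orlov makes this work through uniqueness-of-convolution lemmas whose hypotheses are negative-$\mathrm{Ext}$ vanishing statements obtained by transporting Hom-computations from $X$ to $Y$ --- and this is exactly where \emph{full faithfulness} enters, a hypothesis your proposal never uses (it cannot be dispensed with: Rizzardo--Van den Bergh produced exact functors between derived categories of smooth projective varieties that are not of Fourier--Mukai type). Likewise, agreement of $\Phi_\sP$ and $F$ on the objects of the resolution does not extend ``by exactness,'' since cones are not functorial; Orlov's extension step again uses full faithfulness and the ample-sequence formalism. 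Finally, appealing to uniqueness of dg-enhancements does not repair this: uniqueness of enhancements of the \emph{categories} does not by itself provide a dg (quasi-functor) lift of the given triangulated functor $F$, and the statement that such a lift exists for fully faithful functors with adjoints is essentially equivalent to the theorem you are trying to prove, so the proposed shortcut is circular in spirit even though the modern enhancement-based route (when carried out in full) is a legitimate alternative to Orlov's original argument.
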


It would be difficult to overstate the importance of the above result, which forms the basis of all substantial results in the subject.
In fact, the result is quite a bit stronger, and consists of a natural equivalence between the derived category of coherent sheaves on $X \times Y$ and the category of functors from $D^b(X)$ to $D^b(Y)$.
(See \cite[Theorem 1.1]{lunts_schnuerrer} for a variant of this statement.)
We give two examples:

\begin{theorem}[Proposition 10.10, \cite{huybrechts_FM}; see also Theorem 3.3, \cite{orlov_K3}]
Let $X$ and $Y$ be K3 surfaces.
There exists a linear, exact equivalence between their derived categories $D^b(X)$ and $D^b(Y)$ if and only if there exists a Hodge isometry $\wt H(X; \bZ) \sr{\simeq}{\to} \wt H(Y; \bZ)$.
\null\hfill$\square$
\end{theorem}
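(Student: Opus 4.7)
The plan is to prove both implications via the theory of Fourier--Mukai kernels: for the forward direction I would extract a cohomological Fourier--Mukai transform from any given derived equivalence and verify it is a Hodge isometry, while for the reverse direction I would realize a given Hodge isometry by a Fourier--Mukai kernel supplied by a moduli space of stable sheaves on $X$, following Mukai.

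\medskip

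\textbf{Forward direction.}
Given an equivalence $F\colon D^b(X) \sr{\simeq}{\lra} D^b(Y)$, I would first note that any such equivalence between smooth projective varieties automatically admits both adjoints, so the preceding representability theorem supplies a kernel $\sP \in D^b(X \times Y)$ with $F \simeq \Phi_\sP$. Setting $v(\sP) \coloneqq \on{ch}(\sP)\cdot\sqrt{\on{td}(X\times Y)}$, the kernel induces a graded transform
\begin{equation}
\Phi_\sP^H
\colon
\wt H(X;\bZ) \lra \wt H(Y;\bZ),
\quad
\alpha \mapsto p_{Y,*}\bigl(v(\sP)\cdot p_X^*\alpha\bigr),
\end{equation}
on the Mukai lattice $\wt H = H^0 \oplus H^2 \oplus H^4$, equipped with the Mukai pairing and the natural weight-two Hodge structure (in which $H^0$ and $H^4$ sit in the $(1,1)$ part). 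Integrality of $\Phi_\sP^H$ reduces to Mukai's observation that $v(\sP)$ is integral on a product of K3 surfaces; compatibility with the Hodge structures follows from the holomorphicity of $\sP$; and the isometry property I would obtain by composing with the kernel for a quasi-inverse of $F$, using that convolution of kernels corresponds to composition of functors and that the Mukai pairing computes Euler characteristics, so that the identity functor on $D^b(X)$ induces the identity on $\wt H(X;\bZ)$.

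\medskip

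\textbf{Reverse direction.}
For the harder implication, given a Hodge isometry $\varphi\colon \wt H(X;\bZ) \to \wt H(Y;\bZ)$, my strategy is Mukai's moduli construction. After pre-composing $\varphi$ with auto-equivalences of $\wt H(Y;\bZ)$ coming from shifts, the spherical twist along $\cO_Y$, and tensoring with line bundles, I would arrange that the class $v \coloneqq \varphi^{-1}(0,0,1) \in \wt H(X;\bZ)$ is primitive, isotropic, and of positive rank. For a generic polarization $H$ on $X$, Mukai's theorem then produces a compact, smooth, fine moduli space $M_H(v)$ of $H$-stable sheaves on $X$ with Mukai vector $v$, and $M_H(v)$ is itself a K3 surface. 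A universal sheaf $\sE$ on $X \times M_H(v)$ defines a Fourier--Mukai kernel, and I would verify that $\Phi_\sE\colon D^b(X) \to D^b(M_H(v))$ is an equivalence by checking Bridgeland's strongly-simple criterion on skyscraper sheaves of $M_H(v)$, which translates to stability plus pairwise orthogonality of the corresponding sheaves on $X$. The remaining Hodge isometry $\wt H(M_H(v);\bZ) \to \wt H(Y;\bZ)$ obtained by comparing $\Phi_\sE^H$ with $\varphi$ sends the point class to the point class, and by the global Torelli theorem for K3 surfaces is therefore induced by an isomorphism of K3s, whose pullback composes with $\Phi_\sE$ to yield the desired equivalence $D^b(X) \simeq D^b(Y)$.

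\medskip

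\textbf{Main obstacle.}
The principal difficulty lies in the moduli-theoretic input for the reverse direction: establishing that $M_H(v)$ is a \emph{fine}, compact, smooth, two-dimensional K3 surface. Mukai's deformation-theoretic arguments deliver smoothness and the holomorphic symplectic structure, but compactness requires that $H$-semistability coincides with $H$-stability (whence the primitivity of $v$ and the genericity of $H$), and fineness requires further numerical hypotheses guaranteeing the existence of a universal family. A secondary, more bookkeeping obstacle is the preliminary reduction that allowed me to assume $\varphi^{-1}(0,0,1)$ has positive rank: this requires a sufficiently rich supply of known auto-equivalences of $D^b(Y)$ to act transitively on the relevant orbit of primitive isotropic Mukai vectors, which is itself a nontrivial computation in the orthogonal group of the Mukai lattice with its Hodge structure.
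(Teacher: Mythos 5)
This theorem is stated in the paper purely as cited background (Proposition 10.10 of \cite{huybrechts_FM}, Theorem 3.3 of \cite{orlov_K3}); the survey gives no proof of its own. Your outline is exactly the standard Mukai--Orlov argument found in those references --- the cohomological Fourier--Mukai transform plus Orlov representability for the forward direction, and Mukai's moduli space $M_H(v)$ together with the Torelli theorem for the converse --- and it is essentially correct as a plan. One reassurance on the obstacle you flag: fineness of $M_H(v)$ comes for free here, since the Mukai lattice is unimodular and $v=\varphi^{-1}(0,0,1)$ therefore admits a class pairing to $1$ with it (e.g.\ the image of $(1,0,0)$ up to sign), which is Mukai's criterion for the existence of a universal family; also note that the preliminary reduction to positive rank must be done by composing $\varphi$ with cohomological actions of autoequivalences on the side that actually moves the relevant vector (shifts, twists by line bundles, and the spherical twist along the structure sheaf), as carried out in \cite[Ch.\ 10]{huybrechts_FM}.
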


\begin{theorem}[Theorem 4.3, \cite{orlov_blowup}]
Fix a smooth projective variety $X$ and a codimension-$c$ smooth subvariety $Y$ with $c \geq 2$, and denote by $\wt X$ the blowup of $X$ along $Y$.
Then $D^b\bigl(\wt X\bigr)$ admits a semiorthogonal decomposition of the following form:
\begin{align}
D^b\bigl(\wt X\bigr)
=
\Bigl\langle
D^b(X),
\underbrace{D^b(Y), \ldots, D^b(Y)}_{c-1}
\Bigr\rangle.
\end{align}
\null\hfill$\square$
\end{theorem}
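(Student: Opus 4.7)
The plan is to follow Orlov's original strategy: construct explicit fully faithful embeddings realizing each summand on the right-hand side, verify the required $\Hom$-vanishings between their images, and then prove that the union of these images generates $D^b(\wt X)$. Let $\pi\colon \wt X \to X$ denote the blowup morphism, $j\colon E \hookrightarrow \wt X$ the inclusion of the exceptional divisor, and $p\colon E \to Y$ the projection. The crucial geometric input is that $E \cong \bP(N_{Y/X})$ is a $\bP^{c-1}$-bundle over $Y$, and (up to convention) its normal bundle inside $\wt X$ is the tautological line bundle $\cO_E(-1)$.

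Define the functors
\[
\Phi_0 \coloneqq L\pi^*\colon D^b(X) \to D^b(\wt X),
\qquad
\Phi_k(F) \coloneqq Rj_*\bigl(Lp^*F \otimes \cO_E(-k)\bigr)
\]
for $k = 1, \ldots, c-1$, so that each $\Phi_k$ with $k\geq 1$ lands in the subcategory of complexes supported on $E$. First I would establish that each $\Phi_k$ is fully faithful. For $\Phi_0$ this follows from $R\pi_*\cO_{\wt X}\cong\cO_X$ (a rational resolution in this smooth setting) combined with the projection formula. For $\Phi_k$ with $k\geq 1$, the computation reduces by adjunction to evaluating
\[
Rp_*\bigl(Lj^*Rj_*(Lp^*F\otimes\cO_E(-k)) \otimes \cO_E(k)\bigr),
\]
which one handles by replacing $Lj^*Rj_*$ with its Koszul-type resolution coming from $N_{E/\wt X}\cong\cO_E(-1)$ and then invoking the projective-bundle formula $Rp_*\cO_E(m)=0$ for $-(c-1)\leq m\leq -1$, together with $Rp_*\cO_E = \cO_Y$.

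The same toolkit delivers the semiorthogonality among the various $\Phi_k$: each required $\Hom$-vanishing between $\Phi_i(F)$ and $\Phi_j(G)$ reduces, after adjunction and an application of $Lj^*L\pi^*\cong Lp^*$, to the vanishing of $Rp_*\cO_E(m)$ for $m$ in the appropriate range. The computations here are entirely parallel to Beilinson's decomposition of $D^b(\bP^{c-1})$, pulled up to a family parametrised by $Y$.

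The main obstacle is the generation step: proving that the smallest full triangulated subcategory $\cT\subseteq D^b(\wt X)$ containing the images of all the $\Phi_k$ is already everything. My plan is to decompose an arbitrary $\cF\in D^b(\wt X)$ using the counit triangle $L\pi^*R\pi_*\cF\to\cF\to\cone$, whose cone has cohomology supported on $E$, and then to expand this cone using Beilinson's decomposition $D^b(E)=\langle Lp^*D^b(Y)\otimes\cO_E(k)\rangle_{k=0}^{c-1}$ applied slice-by-slice and pushed forward via $Rj_*$. The subtle accounting point --- and the reason only $c-1$ copies of $D^b(Y)$ appear --- is that the untwisted $k=0$ piece on $E$ is redundant: its $Rj_*$-pushforward is already generated by $L\pi^*$ applied to suitable objects on $X$, via a short exact sequence relating $Rj_*\cO_E$, $\cO_{\wt X}$, and $L\pi^*\cO_Y$. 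Verifying this redundancy and checking that the resulting filtration realises a semiorthogonal decomposition (using Bondal--Kapranov's criterion) completes the argument.
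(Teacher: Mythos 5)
The paper offers no proof of this statement: it is quoted as background (Orlov's blowup formula, Theorem 4.3 of \cite{orlov_blowup}) in the survey of Fourier--Mukai theory, so the only comparison available is with Orlov's original argument --- which is exactly what you have reconstructed. Your outline (full faithfulness of $L\pi^*$ via $R\pi_*\cO_{\wt X}\cong\cO_X$, full faithfulness and semiorthogonality of the $\Phi_k$ via the divisorial triangle for $Lj^*Rj_*$ and the vanishing of $Rp_*\cO_E(m)$ for $-(c-1)\le m\le -1$, and generation via the counit triangle together with the relative Beilinson decomposition of $D^b(E)$ over $Y$) is the standard and correct route. Two points deserve care in a write-up. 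First, the order of the semiorthogonal pieces is forced by your choice of twists $\cO_E(-k)$: with these functors one gets $\bigl\langle \Phi_{c-1}D^b(Y),\ldots,\Phi_1 D^b(Y), L\pi^*D^b(X)\bigr\rangle$, so the loosely stated ordering in the survey (with $D^b(X)$ written first) corresponds to a different twisting convention and should not be taken literally. Second, the redundancy of the untwisted slice is not witnessed by a short exact sequence relating $Rj_*\cO_E$, $\cO_{\wt X}$, and $L\pi^*\cO_Y$ --- the natural sequence $0\to\cO_{\wt X}(-E)\to\cO_{\wt X}\to j_*\cO_E\to 0$ involves $\cO_{\wt X}(-E)$, which is not a pullback from $X$. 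The correct mechanism is the computation of $L\pi^*i_*G$ (with $i\colon Y\hra X$), whose natural filtration has associated graded pieces $Rj_*\bigl(Lp^*G\otimes\Omega^q_{E/Y}(q)\bigr)[q]$ for $0\le q\le c-1$; the $q=0$ piece is $Rj_*Lp^*G$, while the $q\ge 1$ pieces lie in the subcategory generated by $\Phi_1,\ldots,\Phi_{c-1}$ by the relative Euler/Koszul resolutions, which yields the redundancy you need. With that substitution your plan is a faithful rendering of Orlov's proof.
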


We conclude this subsection with one more basic property of Fourier--Mukai transforms:

\begin{proposition}[Proposition 5.10, \cite{huybrechts_FM}; see also its original statement as Proposition 1.3 in \cite{mukai_duality}]
\label{prop:FM_commutativity}
Fix Fourier--Mukai kernels $\sP \in D^b(X\times Y)$ and $\sQ \in D^b(Y\times Z)$, and define $\sR \in D^b(X\times Z)$ by
\begin{align}
\sR
\coloneqq
{\pi_{XZ}}_*\bigl(\pi_{XY}^*\sP \otimes \pi_{YZ}^*\sQ\bigr).
\end{align}
Then the composition $\Phi_\sQ \circ \Phi_\sP$ is naturally isomorphic to $\Phi_\sR$.
\null\hfill$\square$
\end{proposition}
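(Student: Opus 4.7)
The plan is to express both $\Phi_\sQ \circ \Phi_\sP$ and $\Phi_\sR$, applied to an object $\sE^\bullet \in D^b(X)$, as the \emph{same} pushforward-of-a-triple-tensor-product on the triple product $X \times Y \times Z$. Throughout I work in the derived category, interpreting $\otimes$, pullbacks, and pushforwards as their derived versions. Let me abbreviate the face projections from $X \times Y \times Z$ to the three double products as $\pi_{XY}, \pi_{YZ}, \pi_{XZ}$, and the vertex projections to the three factors as $\pi_X, \pi_Y, \pi_Z$; the projections from the double products to their factors will be denoted with superscripts, e.g.\ $\pi^{XY}_Y : X \times Y \to Y$.

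First I would unfold the composition on the left:
\begin{align*}
\Phi_\sQ(\Phi_\sP(\sE^\bullet))
=
(\pi^{YZ}_Z)_*\Bigl((\pi^{YZ}_Y)^* (\pi^{XY}_Y)_*\bigl((\pi^{XY}_X)^*\sE^\bullet \otimes \sP\bigr) \otimes \sQ\Bigr).
\end{align*}
The square with corners $X \times Y \times Z, X \times Y, Y \times Z, Y$ is Cartesian, and the map $\pi^{XY}_Y$ is flat, so flat base change yields $(\pi^{YZ}_Y)^* (\pi^{XY}_Y)_* \simeq (\pi_{YZ})_* (\pi_{XY})^*$. After substituting this in, using $\pi_{XY}^*(\pi^{XY}_X)^* = \pi_X^*$, and applying the projection formula to absorb the outer $\otimes\,\sQ$ into $(\pi_{YZ})_*$, one obtains
\begin{align*}
\Phi_\sQ(\Phi_\sP(\sE^\bullet))
\simeq
\pi_{Z*}\bigl(\pi_X^*\sE^\bullet \otimes \pi_{XY}^*\sP \otimes \pi_{YZ}^*\sQ\bigr),
\end{align*}
where I have also used $(\pi^{YZ}_Z)_* (\pi_{YZ})_* = \pi_{Z*}$.

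Second, I would do the analogous unfolding for the right side: expanding $\Phi_\sR$ and substituting the definition of $\sR$ gives
\begin{align*}
\Phi_\sR(\sE^\bullet)
=
(\pi^{XZ}_Z)_*\Bigl((\pi^{XZ}_X)^*\sE^\bullet \otimes \pi_{XZ*}\bigl(\pi_{XY}^*\sP \otimes \pi_{YZ}^*\sQ\bigr)\Bigr),
\end{align*}
and a single application of the projection formula to $\pi_{XZ}$ together with $(\pi^{XZ}_Z)_* \pi_{XZ*} = \pi_{Z*}$ collapses this to the same expression $\pi_{Z*}(\pi_X^*\sE^\bullet \otimes \pi_{XY}^*\sP \otimes \pi_{YZ}^*\sQ)$. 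Each step above is natural in $\sE^\bullet$, so these isomorphisms assemble into a natural isomorphism $\Phi_\sQ \circ \Phi_\sP \simeq \Phi_\sR$. The only real subtlety is keeping the derived bookkeeping honest: flat base change requires Tor-independence, and the projection formula requires the external factor to be perfect. Both hold automatically in the smooth projective setting --- the face projections are flat, and every object of $D^b$ of a smooth projective variety is perfect --- so the obstruction is merely cosmetic.
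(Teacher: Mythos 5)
Your argument is correct, and it is exactly the standard proof of the result the paper cites without proof (Huybrechts, Prop.\ 5.10; Mukai, Prop.\ 1.3): unfold both sides over $X\times Y\times Z$, apply flat base change for the Cartesian square over $Y$, and use the projection formula twice, with perfectness and flatness automatic in the smooth projective setting. One small wording point: flat base change needs only flatness of the projections (which you have), not Tor-independence --- the latter is the substitute hypothesis when the base-change map is not flat.
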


\noindent
We invite the reader to consult \cite{huybrechts_FM} for a survey of this beautiful subject.

\subsection{Correspondences in symplectic topology}
\label{sec:corr-sympl-topol}

By comparing the Fukaya category with the derived category of coherent sheaves, we might hope for a machine that associates to a Lagrangian correspondence, i.e.\ an object $\Lambda$ of $\Fuk(M^- \times N) \coloneqq \Fuk(M\times N, (-\omega_M)\oplus \omega_N)$, some sort of functor $\Phi_\Lambda\colon \Fuk M \to \Fuk N$.
(See Remark \ref{rmk:why_minus} for an explanation of the minus sign on $M^-$.)
In a remarkable series of five papers published between 2010 and 2018, Wehrheim and Woodward (and in the case of one of these papers, Ma'u) developed just such a construction.
Our main goal in the current section is to explain the two theorems that form the culmination of Wehrheim--Woodward's work on pseudoholomorphic quilts.

The first theorem asserts that one can extend the Fukaya category in such a way that a Lagrangian correspondences from $M_0$ to $M_1$ defines an $A_\infty$-functor from the category associated to $M_0$ to that associated to $M_1$.
(We will explain below why Ma'u--Wehrheim--Woodward needed this extension.)

\begin{theorem}[Theorem 1.1, \cite{mww}]
\label{thm:MWW_functors}
Suppose that $M_0, M_1$ are symplectic manifolds satisfying standard monotonicity hypotheses.
Given an admissible Lagrangian correspondence $L_{01} \subset M_0^- \times M_1$ equipped with a brane structure, there exists an $A_\infty$-functor
\begin{align}
\Phi^\#_{L_{01}}
\colon
\Fuk^\#(M_0)
\to
\Fuk^\#(M_1)
\end{align}
which acts on objects by appending $L_{01}$ to a generalized Lagrangian $\ul L_0 \in \Fuk^\#(M_0)$, and on morphisms by counting quilted disks with two patches and boundary marked points.
\null\hfill$\square$
\end{theorem}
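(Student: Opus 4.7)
The plan is to implement the Operadic Principle of Section 2.3 in a setting where the domain moduli spaces are not associahedra but rather the \emph{multiplihedra} $J_r$, which parametrize configurations encoding operations of an $A_\infty$-functor. First I would recall the definition of $\Fuk^\#(M)$: its objects are generalized Lagrangian correspondences $\ul L = (L_{01}, L_{12}, \ldots, L_{(k-1)k})$, viewed as sequences starting at a point and ending at $M$, and its morphism spaces are the quilted Floer cochain complexes $CF^*(\ul L, \ul L')$ generated by generalized intersection points and with differential counting quilted Floer strips. (The need for this extension, rather than just working in $\Fuk M$, is so that we have a definition of $\Phi^\#_{L_{01}}(\ul L_0)$ available whether or not the geometric composition $L_0 \circ L_{01}$ is embedded.) On objects, $\Phi^\#_{L_{01}}$ is defined by concatenation: $\Phi^\#_{L_{01}}(\ul L_0) \coloneqq (\ul L_0, L_{01})$.

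Next I would define the higher operations $\Phi^{\#,r}_{L_{01}}$ for $r \geq 1$ by counting rigid pseudoholomorphic quilted $(r+1)$-gons with two patches: the domain has an outer disk carrying $r+1$ boundary marked points labelled cyclically by the input and output generalized Lagrangians, and an embedded seam arc, dividing the disk into two patches, which is mapped to $L_{01}$. The seam separates the outputs region (one patch, whose remaining boundary is mapped to the input sequence $\ul L_0$) from the input region (the other patch, whose remaining boundary is mapped to $\ul L_1$). The abstract domain moduli space $\cQ_r$ for such configurations, modulo biholomorphism, is canonically identified with the multiplihedron $J_r$, whose codimension-one boundary strata factor as products $K_s \times J_{r-s+1}$ (grouping $s$ consecutive inputs before passing through the seam) and $J_{s_1} \times \cdots \times J_{s_k} \times K_k$ (passing $k$ independent groupings through the seam and then composing on the output side).

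The curve-counting step proceeds exactly as in Section 2.3: choose coherent Floer and quilted perturbation data, establish transversality for the moduli spaces $\cM(\ul L_0, \ldots, \ul L_r; x_-, x_+)$ of pseudoholomorphic quilted $(r+1)$-gons in a given homotopy class, and prove Gromov-style compactness up to strip-breaking, disk bubbling (canceled by the auxiliary brane data as in the Fukaya-category construction), and seam-degeneration into the two families of boundary strata listed above. Counting the 0-dimensional strata defines $\Phi^{\#,r}_{L_{01}}$ with Novikov weights $T^{\omega(u)}$, and the oriented 1-dimensional moduli spaces give, for each choice of inputs, an oriented cobordism whose boundary count is precisely the $r$-th $A_\infty$-functor equation
\[
\sum \pm \mu_s^{\Fuk^\#(M_1)}\bigl(\Phi^{\#, r_1}_{L_{01}}(\cdots), \ldots, \Phi^{\#, r_s}_{L_{01}}(\cdots)\bigr) = \sum \pm \Phi^{\#, r-a+1}_{L_{01}}\bigl(\cdots, \mu_a^{\Fuk^\#(M_0)}(\cdots), \cdots\bigr),
\]
so the total contribution vanishes.

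The main obstacle is the analytic and combinatorial treatment of the seam: the compactification of quilted $(r+1)$-gons a priori includes the dangerous \emph{figure-eight bubble}, in which energy concentrates at an interior point of the seam. Excluding it (or absorbing it algebraically) is precisely the role of the standard-monotonicity and minimal-Maslov-index hypotheses in the default geometric assumptions of this section, together with the admissibility and brane structure on $L_{01}$: a combined index and energy argument shows that figure-eight bubbles appear only in codimension at least two. Once this is granted, the remaining work -- coherent choice of perturbation data across all $J_r$, compatibility of orientations on the multiplihedra, and signed identification of the boundary of the one-dimensional strata -- is the by-now-standard operadic packaging. Hence $\bigl(\Phi^{\#,r}_{L_{01}}\bigr)_{r \geq 1}$ assembles into the desired $A_\infty$-functor $\Phi^\#_{L_{01}} : \Fuk^\#(M_0) \to \Fuk^\#(M_1)$.
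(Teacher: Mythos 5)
Your overall architecture is the same as the one the paper sketches for Ma'u--Wehrheim--Woodward's construction: work in the extended category $\Fuk^\#$, act on objects by concatenation, realize the higher terms of the functor by counts of quilted polygons whose domain moduli spaces are the multiplihedra $J_r$, and read off the $A_\infty$-functor equations from the two families of codimension-one facets of $J_r$ (the bimodule structure over the associahedra). Your identification of those facets with the two sides of the functor equation is correct, and your first stated reason for passing to $\Fuk^\#$ (no need for $L_0\circ L_{01}$ to be embedded) is one of the two reasons given in \S3.4--3.5.

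The genuine gap is in your treatment of the compactification, where you name the wrong enemy. Figure-eight bubbles are a strip-shrinking phenomenon: they appear when the width of a strip between two seams (or between a seam and a boundary) tends to zero, which is what happens in the proof of the composition theorem (Theorem 1.2 of \cite{mww}) and in the geometric-composition isomorphism \eqref{eq:strip_shrinking}, not in the construction of $\Phi^\#_{L_{01}}$ itself. Moreover, your claim that ``a combined index and energy argument shows that figure-eight bubbles appear only in codimension at least two'' under standard monotonicity is not an argument anyone has, and it conflicts with the known picture: Wehrheim--Woodward exclude figure-eights only under monotonicity \emph{together with} embedded composition, and in general they are a codimension-one phenomenon whose count enters as a bounding-cochain correction (\S3.6, \cite{bottman_wehrheim}). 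The actual difficulty for Theorem 1.1 --- the one the extended category is designed to remove --- is the tangential intersection of the $L_{01}$ seam with the boundary at the output in the naive moduli of quilted disks, together with the fact that in that model the second family of multiplihedron facets is reached by shrinking the annulus between the seam and the boundary (which would force composed Lagrangians and figure-eights into the story). In the $\Fuk^\#$ construction the seams meet every end transversely, i.e.\ parallel to the boundary in strip-like coordinates, all strip widths are bounded below, and the only codimension-one degenerations are quilted Floer breaking at (possibly newly formed) quilted ends and polygon bubbling; monotonicity and minimal Maslov index $3$ are used only for the standard exclusion of disk and sphere bubbles (by index, not by cancellation against brane data). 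Relatedly, your description of the domain is garbled: the patch carrying the input punctures maps to $M_0$ with boundary data given by the generalized Lagrangians $\ul L_0^i$ (and their auxiliary patches, so the quilts have more than two patches for genuinely generalized objects), the other patch maps to $M_1$ and meets the boundary only at the output, and there is no object ``$\ul L_1$'' in the source data.
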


The second result is summarized in the phrase \emph{``composition commutes with categorification''}:

\begin{theorem}[Theorem 1.2, \cite{mww}]
Suppose that $M_0, M_1, M_2$ are symplectic manifolds satisfying standard monotonicity hypotheses.
Let $L_{01} \subset M_0^- \times M_1$, $L_{12} \subset M_1^- \times M_2$ be admissible Lagrangian correspondences with spin structures and gradings such that $L_{01} \circ L_{12}$ is smooth, embedded by $\pi_{02}$ in $M_0^- \times M_2$, and admissible.
Then there exists a homotopy of $A_\infty$-functors
\begin{align}
\Phi^\#_{L_{12}} \circ \Phi^\#_{L_{01}}
\simeq
\Phi^\#_{L_{01} \circ L_{12}}.
\end{align}
\null\hfill$\square$
\end{theorem}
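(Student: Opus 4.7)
The plan is to interpret both sides of the claimed homotopy as operations counted by pseudoholomorphic quilts with different patch-and-seam structures, and to construct the $A_\infty$-homotopy via a one-parameter family of moduli spaces implementing Wehrheim--Woodward's \emph{strip-shrinking}.

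First, I would unpack the definitions given by Theorem \ref{thm:MWW_functors}. The component $(\Phi^\#_{L_{12}} \circ \Phi^\#_{L_{01}})_r$ of the composite functor is computed on $r$-tuples of morphisms by counting quilted $(r+1)$-pointed disks carrying three patches (valued in $M_0$, $M_1$, $M_2$), with an inner strip of positive width $\delta>0$ mapping to $M_1$, bordered on one side by a seam labelled $L_{01}$ and on the other by a seam labelled $L_{12}$. By contrast, $(\Phi^\#_{L_{01} \circ L_{12}})_r$ is computed by quilted $(r+1)$-pointed disks with only two patches (in $M_0$ and $M_2$), separated by a single seam labelled by the geometric composition $L_{01} \circ L_{12}$.

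Second, I would enlarge the domain moduli space by allowing the width $\delta$ of the $M_1$-patch to vary in the compactified interval $[0,\infty]$, obtaining for each $r$ a family of moduli spaces of quilted disks parameterised by the product of the associahedron $K_{r+1}$ with $[0,\infty]$. By neck-stretching, the $\delta = \infty$ fibre should recover the composition $\Phi^\#_{L_{12}} \circ \Phi^\#_{L_{01}}$; by the strip-shrinking principle, the $\delta = 0$ fibre should recover $\Phi^\#_{L_{01} \circ L_{12}}$. Invoking the operadic principle of \S\ref{ss:associahedra_and_OP}, the codimension-one boundary of this $(r+1)$-dimensional moduli space decomposes into strata corresponding to: (i) breaking off of a polygon on the $\Fuk^\#(M_0)$-side, (ii) breaking off on the $\Fuk^\#(M_2)$-side, (iii) the $\delta=\infty$ fibre, and (iv) the $\delta=0$ fibre. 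Counting the rigid elements of the codimension-one boundary and using $\partial^2 = 0$ on the total moduli space then yields exactly the equations that make the resulting collection of maps a pre-natural transformation whose $A_\infty$-coherence identity equates $\Phi^\#_{L_{01} \circ L_{12}}$ with $\Phi^\#_{L_{12}} \circ \Phi^\#_{L_{01}}$ up to homotopy.

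The main obstacle is the strip-shrinking analysis at $\delta=0$. The challenge is that as $\delta \searrow 0$, energy can concentrate at a seam point, producing a new bubble type which Wehrheim--Woodward call a \emph{figure-eight bubble} -- a pair of holomorphic discs in $M_0^- \times M_1$ and $M_1^-\times M_2$ glued along an arc in $L_{01}$ and $L_{12}$ respectively. Ruling these bubbles out in moduli spaces of virtual dimension $\leq 1$ is precisely where the monotonicity hypotheses, the minimal-Maslov-index $\geq 3$ condition, and the embeddedness of $L_{01}\circ L_{12}$ are used: a Maslov-index computation shows the figure-eight bubbles occur in codimension $\geq 2$, so they do not contribute to the operations or their coherences. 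Once they are excluded, a gluing statement identifies the $\delta=0$ stratum with the moduli space of quilted disks for $L_{01}\circ L_{12}$, and Gromov compactness packages the whole $[0,\infty]$-family into a well-defined chain-level homotopy. The homotopy is then a homotopy equivalence by the standard argument that it induces the identity on the associated graded with respect to the action filtration.
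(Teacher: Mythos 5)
Your overall strategy---an interpolating family of quilted disks with two seams, the composed correspondence appearing at the shrunk end via strip-shrinking, the composite functor at the stretched end, and the homotopy relation read off from codimension-one boundary strata---is indeed the strategy of \cite{mww} as recounted in \S\ref{ss:def_of_Phi_L12}. But the way you set up the parameter space and its boundary has a genuine gap. The compactified domain moduli space is not a product $K_{r+1}\times[0,\infty]$: the terms of $\Phi^\#_{L_{12}}\circ\Phi^\#_{L_{01}}$ with more than one block arise from multiplihedron-type degenerations in which several one-seam quilted disks bubble off along the boundary, so the ``width'' direction cannot be decoupled from the associahedron directions. More seriously, your list (i)--(iv) omits the stratum in which the two seams collide with the boundary at commensurate speeds (the right-most degeneration in Figure \ref{fig:homotopy_degens}). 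The bubbles produced there are themselves of the intermediate two-seam type, so the naive count yields contributions of the shape $\mu_\cD\bigl(T(\cdots),\ldots,T(\cdots)\bigr)$ rather than the terms $\mu_\cD\bigl(F(\cdots),\ldots,F(\cdots),T(\cdots),G(\cdots),\ldots,G(\cdots)\bigr)$ demanded by \eqref{eq:homotopy}. Reconciling this stratum with the algebra is the crux of the proof; Ma'u--Wehrheim--Woodward do it with delay functions (\cite[\S7]{mww}), and the alternative is a coherent augmentation of the domain moduli spaces, as mentioned in \S\ref{ss:def_of_Phi_L12}. Without some such device your boundary analysis does not close up into the homotopy equation.

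Two further points. Your treatment of the $\delta\to 0$ end understates what must be proved: the figure-eight bubble is not a pair of discs glued along an arc but a quilted sphere with three patches whose two seams meet at a point (\S\ref{sss:strip-shrinking}), and at the time of \cite{mww} there was no removal-of-singularity or Fredholm theory for such objects, so one cannot simply assert via a Maslov-index computation that they ``occur in codimension $\geq 2$.'' The exclusion runs instead through width-independent elliptic estimates showing that no energy concentrates in the strip-shrinking limit for the $0$- and $1$-dimensional moduli spaces, with monotonicity and energy quantization forbidding the index loss that any concentration would force. Finally, the closing appeal to an action-filtration argument is beside the point: once the boundary analysis yields \eqref{eq:homotopy}, that is the assertion of the theorem; note also that the two functors do not literally agree on objects---one appends the pair $(L_{01},L_{12})$, the other appends $L_{01}\circ L_{12}$---and the identification at the object and generator level is exactly what the embedded-composition bijection on generalized intersection points supplies.
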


\noindent
(Compare this with Proposition \ref{prop:FM_commutativity}, which is the analogous result for Fourier--Mukai transforms.)

The crucial tool that Wehrheim and Woodward used to establish these results was their theory of \emph{pseudoholomorphic quilts}, which we will briefly survey later in this section.

\begin{remark}
\label{rmk:why_minus}
The essential reason that we have to introduce the symplectic manifold $M^-$, which has no analogue in algebraic geometry, is that the derived category of coherent sheaves is equipped with a natural equivalence to its opposite category (in which the direction of morphisms are reversed), given by assigning to a sheaf its dual (i.e.\ the sheaf of morphisms to the structure sheaf).
In contrast, the Fukaya category has no such duality isomorphism, in part because there is no canonical object that plays the role of the structure sheaf.
One way to produce such a duality isomorphism is to exploit an anti-symplectic involution on $M$, as discussed by Casta\~{n}o-Bernard--Matessi--Solomon in  \cite{castano-bernard_matessi_solomon}, who explored this structure in the context of mirror symmetry.
\null\hfill$\triangle$
\end{remark}

\subsection{Approaching Ma'u--Wehrheim--Woodward's $A_\infty$-functor}
\label{ss:approaching_Phi}

We begin this subsection by approaching Ma'u--Wehrheim--Woodward's construction of the functors $\Phi_{L_{12}}^\#$ in Theorem \ref{thm:MWW_functors} via the Operadic Principle of \S\ref{ss:HF}.

There is a very natural way to try and define a functor $\Phi_{L_{12}}\colon \Fuk M_1 \to \Fuk M_2$ on the level of objects.
Indeed, we can view a Lagrangian $L_1$ in $M_1$ as a correspondence from $\pt$ to $M_1$, and then compose $\pt \sr{L_1}{\lra} M_1$ and $M_1 \sr{L_{12}}{\lra} M_2$ as correspondences --- which is to say, to form
\begin{align}
\Phi_{L_{12}}(L_1)
\coloneqq
L_1 \circ L_{12}
&\coloneqq
\pi_{M_2}(L_1 \times_{M_1} L_{12})
\\
&\coloneqq
\pi_{M_2}\bigl((L_1 \times L_{12}) \cap (\Delta_{M_1} \times M_2)\bigr).
\nonumber
\end{align}

\noindent
This idea is supported by an observation due to Guillemin--Sternberg:

\begin{proposition}[Theorem 4, \cite{guillemin_sternberg}]
\label{prop:guillemin_sternberg}
If the submanifolds $L_1 \times L_{12}$ and $\Delta_{M_1} \times M_2$ of $M_1 \times M_1^- \times M_2$ intersect transversely, then $\pi_{M_2}\colon M_1 \times M_1^- \times M_2 \to M_2$ restricts to a Lagrangian immersion of $L_1 \times_{M_1} L_{12}$ into $M_2$.
\null\hfill$\square$
\end{proposition}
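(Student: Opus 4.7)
The plan is to recognize the statement as an instance of symplectic reduction. Set $X \coloneqq M_1 \times M_1^- \times M_2$ with symplectic form $\Omega = \omega_{M_1} \oplus (-\omega_{M_1}) \oplus \omega_{M_2}$, and write $L \coloneqq L_1 \times L_{12}$ and $C \coloneqq \Delta_{M_1} \times M_2$. Then $L$ is a Lagrangian in $X$ as a product of Lagrangians, while a direct computation gives $(T_{(x,x,y)} C)^\Omega = \Delta_{T_x M_1} \oplus \{0\} \subset T_{(x,x,y)} C$, so $C$ is coisotropic with null leaves $\Delta_{M_1} \times \{y\}$. The symplectic reduction of $C$ is therefore canonically $(M_2, \omega_{M_2})$, with reduction map $\pi_{M_2}|_C$. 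A dimension count (with $\dim M_i = 2n_i$) gives $\dim(L \cap C) = n_2 = \tfrac{1}{2}\dim M_2$, so the statement reduces to the standard fact that the reduction of a Lagrangian transverse to a coisotropic is a Lagrangian immersion.

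For this, at a point $p \in L \cap C$ let $W \coloneqq T_p L \cap T_p C$, which is the tangent space to $L_1 \times_{M_1} L_{12}$. The kernel of the restricted differential $d(\pi_{M_2})|_W$ is $W \cap (T_p C)^\Omega = T_p L \cap (T_p C)^\Omega$. Taking $\Omega$-orthogonals of the transversality identity $T_p L + T_p C = T_p X$ yields $(T_p L)^\Omega \cap (T_p C)^\Omega = \{0\}$, and since $(T_p L)^\Omega = T_p L$ by the Lagrangian property, the kernel vanishes and $\pi_{M_2}|_W$ is injective. Moreover, $\Omega|_W = 0$ because $W \subset L$; evaluating $\Omega$ on vectors $(v_1, v_1, v_2), (u_1, u_1, u_2) \in W$, the two $\omega_{M_1}$-terms cancel, leaving $\omega_{M_2}(v_2, u_2) = 0$. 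Therefore $d\pi_{M_2}(W) \subset T_y M_2$ is isotropic, and by injectivity it is $n_2$-dimensional, hence Lagrangian.

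The main obstacle is the injectivity step, which is the only place where both the Lagrangian property of $L$ and the transversality hypothesis must be invoked together; neither alone forces the kernel to vanish. Once this linear-symplectic identity is in hand, the isotropy of the image is immediate from the coisotropic structure of $C$, and the dimension match then upgrades isotropy to the Lagrangian property.
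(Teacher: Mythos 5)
Your proof is correct. The paper itself gives no argument for this statement—it simply cites Theorem 4 of Guillemin--Sternberg—and your proof is the standard one for that result: recognize $\Delta_{M_1}\times M_2$ as a coisotropic whose reduction is $(M_2,\omega_{M_2})$ with reduction map $\pi_{M_2}$, check the dimension count from transversality, get injectivity of the restricted differential from $(T_pL+T_pC)^\Omega=T_pL\cap(T_pC)^\Omega=\{0\}$ together with $(T_pL)^\Omega=T_pL$, and get isotropy of the image from the cancellation of the two $\omega_{M_1}$-terms. All the individual linear-algebra steps check out, including the identification of $\ker\,d\pi_{M_2}|_W$ with $W\cap(T_pC)^\Omega$ (valid because $W\subset T_pC$), so there is nothing to correct.
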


\noindent
When the hypotheses of Proposition \ref{prop:guillemin_sternberg} are satisfied, we say that $L_1\circ L_{12}$ have \emph{immersed composition}.
If $L_1$ and $L_{12}$ satisfy the additional hypothesis that $L_1\circ L_{12}$ is an embedded submanifold of $M_2$, then we say that they have \emph{embedded composition}.

There are two apparent issues with our attempted definition of $\Phi_{L_{12}}$ on the level of objects: for one thing, the composition $L_1 \circ L_{12}$ may not even be immersed, hence may not produce a Lagrangian submanifold, indeed a submanifold at all, of $M_2$.
For another, the Fukaya category typically consists of \emph{embedded} Lagrangians.
Regarding the first issue, Wehrheim--Woodward showed in \cite[Proposition 2.2.1]{wehrheim_woodward_geometric_composition} that composition of Lagrangian correspondences can be made immersed after applying a Hamiltonian isotopy in each factor.
The second issue is surmountable via the theory of ``immersed Fukaya categories'', which were initially studied by Akaho and Joyce \cite{akaho_joyce} in an early extension of Fukaya, Oh, Ohta, and Ono's work.

We will return later to the issue of the definition of $\Phi_{L_{12}}$ on objects.
Before that, we consider its action on morphisms.
We expect this part of the definition of $\Phi_{L_{12}}$ to be based on counts of some sort of pseudoholomorphic objects, and therefore we turn to the Operadic Principle.
This principle suggests that we should follow these steps:
\begin{enumerate}	
\item
Find an operadic construction that encodes the structure of an $A_\infty$-functor.

\smallskip

\item
Find moduli spaces of Riemann surfaces that realize this operadic construction.

\smallskip

\item
Complete our definition-in-progress of $\Phi_{L_{12}}$ by counting pseudoholomorphic curves whose domains are the Riemann surfaces from the previous step.
\end{enumerate}

That is nearly what Wehrheim and Woodward did.
We begin by describing an answer to (1) and (2) simultaneously.
(We could treat them separately, but this would require introducing combinatorial objects that would not be enlightening.
Instead, we will conflate the combinatorial and topological instantiations of the multiplihedra.)

\begin{definition}
For every $r \geq 1$, the \emph{$(r-1)$-dimensional multiplihedron} $J_r$ (defined in \cite{saneblidze_umble:permutahedra_multiplihedra_associahedra}) is the compactified moduli space of configurations of a vertical line in the right half plane $\bH^0$ and $r$ points on the imaginary axis, up to real translations and dilations.
Alternately, we can visualize such a configuration as a disk decorated with $r$ boundary marked points and one interior circle tangent to the boundary at a single point.
We depict both presentations of a point in $J_3$ in the following figure:
\begin{figure}[H]
\centering
\def\svgwidth{0.3\columnwidth}
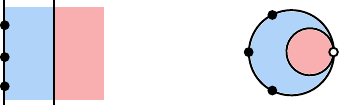
\caption{
\label{fig:two-patch_disks}
}
\end{figure}
\null\hfill$\triangle$
\end{definition}
\noindent
Topologically, $J_r$ is a compact $(r-1)$-dimensional manifold with boundary.
For instance, $J_1$ is a singleton because any two circles tangent to the disc at $1$ are related by a unique M\"obius transformation that fixes $\pm 1$.
$J_r$ can be realized as a convex polytope, but this polytopal structure is not directly relevant for our current purposes.
Below, we illustrate the 1- and 2-dimensional instances $J_2$ and $J_3$:

\begin{figure}[H]
\centering
\def\svgwidth{1.0\columnwidth}
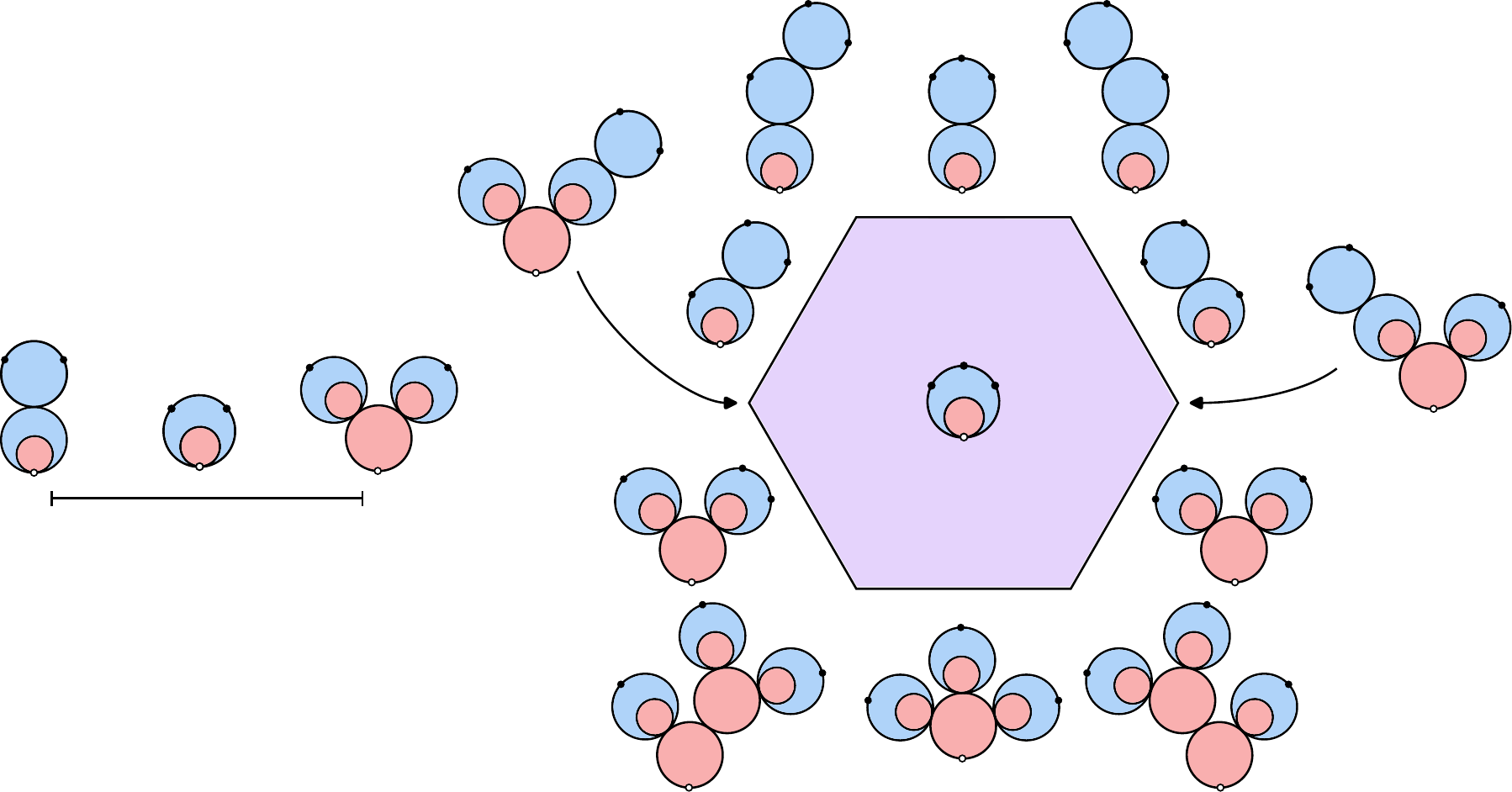
\caption{
The multiplihedra $J_2$ and $J_3$.
\label{fig:J2_J3}
}
\end{figure}

To see why the multiplihedra enable us to apply the Operadic Principle, we must understand what sort of operadic structure $(J_r)$ supports.
It turns out that the answer to this question is that the multiplihedra $(J_r)$ form a \emph{bimodule} over the operad $(K_r)$ of associahedra \cite{laplante-anfossi_mazuir:multiplihedra}.
This bimodule structure consists of the following:
\begin{itemize}
\item[]
{\bf (left module)}
A map $K_r \times J_	{a_1} \times \cdots \times J_{a_r} \to J_{a_1+\cdots+a_r}$, which is defined like so:

\begin{figure}[H]
\centering
\def\svgwidth{0.6\columnwidth}
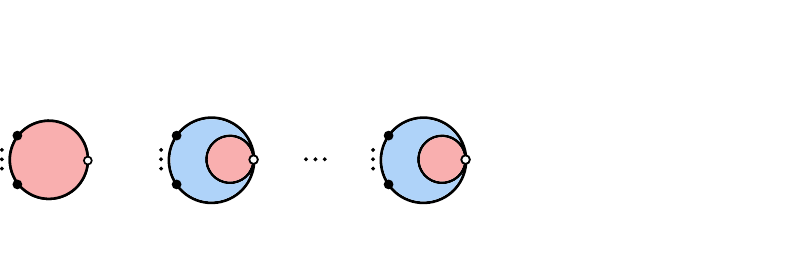
\caption{}
\end{figure}

\item[]
{\bf (right module)}
A map $J_a \times K_{r_1} \times \cdots \times K_{r_a} \to J_{r_1+\cdots+r_k}$, which is defined like so:

\begin{figure}[H]
\centering
\def\svgwidth{0.6\columnwidth}
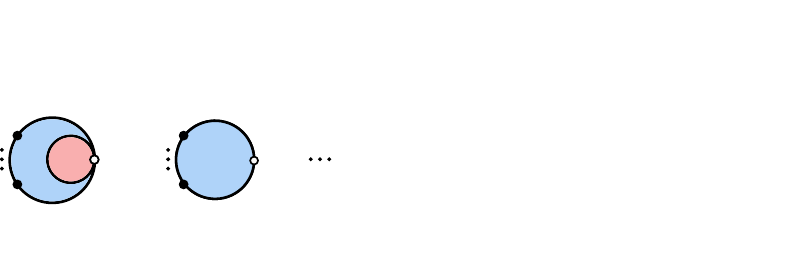
\caption{}
\end{figure}

\end{itemize}
\begin{remark}
There is a fundamental asymmetry between the left and right module structures.
Namely, we can naturally extend the polyhedra $K_r$ to the case $r=1$ by declaring that $K_1$ is a point; this element then corresponds to the identity map of each $A_\infty$-algebra.
In this way, the right module maps can be extracted from operations $\circ_i : J_a \times K_r \to J_{a+r-1}$, for $r \geq 2$, associated to gluing a disc as a single input labelled by an integer $1 \leq i \leq a$.
In Figure \ref{fig:J2_J3}, these operations label the top boundary strata of $J_3$ corresponding to the upper half of the hexagon.
On the other hand, while the moduli space $J_1$ is also a singleton, it does not correspond to a tautological operation at the level of algebra, and this is reflected in the fact that the codimension $1$ boundary strata of $J_a$ which correspond to these operations involve attaching quilted discs at every input of an element of $K_r$, as can be seen by inspecting the bottom boundary strata of $J_3$.

Applying $C_*^\cell$, we see that $C_*^\cell(J) \coloneqq \bigl(C_*^\cell(J_r)\bigr)$ is a bimodule over $C_*^\cell(K)$.
In fact, just as an $A_\infty$-category is the same thing as an $C_*^\cell(K)$-category, an $A_\infty$-functor is the same thing as a functor over $C_*^\cell(J)$.
(The notion of a functor over a bimodule over an operad is just what one might expect: in the present case, given categories $C, D$ over $C_*^\cell(K)$, a functor $F\colon C \to D$ over $C_*^\cell(J)$ consists of a map $\Ob C \to \Ob D$ and a collection of chain maps
\begin{align}
C_*^\cell(J_r)
\otimes
\hom(X_0,X_1)
\otimes
\cdots
\otimes
\hom(X_{r-1},X_r)
\to
\hom(F(X_0), F(X_r))
\end{align}
that satisfy a collection of coherences.)

The Operadic Principle now tells us that if we want to define a functor $\Phi_{L_{12}}$ from $\Fuk M_1$ to $\Fuk M_2$, we should construct moduli spaces of pseudoholomorphic curves whose domains are the decorated disks in Figure \ref{fig:two-patch_disks}.
This requires deviating from the conventional notion of a pseudoholomorphic curve, because the disks we would like to use as our domains are divided into two ``patches'', and because there are two possible targets, $M_1$ and $M_2$.
One of Wehrheim and Woodward's central insights was to ask for a map from one patch to $M_1$ and from the other to $M_2$, with these maps coupled along the interior ``seam'' by $L_{12}$.
Indeed, this is an instance of Wehrheim--Woodward's notion of a \emph{pseudoholomorphic quilt}, or simply a \emph{quilt}.
\null\hfill$\triangle$
\end{remark}

\subsection{Quilted Floer cohomology}
\label{ss:qHF}

As we described in the previous subsection, a key innovation of Wehrheim and Woodward was their notion of pseudoholomorphic quilts.
We will approach this general construction via the notion of \emph{quilted Floer cohomology}, which is an extension of Lagrangian Floer cohomology that is defined by counting rigid pseudoholomorphic quilts whose domain is a cylinder divided into strips by interior lines.
Just as ordinary Floer cohomology groups are the natural input and output for operations defined by counting pseudoholomorphic curves with boundary punctures, quilted Floer cohomology forms the natural input and output for operations defined by counting pseudoholomorphic quilts.

We begin by defining generalizations of Lagrangian submanifolds and correspondences.
For symplectic manifolds $M_0, M_1$, a \emph{generalized (Lagrangian) correspondence from $M_0$ to $M_1$} is a sequence
\begin{align}
\label{eq:generalized_correspondence}
\ul L
\coloneqq
\bigl(
M_0 = N_0
\:\sr{L_{01}}{\lra}\:
N_1
\:\sr{L_{12}}{\lra}\:
\cdots
\:\sr{L_{(r-1)r}}{\lra}\:
N_r = M_1
\bigr).
\end{align}
A \emph{cyclic generalized correspondence} is a generalized correspondence with $M_0 = M_1$.
Just as a Lagrangian is a correspondence from $\pt$ to a symplectic manifold, a \emph{generalized Lagrangian} is a generalized correspondence with $M_0 = \pt$.
When $r=1$, of course, these generalized notions reduce to their ordinary counterparts.

Floer cohomology associates to a pair of Lagrangians $L, K \subset M$ an Abelian group $HF^*(L,K)$.
From $L$ and $K$, we can produce the following cyclic generalized correspondence:
\begin{align}
\xymatrix{
\pt \ar@/^/[r]^L & \ar@/^/[l]^K M.
}
\end{align}
Quilted Floer cohomology extends $HF^*(L,K)$ to arbitrary cyclic generalized correspondences.

Indeed, fix a length-$r$ cyclic correspondence $\ul L$, as in \eqref{eq:generalized_correspondence}.
To define its quilted Floer cohomology, we first need to specify a tuple $\ul\delta \in (0,\infty)^r$ of ``widths''.
Having done so, the quilted Floer cohomology $HF^*_{\ul\delta}(\ul L)$ is defined to be the homology of a complex
\begin{align}
\bigl(
CF_{\ul\delta}(\ul L)
\coloneqq
\bK\langle \ul p\rangle_{\ul p \in\cI(\ul L)}, d_{\ul\delta}
\bigr),
\end{align}
where $\cI(\ul L)$ is the set of \emph{generalized intersection points}.
$\cI(\ul L)$ is defined by
\begin{align}
\label{eq:generalized_intersection_points}
\cI(\ul L)
\coloneqq
\bigl\{
(p_{01},\ldots,p_{(r-1)r})
\in
L_{01}\times\cdots\times L_{(r-1)r}
\:|\:
\pi_j(p_{(j-1)j})=\pi_j(p_{j(j+1)})
\bigr\},
\end{align}
where we require the condition to hold for all $1 \leq j \leq r$, and where we make the convention $L_{r(r+1)} \coloneqq L_{01}$.
(Indeed, from now on, we treat $j$ as a cyclic index living in $\bZ/r\bZ$.)
We require that $\cI(\ul L)$ is cut out transversally.
This is to say that the intersection
\begin{align}
(L_{01} \times \cdots \times L_{(r-1)r})
\cap
\sigma(\Delta_{M_0}\times\cdots\times\Delta_{M_r})
\end{align}
is transverse, where $\sigma$ is the obvious permutation of factors.

\begin{remark}
Wehrheim--Woodward do not assume this transversality --- rather, they show that they can always arrange for it to hold, by applying Hamiltonian perturbations.
In \cite[\S5]{wehrheim2010quilted}, they show that split Hamiltonian perturbations (that is, the result of perturbing each factor separately) are enough to ensure that $\cI(\ul L)$ is cut out transversally.
\null\hfill$\triangle$
\end{remark}

We will now encounter our first pseudoholomorphic quilt: to define the differential $d_{\ul\delta}$ on $CF^*(\ul L)$, we need to count certain \emph{quilted} Floer trajectories.
Specifically, for elements $\ul p^\pm$, we define the following moduli space:
\begin{gather}
\label{eq:qHF_moduli_space}
\cM_{\ul\delta}(\ul p^-,\ul p^+)
\coloneqq
\Bigl\{
\ul u \coloneqq \bigl(u_j\colon \bR\times[0,\delta_j] \to N_j\bigr)_{0\leq j\leq r}
\:\big|\:
(*1), (*2), (*3), (*4)
\Bigr\}/\sim,
\\
(*1):
\partial_s u_j + J_j(u_j)\partial_t u_j = 0
\:\forall\: j,
\nonumber
\qquad
(*2):
\bigl(u_j(s,\delta_j),u_{j+1}(s,0)\bigr)
\in
L_{j(j+1)}
\:\forall\: j,
\:\forall\: s \in \bR,
\nonumber
\\
(*3):
E(\ul u)
\coloneqq
\sum_{j=0}^r
\int_{\bR\times[0,\delta_j]}
u_j^*\omega_j
<
\infty,
\qquad
(*4):
\lim_{s\to\pm\infty}
u_j(s,-)
=
p_j^\mp
\:\forall\: j,
\nonumber
\end{gather}
where the quotient in the definition of $\cM(\ul p^-,\ul p^+)$ indicates that we identify two quilted Floer trajectories that differ by a simultaneous translation in the $s$-coordinate.
Conditions $(*1)$, $(*3)$, and $(*4)$ are familiar: they say that each map $u_j$ is pseudoholomorphic, has finite energy, and limits to $p_j^\mp$ as $s \to \pm\infty$.

Condition $(*2)$, which can be thought of a nonlocal boundary condition, is less familiar.
It is known as a \emph{seam condition}, because we can visualize the domain of a quilted Floer trajectory as the result of attaching together the domains of the $u_j$'s by identifying $(s,\delta_j)$ in the $j$-th rectangle with $(s,0)$ in the $(j+1)$-th.
Indeed, we can view $\ul u$ as a \emph{quilted map} whose domain is the cylinder $\bR \times (\bR/|\ul\delta|\bZ)$, where $|\ul\delta|$ denotes the sum $\delta_0+\cdots+\delta_r$.
This cylinder is divided into regions called \emph{patches}, the $j$-th of which being equipped with a map to $N_j$.
Specifically, for $0 \leq j \leq r$, the map from the $j$-th patch is
\begin{align}
\wt u_j
\colon
\bR \times [\delta_0+\cdots+\delta_{j-1}, \delta_0+\cdots+\delta_j]
\to
N_j,
\quad
\wt u_j(s,t)
\coloneqq
u_j(s,t-\delta_0-\cdots-\delta_{j-1}).
\end{align}
We depict a quilted cylinder like so:
\begin{figure}[H]
\centering
\def\svgwidth{0.375\columnwidth}
%% Creator: Inkscape 1.2 (dc2aeda, 2022-05-15), www.inkscape.org
%% PDF/EPS/PS + LaTeX output extension by Johan Engelen, 2010
%% Accompanies image file '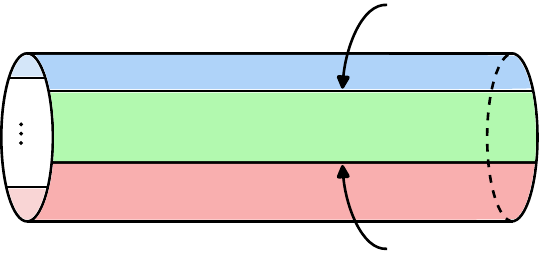' (pdf, eps, ps)
%%
%% To include the image in your LaTeX document, write
%%   \input{<filename>.pdf_tex}
%%  instead of
%%   \includegraphics{<filename>.pdf}
%% To scale the image, write
%%   \def\svgwidth{<desired width>}
%%   \input{<filename>.pdf_tex}
%%  instead of
%%   \includegraphics[width=<desired width>]{<filename>.pdf}
%%
%% Images with a different path to the parent latex file can
%% be accessed with the `import' package (which may need to be
%% installed) using
%%   \usepackage{import}
%% in the preamble, and then including the image with
%%   \import{<path to file>}{<filename>.pdf_tex}
%% Alternatively, one can specify
%%   \graphicspath{{<path to file>/}}
%% 
%% For more information, please see info/svg-inkscape on CTAN:
%%   http://tug.ctan.org/tex-archive/info/svg-inkscape
%%
\begingroup%
  \makeatletter%
  \providecommand\color[2][]{%
    \errmessage{(Inkscape) Color is used for the text in Inkscape, but the package 'color.sty' is not loaded}%
    \renewcommand\color[2][]{}%
  }%
  \providecommand\transparent[1]{%
    \errmessage{(Inkscape) Transparency is used (non-zero) for the text in Inkscape, but the package 'transparent.sty' is not loaded}%
    \renewcommand\transparent[1]{}%
  }%
  \providecommand\rotatebox[2]{#2}%
  \newcommand*\fsize{\dimexpr\f@size pt\relax}%
  \newcommand*\lineheight[1]{\fontsize{\fsize}{#1\fsize}\selectfont}%
  \ifx\svgwidth\undefined%
    \setlength{\unitlength}{155.17061056bp}%
    \ifx\svgscale\undefined%
      \relax%
    \else%
      \setlength{\unitlength}{\unitlength * \real{\svgscale}}%
    \fi%
  \else%
    \setlength{\unitlength}{\svgwidth}%
  \fi%
  \global\let\svgwidth\undefined%
  \global\let\svgscale\undefined%
  \makeatother%
  \begin{picture}(1,0.48522463)%
    \lineheight{1}%
    \setlength\tabcolsep{0pt}%
    \put(0,0){\includegraphics[width=\unitlength,page=1]{quilted_cylinder.pdf}}%
    \put(0.72756329,0.46538761){\makebox(0,0)[lt]{\lineheight{1.25}\smash{\begin{tabular}[t]{l}$L_{01}$\end{tabular}}}}%
    \put(0.72756329,0.00518582){\makebox(0,0)[lt]{\lineheight{1.25}\smash{\begin{tabular}[t]{l}$L_{(r-1)r}$\end{tabular}}}}%
    \put(0.29211858,0.22900391){\makebox(0,0)[lt]{\lineheight{1.25}\smash{\begin{tabular}[t]{l}$N_0$\end{tabular}}}}%
    \put(0.29211858,0.33663322){\makebox(0,0)[lt]{\lineheight{1.25}\smash{\begin{tabular}[t]{l}$N_1$\end{tabular}}}}%
    \put(0.29211858,0.11120119){\makebox(0,0)[lt]{\lineheight{1.25}\smash{\begin{tabular}[t]{l}$N_{r-1}$\end{tabular}}}}%
  \end{picture}%
\endgroup%

\caption{}
\end{figure}

\noindent
Note that when $M_0 = \pt$ (in the notation of \eqref{eq:generalized_correspondence}), the 0-th map contains no information, so we can remove it from our depiction of the domain of $\ul u$ and obtain a quilted strip.

\noindent
When in addition $r=2$, quilted Floer trajectories specialize to ordinary Floer trajectories.

$\cM_{\ul\delta}(\ul p^-,\ul p^+)$ enjoys the regularization and compactification properties as the moduli space of unquilted Floer trajectories.
Underlying this is the fact that the linearization of the $\dbar$-operator that defines $\cM_{\ul\delta}(\ul p^-,\ul p^+)$ satisfies domain-local elliptic estimates that can be patched together into a global estimate, which implies that these linearized operators are Fredholm.
These local elliptic estimates result from the observation that locally in the domain, a quilted Floer trajectory can be recast as an unquilted pseudoholomorphic curve.
For instance, consider the maps $\wt u_0\colon \bR\times[0,\delta_0] \to M_j$ and $\wt u_1\colon \bR\times[\delta_0,\delta_0+\delta_1] \to M_1$ and a point $(s_0,\delta_0)$ in the intersection of their domains.
If we fix $r \in (0,\min\{\delta_0,\delta_1\})$, then the data of the restrictions $\wt u_0|_{B^-_r(s_0,\delta_0)}, \wt u_1|_{B^+_r(s_0,\delta_0)}$ is equivalent to the data of a map
\begin{align}
v
\colon
B_r^+(s_0,\delta_0)
\to
M_0\times M_1,
\quad
\partial_s v
+
((-J_0) \oplus J_1)\partial_t v
= 0,
\quad
v((-r,r))
\subset
L_{01},
\end{align}
where $B_r^+$ resp.\ $B_r^-$ denote the halves of the open disk $B_r$ defined by $\pm t \geq 0$.
(To see this, define $v$ by $v(s,t) \coloneqq \bigl(\wt u_0(s,-t),\wt u_1(s,t)\bigr)$.)

Regarding compactness, the standard approach to compactifying moduli spaces of pseudoholomorphic curves carries over to quilted Floer trajectories: given a sequence $(\ul u^\nu) \subset \cM_{\ul\delta}(\ul p^-,\ul p^+)$, we may pass to a subsequence that converges to a possibly-broken quilted Floer trajectory, to which trees of quilted bubbles may be attached.
Under the present hypotheses, the only codimension-1 degeneration in a family of quilted Floer trajectories is a single breaking.
As a result, the same argument that shows that the ordinary Floer differential squares to zero applies here.
Finally, an argument similar to the one that shows that $HF^*(L,K)$ is independent of the almost complex structure implies that $HF_{\ul\delta}(\ul L)$ is independent of the strip-widths.
We summarize our conclusions in the following definition-proposition.

\medskip

\noindent
{\bf Definition-Proposition.}
For any cyclic generalized correspondence $\ul L$ and tuple of widths $\ul\delta$, the quilted Floer cohomology $HF^*(\ul L)$ is the homology of the complex $(\cI(\ul L), d_{\ul\delta})$, where $d_{\ul\delta}$ is the differential that results from counting quilted Floer trajectories.
\null\hfill$\triangle$

\medskip

\subsubsection{Quilted invariants}
\label{sss:quilts}

The quilted cylinders we counted to define the quilted Floer differential in \S\ref{ss:qHF} are one instance of a general construction of \emph{pseudoholomorphic quilts}, or simply \emph{quilts}.
Quilts form the basis for all the functorial structures we will consider in this paper.

The domain for a quilt is a \emph{quilted surface}, which is a collection of \emph{patches}, i.e.\ Riemann surfaces with boundary marked points, glued together by diffeomorphisms of boundary components.
When two boundary components are identified, the result is a \emph{seam}.
Quilted surfaces may have boundary.
For a complete definition, see \cite[Definition 3.1]{wehrheim_woodward_jhol_quilts}.

Consider a quilted surface $\ul S$; suppose that the patches are labeled by symplectic manifolds, the seams are labeled by Lagrangian correspondences between the adjacent labels, and the boundary components are labeled by Lagrangians.
We denote the set of input marked points by $\cE_i$, and the set of output marked points by $\cE_o$.
For a single marked point $\ul e \in \cE_i \sqcup \cE_o$, we denote by $\ul L_e$ the cyclic generalized Lagrangian correspondence formed by the labels at this end.
Now, if we fix an element $\ul x_{\ul e}$ of the set of generalized intersection points $\cI(\ul L_{\ul e})$ (see \eqref{eq:generalized_intersection_points}) for every $\ul e \in \cE_i \sqcup \cE_o$, we can consider the associated moduli space $\cM_{\ul S}\bigl((\ul x_{\ul e})_{\ul e \in \cE_i \sqcup \cE_o}\bigr)$.
An element of this moduli space consists of a $J_{M_k}$-holomorphic map $u_k\colon S_k \to M_k$ for every $k$, the collection of which is required to satisfy analogues of the conditions in \eqref{eq:qHF_moduli_space}: the maps satisfy \emph{seam conditions} determined by the Lagrangian correspondences; all maps have finite energy; and the limits at the marked points are equal to the specified generalized intersection points.

By counting rigid instances of these quilts, we can produce a \emph{relative invariant} $\Phi_{\ul S}$:
\begin{align}
\Phi_{\ul S}
\colon
\bigotimes_{\ul e \in \cE_i}
HF^*(\ul L_{\ul e})
\to
\bigotimes_{\ul e \in \cE_o}
HF^*(\ul L_{\ul e}).
\end{align}
The fact that this invariant descends to the homology level follows from considering the codimension-1 degenerations in the associated moduli space $\cM_{\ul S}$.
These are exactly Floer breakings at the cylindrical ends, which corresponding to pre- or post-composing with the quilted Floer differential.

\subsubsection{Categorification commutes with composition}
\label{sss:strip-shrinking}

An important feature of quilted Floer cohomology is its invariance under a move called \emph{strip-shrinking}.
This was established in \cite[Theorem 1.0.1]{wehrheim2010functoriality}, in which Wehrheim--Woodward proved that under standard monotonicity hypotheses, and assuming also that the composition $L_{01}\circ L_{12}$ is embedded, the canonical bijection $(L_0\times L_{12})\cap(L_{01}\times L_2) \sr{\simeq}{\lra} (L_0 \times L_2) \cap (L_{01}\circ L_{12})$ induces an isomorphism
\begin{align}
\label{eq:strip_shrinking}
HF^*(L_0, L_{01}, L_{12}, L_2)
\sr{\simeq}{\lra}
HF^*(L_0, L_{01}\circ L_{12}, L_2).
\end{align}

\noindent
They proved this by constructing an isomorphism between the moduli spaces used to define the differentials.
That is, they identified triple and double strips of the following form:

\begin{figure}[H]
\centering
\def\svgwidth{0.9\columnwidth}
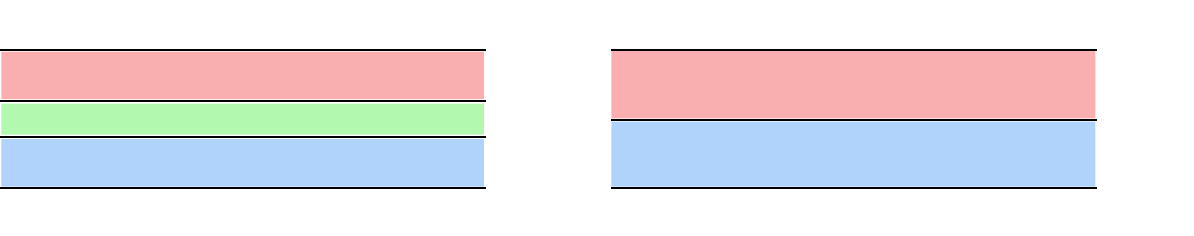
\caption{
\label{fig:double_triple_strips}
}
\end{figure}

\noindent
They accomplished this via a set of domain-independent elliptic inequalities.

\eqref{eq:strip_shrinking} immediately extends to a more general isomorphism of quilted invariants.
Indeed, Wehrheim--Woodward showed in \cite[Theorem 5.1]{wehrheim_woodward_jhol_quilts} that under the same hypotheses of \eqref{eq:strip_shrinking}, and assuming that two quilts differ by shrinking a strip or annulus in the first to a curve, their associated quilted invariants coincide.

The monotonicity and embedded-composition hypotheses that Wehrheim--Woodward imposed in order to prove \eqref{eq:strip_shrinking} are not merely technical.
Indeed, when they are relaxed, there is an expected obstruction to \eqref{eq:strip_shrinking} coming from counts of ``figure eight bubbles''.
The reason for this is that in a moduli space of triple strips (as on the left of Figure \ref{fig:double_triple_strips}) with middle strip-width varying in $[0,\eps_0)$, in the limit as strip-width approaches zero, we can see bubbling on the fused seam, like so:

\begin{figure}[H]
\centering
\def\svgwidth{0.4\columnwidth}
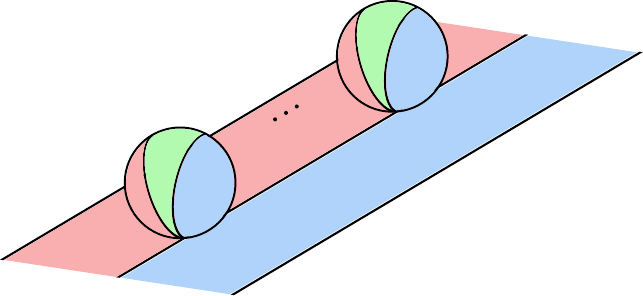
\caption{}
\end{figure}

\noindent
These quilted spheres, called \emph{figure eight bubbles} and first predicted by Wehrheim--Woodward in \cite[\S1]{wehrheim_woodward_geometric_composition}, are the result of gradient blowup on the middle strip at a rate commensurate with that at which the strip-width converges to 0.
One rescales at a rate inverse to the gradient blowup, which yields a quilted plane with three patches.
The second author showed in his thesis \cite{b:thesis} that the singularity at $\infty$ can be removed, hence this plane can be completed to a quilted sphere.

Bottman--Wehrheim \cite[\S4]{bottman_wehrheim} interpreted this bubble formation as saying that \eqref{eq:strip_shrinking} should be replaced by the formula
\begin{align}
\label{eq:strip_shrinking_obstructed}
HF^*(L_0, L_{01}, L_{12}, L_2)
\sr{\simeq}{\lra}
HF^*\bigl(L_0, (L_{01}\circ L_{12}, b_{L_{01},L_{12}}), L_2\bigr),
\end{align}
where $b_{L_{01},L_{12}} \in CF^*(L_{01}\circ L_{12},L_{01}\circ L_{12})$ is the result of counting rigid figure-eight bubbles with an output marked point at the south poles of the sphere (where the two seams come together).
The notation $(L_{01}\circ L_{12}, b_{L_{01},L_{12}})$ indicates that we have equipped the composed correspondence with a \emph{bounding cochain}.
Bounding cochains were introduced by Fukaya--Oh--Ohta--Ono in \cite{fooo_1}, in order to define the Fukaya category in situations where the Lagrangians $L$ may not satisfy $\mu_0 = 0$, i.e.\ there may be nonzero counts of disks with boundary on $L$.
A bounding cochain for $L$ is an element $b \in CF^*(L,L)$ satisfying the Maurer--Cartan equation
\begin{align}
\sum_{k=0}^\infty
\mu_k(b,\ldots,b)
=
0,
\end{align}
and equipping $L$ with a bounding cochain $b$ deforms all $A_\infty$-operations involving this correspondence.
(C.f. \S\ref{ss:anom-lagr-floer} for more about bounding cochains, and \S\ref{s:symp} for the much larger subject of the symplectic $(A_\infty,2)$-category that figure eight bubbling leads into.)

\subsection{The definition of $\Phi_{L_{12}}^\#\colon \Fuk^\#(M_1) \to \Fuk^\#(M_2)$}
\label{ss:def_of_Phi_L12}

In \S\ref{ss:approaching_Phi}, we contemplated from first principles the task of defining an $A_\infty$-functor $\Phi_{L_{12}}\colon \Fuk M_1 \to \Fuk M_2$.
The Operadic Principle helped us to formulate a strategy, which involved counting some sort of pseudoholomorphic maps whose domains are disks with boundary marked points and with one interior circle.
After our introduction to quilts in \S\ref{ss:qHF}, it is clear what we should do --- we should count pseudoholomorphic quilts whose domains are quilts of the following form:
\begin{figure}[H]
\centering
\def\svgwidth{0.2\columnwidth}
%% Creator: Inkscape 1.2 (dc2aeda, 2022-05-15), www.inkscape.org
%% PDF/EPS/PS + LaTeX output extension by Johan Engelen, 2010
%% Accompanies image file '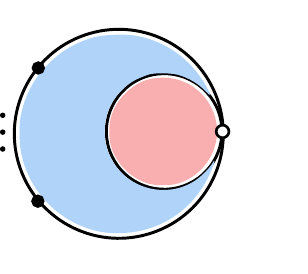' (pdf, eps, ps)
%%
%% To include the image in your LaTeX document, write
%%   \input{<filename>.pdf_tex}
%%  instead of
%%   \includegraphics{<filename>.pdf}
%% To scale the image, write
%%   \def\svgwidth{<desired width>}
%%   \input{<filename>.pdf_tex}
%%  instead of
%%   \includegraphics[width=<desired width>]{<filename>.pdf}
%%
%% Images with a different path to the parent latex file can
%% be accessed with the `import' package (which may need to be
%% installed) using
%%   \usepackage{import}
%% in the preamble, and then including the image with
%%   \import{<path to file>}{<filename>.pdf_tex}
%% Alternatively, one can specify
%%   \graphicspath{{<path to file>/}}
%% 
%% For more information, please see info/svg-inkscape on CTAN:
%%   http://tug.ctan.org/tex-archive/info/svg-inkscape
%%
\begingroup%
  \makeatletter%
  \providecommand\color[2][]{%
    \errmessage{(Inkscape) Color is used for the text in Inkscape, but the package 'color.sty' is not loaded}%
    \renewcommand\color[2][]{}%
  }%
  \providecommand\transparent[1]{%
    \errmessage{(Inkscape) Transparency is used (non-zero) for the text in Inkscape, but the package 'transparent.sty' is not loaded}%
    \renewcommand\transparent[1]{}%
  }%
  \providecommand\rotatebox[2]{#2}%
  \newcommand*\fsize{\dimexpr\f@size pt\relax}%
  \newcommand*\lineheight[1]{\fontsize{\fsize}{#1\fsize}\selectfont}%
  \ifx\svgwidth\undefined%
    \setlength{\unitlength}{88.55671105bp}%
    \ifx\svgscale\undefined%
      \relax%
    \else%
      \setlength{\unitlength}{\unitlength * \real{\svgscale}}%
    \fi%
  \else%
    \setlength{\unitlength}{\svgwidth}%
  \fi%
  \global\let\svgwidth\undefined%
  \global\let\svgscale\undefined%
  \makeatother%
  \begin{picture}(1,0.89019759)%
    \lineheight{1}%
    \setlength\tabcolsep{0pt}%
    \put(0,0){\includegraphics[width=\unitlength,page=1]{functor_def.pdf}}%
    \put(0.13795923,0.42907018){\makebox(0,0)[lt]{\lineheight{1.25}\smash{\begin{tabular}[t]{l}$M_1$\end{tabular}}}}%
    \put(0.45222737,0.42907018){\makebox(0,0)[lt]{\lineheight{1.25}\smash{\begin{tabular}[t]{l}$M_2$\end{tabular}}}}%
    \put(0,0){\includegraphics[width=\unitlength,page=2]{functor_def.pdf}}%
    \put(0.77706747,0.85543881){\makebox(0,0)[lt]{\lineheight{1.25}\smash{\begin{tabular}[t]{l}$L_{12}$\end{tabular}}}}%
    \put(0.31582435,0.83499825){\makebox(0,0)[lt]{\lineheight{1.25}\smash{\begin{tabular}[t]{l}$L_1^0$\end{tabular}}}}%
    \put(0.31582435,0.00796188){\makebox(0,0)[lt]{\lineheight{1.25}\smash{\begin{tabular}[t]{l}$L_1^d$\end{tabular}}}}%
  \end{picture}%
\endgroup%

\caption{
\label{fig:functor_def}
}
\end{figure}

As we noted in \S\ref{ss:approaching_Phi}, the obvious definition of $\Phi_{L_{12}}$ on objects is
\begin{align}
\Phi_{L_{12}}(L_1) \coloneqq L_1 \circ L_{12} \coloneqq \pi_{M_2}(L_1\times_{M_1}L_{12}).
\end{align}
Besides the two issues mentioned in that subsection, another problem is that the tangential intersection of the seam with the boundary in the quilt just pictured leads to major analytical difficulties.
The way Ma'u--Wehrheim--Woodward dealt with this was to change their goal, and construct instead a functor $\Phi_{L_{12}}^\#\colon \Fuk^\#(M_1) \to \Fuk^\#(M_2)$ between \emph{extended Fukaya categories} (see Theorem \ref{thm:MWW_functors} in \S\ref{sec:corr-sympl-topol}).

The extended Fukaya category is tailor-made to eliminate these difficulties.
Here is a sketch of its definition, which Ma'u--Wehrheim--Woodward formulated under standard monotonicity hypotheses:

\begin{definition}
The \emph{extended Fukaya category} $\Fuk M$ is the $A_\infty$-category defined in the following way.
\begin{itemize}
\item
The objects of $\Fuk M$ are the generalized Lagrangians $\pt \sr{\ul L}{\lra} M$.

\smallskip

\item
The hom-set from $\ul L$ to $\ul K$ is defined to be the quilted Floer cochain complex $CF^*\bigl(\ul L,\ul K\bigr)$.

\smallskip

\item
The $A_\infty$-operations
\begin{align}
\mu_d\bigl(\ul L^0,\ldots,\ul L^d\bigr)
\colon
CF^*\bigl(\ul L^0,\ul L^1\bigr)
\otimes
\cdots
\otimes
CF^*\bigl(\ul L^{d-1},\ul L^d\bigr)
\to
CF^*\bigl(\ul L^0,\ul L^d\bigr)
\end{align}
are defined by counting rigid quilts of the following form:
\begin{figure}[H]
\centering
\def\svgwidth{0.175\columnwidth}
%% Creator: Inkscape 1.2 (dc2aeda, 2022-05-15), www.inkscape.org
%% PDF/EPS/PS + LaTeX output extension by Johan Engelen, 2010
%% Accompanies image file '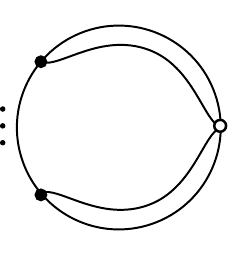' (pdf, eps, ps)
%%
%% To include the image in your LaTeX document, write
%%   \input{<filename>.pdf_tex}
%%  instead of
%%   \includegraphics{<filename>.pdf}
%% To scale the image, write
%%   \def\svgwidth{<desired width>}
%%   \input{<filename>.pdf_tex}
%%  instead of
%%   \includegraphics[width=<desired width>]{<filename>.pdf}
%%
%% Images with a different path to the parent latex file can
%% be accessed with the `import' package (which may need to be
%% installed) using
%%   \usepackage{import}
%% in the preamble, and then including the image with
%%   \import{<path to file>}{<filename>.pdf_tex}
%% Alternatively, one can specify
%%   \graphicspath{{<path to file>/}}
%% 
%% For more information, please see info/svg-inkscape on CTAN:
%%   http://tug.ctan.org/tex-archive/info/svg-inkscape
%%
\begingroup%
  \makeatletter%
  \providecommand\color[2][]{%
    \errmessage{(Inkscape) Color is used for the text in Inkscape, but the package 'color.sty' is not loaded}%
    \renewcommand\color[2][]{}%
  }%
  \providecommand\transparent[1]{%
    \errmessage{(Inkscape) Transparency is used (non-zero) for the text in Inkscape, but the package 'transparent.sty' is not loaded}%
    \renewcommand\transparent[1]{}%
  }%
  \providecommand\rotatebox[2]{#2}%
  \newcommand*\fsize{\dimexpr\f@size pt\relax}%
  \newcommand*\lineheight[1]{\fontsize{\fsize}{#1\fsize}\selectfont}%
  \ifx\svgwidth\undefined%
    \setlength{\unitlength}{65.47085858bp}%
    \ifx\svgscale\undefined%
      \relax%
    \else%
      \setlength{\unitlength}{\unitlength * \real{\svgscale}}%
    \fi%
  \else%
    \setlength{\unitlength}{\svgwidth}%
  \fi%
  \global\let\svgwidth\undefined%
  \global\let\svgscale\undefined%
  \makeatother%
  \begin{picture}(1,1.17644435)%
    \lineheight{1}%
    \setlength\tabcolsep{0pt}%
    \put(0,0){\includegraphics[width=\unitlength,page=1]{extended_composition.pdf}}%
    \put(0.48488811,1.12942919){\makebox(0,0)[lt]{\lineheight{1.25}\smash{\begin{tabular}[t]{l}$\ul L^0$\end{tabular}}}}%
    \put(0.48488811,0.01076933){\makebox(0,0)[lt]{\lineheight{1.25}\smash{\begin{tabular}[t]{l}$\ul L^d$\end{tabular}}}}%
    \put(0,0){\includegraphics[width=\unitlength,page=2]{extended_composition.pdf}}%
    \put(0.45002764,0.59570206){\makebox(0,0)[lt]{\lineheight{1.25}\smash{\begin{tabular}[t]{l}$M$\end{tabular}}}}%
    \put(0,0){\includegraphics[width=\unitlength,page=3]{extended_composition.pdf}}%
  \end{picture}%
\endgroup%

\caption{
\label{fig:extended_composition}
}
\end{figure}
\end{itemize}
\null\hfill$\triangle$
\end{definition}

\noindent
An important aspect of this definition is that the seams at the output marked point approach the boundary transversely, rather than tangentially.
That is, in striplike coordinates near the output marked point, the seams are parallel to the boundary, rather than asymptotic to the boundary.
This can be seen when comparing Figures \ref{fig:functor_def} and \ref{fig:extended_composition}.

While this version of the Fukaya category may appear so large as to be unmanageable, a folk expectation is that in many cases, the embedding $\Fuk M \hra \Fuk^\#(M)$ is a quasi-equivalence.
Using the extended Fukaya category in place of the ordinary one allows us to easily sidestep the first issue in the definition of $\Phi_{L_{12}}^\#$: instead of sending $L_1$ to the geometric composition $L_1 \circ L_{12}$, we can simply act on $\ul L_1 \in \Fuk^\#(M_1)$ by appending $L_{12}$.

We expand on this in the following definition.

\begin{definition}
Given a correspondence $L_{12} \subset M_1^-\times M_2$, we define $\Phi_{L_{12}}^\#$ in the following way.
\begin{itemize}
\item
On objects, $\Phi_{L_{12}}^\#$ acts like so:
\begin{align}
&\Phi_{L_{12}}^\#\bigl(\pt = N_0 \sr{K_{01}}{\lra} \cdots \sr{K_{(k-1)k}}{\lra} N_r = M_1\bigr)
\\
&\hspace{1.25in}
\coloneqq
\bigl(\pt = N_0 \sr{K_{01}}{\lra} \cdots \sr{K_{(k-1)k}}{\lra} N_r = M_1 \sr{L_{12}}{\lra} M_2\bigr).
\nonumber
\end{align}
If we denote the input generalized Lagrangian by $\ul L_1$, then we often denote the output by $\bigl(\ul L_1, L_{12}\bigr)$.

\smallskip

\item
On morphisms,
\begin{align}
\Phi_{L_{12}}^\#
\colon
\hom\bigl(\ul L_1^0,\ul L_1^1\bigr)\otimes\cdots\otimes\hom\bigl(\ul L_1^{r-1},\ul L_1^r\bigr)
\to
\hom\bigl(
\bigl(\ul L_1^0, L_{12}\bigr),
\bigl(\ul L_1^r, L_{12}\bigr)
\bigr)
\end{align}
is defined by counting rigid quilts of the following form:
\begin{figure}[H]
\centering
\def\svgwidth{0.2\columnwidth}
%% Creator: Inkscape 1.2 (dc2aeda, 2022-05-15), www.inkscape.org
%% PDF/EPS/PS + LaTeX output extension by Johan Engelen, 2010
%% Accompanies image file '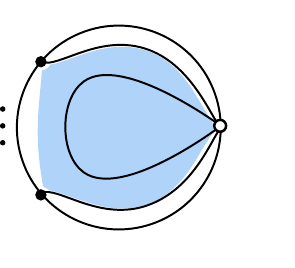' (pdf, eps, ps)
%%
%% To include the image in your LaTeX document, write
%%   \input{<filename>.pdf_tex}
%%  instead of
%%   \includegraphics{<filename>.pdf}
%% To scale the image, write
%%   \def\svgwidth{<desired width>}
%%   \input{<filename>.pdf_tex}
%%  instead of
%%   \includegraphics[width=<desired width>]{<filename>.pdf}
%%
%% Images with a different path to the parent latex file can
%% be accessed with the `import' package (which may need to be
%% installed) using
%%   \usepackage{import}
%% in the preamble, and then including the image with
%%   \import{<path to file>}{<filename>.pdf_tex}
%% Alternatively, one can specify
%%   \graphicspath{{<path to file>/}}
%% 
%% For more information, please see info/svg-inkscape on CTAN:
%%   http://tug.ctan.org/tex-archive/info/svg-inkscape
%%
\begingroup%
  \makeatletter%
  \providecommand\color[2][]{%
    \errmessage{(Inkscape) Color is used for the text in Inkscape, but the package 'color.sty' is not loaded}%
    \renewcommand\color[2][]{}%
  }%
  \providecommand\transparent[1]{%
    \errmessage{(Inkscape) Transparency is used (non-zero) for the text in Inkscape, but the package 'transparent.sty' is not loaded}%
    \renewcommand\transparent[1]{}%
  }%
  \providecommand\rotatebox[2]{#2}%
  \newcommand*\fsize{\dimexpr\f@size pt\relax}%
  \newcommand*\lineheight[1]{\fontsize{\fsize}{#1\fsize}\selectfont}%
  \ifx\svgwidth\undefined%
    \setlength{\unitlength}{84.73984872bp}%
    \ifx\svgscale\undefined%
      \relax%
    \else%
      \setlength{\unitlength}{\unitlength * \real{\svgscale}}%
    \fi%
  \else%
    \setlength{\unitlength}{\svgwidth}%
  \fi%
  \global\let\svgwidth\undefined%
  \global\let\svgscale\undefined%
  \makeatother%
  \begin{picture}(1,0.90893272)%
    \lineheight{1}%
    \setlength\tabcolsep{0pt}%
    \put(0,0){\includegraphics[width=\unitlength,page=1]{extended_functor.pdf}}%
    \put(0.37462942,0.87260834){\makebox(0,0)[lt]{\lineheight{1.25}\smash{\begin{tabular}[t]{l}$\ul L_1^0$\end{tabular}}}}%
    \put(0.37462942,0.0083205){\makebox(0,0)[lt]{\lineheight{1.25}\smash{\begin{tabular}[t]{l}$\ul L_1^d$\end{tabular}}}}%
    \put(0,0){\includegraphics[width=\unitlength,page=2]{extended_functor.pdf}}%
    \put(0.37936463,0.46024539){\makebox(0,0)[lt]{\lineheight{1.25}\smash{\begin{tabular}[t]{l}$M_2$\end{tabular}}}}%
    \put(0,0){\includegraphics[width=\unitlength,page=3]{extended_functor.pdf}}%
    \put(0.76702611,0.82301554){\makebox(0,0)[lt]{\lineheight{1.25}\smash{\begin{tabular}[t]{l}$L_{12}$\end{tabular}}}}%
    \put(0.06987485,0.46024539){\makebox(0,0)[lt]{\lineheight{1.25}\smash{\begin{tabular}[t]{l}$M_1$\end{tabular}}}}%
  \end{picture}%
\endgroup%

\caption{}
\end{figure}
\end{itemize}
\null\hfill$\triangle$
\end{definition}

\medskip

\noindent
{\it Picture proof that $\Phi_{L_{12}}^\#$ satisfies the $A_\infty$-relations, following \cite[Theorem 1.1]{mww}.}
In order for $\Phi_{L_{12}}^\#\colon \Fuk^\#(M_1) \to \Fuk^\#(M_2)$ to define an $A_\infty$-functor, it must satisfy the following relations for any inputs $x_1, \ldots, x_d$:
\begin{align}
\label{eq:functor_relations}
&\sum_{k,\ell}
\pm\Phi_{L_{12}}^\#\bigl(x_1,\ldots,x_k,\mu_\ell(x_{k+1},\ldots,x_{k+\ell}),x_{k+\ell+1},\ldots,x_d\bigr)
\\
&\hspace{0.25in}=
\sum_{\ell,k_1,\ldots,k_\ell}
\pm\mu_\ell\bigl(
\Phi_{L_{12}}^\#(x_1,\ldots,x_{k_1}),
\ldots,
\Phi_{L_{12}}^\#(x_{k_1+\cdots+k_{\ell-1}+1},\ldots,x_{k_1+\cdots+k_\ell})
\bigr).
\nonumber
\end{align}
This follows from considering the codimension-1 degenerations of the quilts that we counted to define $\Phi_{L_{12}}^\#$:
\begin{figure}[H]
\centering
\def\svgwidth{0.55\columnwidth}
%% Creator: Inkscape 1.2 (dc2aeda, 2022-05-15), www.inkscape.org
%% PDF/EPS/PS + LaTeX output extension by Johan Engelen, 2010
%% Accompanies image file '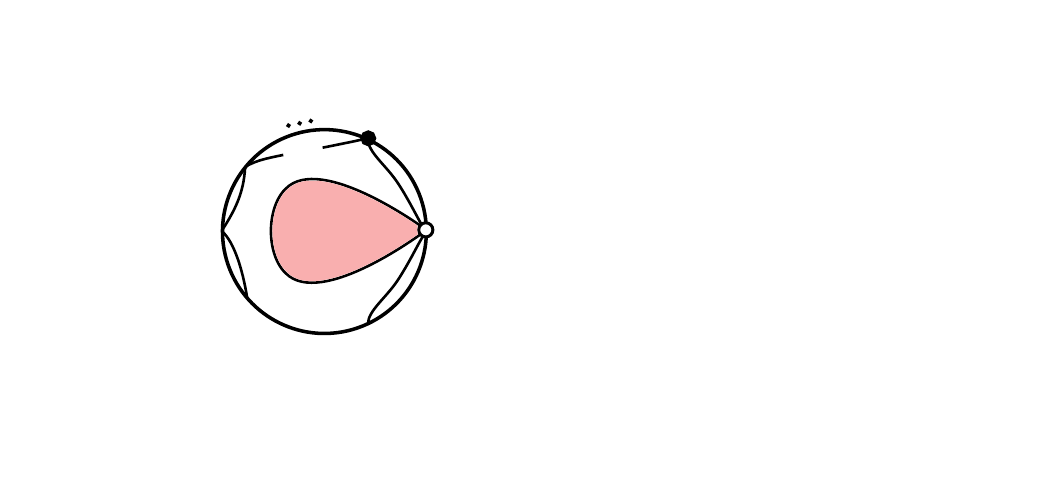' (pdf, eps, ps)
%%
%% To include the image in your LaTeX document, write
%%   \input{<filename>.pdf_tex}
%%  instead of
%%   \includegraphics{<filename>.pdf}
%% To scale the image, write
%%   \def\svgwidth{<desired width>}
%%   \input{<filename>.pdf_tex}
%%  instead of
%%   \includegraphics[width=<desired width>]{<filename>.pdf}
%%
%% Images with a different path to the parent latex file can
%% be accessed with the `import' package (which may need to be
%% installed) using
%%   \usepackage{import}
%% in the preamble, and then including the image with
%%   \import{<path to file>}{<filename>.pdf_tex}
%% Alternatively, one can specify
%%   \graphicspath{{<path to file>/}}
%% 
%% For more information, please see info/svg-inkscape on CTAN:
%%   http://tug.ctan.org/tex-archive/info/svg-inkscape
%%
\begingroup%
  \makeatletter%
  \providecommand\color[2][]{%
    \errmessage{(Inkscape) Color is used for the text in Inkscape, but the package 'color.sty' is not loaded}%
    \renewcommand\color[2][]{}%
  }%
  \providecommand\transparent[1]{%
    \errmessage{(Inkscape) Transparency is used (non-zero) for the text in Inkscape, but the package 'transparent.sty' is not loaded}%
    \renewcommand\transparent[1]{}%
  }%
  \providecommand\rotatebox[2]{#2}%
  \newcommand*\fsize{\dimexpr\f@size pt\relax}%
  \newcommand*\lineheight[1]{\fontsize{\fsize}{#1\fsize}\selectfont}%
  \ifx\svgwidth\undefined%
    \setlength{\unitlength}{303.99428358bp}%
    \ifx\svgscale\undefined%
      \relax%
    \else%
      \setlength{\unitlength}{\unitlength * \real{\svgscale}}%
    \fi%
  \else%
    \setlength{\unitlength}{\svgwidth}%
  \fi%
  \global\let\svgwidth\undefined%
  \global\let\svgscale\undefined%
  \makeatother%
  \begin{picture}(1,0.45222866)%
    \lineheight{1}%
    \setlength\tabcolsep{0pt}%
    \put(0,0){\includegraphics[width=\unitlength,page=1]{quilted_functor_relations.pdf}}%
    \put(0.2886486,0.22445446){\makebox(0,0)[lt]{\lineheight{1.25}\smash{\begin{tabular}[t]{l}$M_2$\end{tabular}}}}%
    \put(0,0){\includegraphics[width=\unitlength,page=2]{quilted_functor_relations.pdf}}%
    \put(0.08565044,0.225241){\makebox(0,0)[lt]{\lineheight{1.25}\smash{\begin{tabular}[t]{l}$M_1$\end{tabular}}}}%
    \put(0,0){\includegraphics[width=\unitlength,page=3]{quilted_functor_relations.pdf}}%
  \end{picture}%
\endgroup%

\caption{}
\end{figure}

\noindent
Indeed, counting the quilts on the left resp.\ on the right produces the left resp.\ right of \eqref{eq:functor_relations}.
\null\hfill$\square$

\medskip

The strip-shrinking isomorphism in \S\ref{sss:strip-shrinking} translates into a homotopy of functors.
Recall that $A_\infty$-functors $F, G\colon \cA \to \cB$ are \emph{homotopic} if they agree on objects, and if there exists a collection of morphisms
\begin{align}
T
\colon
\hom(X_{d-1},X_d)\otimes\cdots\hom(X_0,X_1) \to \hom(X_0,X_d)
\end{align}
so that the equation
\begin{align}
\label{eq:homotopy}
&(F - G)(\cdots)
=
\sum \pm T\bigl(\cdots,\mu_\cC(\cdots),\cdots\bigr)
+
\\
&\hspace{2.25in}
+ \sum \pm\mu_\cD\bigl(F(\cdots),\ldots,F(\cdots),T(\cdots),G(\cdots),\ldots,G(\cdots)\bigr)
\nonumber
\end{align}
holds.
(C.f.\ \cite[\S1h]{seidel_picard-lefschetz} for more details.)

Ma'u--Wehrheim--Woodward proved in \cite[Theorem 1.2]{mww} that if $L_{12}$ and $L_{23}$ have embedded composition, and if the usual monotonicity hypotheses hold, then the $A_\infty$-functors $\Phi_{L_{23}} \circ \Phi_{L_{12}}$ and $\Phi_{L_{12}}\circ\Phi_{L_{23}}$ are homotopic.
The homotopy $\Psi_{L_{12},L_{23}}$ is defined by counting quilted disks of the following form:

\begin{figure}[H]
\centering
\def\svgwidth{0.2\columnwidth}
%% Creator: Inkscape 1.2 (dc2aeda, 2022-05-15), www.inkscape.org
%% PDF/EPS/PS + LaTeX output extension by Johan Engelen, 2010
%% Accompanies image file '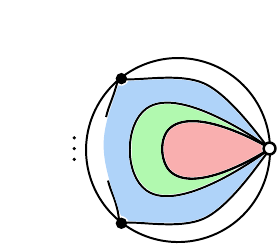' (pdf, eps, ps)
%%
%% To include the image in your LaTeX document, write
%%   \input{<filename>.pdf_tex}
%%  instead of
%%   \includegraphics{<filename>.pdf}
%% To scale the image, write
%%   \def\svgwidth{<desired width>}
%%   \input{<filename>.pdf_tex}
%%  instead of
%%   \includegraphics[width=<desired width>]{<filename>.pdf}
%%
%% Images with a different path to the parent latex file can
%% be accessed with the `import' package (which may need to be
%% installed) using
%%   \usepackage{import}
%% in the preamble, and then including the image with
%%   \import{<path to file>}{<filename>.pdf_tex}
%% Alternatively, one can specify
%%   \graphicspath{{<path to file>/}}
%% 
%% For more information, please see info/svg-inkscape on CTAN:
%%   http://tug.ctan.org/tex-archive/info/svg-inkscape
%%
\begingroup%
  \makeatletter%
  \providecommand\color[2][]{%
    \errmessage{(Inkscape) Color is used for the text in Inkscape, but the package 'color.sty' is not loaded}%
    \renewcommand\color[2][]{}%
  }%
  \providecommand\transparent[1]{%
    \errmessage{(Inkscape) Transparency is used (non-zero) for the text in Inkscape, but the package 'transparent.sty' is not loaded}%
    \renewcommand\transparent[1]{}%
  }%
  \providecommand\rotatebox[2]{#2}%
  \newcommand*\fsize{\dimexpr\f@size pt\relax}%
  \newcommand*\lineheight[1]{\fontsize{\fsize}{#1\fsize}\selectfont}%
  \ifx\svgwidth\undefined%
    \setlength{\unitlength}{79.69374914bp}%
    \ifx\svgscale\undefined%
      \relax%
    \else%
      \setlength{\unitlength}{\unitlength * \real{\svgscale}}%
    \fi%
  \else%
    \setlength{\unitlength}{\svgwidth}%
  \fi%
  \global\let\svgwidth\undefined%
  \global\let\svgscale\undefined%
  \makeatother%
  \begin{picture}(1,0.8782249)%
    \lineheight{1}%
    \setlength\tabcolsep{0pt}%
    \put(0,0){\includegraphics[width=\unitlength,page=1]{homotopy.pdf}}%
    \put(0.25367907,0.76673639){\makebox(0,0)[lt]{\lineheight{1.25}\smash{\begin{tabular}[t]{l}$M_1$\end{tabular}}}}%
    \put(0.48970571,0.78349576){\makebox(0,0)[lt]{\lineheight{1.25}\smash{\begin{tabular}[t]{l}$M_2$\end{tabular}}}}%
    \put(0.66288538,0.31563191){\makebox(0,0)[lt]{\lineheight{1.25}\smash{\begin{tabular}[t]{l}$M_3$\end{tabular}}}}%
    \put(0,0){\includegraphics[width=\unitlength,page=2]{homotopy.pdf}}%
    \put(0.67405884,0.85891268){\makebox(0,0)[lt]{\lineheight{1.25}\smash{\begin{tabular}[t]{l}$L_{23}$\end{tabular}}}}%
    \put(0,0){\includegraphics[width=\unitlength,page=3]{homotopy.pdf}}%
    \put(-0.00189936,0.07262233){\makebox(0,0)[lt]{\lineheight{1.25}\smash{\begin{tabular}[t]{l}$L_{12}$\end{tabular}}}}%
  \end{picture}%
\endgroup%

\caption{}
\end{figure}

\noindent
Considering the codimension-1 degenerations of these disks leads to \eqref{eq:homotopy}.
Indeed, these degenerations occur when either (i) the $12$- and $23$-seams collide, (ii) the $23$-seam collides with the boundary, (iii) a consecutive sequence of boundary marked points collide, or (iv) the $12$- and $23$-seams collide at commensurate speeds with the boundary.
We depict these degenerations below:

\begin{figure}[H]
\centering
\def\svgwidth{0.8\columnwidth}
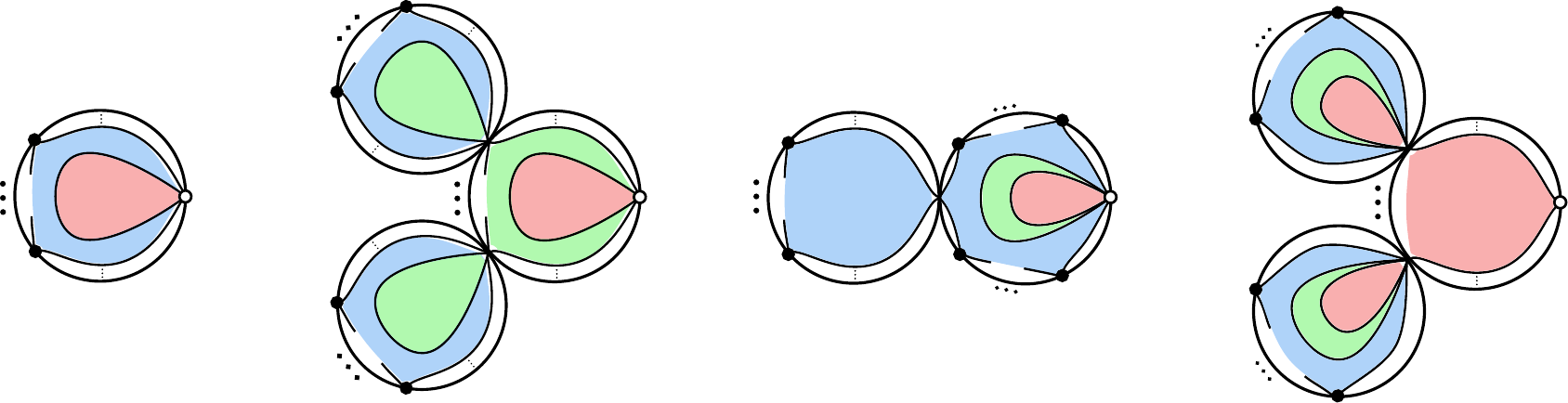
\caption{
\label{fig:homotopy_degens}
}
\end{figure}

\noindent
Counting these quilts nearly produces the four terms in \eqref{eq:homotopy}, from left to right.
The only discrepancy is between the right-most degeneration in Figure \ref{fig:homotopy_degens} and the last sum in \eqref{eq:homotopy}.
Ma'u--Wehrheim--Woodward resolved this by using \emph{delay functions} (c.f.\ \cite[\S7]{mww}).
An alternate approach, carried out in unpublished work of the second author, is to coherently augment the domain moduli spaces in a way that exactly interpolates between the two relevant types of degenerations.

\subsection{A related construction: $A_\infty$-bimodules from correspondences}
\label{ss:bimodules}

Given an $A_\infty$-category $\sA$, an \emph{$\sA$-module} is an $A_\infty$-functor $\sA \to \Ch$.
Here $\Ch$ is the dg-category of chain complexes of $\bK$-vector spaces, considered as an $A_\infty$-category.
Unwinding the definition, an $\sA$-module $\sM$ associates to every object $X \in \sA$ a graded vector space $\sM(X)$, and for every $d\geq 1$ and $d$-tuple of objects $X_0,\ldots,X_{d-1}$, maps
\begin{align}
\mu_{\sM,d}
\colon
\hom(X_0,X_1)
\otimes
\cdots
\otimes
\hom(X_{d-1},X_d)
\otimes
\sM(X_0)
\to
\sM(X_d)[1-d].
\end{align}
These maps are required to satisfy the following hierarchy of coherences that come from the $A_\infty$-equations:
\begin{align}
\label{eq:module_equations}
&\sum_{k,\ell}
\pm\mu_{\sM,d-\ell+1}\bigl(a_1,\ldots,a_k,\mu_{\sA,\ell}(a_{k+1},\ldots,a_{k+\ell}),a_{k+\ell+1},\ldots,a_d;m\bigr)
\\
&\hspace{1.0in}
+
\sum_k
\pm\mu_{\sM,k}\bigl(a_1,\ldots,a_k,\mu_{\sM,d-k}(a_{k+1},\ldots,a_d;m)\bigr)
=
0.
\nonumber
\end{align}
This definition should be compared with the fact that one can define a module over a ring $R$ to be a functor from $R$ to the category $\Ab$ of Abelian groups, where we view $R$ and $\Ab$ as categories enriched in $\Ab$.

\begin{example}
If $\sA$ is an $A_\infty$-category and $Y$ is an object in $\sA$, we can define an $\sA$-module $\sY$ by $\sY(X) \coloneqq \hom(X,Y)$ and $\mu_{\sY,d}\coloneqq\mu_{\sA,d}$.
This is called the \emph{Yoneda module associated to $Y$}.
\null\hfill$\triangle$
\end{example}

Similarly, given $A_\infty$-categories $\sA$ and $\sB$, we can define an $(\sA,\sB)$-bimodule $\sM$ to be an $A_\infty$-bifunctor $(\sA,\sB) \to \Ch$.
This amounts to a graded vector space $\sM(X,Y)$ for every $X \in \sA$ and $Y \in \sB$, and multilinear maps
\begin{align}
&\mu_{\sM,d,e}
\colon
\hom(X_0,X_1)\otimes\cdots\otimes\hom(X_{d-1},X_d)
\otimes
\\
&\hspace{1in}
\otimes
\hom(Y_0,Y_1)\otimes\cdots\otimes\hom(Y_{e-1},Y_e)
\otimes
\sM(X_0,Y_0)
\to
\sM(X_d,Y_e)
\nonumber
\end{align}
for any choice of objects $X_0,\ldots,X_d\in \sA$, $Y_0,\ldots,Y_e\in \sB$.
The $A_\infty$-equations for $\sM$ translate into an equation similar to \eqref{eq:module_equations}.

\begin{example}
If $\sF\colon \sA \to \sB$ is an $A_\infty$-functor, we can define an $(\sA,\sB)$-bimodule $\sM$ in the following way:
\begin{align}
\sM(X,Y)
\coloneqq
 \hom_\sB(\sF(X),Y),
\end{align}
\begin{align*}
&\mu_{\sM,d,e}(a_1,\ldots,a_d;b_1,\ldots,b_e;m)
\\
&\hspace{0.25in}\coloneqq
\nonumber
\sum_{k,s_1,\ldots,s_k}
\pm\mu_\sB\bigl(\sF(a_1,\ldots,a_{s_1}),\ldots,\sF(a_{s_1+\cdots+s_{k-1}+1},\ldots,a_d);b_1,\ldots,b_e;m\bigr).
\end{align*}
This is called the \emph{graph bimodule associated to $\sF$}.
\null\hfill$\triangle$
\end{example}

We can associate to a Lagrangian correspondence $L_{12} \subset M_1^-\times M_2$ a $(\Fuk M_1,\Fuk M_2)$-bimodule.
This construction was first proposed by Ma'u; in the exact context, it was constructed by Gao, c.f.\ e.g.\ \cite{gao2018functors}.
A variant of this construction appeared in \cite{woodward_gauged_Floer_theory_of_toric_moment_fibers}, and the domain moduli spaces were studied systematically by Ma'u in \cite{mau_n-modules}.

\begin{definition}
Given $L_{12} \subset M_1^-\times M_2$, we define its associated $(\Fuk M_1,\Fuk M_2)$-bimodule $\sM_{L_{12}}$ by setting $\sM_{L_{12}}(L_1,L_2)$ to be $CF^*(L_1,L_{12},L_2)$ and defining the structure maps to be the result of counting the following quilts:
\begin{figure}[H]
\centering
\def\svgwidth{0.35\columnwidth}
%% Creator: Inkscape 1.2 (dc2aeda, 2022-05-15), www.inkscape.org
%% PDF/EPS/PS + LaTeX output extension by Johan Engelen, 2010
%% Accompanies image file '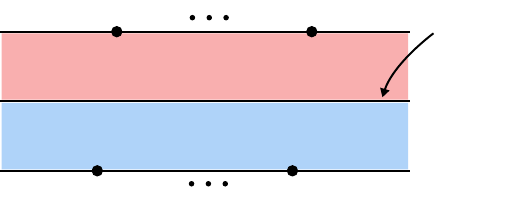' (pdf, eps, ps)
%%
%% To include the image in your LaTeX document, write
%%   \input{<filename>.pdf_tex}
%%  instead of
%%   \includegraphics{<filename>.pdf}
%% To scale the image, write
%%   \def\svgwidth{<desired width>}
%%   \input{<filename>.pdf_tex}
%%  instead of
%%   \includegraphics[width=<desired width>]{<filename>.pdf}
%%
%% Images with a different path to the parent latex file can
%% be accessed with the `import' package (which may need to be
%% installed) using
%%   \usepackage{import}
%% in the preamble, and then including the image with
%%   \import{<path to file>}{<filename>.pdf_tex}
%% Alternatively, one can specify
%%   \graphicspath{{<path to file>/}}
%% 
%% For more information, please see info/svg-inkscape on CTAN:
%%   http://tug.ctan.org/tex-archive/info/svg-inkscape
%%
\begingroup%
  \makeatletter%
  \providecommand\color[2][]{%
    \errmessage{(Inkscape) Color is used for the text in Inkscape, but the package 'color.sty' is not loaded}%
    \renewcommand\color[2][]{}%
  }%
  \providecommand\transparent[1]{%
    \errmessage{(Inkscape) Transparency is used (non-zero) for the text in Inkscape, but the package 'transparent.sty' is not loaded}%
    \renewcommand\transparent[1]{}%
  }%
  \providecommand\rotatebox[2]{#2}%
  \newcommand*\fsize{\dimexpr\f@size pt\relax}%
  \newcommand*\lineheight[1]{\fontsize{\fsize}{#1\fsize}\selectfont}%
  \ifx\svgwidth\undefined%
    \setlength{\unitlength}{146.33089616bp}%
    \ifx\svgscale\undefined%
      \relax%
    \else%
      \setlength{\unitlength}{\unitlength * \real{\svgscale}}%
    \fi%
  \else%
    \setlength{\unitlength}{\svgwidth}%
  \fi%
  \global\let\svgwidth\undefined%
  \global\let\svgscale\undefined%
  \makeatother%
  \begin{picture}(1,0.42581847)%
    \lineheight{1}%
    \setlength\tabcolsep{0pt}%
    \put(0,0){\includegraphics[width=\unitlength,page=1]{bimodule.pdf}}%
    \put(0.86508542,0.35920445){\makebox(0,0)[lt]{\lineheight{1.25}\smash{\begin{tabular}[t]{l}$L_{12}$\end{tabular}}}}%
    \put(0.35211207,0.12955182){\makebox(0,0)[lt]{\lineheight{1.25}\smash{\begin{tabular}[t]{l}$M_1$\end{tabular}}}}%
    \put(0.35211207,0.27120989){\makebox(0,0)[lt]{\lineheight{1.25}\smash{\begin{tabular}[t]{l}$M_2$\end{tabular}}}}%
    \put(0.65944998,0.00481838){\makebox(0,0)[lt]{\lineheight{1.25}\smash{\begin{tabular}[t]{l}$L_1^0$\end{tabular}}}}%
    \put(0.66267689,0.40209417){\makebox(0,0)[lt]{\lineheight{1.25}\smash{\begin{tabular}[t]{l}$L_2^0$\end{tabular}}}}%
    \put(0.0678078,0.40478311){\makebox(0,0)[lt]{\lineheight{1.25}\smash{\begin{tabular}[t]{l}$L_2^s$\end{tabular}}}}%
    \put(0.06202585,0.00696994){\makebox(0,0)[lt]{\lineheight{1.25}\smash{\begin{tabular}[t]{l}$L_1^r$\end{tabular}}}}%
  \end{picture}%
\endgroup%

\caption{
The positions of the marked points on the boundary of this quilted strip are not fixed.
}
\end{figure}
\null\hfill$\triangle$
\end{definition}

The $A_\infty$-equations for $\sM_{L_{12}}$ follow immediately from an enumeration of the codimension-1 degenerations of the quilts we count to define the structure maps of $\sM_{L_{12}}$.

The advantage of considering bimodule $\sM_{L_{12}}$ rather than the functors $\Phi_{L_{12}}$ and $\Phi_{L_{12}}^\#$ is that the quilts we use to define $\sM_{L_{12}}$ do not involve tangential intersections of seams, so we are not forced to choose between working with such quilts or with the extended version of the Fukaya category.
Moreover, $\sM_{L_{12}}$ does not involve composing Lagrangians.
In situations where $\Phi_{L_{12}}$ can be defined on non-extended Fukaya categories, we expect $\sM_{L_{12}}$ to be isomorphic to the graph bimodule associated to $\Phi_{L_{12}}$.

\subsection{$(\Phi_{L_{12}}, \Phi_{L_{12}^T})$ is an adjoint pair}
\label{ss:adjunction}

Consider the functors
\begin{align}
\label{eq:putative_adjunction}
\xymatrix{\Fuk^\#(M_1) \ar@/^/[r]^{\Phi^\#_{L_{12}}} & \ar@/^/[l]^{\Phi^\#_{L_{12^T}}} \Fuk^\#(M_2)}
\end{align}
associated to a Lagrangian correspondence and its transpose.
These functors  form an adjoint pair.
(To our knowledge, this statement does not appear in Wehrheim--Woodward's work.)

According to \cite[Definition 2.14]{abouzaid2019khovanov}, a pair
\begin{align}
\xymatrix{
\cC \ar@/^/[r]^L & \ar@/^/[l]^R \cD
}
\end{align}
of $A_\infty$-functors is adjoint if there is a natural isomorphism of $\cC$-$\cD$-bimodules
\begin{align}
\hom_\cD(L(-), -)
\sr{\simeq}{\lra}
\hom_\cC(-, R(-)).
\end{align}
In the case of the putative adjunction \eqref{eq:putative_adjunction}, such a natural isomorphism includes a collection of morphisms
\begin{gather}
CF^*(L_1^{d-1},L_1^d)
\otimes\cdots\otimes
CF^*(L_1^0,L_1^1)
\otimes
CF^*(L_2^0,L_2^1)
\otimes\cdots\otimes
CF^*(L_2^{e-1},L_2^e)
\otimes
CF^*((L_1^0, L_{12}), L_2^0)
\\
\downarrow
\nonumber
\\
CF^*(L_1^d, (L_2^e, L_{12}^T)).
\nonumber
\end{gather}

\noindent
Such an isomorphism is given by counting quilts of the following form.

\begin{figure}[H]
\centering
\def\svgwidth{0.35\columnwidth}
%% Creator: Inkscape 1.2 (dc2aeda, 2022-05-15), www.inkscape.org
%% PDF/EPS/PS + LaTeX output extension by Johan Engelen, 2010
%% Accompanies image file '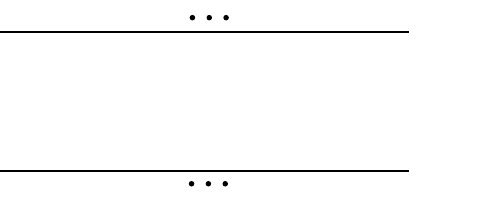' (pdf, eps, ps)
%%
%% To include the image in your LaTeX document, write
%%   \input{<filename>.pdf_tex}
%%  instead of
%%   \includegraphics{<filename>.pdf}
%% To scale the image, write
%%   \def\svgwidth{<desired width>}
%%   \input{<filename>.pdf_tex}
%%  instead of
%%   \includegraphics[width=<desired width>]{<filename>.pdf}
%%
%% Images with a different path to the parent latex file can
%% be accessed with the `import' package (which may need to be
%% installed) using
%%   \usepackage{import}
%% in the preamble, and then including the image with
%%   \import{<path to file>}{<filename>.pdf_tex}
%% Alternatively, one can specify
%%   \graphicspath{{<path to file>/}}
%% 
%% For more information, please see info/svg-inkscape on CTAN:
%%   http://tug.ctan.org/tex-archive/info/svg-inkscape
%%
\begingroup%
  \makeatletter%
  \providecommand\color[2][]{%
    \errmessage{(Inkscape) Color is used for the text in Inkscape, but the package 'color.sty' is not loaded}%
    \renewcommand\color[2][]{}%
  }%
  \providecommand\transparent[1]{%
    \errmessage{(Inkscape) Transparency is used (non-zero) for the text in Inkscape, but the package 'transparent.sty' is not loaded}%
    \renewcommand\transparent[1]{}%
  }%
  \providecommand\rotatebox[2]{#2}%
  \newcommand*\fsize{\dimexpr\f@size pt\relax}%
  \newcommand*\lineheight[1]{\fontsize{\fsize}{#1\fsize}\selectfont}%
  \ifx\svgwidth\undefined%
    \setlength{\unitlength}{144.76088931bp}%
    \ifx\svgscale\undefined%
      \relax%
    \else%
      \setlength{\unitlength}{\unitlength * \real{\svgscale}}%
    \fi%
  \else%
    \setlength{\unitlength}{\svgwidth}%
  \fi%
  \global\let\svgwidth\undefined%
  \global\let\svgscale\undefined%
  \makeatother%
  \begin{picture}(1,0.43043669)%
    \lineheight{1}%
    \setlength\tabcolsep{0pt}%
    \put(0,0){\includegraphics[width=\unitlength,page=1]{adjunction.pdf}}%
    \put(0.66660206,0.00487064){\makebox(0,0)[lt]{\lineheight{1.25}\smash{\begin{tabular}[t]{l}$L_1^0$\end{tabular}}}}%
    \put(0.66986396,0.40645509){\makebox(0,0)[lt]{\lineheight{1.25}\smash{\begin{tabular}[t]{l}$L_2^0$\end{tabular}}}}%
    \put(0.06854321,0.40917319){\makebox(0,0)[lt]{\lineheight{1.25}\smash{\begin{tabular}[t]{l}$L_2^s$\end{tabular}}}}%
    \put(0.06269855,0.00704554){\makebox(0,0)[lt]{\lineheight{1.25}\smash{\begin{tabular}[t]{l}$L_1^r$\end{tabular}}}}%
    \put(0,0){\includegraphics[width=\unitlength,page=2]{adjunction.pdf}}%
    \put(0.8636222,0.23188402){\makebox(0,0)[lt]{\lineheight{1.25}\smash{\begin{tabular}[t]{l}$L_{12}$\end{tabular}}}}%
    \put(0.59851265,0.22139018){\makebox(0,0)[lt]{\lineheight{1.25}\smash{\begin{tabular}[t]{l}$M_2$\end{tabular}}}}%
    \put(0.12212436,0.20050558){\makebox(0,0)[lt]{\lineheight{1.25}\smash{\begin{tabular}[t]{l}$M_1$\end{tabular}}}}%
    \put(0,0){\includegraphics[width=\unitlength,page=3]{adjunction.pdf}}%
  \end{picture}%
\endgroup%

\caption{
Again, the positions of the marked points on the boundary of this quilted strip are not fixed.
}
\end{figure}

\subsection{Lagrangian correspondences from symplectic reduction, and an example}
\label{ss:reduction}

In this subsection, we will illustrate how the compatibility of categorification with geometric composition of Lagrangian correspondences can be used to effectively compute Floer cohomology.
Since we will use group actions to produce our illustrative examples, we need to introduce the right notion of quotients in symplectic topology: the starting point is to consider a compact Lie group $G$ acting smoothly, and by symplectomorphisms, on a symplectic manifold $(M,\omega)$.
By differentiating this action, we obtain a map from the Lie algebra $\fg$ to the space of vector fields on $M$; note that this map is automatically $G$-equivariant when the domain is equipped with the adjoint action.
As we introduced in \S\ref{ss:HF}, the symplectic form assigns to each function on $M$ its Hamiltonian vector field, and we say the datum of a \emph{Hamiltonian action of $G$ on $M$} consists of an equivariant lift:
\begin{equation}
  \begin{tikzcd}
    & C^\infty(M,\bR) \ar[d] \\
    \fg \ar[r] \ar[ur,dashed] &  C^\infty(M,TM).
  \end{tikzcd}
\end{equation}

In order to record this data more efficiently, it is convenient to express it in terms of the dual Lie algebra $\fg^*$:
\begin{definition}
The \emph{moment map} of a Hamiltonian $G$-action is the map
\begin{equation}
\mu\colon M \to \fg^*
\end{equation}
which is characterized by the equation
\begin{equation}
\rd\mu^X = \iota_{X^\#}\omega,
\end{equation}
for each vector field  $X \in \fg$, with associated vector field  $X^\#$ on $M$.
\null\hfill$\triangle$
\end{definition}

\noindent
While this definition may seem intimidating at first, it is simply the natural generalization to a general $G$ of the action of $\bR$ on a symplectic manifold by the flow of a Hamiltonian vector field.

\begin{example}
Consider the action of $T^n$ on $\bCP^n$ by rotating the latter $n$ homogeneous coordinates:
\begin{align}
\bigl(e^{i\lambda_1},\ldots,e^{i\lambda_n}\bigr)
\cdot
[z_0:\cdots:z_n]
\coloneqq
\bigl[z_0:e^{i\lambda_1}z_1:\cdots e^{i\lambda_n}z_n\bigr].	
\end{align}
Equip $\bCP^n$ with the normalized Fubini--Study form $\omega \coloneqq \alpha\omega_{FS}$, where $\alpha$ is chosen so that $\omega$ has monotonicity constant 1, i.e.\ so that the class of the symplectic form agrees with the first Chern class.
This action is Hamiltonian, with moment map given by:
\begin{align}
\mu
=
(\mu_1,\ldots,\mu_n)
\colon
\bCP^n
\to
\bR^n,
\qquad
\mu([z_0:\cdots:z_n])
\coloneqq
-\frac12
\frac{\bigl(|z_0|^2,\ldots,|z_n|^2\bigr)}{|z_0|^2+\cdots+|z_n|^2}.
\end{align}

\null\hfill$\triangle$	
\end{example}

\noindent
Atiyah and Guillemin--Sternberg's Convexity Theorem \cite{atiyah_convexity,guillemin_sternberg_convexity} asserts that the image of the moment map of a Hamiltonian torus action is the convex hull of the images of the fixed points, and in particular is always a convex polytope.
In the case of the above standard action on projective space, this image is the simplex spanned by the origin and the rescaled standard basis vectors $-\tfrac12e_i$ (the unfortunate scaling is a consequence of our choice of monotonicity constant and of the conventions for Hamiltonian vector fields).

With this necessary groundwork in place, we can define the symplectic reduction $M/\!\!/G$ and the associated correspondence $M/\!\!/G \sr{\Lambda_G}{\lra} M$ associated to a Hamiltonian action of a compact Lie group $G$ on $(M,\omega_M)$, with moment map $\mu$.

\begin{definition}
\label{def:symp_red}
If $a$ is fixed by the coadjoint action, and $G$ acts freely on $ \mu^{-1}(a) $, the \emph{symplectic reduction} or \emph{symplectic quotient} at level $a$ is the quotient
\begin{equation}
    M/\!\!/G \coloneqq \mu^{-1}(a)/G,
\end{equation}
equipped with the symplectic form $\omega_{M/\!\!/G}$ that is characterized  by $\pi^*\omega_{M/\!\!/G} = \iota^*\omega_M$.
The \emph{moment correspondence} is the Lagrangian
\begin{align} \label{eq:moment_correspondence}
\Lambda_G \coloneqq \{([p],p) \in M/\!\!/G \times M \:|\: p \in \mu^{-1}(a)\} \subset M/\!\!/G \times M^-.
\end{align}
\null\hfill$\triangle$
\end{definition}

\noindent
The fact that $\Lambda_G $ is Lagrangian is a consequence of the defining property of $\omega_{M/\!\!/G}$.

\begin{example}[Theorem 6.2.1, \cite{wehrheim2010quilted}]
In this example, which appears as a theorem in \cite{wehrheim2010quilted}, we show how categorification-commutes-with-composition can be used to efficiently compute the self-Floer cohomology of the Clifford torus
\begin{align}
T_\Cl^n
\coloneqq
\bigl\{
[z_0:\cdots:z_n]
\:\big|\:
|z_0|=\cdots=|z_n|
\bigr\}
\subset
\bCP^n,
\end{align}
with its standard spin structure.

We begin by introducing a collection of Lagrangian correspondences between complex projective spaces, which all arise in the fashion of Definition \ref{def:symp_red}.
Given $n \geq 1$, we loosely follow \cite{wehrheim2010quilted}
and define correspondences $\Sigma_A$ for $A \subsetneq \{1,\ldots,n\}$:
\begin{gather}
\Sigma_A
\coloneqq
\bCP^{n-\#A}
\sr{\Sigma_A}{\lra}
\bCP^n,
\\
\Sigma_A
\coloneqq
\Bigl\{
\bigl(
[z_j]_{j \in \{0,\ldots,n\}\setminus A},
[z_0:\cdots:z_n]
\bigr)
\:\Big|\:
|z_k|^2 = \tfrac1{n+1}\sum_{j=0}^n |z_j|^2
\:\forall\:
k \in A
\Bigr\}.
\nonumber
\end{gather}
Note that $\Sigma_A$ is the correspondence that results from taking the symplectic quotient $\bCP^n/\!\!/T^{\#A}$, where $T^{\#A}\colon \bCP^n \to \bCP^n$ is the restriction of the standard action of $T^n$ (on the latter $n$ homogeneous coordinates) to the circle factors corresponding to the indices in $A$.

In fact, one can use strong induction to show $HF^*(T^n_\Cl, T^n_\Cl) \simeq HF^*(T^{n-1}_\Cl, T^{n-1}_\Cl)^{\otimes 2}$, hence $HF^*(T^n_\Cl,T^n_\Cl) \simeq \bZ^{2^n}$.
In the following figure, we illustrate how we can prove the induction step in the case of $HF^*(T^3_\Cl, T^3_\Cl)$ by applying the strip-shrinking isomorphism.
\begin{figure}[H]
\centering
\def\svgwidth{1.0\columnwidth}
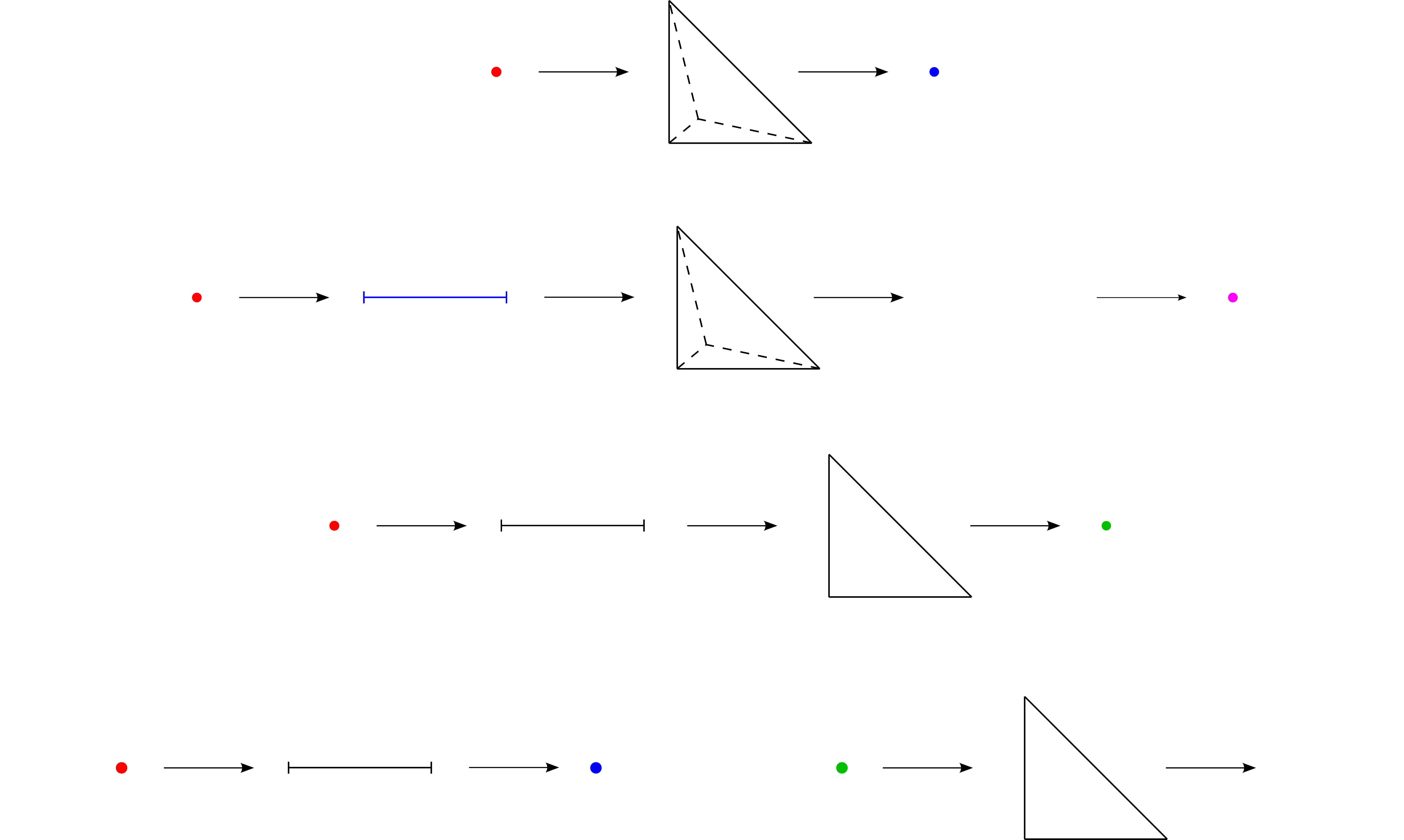
\caption{
These four expressions are isomorphic, thus identifying $HF^*(T^3_\Cl,T^3_\Cl)$ with $HF^*(S^1_\Cl,S^1_\Cl) \otimes HF^*(T^2_\Cl,T^2_\Cl)$.
The first two isomorphisms are essentially the strip-shrinking isomorphism \eqref{eq:strip_shrinking}.
The third isomorphism is straightforward.
\null\hfill$\triangle$
\label{fig:T_Cl_picture_computation}
}
\end{figure}
\end{example}

\subsection{The continuation map approach}
\label{ss:Y-map}

In \cite{lekili2013geometric}, Lekili--Lipyanskiy demonstrated another method for proving categorification-commutes-with-composition results in quilted Floer theory.
They were motivated by Lekili's work on identifying Perutz's Lagrangian matching invariants with Ozsv\'ath--Szab\'o's Heegaard Floer invariants for 3-manifolds equipped with ``broken fibrations'' over $S^1$, which requires working in the strongly negatively monotone case.
(This project of Lekili's resulted in \cite{lekili_heegaard_floer}.)
Lekili--Lipyanski's main result is the following variant of the strip-shrinking isomorphism \eqref{eq:strip_shrinking}.

\begin{theorem}[Paraphrase of Theorem 3, \cite{lekili2013geometric}, in the corrected form described in \cite{lekili_lipyanskiy_corrigendum}]
\label{thm:lekili-lipyanskiy_Y-map}
Fix closed symplectic manifolds $M_0, M_1, M_2$ of dimensions $d_0, d_1, d_2$.
Fix compact Lagrangians
\begin{align}
L_0 \subset M_0,
\qquad
L_{01} \subset M_0^-\times M_1,
\qquad
L_{12} \subset M_1^-\times M_2,
\qquad
L_2 \subset M_2
\end{align}
such that $L_{01}$ and $L_{12}$ have embedded composition.
Suppose that the symplectic manifolds and Lagrangians are negatively monotone, and satisfy a certain index inequality.
Then the canonical bijection $(L_0\times L_{12})\cap(L_{01}\times L_2) \sr{\simeq}{\lra} (L_0 \times L_2) \cap (L_{01}\circ L_{12})$ induces an isomorphism
\begin{align}
\label{eq:lekili-lipyanskiy_isomorphism}
HF^*(L_0, L_{01}, L_{12}, L_2)
\sr{\simeq}{\lra}
HF^*(L_0, L_{01}\circ L_{12}, L_2).
\end{align}
\null\hfill$\square$
\end{theorem}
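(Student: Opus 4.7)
The plan is to define a chain map
\begin{align}
\Psi\colon CF^*(L_0, L_{01}, L_{12}, L_2) \to CF^*(L_0, L_{01}\circ L_{12}, L_2)
\end{align}
directly by counting rigid pseudoholomorphic quilts over a fixed ``Y''-shaped domain, rather than by deforming moduli spaces via strip-shrinking as in the Wehrheim--Woodward approach. The Y-shaped quilted surface has an input end that is a quilted triple strip with patches $M_0, M_1, M_2$ separated by seams along $L_{01}$ and $L_{12}$, and an output end that is an ordinary double strip with patches $M_0, M_2$ separated by a single seam labelled by $L_{01}\circ L_{12}$. The two ends are joined across a transition region in which the middle $M_1$-patch tapers down to a single junction point, at which the two seams merge into the seam labelled by the composed correspondence. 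Since the canonical bijection $(L_0\times L_{12})\cap(L_{01}\times L_2) \sr{\simeq}{\lra} (L_0 \times L_2) \cap (L_{01}\circ L_{12})$ identifies the underlying chain groups, the content of the theorem is the identification of the Floer differentials up to such a chain map.

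I would proceed in three steps. First, verify that $\Psi$ is a chain map by enumerating the codimension-one strata of the associated 1-dimensional moduli space of Y-quilts: the expected contributions are strip breakings at either end, corresponding exactly to $d\circ \Psi - \Psi\circ d$. Second, construct a candidate inverse $\Phi$ by reflecting the Y-quilt, and show that $\Phi\circ \Psi$ and $\Psi\circ \Phi$ are chain homotopic to the identities via families of quilts interpolating between a composition of two Y-quilts and a trivial strip (respectively triple strip). Third, and most importantly, one must exclude all unwanted bubbling phenomena in the compactifications of the 0- and 1-dimensional Y-quilt moduli spaces --- in particular figure-eight bubbles forming at the tip of the tapering $M_1$-patch, as well as disk bubbles on $L_0, L_2, L_{01}, L_{12}$ and sphere bubbles in the $M_i$. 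This is where the negative monotonicity and the index inequality enter: under negative monotonicity, each bubble contributes negatively to the virtual dimension in proportion to its symplectic area, and the index inequality is calibrated so that no such configuration can appear in codimension at most one.

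The main obstacle is precisely the exclusion of figure-eight bubbling, which, as discussed in \S\ref{sss:strip-shrinking}, is the universal obstruction to identifying quilted and unquilted Floer cohomology outside the positively monotone embedded-composition setting. The delicacy is that a figure-eight bubble is produced by simultaneous gradient blowup and strip-shrinking at commensurate rates, so its energy does not a priori tend to zero in the limit and it cannot be ruled out by naive energy estimates; one must combine a careful dimensional analysis of the rescaled limiting quilted plane with the sign of the monotonicity constant. The fact that the original argument required a corrigendum \cite{lekili_lipyanskiy_corrigendum} sharpening the index inequality to handle a case initially overlooked underscores how fragile this bubble analysis is, and suggests that any attempted proof will need to devote most of its effort to this single technical point.
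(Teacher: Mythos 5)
Your overall strategy is the same one the paper attributes to Lekili--Lipyanskiy in \S\ref{ss:Y-map}: define the map in \eqref{eq:lekili-lipyanskiy_isomorphism} as a continuation-type map by counting rigid quilts on the fixed Y-shaped domain of Figure \ref{fig:Y-quilt}. However, two parts of your plan deviate from that argument in ways that matter. First, the invertibility step: Lekili--Lipyanskiy do not construct an inverse by reflecting the Y-quilt and exhibiting homotopies; they show that, with respect to the energy filtration, the lowest-order contributions to the Y-map are exactly the ``constant'' solutions realizing the canonical bijection $(L_0\times L_{12})\cap(L_{01}\times L_2) \sr{\simeq}{\lra} (L_0 \times L_2) \cap (L_{01}\circ L_{12})$, with all other contributions of strictly larger energy, so the map is upper triangular with identity leading term and hence an isomorphism --- this is why the theorem is phrased as ``the canonical bijection induces an isomorphism.'' Your alternative, gluing the Y-quilt to its reflection and interpolating to a trivial strip, produces a quilted strip with a finite middle $M_1$-patch; to compare it with the identity you would have to shrink that patch away, which is precisely the strip-shrinking move of \S\ref{sss:strip-shrinking} whose figure-eight obstruction the Y-map approach is designed to bypass. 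As written, your Step 2 either fails or silently reimports the hardest part of the other approach.

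Second, your bubbling analysis mislocates the danger. In the Y-map moduli spaces all strip widths are fixed, so figure-eight bubbles (which arise from a strip width tending to zero at a rate commensurate with gradient blowup) are not the codimension-one enemy. As discussed in \S\ref{ss:fukaya}, the codimension-one degenerations of Y-quilts are breakings at the ends (the Floer differentials), disk bubbling along the seams and boundary (the curvature terms $\mu_0$ of $L_0$, $L_2$, $L_{01}$, $L_{12}$, $L_{01}\circ L_{12}$), and breaking at the singular Y-point, whose bubble is the quilted cylinder with three parallel seams labelled $L_{01}$, $L_{12}$, and $L_{01}\circ L_{12}$ as in Figure \ref{fig:quilted_cylinder_3-marked-points}, i.e.\ a contribution from the quilted Floer complex $CF^*(L_{01}, L_{12}, L_{01}\circ L_{12})$. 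It is these breakings --- not figure eights --- that the negative monotonicity and the index inequality (sharpened in the corrigendum) are calibrated to exclude. Your proposal devotes its technical effort to the wrong degeneration, and so the step that actually needs the hypotheses is left unaddressed.
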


\noindent
Lekili--Lipyanskiy construct the isomorphism \eqref{eq:lekili-lipyanskiy_isomorphism} using a continuation map.
Specifically, they count quilted strips of the following form, which are known as \emph{Y-maps}:

\begin{figure}[H]
\centering
\def\svgwidth{0.4\columnwidth}
%% Creator: Inkscape 1.2 (dc2aeda, 2022-05-15), www.inkscape.org
%% PDF/EPS/PS + LaTeX output extension by Johan Engelen, 2010
%% Accompanies image file '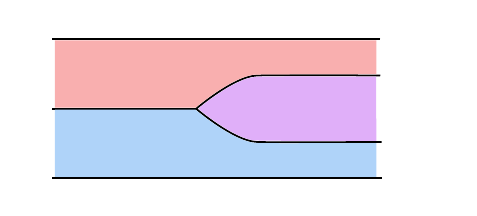' (pdf, eps, ps)
%%
%% To include the image in your LaTeX document, write
%%   \input{<filename>.pdf_tex}
%%  instead of
%%   \includegraphics{<filename>.pdf}
%% To scale the image, write
%%   \def\svgwidth{<desired width>}
%%   \input{<filename>.pdf_tex}
%%  instead of
%%   \includegraphics[width=<desired width>]{<filename>.pdf}
%%
%% Images with a different path to the parent latex file can
%% be accessed with the `import' package (which may need to be
%% installed) using
%%   \usepackage{import}
%% in the preamble, and then including the image with
%%   \import{<path to file>}{<filename>.pdf_tex}
%% Alternatively, one can specify
%%   \graphicspath{{<path to file>/}}
%% 
%% For more information, please see info/svg-inkscape on CTAN:
%%   http://tug.ctan.org/tex-archive/info/svg-inkscape
%%
\begingroup%
  \makeatletter%
  \providecommand\color[2][]{%
    \errmessage{(Inkscape) Color is used for the text in Inkscape, but the package 'color.sty' is not loaded}%
    \renewcommand\color[2][]{}%
  }%
  \providecommand\transparent[1]{%
    \errmessage{(Inkscape) Transparency is used (non-zero) for the text in Inkscape, but the package 'transparent.sty' is not loaded}%
    \renewcommand\transparent[1]{}%
  }%
  \providecommand\rotatebox[2]{#2}%
  \newcommand*\fsize{\dimexpr\f@size pt\relax}%
  \newcommand*\lineheight[1]{\fontsize{\fsize}{#1\fsize}\selectfont}%
  \ifx\svgwidth\undefined%
    \setlength{\unitlength}{137.19583363bp}%
    \ifx\svgscale\undefined%
      \relax%
    \else%
      \setlength{\unitlength}{\unitlength * \real{\svgscale}}%
    \fi%
  \else%
    \setlength{\unitlength}{\svgwidth}%
  \fi%
  \global\let\svgwidth\undefined%
  \global\let\svgscale\undefined%
  \makeatother%
  \begin{picture}(1,0.4531147)%
    \lineheight{1}%
    \setlength\tabcolsep{0pt}%
    \put(0,0){\includegraphics[width=\unitlength,page=1]{Y-map.pdf}}%
    \put(0.60742628,0.10007348){\makebox(0,0)[lt]{\lineheight{1.25}\smash{\begin{tabular}[t]{l}$M_0$\end{tabular}}}}%
    \put(0.60742628,0.31867239){\makebox(0,0)[lt]{\lineheight{1.25}\smash{\begin{tabular}[t]{l}$M_2$\end{tabular}}}}%
    \put(0.60742628,0.21106497){\makebox(0,0)[lt]{\lineheight{1.25}\smash{\begin{tabular}[t]{l}$M_1$\end{tabular}}}}%
    \put(0,0){\includegraphics[width=\unitlength,page=2]{Y-map.pdf}}%
    \put(0.85610225,0.43067872){\makebox(0,0)[lt]{\lineheight{1.25}\smash{\begin{tabular}[t]{l}$L_{12}$\end{tabular}}}}%
    \put(0,0){\includegraphics[width=\unitlength,page=3]{Y-map.pdf}}%
    \put(0.85610225,0.00558052){\makebox(0,0)[lt]{\lineheight{1.25}\smash{\begin{tabular}[t]{l}$L_{01}$\end{tabular}}}}%
    \put(0,0){\includegraphics[width=\unitlength,page=4]{Y-map.pdf}}%
    \put(-0.00220658,0.42499982){\makebox(0,0)[lt]{\lineheight{1.25}\smash{\begin{tabular}[t]{l}$L_{01}\circ L_{12}$\end{tabular}}}}%
  \end{picture}%
\endgroup%

\caption{The Y-quilt.
}
\label{fig:Y-quilt}
\end{figure}

\subsection{Fukaya's alternate approach to constructing composition (bi)functors}
\label{ss:fukaya}

In \cite{fukaya2017unobstructed}, Kenji Fukaya built on Lekili--Lipyanskiy's work to construct functors
\begin{align}
\Phi_{L_{12}}
\colon
\Fuk M_1
\to
\Fuk M_2
\end{align}
in the general compact setting, and proved that they are compatible with compositions.

The essential difficulty in establishing such a result lies in understanding how to extract the desired algebraic structures from the compactification of the moduli space of Y-quilts: in virtual codimension $1$, the possible breakings take place at the ends, along the seams, or at the singular point where all three seams meet.
The breakings at the end correspond to differentials in Floer complexes, and those along the seams to the curvature of each of the Lagrangian correspondences, which means that it remains to account for breaking at the singular point.
The corresponding bubble turns out to be a cylinder with three parallel seams, labelled by $L_{01}$, $L_{12}$, and the geometric compositions $L_{01} \circ L_{12}$, like so:

\begin{figure}[H]
\centering
\def\svgwidth{0.45\columnwidth}
%% Creator: Inkscape 1.2 (dc2aeda, 2022-05-15), www.inkscape.org
%% PDF/EPS/PS + LaTeX output extension by Johan Engelen, 2010
%% Accompanies image file '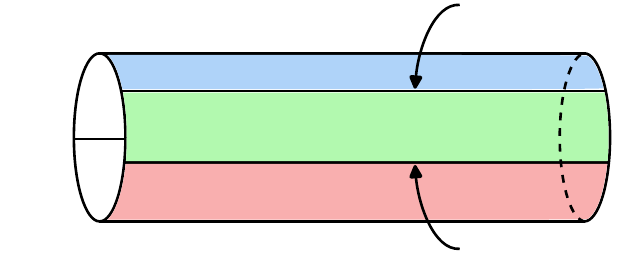' (pdf, eps, ps)
%%
%% To include the image in your LaTeX document, write
%%   \input{<filename>.pdf_tex}
%%  instead of
%%   \includegraphics{<filename>.pdf}
%% To scale the image, write
%%   \def\svgwidth{<desired width>}
%%   \input{<filename>.pdf_tex}
%%  instead of
%%   \includegraphics[width=<desired width>]{<filename>.pdf}
%%
%% Images with a different path to the parent latex file can
%% be accessed with the `import' package (which may need to be
%% installed) using
%%   \usepackage{import}
%% in the preamble, and then including the image with
%%   \import{<path to file>}{<filename>.pdf_tex}
%% Alternatively, one can specify
%%   \graphicspath{{<path to file>/}}
%% 
%% For more information, please see info/svg-inkscape on CTAN:
%%   http://tug.ctan.org/tex-archive/info/svg-inkscape
%%
\begingroup%
  \makeatletter%
  \providecommand\color[2][]{%
    \errmessage{(Inkscape) Color is used for the text in Inkscape, but the package 'color.sty' is not loaded}%
    \renewcommand\color[2][]{}%
  }%
  \providecommand\transparent[1]{%
    \errmessage{(Inkscape) Transparency is used (non-zero) for the text in Inkscape, but the package 'transparent.sty' is not loaded}%
    \renewcommand\transparent[1]{}%
  }%
  \providecommand\rotatebox[2]{#2}%
  \newcommand*\fsize{\dimexpr\f@size pt\relax}%
  \newcommand*\lineheight[1]{\fontsize{\fsize}{#1\fsize}\selectfont}%
  \ifx\svgwidth\undefined%
    \setlength{\unitlength}{178.66684743bp}%
    \ifx\svgscale\undefined%
      \relax%
    \else%
      \setlength{\unitlength}{\unitlength * \real{\svgscale}}%
    \fi%
  \else%
    \setlength{\unitlength}{\svgwidth}%
  \fi%
  \global\let\svgwidth\undefined%
  \global\let\svgscale\undefined%
  \makeatother%
  \begin{picture}(1,0.42119476)%
    \lineheight{1}%
    \setlength\tabcolsep{0pt}%
    \put(0,0){\includegraphics[width=\unitlength,page=1]{three-patch_cylinder.pdf}}%
    \put(0.74885626,0.40396647){\makebox(0,0)[lt]{\lineheight{1.25}\smash{\begin{tabular}[t]{l}$L_{01}$\end{tabular}}}}%
    \put(0.74885626,0.00428521){\makebox(0,0)[lt]{\lineheight{1.25}\smash{\begin{tabular}[t]{l}$L_{01}\circ L_{12}$\end{tabular}}}}%
    \put(0.3706763,0.19866928){\makebox(0,0)[lt]{\lineheight{1.25}\smash{\begin{tabular}[t]{l}$M_0$\end{tabular}}}}%
    \put(0.3706763,0.2921444){\makebox(0,0)[lt]{\lineheight{1.25}\smash{\begin{tabular}[t]{l}$M_1$\end{tabular}}}}%
    \put(0.3706763,0.09635864){\makebox(0,0)[lt]{\lineheight{1.25}\smash{\begin{tabular}[t]{l}$M_2$\end{tabular}}}}%
    \put(0,0){\includegraphics[width=\unitlength,page=2]{three-patch_cylinder.pdf}}%
    \put(-0.00169441,0.32866884){\makebox(0,0)[lt]{\lineheight{1.25}\smash{\begin{tabular}[t]{l}$L_{12}$\end{tabular}}}}%
  \end{picture}%
\endgroup%

\caption{
\label{fig:quilted_cylinder_3-marked-points}
}
\end{figure}

\noindent
This cylinder exactly corresponds to the differential in the quilted Floer complex
\begin{equation}
\label{eq:quilted_Floer_composition}
CF^*(L_{01}, L_{12}, L_{01} \circ L_{12}).  
\end{equation}

The naive expectation is that, since the fibre product of $L_{01} \times L_{12}$ with $L_{01} \circ L_{12}$ over $M_0 \times M_1 \times M_1 \times M_2$ is exactly a copy of $ L_{01} \circ L_{12}$, the fundamental class of this manifold should define a cycle in the Floer complex of \eqref{eq:quilted_Floer_composition}, and that inserting this cycle at the Y-point gives rise to the desired map associated to the Y-quilt.

Wehrheim--Woodward worked in a setting where every Lagrangian $L$ has the property that $\mu_0 = 0$, i.e.\ the count of rigid disks on $L$ equals 0, and  the topological assumptions alluded to in Theorem~\ref{thm:lekili-lipyanskiy_Y-map} are imposed specifically in order to ensure that this breaking does not occur for the moduli spaces that are required in the construction of the functor.
In the general compact setting, in which the Lagrangians are moreover only assumed to be immersed, this is no longer the case. 
For the particular problem at hand, the differential on the quilted Floer complex may not square to $0$, so that it does not even make sense to consider cycles.
The proper algebraic structure is that of a \emph{curved $A_\infty$-trimodule}, over the curved Floer algebras of the three Lagrangians: the equation $d^2x=0$ is replaced, in this context, by the equality between $d^2$ and the result of acting on $x$ by the curvatures of the Lagrangians $L_{01}$, $L_{12}$, and $L_{01} \circ L_{12}$.

Fukaya therefore needed to consider Lagrangians $L$ equipped with bounding cochains (c.f.\ \S\ref{sss:strip-shrinking}).
Constructing a functor associated to the Y-quilt thus amounts to solving the following problem:

\begin{theorem}[Paraphrase of Theorem 1.5 and Proposition 8.11, \cite{fukaya2017unobstructed}]
\label{thm:fukaya_bounding_cochain_transfer}
Fix immersed Lagrangians $L_{01} \subset M_0^- \times M_1$ and $L_{12} \subset M_1^- \times M_2$ equipped with bounding cochains $b_{01}, b_{12}$, and assume that the fiber product $L_{01} \circ  L_{12}$ is cut out transversely, with clean self-intersections.
There is a bounding cochain on $L_{01} \circ L_{12}$ which is characterised, up to gauge equivalence, by the property that the Yoneda module of $ L_{01} \circ L_{12}$ is quasi-isomorphic to the module (over the Fukaya category of $M_0^- \times M_2$) associated to the composition of the correspondences $L_{01}$ and $L_{12}$.
\null\hfill$\square$
\end{theorem}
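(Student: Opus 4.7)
The plan is to construct the bounding cochain on $L_{01}\circ L_{12}$ via an obstruction-theoretic argument whose geometric input is the Y-quilt moduli space of Figure~\ref{fig:Y-quilt}, and then to derive the quasi-isomorphism as a byproduct of the same construction.

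\textbf{Setup.} First, I would define the $A_\infty$-module $\sM_{L_{01},L_{12}}$ over $\Fuk(M_0^-\times M_2)$ by counting rigid quilts whose domain is a strip partitioned into an $M_0^-\times M_2$-patch and an $M_1$-patch separated by two parallel seams labeled by $L_{01}$ and $L_{12}$, carrying the cochains $b_{01}$ and $b_{12}$ respectively, and with Lagrangian labels from $\Fuk(M_0^-\times M_2)$ along the boundary. I would then view the Y-quilt of Figure~\ref{fig:Y-quilt} as providing a putative module homomorphism from $\sM_{L_{01},L_{12}}$ to the Yoneda module of $L_{01}\circ L_{12}$ equipped with an as-yet-undetermined bounding cochain $b$.

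\textbf{The Maurer--Cartan equation.} Analyzing the virtual codimension-$1$ boundary of the Y-quilt moduli space: breakings at the three cylindrical ends correspond to the differentials on source and target; breakings along the three seams are absorbed by $b_{01}$, $b_{12}$, and $b$; and the breaking at the triple point produces the quilted cylinder of Figure~\ref{fig:quilted_cylinder_3-marked-points}. The latter equips $CF^*(L_{01}, L_{12}, L_{01}\circ L_{12})$ with the structure of a curved $A_\infty$-trimodule over the curved Floer algebras of the three seams. After inserting $b_{01}$, $b_{12}$, and $b$, the requirement that the Y-quilt descend to a genuine chain map is exactly that $b$ solves a Maurer--Cartan equation in this trimodule.

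\textbf{Solving the equation and comparing the modules.} I would solve this equation order-by-order in the energy filtration of the Novikov ring, along the lines of \cite[Ch.~4]{fooo_1}. At leading order, the diagonal inside $L_{01}\times_{M_1} L_{12}\times(L_{01}\circ L_{12})$, which under the embedded-composition hypothesis is diffeomorphic to a copy of $L_{01}\circ L_{12}$, furnishes an explicit cycle representing the fundamental class. Higher-order corrections exist provided that obstruction classes in appropriate quilted Floer cohomology groups vanish at each stage; a curved adaptation of the strip-shrinking comparison (\S\ref{sss:strip-shrinking}) identifies these obstructions with boundaries on the associated graded, allowing the inductive construction to go through and producing a $T$-adically convergent $b = b_{L_{01}\circ L_{12}}$. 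Uniqueness up to gauge equivalence is then the standard consequence of the homotopy theory of curved filtered $A_\infty$-algebras \cite{fooo_1}. With $b$ in hand, the Y-quilt defines a genuine module morphism; to verify that it is a quasi-isomorphism, I would run a filtration spectral sequence whose $E_1$-page identifies the two modules via the Guillemin--Sternberg bijection of Proposition~\ref{prop:guillemin_sternberg} applied to generalized intersection points.

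\textbf{Main obstacle.} The hard part will be verifying vanishing of the higher-order obstructions in the general compact setting, without the monotonicity or topological hypotheses that made the analogous computations tractable in \cite{wehrheim2010functoriality, lekili2013geometric}. Figure-eight bubbles (cf.\ \S\ref{sss:strip-shrinking}) and more elaborate quilted bubbling contribute boundary strata, and one must show that their cumulative effect at each Novikov order is a coboundary rather than a nonzero obstruction class. This is precisely where the virtual perturbation framework of \cite{fooo_2} becomes indispensable: it both ensures that all the relevant moduli spaces are well-defined as virtual fundamental chains, and supplies the homotopical flexibility needed to kill the obstructions inductively.
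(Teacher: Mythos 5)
Your framing of the problem (the correspondence module, the Y-quilt as a putative morphism, the three-seam cylinder of Figure~\ref{fig:quilted_cylinder_3-marked-points} endowing $CF^*(L_{01},L_{12},L_{01}\circ L_{12})$ with a curved trimodule structure, and a Maurer--Cartan equation for the unknown $b$) matches how \S\ref{ss:fukaya} sets up the problem, but the heart of your argument --- solving that equation order by order in the energy filtration subject to the vanishing of obstruction classes at each stage --- is a genuine gap rather than a deferred technicality. Nothing forces those classes to vanish: for a single Lagrangian the analogous classes are exactly the Floer-theoretic obstructions, which are nonzero in general, and the virtual framework of \cite{fooo_2} only provides well-defined fundamental chains; it has no mechanism for ``killing obstructions inductively.'' The whole content of Theorem~\ref{thm:fukaya_bounding_cochain_transfer} is that no auxiliary unobstructedness hypothesis is needed beyond $b_{01}$ and $b_{12}$: existence of $b$ is unconditional under the stated transversality and clean-self-intersection assumptions (note also that the composition is only assumed immersed with clean self-intersections, so your leading-order ``fundamental class'' cycle must already be formulated in the immersed/clean framework, not under an embedded-composition assumption). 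Likewise, uniqueness up to gauge equivalence is not ``standard'' homotopy theory of curved algebras: $L_{01}\circ L_{12}$ may carry many gauge-inequivalent bounding cochains, and the theorem singles out $b$ only through the representability property, so uniqueness has to come from a Yoneda/Whitehead-type argument for curved modules rather than from the inductive construction itself.

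The second problem is the analytic input. Your ``curved adaptation of the strip-shrinking comparison'' and the figure-eight contributions you invoke presuppose exactly the quilt analysis with colliding seams whose regularization theory, as \S\ref{ss:analysis} explains, is not yet available in the general compact setting; in \cite{wehrheim2010functoriality} such arguments only go through because monotonicity and index assumptions exclude the figure-eight bubbles a priori. Fukaya's proof of the cited Theorem 1.5 and Proposition 8.11 --- as the remark following Theorem~\ref{thm:fukaya_bounding_cochain_transfer} emphasizes --- deliberately bypasses figure-eight bubbling and strip-shrinking altogether: the geometric input is limited to moduli spaces of the type in Figure~\ref{fig:quilted_cylinder_3-marked-points} (with marked points along the seams, no shrinking strips), and both the bounding cochain and the quasi-isomorphism are then extracted from an algebraic lemma about curved bimodules over curved $A_\infty$-algebras, roughly exploiting that the correspondence module has the same generators as the Yoneda module of $L_{01}\circ L_{12}$ (via the Guillemin--Sternberg bijection) and that its curved structure is a deformation of the latter, with such deformations governed precisely by bounding cochains. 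So to repair your argument you would either have to supply the missing analytic package of \S\ref{s:symp}, or restructure the proof so that existence of $b$ follows from this kind of algebraic transfer instead of from an unproven obstruction-vanishing statement.
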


\begin{remark}
Since classical techniques do not suffice to even define the Fukaya category in the general compact setting, Fukaya relies heavily on the techniques developed in \cite{fooo_2}, whose exposition goes beyond what we can hope to achieve in this paper.
\null\hfill$\triangle$
\end{remark}

Returning to the discussion above, Fukaya then shows in \cite[Theorem 9.1]{fukaya2017unobstructed} that the choice of bounding cochain on $ L_{01} \circ L_{12}$ fixed in Theorem \ref{thm:fukaya_bounding_cochain_transfer}, combined with the study of Lekili and Lipyanki's Y-map, determine a homotopy equivalence between the functors associated to the geometric composition, and the composition of functors associated to $L_{01}$ and $L_{12}$:
\begin{equation}
  \begin{tikzcd}
    \Fuk M_0 \ar[r] \ar[dr] & \Fuk M_1 \ar[d] \\
    &  \Fuk M_2.
  \end{tikzcd}
\end{equation}

Fukaya then upgrades this result to prove the following result, which constructs a composition bifunctor between Fukaya categories.

\begin{theorem}[Paraphrase of Theorem 1.8, \cite{fukaya2017unobstructed}]
The quasi-equivalence of Theorem \ref{thm:fukaya_bounding_cochain_transfer} extends to a filtered $A_\infty$-bifunctor
\begin{align}
\bigl(\Fuk(M_0^-\times M_1),
\Fuk(M_1^-\times M_2)
\bigr)
\to
\Fuk(M_0^-\times M_2),
\end{align}
which is associative up to homotopy in the sense that there is a prescribed homotopy in the following square of $A_\infty$-trifunctors:
\begin{equation} \label{eq:homotopy_associative_composition}
\begin{tikzcd}
\bigl(\Fuk(M_0^-\times M_1), \Fuk(M_1^-\times M_2), \Fuk(M_2^-\times M_3) \bigr) \ar[r] \ar[d]
&
\bigl(\Fuk(M_0^-\times M_1), \Fuk(M_1^-\times M_3) \bigr) \ar[d]
\\
\bigl(\Fuk(M_0^-\times M_2), \Fuk(M_2^-\times M_3) \bigr)  \ar[r]
&
\Fuk(M_0^-\times M_3).
\end{tikzcd}
\end{equation}
\null\hfill$\square$
\end{theorem}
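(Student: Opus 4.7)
The plan is to apply the Operadic Principle of \S\ref{ss:associahedra_and_OP} in a two-pointed setting, extending the Y-quilt construction of \S\ref{ss:Y-map} and Theorem \ref{thm:fukaya_bounding_cochain_transfer} to families of quilts with an arbitrary number of inputs distributed on two boundary seams. Specifically, for tuples of Lagrangian correspondences $L_{01}^0, \ldots, L_{01}^r$ in $M_0^-\times M_1$ and $L_{12}^0, \ldots, L_{12}^s$ in $M_1^-\times M_2$, each equipped with bounding cochains, I would define a moduli space of quilted disks with three patches (labelled by $M_0, M_1, M_2$), whose upper-left seam carries $r$ interior marked points subdividing it into arcs labelled by the $L_{01}^i$, whose upper-right seam carries $s$ interior marked points labelled by the $L_{12}^j$, and whose bottom seam is labelled by the geometric composition equipped with the bounding cochain provided by Theorem \ref{thm:fukaya_bounding_cochain_transfer}, with a single output marked point. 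Counts of rigid elements of these moduli spaces, taken with weights by the Novikov parameter recording energy, would produce the $(r,s)$-ary components of the desired $A_\infty$-bifunctor.

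The bifunctor equations would be verified by the familiar codimension-one boundary analysis: the associated moduli spaces admit a compactification whose codimension-one strata are of four types -- breaking at one of the two input seams (yielding the $A_\infty$-composition on the input side), breaking at the output (yielding $A_\infty$-composition on the output side), interior bubbling of a disc with boundary on one of the $L_{01}^i$ or $L_{12}^j$ or $L_{01}^0 \circ L_{12}^0$ (accounted for by the curvature terms in the filtered curved setting, which are cancelled precisely by the bounding cochains), and a new type of stratum in which all three seams meet at a single Y-vertex in the interior. This last stratum gives rise to the trimodule structure on the quilted Floer complex $CF^*(L_{01}, L_{12}, L_{01}\circ L_{12})$; Theorem \ref{thm:fukaya_bounding_cochain_transfer} is precisely the input which ensures that the transferred bounding cochain on $L_{01}\circ L_{12}$ kills this contribution up to gauge equivalence, so that the signed count of all codimension-one degenerations yields the $A_\infty$-bifunctor relations.

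To obtain the homotopy filling \eqref{eq:homotopy_associative_composition}, I would construct a one-parameter family of quilted domains for three successive correspondences $L_{01}$, $L_{12}$, $L_{23}$. The family interpolates between two triples of Y-points: at one end of the interval, the $L_{01}$- and $L_{12}$-seams collide into a single seam labelled by $L_{01}\circ L_{12}$ before meeting the $L_{23}$-seam at a lower Y-vertex; at the other end, the $L_{12}$- and $L_{23}$-seams collide first. Equipping this family with marked points on each of the three input seams produces a parameter space that fibers over an interval with fibers of the two-input type considered above, together with a $1$-parameter gluing datum. Counting rigid quilts over this parameterised moduli space yields a trilinear pre-homotopy whose coherence (the $A_\infty$-trifunctor homotopy relations) follows from analysing its codimension-one degenerations: the ends of the interval recover the two compositions of bifunctors, while the remaining degenerations reproduce the bifunctor and module equations already established, together with figure-eight-type contributions along the collapsing seams which are absorbed into the inductively-transferred bounding cochain on $L_{01}\circ L_{12} \circ L_{23}$.

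The main obstacle, as in \cite{fooo_2} and \cite{fukaya2017unobstructed}, is analytic rather than combinatorial: in the general compact setting, the moduli spaces above are not a priori transversely cut out, and classical perturbations of the almost complex structures destroy the seam compatibility conditions needed for the gluing of quilted strata. One must work systematically with virtual fundamental chains (e.g.\ Kuranishi structures with compatible CF-perturbations) on the entire tower of parameterised quilted moduli spaces, and verify that the prescribed bounding cochain transfer from Theorem \ref{thm:fukaya_bounding_cochain_transfer} can be carried out coherently across the entire operadic family --- including along the degenerations involving figure-eight bubbles at triple seam points. This coherence is the heart of the argument and is what forces the entire construction to live in the filtered, gapped, curved $A_\infty$-world of \S\ref{ss:anom-lagr-floer} rather than in the more classical uncurved setting of \cite{mww}.
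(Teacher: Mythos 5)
Your route is not the one the cited proof takes, and as written it has a genuine gap at its core. You build the bifunctor and the associativity homotopy out of degenerating families in which seams collide (your one-parameter family interpolating between the two orders in which the $L_{01}$-, $L_{12}$-, $L_{23}$-seams fuse), and you dispose of the resulting figure-eight-type contributions by declaring them ``absorbed into the inductively-transferred bounding cochain.'' That absorption is precisely the unproven step: it is the conjectural Wehrheim--Woodward/Bottman picture of \eqref{eq:strip_shrinking_obstructed} and \S\ref{s:symp}, and the paper is explicit (\S\ref{ss:analysis}) that no existing regularization scheme handles families of quilts with colliding seams in the general compact setting, so the moduli spaces your homotopy is counted over are not currently defined. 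Moreover, Theorem \ref{thm:fukaya_bounding_cochain_transfer} does not supply what you need for this cancellation: the transferred bounding cochain there is characterized algebraically, up to gauge equivalence, by a quasi-isomorphism of Yoneda modules, not by a count of figure-eight bubbles, so you cannot invoke it to kill the colliding-seam strata without first proving the (open) comparison between the two.

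Fukaya's actual argument, as paraphrased in \S\ref{ss:fukaya}, is designed to avoid exactly this. The bifunctor requires no geometric input beyond the quilted cylinders with three \emph{parallel} seams labelled $L_{01}$, $L_{12}$, $L_{01}\circ L_{12}$ of Figure \ref{fig:quilted_cylinder_3-marked-points}, decorated with marked points along the seams; the relevant degenerations are Floer breakings and curvature insertions governed by the curved trimodule structure, handled by an algebraic lemma about curved bimodules rather than by strip-shrinking. The homotopy filling \eqref{eq:homotopy_associative_composition} then comes not from a seam-collision family but from the new quilted surface on a four-punctured genus-zero curve (Figure \ref{fig:fukaya_jumpsuit}), each of whose ends carries three parallel seams labelled $L_{ij}$, $L_{jk}$, $L_{ik}$, with the canonical ``unit'' cycles of $CF^*(L_{ij},L_{jk},L_{ik}^T)$ inserted; the essential computation is that the count of constant quilts with three unit inputs and unit output equals one, which reduces to the maximum principle. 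If you want to salvage your approach, you must either prove the regularization and figure-eight cancellation statements you are assuming (the heart of the $\Symp$ program), or reorganize the argument around parallel-seam quilts and the curved-module algebra as Fukaya does.
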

Fukaya's proof of the first part of the above result does not use any significantly new geometric input beyond the moduli spaces in Figure \ref{fig:quilted_cylinder_3-marked-points} (with additional marked points along the seams).
On the other hand, the proof of the second part --- i.e.\ the homotopy in Diagram \eqref{eq:homotopy_associative_composition} --- uses a new type of quilted surface, lying on a genus-$0$ curve with four punctures, as shown in Figure \ref{fig:fukaya_jumpsuit}.

\begin{figure}[H]
\centering
\def\svgwidth{0.55\columnwidth}
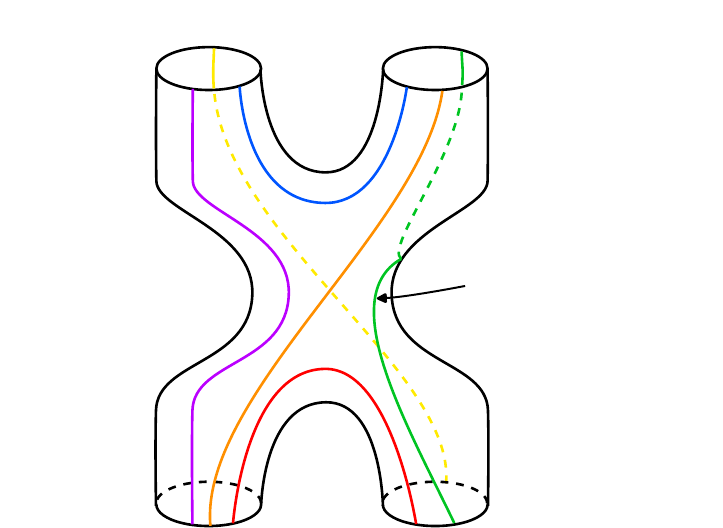
\caption{
\label{fig:fukaya_jumpsuit}
}
\end{figure} 

Heuristically, the appearance of the quilted surface in Figure \ref{fig:fukaya_jumpsuit} can be justified as follows: given a triple $L_{01}$, $L_{12}$, and $L_{23}$ of Lagrangian correspondences, which are in generic position, one defines a Lagrangian correspondence $L_{ij}$ for each pair $0 \leq i< j \leq 3$ by geometric composition.
Now, each end of the surface in Figure \ref{fig:fukaya_jumpsuit} is labelled by a triple of integers $i < j < k$, and carries parallel seams to which the Lagrangian correspondences $L_{ij}$, $L_{jk}$, and $L_{ik}$.
As an outcome of Theorem \ref{thm:fukaya_bounding_cochain_transfer}, a choice of bounding cochains on the initial three Lagrangians $L_{01}$, $L_{12}$, and $L_{23}$ determines a bounding cochain on each Lagrangian $L_{ij}$, and this choice is such that the quilted Floer group
\begin{equation}
CF^*(L_{ij}, L_{jk}, L_{ik}^T)    
\end{equation}
admits a canonical cycle representing the equivalence between $L_{ik}$ and the composition of  $L_{ij}$ with $L_{jk}$ (we call this cycle the unit).
The essential point in showing that Figure \ref{fig:fukaya_jumpsuit} induces the desired equivalence is to compute that the count of constant pseudoholomorphic quilts with the given seam conditions, with the unit as input in three of the ends and as output at the third end, is exactly one, which is analogous to the fact that the unique constant disc with four marked points in fixed conformal position, passing through any point on a Lagrangian submanifold, is regular.
This fact ultimately reduces to the maximum principle for holomorphic functions, which shows that any holomorphic function on a disc, with value in $\bC^n$, and with boundary condition on $\bR^n$, must be constant.

\begin{remark}
In \S\ref{s:symp}, we explain an alternative conjectural approach to reprove these result, following the geometric ideas initiated by Wehrheim--Woodward.
Fukaya's proof of Theorem \ref{thm:fukaya_bounding_cochain_transfer} amazingly succeeds in bypassing all the analytic and geometric difficulties in the study of figure eight bubbling, and ultimately reduces it to an algebraic lemma about curved bimodules between curved $A_\infty$-algebras.
We expect that the complete functoriality package that we discuss in \S\ref{s:symp} can also be implemented in Fukaya's approach, but that doing so would require complicated arguments in the theory of curved algebras, which are further removed from the geometry of Floer theory than the quilted approach we discuss.
\null\hfill$\triangle$
\end{remark}

%%%% Local Variables:
%%%% mode: latex
%%%% TeX-master: "functoriality_in_categorical_symplectic_geometry"
%%%% End:

\section{The symplectic $(A_\infty,2)$-category $\Symp$}
\label{s:symp}

Wehrheim--Woodward's package of $A_\infty$-functors and homotopies
\begin{align} \label{eq:functor_and_homotopy}
\Phi^\#_{L_{12}}
\colon
\Fuk^\#(M_1) \to \Fuk^\#(M_2),
\qquad
\Psi^\#_{L_{12},L_{23}}
\colon
\Phi^\#_{L_{23}}\circ\Phi^\#_{L_{12}}
\sr{\simeq}{\lra}
\Phi^\#_{L_{12}\circ L_{23}}
\end{align}
represented a major step toward a notion of functoriality for the Fukaya category, but it faces fundamental restrictions because Wehrheim and Woodward only considered settings in which figure eight bubbles are a priori excluded.
This leads to an obvious question: is there a single algebraic framework that incorporates the composition operations in the Fukaya category, functors $\Phi_{L_{12}^\#}$, the homotopies $\psi_{L_{12},L_{23}}$, and a hypothetical algebraic operation defined by counting figure eight bubbles?

In this section, we describe a project of the second author and his collaborators which aims to achieve this goal and additionally constructs a coherent package for the functoriality of the Fukaya category, incorporating higher homotopies between the composition of the maps in \eqref{eq:functor_and_homotopy}.
This package is called the \emph{symplectic $(A_\infty,2)$-category}, denoted $\Symp$.
It is a chain-level version of a 2-category constructed by Wehrheim--Woodward in \cite{wehrheim2010functoriality}.

We begin with the observation that a number of pseudoholomorphic maps and quilts can be subsumed as instances of a more general family of quilts, called \emph{witch balls}.
Indeed, consider the pseudoholomorphic quilts depicted in the following figure, which we encountered while discussing the Fukaya category (\S\ref{ss:fuk}), the $A_\infty$-functor $\Phi_{L_{12}}\colon \Fuk M_1 \to \Fuk M_2$ (\S\ref{ss:approaching_Phi}), the $A_\infty$-homotopy $\Psi_{L_{12},L_{23}}\colon \Phi_{L_{23}}\circ\Phi_{L_{12}} \to \Phi_{L_{12}\circ L_{23}}$ (\S\ref{ss:def_of_Phi_L12}), and strip-shrinking (\S\ref{sss:strip-shrinking}), respectively.
\begin{figure}[H]
\centering
\def\svgwidth{0.9\columnwidth}
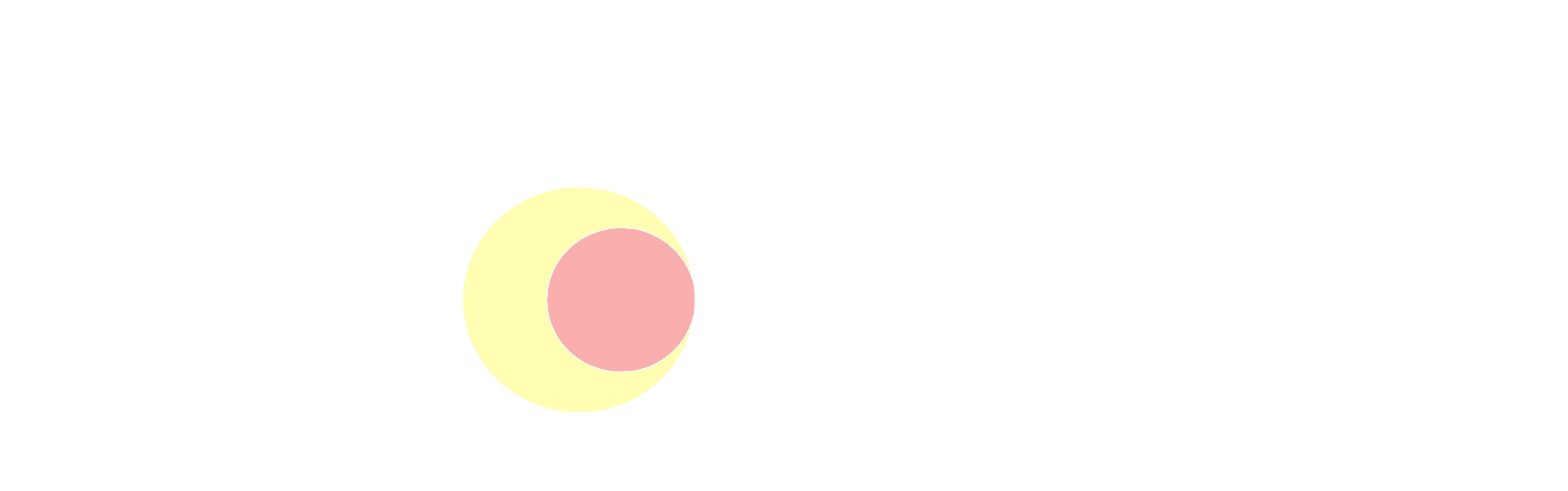
\caption{
\label{fig:witch_specializations}
}
\end{figure}

\noindent
Each of these quilts is an instance of a \emph{witch ball}, which is a quilt whose domain is depicted in the following figure (borrowed from \cite[p.\ 2]{bottman_2-associahedra}).
\begin{figure}[H]
\centering
\def\svgwidth{0.35\columnwidth}
%% Creator: Inkscape 1.2 (dc2aeda, 2022-05-15), www.inkscape.org
%% PDF/EPS/PS + LaTeX output extension by Johan Engelen, 2010
%% Accompanies image file '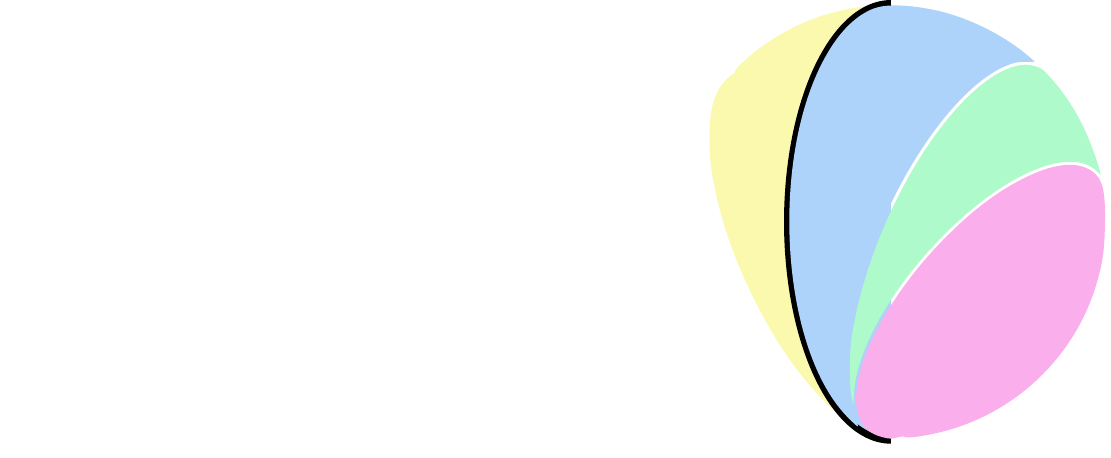' (pdf, eps, ps)
%%
%% To include the image in your LaTeX document, write
%%   \input{<filename>.pdf_tex}
%%  instead of
%%   \includegraphics{<filename>.pdf}
%% To scale the image, write
%%   \def\svgwidth{<desired width>}
%%   \input{<filename>.pdf_tex}
%%  instead of
%%   \includegraphics[width=<desired width>]{<filename>.pdf}
%%
%% Images with a different path to the parent latex file can
%% be accessed with the `import' package (which may need to be
%% installed) using
%%   \usepackage{import}
%% in the preamble, and then including the image with
%%   \import{<path to file>}{<filename>.pdf_tex}
%% Alternatively, one can specify
%%   \graphicspath{{<path to file>/}}
%% 
%% For more information, please see info/svg-inkscape on CTAN:
%%   http://tug.ctan.org/tex-archive/info/svg-inkscape
%%
\begingroup%
  \makeatletter%
  \providecommand\color[2][]{%
    \errmessage{(Inkscape) Color is used for the text in Inkscape, but the package 'color.sty' is not loaded}%
    \renewcommand\color[2][]{}%
  }%
  \providecommand\transparent[1]{%
    \errmessage{(Inkscape) Transparency is used (non-zero) for the text in Inkscape, but the package 'transparent.sty' is not loaded}%
    \renewcommand\transparent[1]{}%
  }%
  \providecommand\rotatebox[2]{#2}%
  \newcommand*\fsize{\dimexpr\f@size pt\relax}%
  \newcommand*\lineheight[1]{\fontsize{\fsize}{#1\fsize}\selectfont}%
  \ifx\svgwidth\undefined%
    \setlength{\unitlength}{319.89737688bp}%
    \ifx\svgscale\undefined%
      \relax%
    \else%
      \setlength{\unitlength}{\unitlength * \real{\svgscale}}%
    \fi%
  \else%
    \setlength{\unitlength}{\svgwidth}%
  \fi%
  \global\let\svgwidth\undefined%
  \global\let\svgscale\undefined%
  \makeatother%
  \begin{picture}(1,0.41149468)%
    \lineheight{1}%
    \setlength\tabcolsep{0pt}%
    \put(0,0){\includegraphics[width=\unitlength,page=1]{witch_ball.pdf}}%
    \put(0.02449367,0.22024361){\color[rgb]{0,0,0}\makebox(0,0)[lt]{\lineheight{0}\smash{\begin{tabular}[t]{l} \end{tabular}}}}%
    \put(0,0){\includegraphics[width=\unitlength,page=2]{witch_ball.pdf}}%
  \end{picture}%
\endgroup%

\caption{Two equivalent views of the domain of a witch ball.
On the left, we depict $\bR^2$, divided into patches by vertical lines with marked points.
On the right, we compactify $\bR^2$ to $S^2$; the lines become circles that intersect at the south pole.}
\label{fig:two_pictures_for_witch_ball}
\end{figure}
\noindent
The domain moduli spaces of witch balls are called \emph{2-associahedra}, and are denoted $\ol{2M}_\bn$ or $W_\bn$ depending on whether we are referring to the stratified topological space or to the poset of strata; the indexing set $\bn$ is a sequence of natural numbers that records the number ofpoints on each seam.

The symplectic $(A_\infty,2)$-category $\Symp$ is the structure that emerges from counting witch balls.
We will now give a blueprint for this structure.
After that, we will explain what portions of this structure have been defined and what parts remain to be constructed.
In the subsequent subsections, we will delve into the details of the components of $\Symp$.

\medskip

\noindent
{\bf Blueprint for $\Symp$, and a roadmap of which parts have and have not been completed.}
$\Symp$ is an $(A_\infty,2)$-category consisting of the following data:
\begin{itemize}
\item
The category $\Symp_1$, whose objects are symplectic manifolds $(M,\omega)$ and where $\hom(M_1,M_2)$ is the set of Lagrangian correspondences $M_1 \sr{L_{12}}{\lra} M_2$.

\smallskip

\item
For each pair of 1-morphisms, i.e.\ Lagrangian correspondences $M_1 \sr{L_{12},L_{12}'}{\lra} M_2$, the Floer cochain complex $CF^*(L_{12},L_{12}')$ of \emph{2-morphisms from $L_{12}$ to $L_{12}'$}.

\smallskip

\item
For each $r \geq 1$ and $\bm \in \bZ_{\geq0}^r\setminus\{\bzero\}$, for each sequence $M_0,\ldots,M_r$, and for each collection of sequences of Lagrangian correspondences
\begin{align*}
L_{01}^0,\ldots,L_{01}^{m_1} 
\subset &
M_0^-\times M_1, \\ 
 & \ldots, \\
L_{(r-1)r}^0,\ldots,L_{(r-1)r}^{m_r}
\subset &
M_{r-1}^-\times M_r,
\end{align*}
a composition map
\begin{multline}
2c_\bm\colon
C_*^\sing(\ol{2M}_\bn)
\otimes
\bigotimes_{{1\leq i\leq r,}
\atop
{1 \leq j\leq m_i}}
CF^*\bigl(L_{(i-1)i}^{j-1},L_{(i-1)i}^j\bigr)
\\
\lra
CF^*\bigl(
L_{01}^0\circ\cdots\circ L_{(r-1)r}^0,
L_{01}^{m_1}\circ\cdots\circ L_{(r-1)r}^{m_r}\bigr).
\end{multline}
\end{itemize}
\null\hfill$\triangle$

We now describe the current status of progress toward the definition of $\Symp$.

\begin{itemize}
\item
As we will explain in \S\S\ref{ss:witch_curves}--\ref{ss:2-associahedra}, the second author defined in \cite{bottman_2-associahedra} the 2-associahedra $\ol{2M}_\bn$ in terms of two equivalent models $W_\bn^\tree \simeq W_\bn^\br \eqqcolon W_\bn$.
He established the basic combinatorial properties of the 2-associahedra, in particular that $W_\bn$ is an abstract polytope with a recursive structure.
Next, in \cite{bottman_realizations}, the second author constructed the moduli spaces $\ol{2M}_\bn$ of witch curves and established their basic topological properties.
These spaces will form the domain spaces for the maps whose counts define $\Symp$.
In \cite{bottman_oblomkov}, Bottman--Oblomkov upgraded the topological structure on $\ol{2M}_\bn$ to a smooth structure --- specifically, they equipped $\ol{2M}_\bn$ with the structure of a smooth manifold with g-corners in the sense of \cite{joyce}.

\smallskip

\item
As we will explain in \S\ref{ss:relative_2-operad}, Bottman and Carmeli defined in \cite{bottman_carmeli} the notion of a relative 2-operad.
They showed that $\bigl(\ol{2M}_\bn\bigr)$ forms a 2-operad relative to $\bigl(\ol M_r\bigr)$, and used this to define an $(A_\infty,2)$-category to be a 2-category-like object in which there is an operation on 2-morphisms associated to every singular chain on $\ol{2M}_\bn$.
($\ol M_r$ is the topological instantiation of the $(r-2)$-dimensional associahedron, which we referred to as $K_r$ earlier in this paper.)

Depending on the particular regularization theory, one might hope for a definition of $\Symp$ in which operations on 2-morphisms are associated to \emph{cellular} chains on $\ol{2M}_\bn$.
This requires a nontrivial modification to the definition of an $(A_\infty,2)$-category.
The second author is currently developing such a modification.

\smallskip

\item
As we will describe in \S\ref{ss:analysis}, Bottman and Wehrheim established two basic analysis results necessary for it to be conceivable to define a curve-counting theory via witch balls.
Specifically, Bottman proved a removal-of-singularity theorem in \cite{bottman_figure_eight_singularity} via a collection of width-independent elliptic inequalities, and Bottman and Wehrheim built on this in \cite{bottman_wehrheim} to establish a Gromov compactness result for moduli spaces of witch balls.

\smallskip

\item
The major step toward $\Symp$ that has not yet been completed is the construction of a regularization theory for moduli spaces of witch balls.
While a number of regularization theories for moduli spaces of pseudoholomorphic curves exist, none of these theories currently allow for families of quilts involving colliding seams.
The second author and Katrin Wehrheim are currently working on an approach to this task via the theory of polyfolds.

In addition, there is the issue of Lagrangian correspondences that do not have transversely-defined composition, and those that do, but for which the composition is immersed, rather than embedded.
\end{itemize}

\begin{remark}
Here, we describe $\Symp_1$ as an ordinary category, which implies that composition of 1-morphisms $L_{12} \circ L_{23}$ is strictly associative.
To carry out the construction of $\Symp_1$ in generality, this will need to be relaxed to allow homotopy-associative composition.
The reason for this is the figure-eight bubbling discussed in \S\ref{sss:strip-shrinking}.
\null\hfill$\triangle$
\end{remark}

\subsection{The domain moduli spaces of witch curves}
\label{ss:witch_curves}

The Operadic Principle described in \S\ref{ss:fuk} tells us that to understand the structure that results from counting witch balls, we must first define and understand the relevant compactified domain moduli spaces.
These moduli spaces are indexed by a non-negative integer $r \geq 1$ which records the number of vertical lines appearing in the left part of Figure \ref{fig:two_pictures_for_witch_ball}, and a sequence $\bn \in \bZ_{\geq0}^r\setminus\{\bzero\}$ of integers which records the number of marked points on each vertical line.

There is an open moduli space $2M_\bn$ associated to these data that parametrizes \emph{witch curves}, i.e.\ configurations of vertical lines in $\bR^2$ equipped with marked points; we identify two configurations if they differ by an overall translation and positive dilation.
This moduli space is not compact, because points on a single line can collide, or lines can collide.
We compactify $2M_\bn$ to a space $\ol{2M}_\bn$ of \emph{nodal witch curves} like so: when a collection of lines collide, then wherever the marked points on these lines are as this collision happens, we bubble off another configuration of lines and points.
This compactified moduli space is called the \emph{$\bn$-th 2-associahedron}.
To define $\ol{2M}_\bn$, we need to specify the allowed degenerations, and this is where the 2-associahedra come in: for $r \geq 1$ and $\bn \in \bZ_{\geq0}^r\setminus\{\bzero\}$ we define the \emph{2-associahedron} $W_\bn$ to be the poset of degenerations in $\ol{2M}_\bn$.

There are two combinatorial models for $W_\bn$, which take the form of isomorphic posets $W_\bn^\tree$ and $W_\bn^\br$.
These models are completely analogous to the models $K_r^\tree$ and $K_r^\br$ for the associahedra:
\begin{itemize}
\item
$W_\bn^\tree$ consists of \emph{tree-pairs} $T_b \to T_s$, where $T_b$ resp.\ $T_s$ are planted trees called the \emph{bubble tree} resp.\ \emph{seam tree}.
When identifying the strata of $\ol{2M}_\bn$ with the elements of $W_\bn^\tree$, it is straightforward to go from a tree of spheres to the bubble tree $T_b$: $T_b$ has a cluster of solid edges for every sphere, where the number of solid edges corresponds to the number of seams on that sphere.
$T_b$ has a dashed edge for every attachment point and marked point in the tree of spheres.
The seam tree $T_s$ keeps track of how the seams have collided --- and, importantly, enforces certain coherences, as we will explain below.

\smallskip

\item
$W_\bn^\br$ consists of \emph{2-bracketings}.
Roughly, a 2-bracketing encodes the data of a witch tree by including a 2-bracket for every sphere.
For each sphere, the corresponding 2-bracket contains the information of all the lines and marked points that are either on that sphere, or on a sphere farther from the root.
\end{itemize}
We illustrate this in the following figure (borrowed from \cite[p.\ 3]{bottman_2-associahedra}): on the left is the compactified moduli space $\ol{2M}_{200}$, and in the middle and on the right are two presentations of $W_{200}$.
These figures are not intended to be understandable just yet, but they will serve as points of reference as we describe $\ol{2M}_\bn$, $W_\bn^\tree$, and $W_\bn^\br$ in more detail.

\begin{figure}[H]
\centering
\def\svgwidth{1.0\columnwidth}
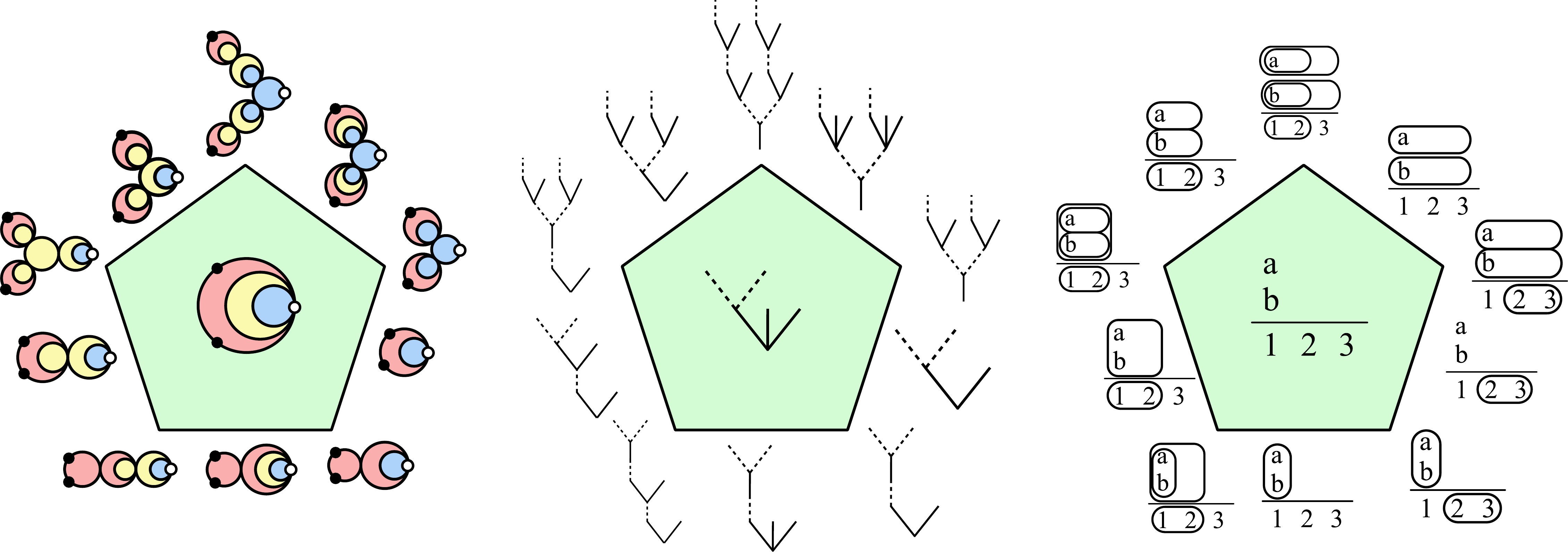
\caption{
\label{fig:rosetta}
}
\end{figure}

\noindent
In the current subsection, we will focus on the domain moduli spaces $\ol{2M}_\bn$.
In \S\ref{ss:2-associahedra}, we will return to the posets $W_\bn^\tree, W_\bn^\br$.

\begin{remark}
\label{rmk:quilted_disks_vs_spheres}
We defined $\ol{2M}_\bn$ to be a moduli space of quilted spheres, but in Figure \ref{fig:rosetta}, we have labeled its strata by representative quilted disks.
The reason is that an element of $2M_\bn$ can be identified with a quilted disk, by excising the left-most patch of the quilted sphere; moreover, when $\bn$ is of the form $\bn = (n_1,0,\ldots,0)$, this extends to the boundary to yield an identification of an element of $\ol{2M}_\bn$ with a nodal quilted disk.
\null\hfill$\triangle$
\end{remark}

\medskip

We begin by being describing in $\ol{2M}_\bn$ in more detail, while still falling short of providing a complete description, which will require the definitions of $W_\bn^\tree$ and $W_\bn^\br$ (but to understand the definitions of these posets, it is helpful to first have some intuition about $\ol{2M}_\bn$!).
First, we record the definition of its interior, $2M_\bn$:
\begin{align}
2M_\bn
\coloneqq
\left\{
{\ell_1, \ldots, \ell_r \text{ vertical lines in } \bR^2 \text{ ordered from left to right,}}
\atop
{p_{i1},\ldots,p_{in_i} \text{ points on } \ell_i, \text{ ordered from top to bottom}}
\right\}\bigg/_{\bR^2 \rtimes \bR_{>0}},
\end{align}
where these lines and points are required to be distinct and where we identify two configurations that differ by an element of the group $\bR^2 \rtimes \bR_{>0}$ of automorphisms of the plane generated by translations and positive dilations.
$2M_\bn$ is not compact, because lines or points can collide.
To define a theory by counting witch balls, we therefore need to compactify this domain moduli space.
Bottman Gromov-compactified $2M_\bn$ to form $\ol{2M}_\bn$, according to the following paradigm:

\smallskip

\begin{quotation}
\it
When a marked points $p_{ij}$ collides with either another marked point $p_{i'j'}$ or with a line $\ell_{i'}$, we resolve this collision by ``bubbling off'' a new copy of $\bR^2$, which we obtain by zooming in at the collision point $c$ at a rate commensurate to the slowest collision occurring at $c$.
If in this zoomed-in view there is still a collision, we resolve this collision in the same way, and so on inductively.
\end{quotation}

\smallskip

A detailed example of this compactification process is given in \cite[\S1.1]{bottman_realizations}.
We summarize this example now.
For $\eps \in (0,\tfrac 12)$, take the following configuration in $\ol{2M}_{10010}$ (pictured with $\eps = 2/5$):

\vspace{-0.5em}
\begin{figure}[H]
\centering
\def\svgwidth{0.325\columnwidth}
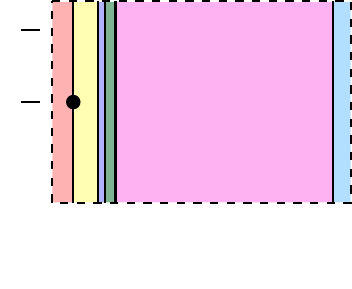
\caption{}
\end{figure}
\vspace{-1em}

\noindent
As $\eps \to 0$, all lines except the right-most one collide, and the two marked points also collide.
The limit is defined by inductively rescaling on the marked points involved in collisions, according to the paradigm in italics above.
We depict this limit below:

\begin{figure}[H]
\centering
\def\svgwidth{1.0\columnwidth}
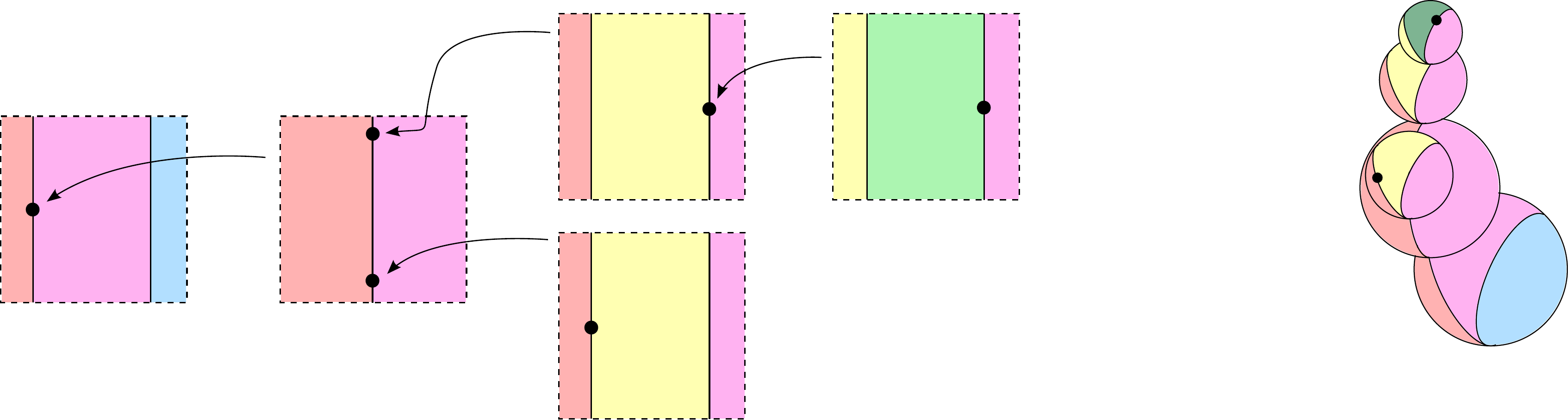
\caption{
Two equivalent depictions of the limit.
On the left, we have a tree of configurations in $\bR^2$.
On the right, we have a tree of configurations in $S^2$.
}
\end{figure}

A subtle but important aspect of $\ol{2M}_\bn$ is that unlike $\ol M_r$, the strata of $\ol{2M}_\bn$ do not decompose as products of lower-dimensional instances of $\ol{2M}_\bn$!
We will approach this through the example of $\ol{2M}_{200}$, as in Figure \ref{fig:rosetta}.
Specifically, we consider the upper-right stratum of that pentagon,
which corresponds to the degeneration where the three lines collide commensurately.
We see that we can naturally identify the codimension-1 stratum resulting from degenerations of this form with the locus inside $\ol{2M}_2 \times \ol{2M}_{100} \times \ol{2M}_{100}$ where the positions of the lines on the two bubbled-off screens agree, up to translation and dilation.
Equivalently, if we denote by $\pi\colon \ol{2M}_{100} \to \ol M_3$ the forgetful map which remembers the $x$-positions of the lines, we have identified this stratum with the fiber product $\ol{2M}_2 \times \ol{2M}_{100} \times_{\ol M_3} \ol{2M}_{100}$.
We will return to this aspect of the 2-associahedra in \S\ref{ss:relative_2-operad}.

The full construction of $\Symp$ will depend on the choice of an abstract perturbation scheme.
At least one of these schemes --- based on the polyfold theory developed by Hofer, Wysocki, Zehnder, and Fish --- requires smooth structures on the domain moduli spaces.
In the case of $\Symp$, the relevant domain moduli spaces are the spaces $\ol{2M}_\bn$ of witch curves.
It turns out that endowing $\ol{2M}_\bn$ with a smooth structure is a nontrivial proposition, because $\ol{2M}_\bn$ cannot be a smooth manifold with boundary and corners in a way compatible with its natural stratification.
The first example where we can see this is in the 3-dimensional space $\ol{2M}_{40}$ corresponding to two seams, one of which carries four marked points.
Consider the portion of $\partial\bigl(\ol{2M}_{40}\bigr)$ depicted below:

\medskip

\begin{figure}[H]
\centering
\def\svgwidth{0.35\columnwidth}
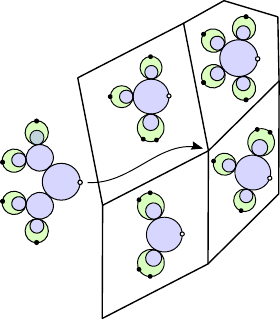
\caption{
\label{fig:bad_corner}
}
\end{figure}

\noindent
This configuration cannot appear in a 3-dimensional manifold with boundary and corners, because $\ol{2M}_{40}$ is not locally diffeomorphic to $[0,1)^3$ at the corner depicted here.

Nevertheless, one can equip $\ol{2M}_{40}$ with a smooth structure.
To approach this, we recall the main result from \cite{bottman_oblomkov}.
This result is concerned with $\ol{2M}_\bn^\bC$, which is a ``complexification'' of $\ol{2M}_\bn$: $\ol{2M}_\bn^\bC$ is a compactified moduli space of configurations of vertical complex lines in $\bC^2$, up to complex dilations and translations.

\medskip

\noindent
{\bf Theorem 1.1, \cite{bottman_oblomkov}.}
{\it
$\ol{2M}_\bn^\bC$ is a proper complex variety with toric singularities.
There is a forgetful morphism $\pi\colon \ol{2M}_\bn^\bC \to \ol M_{0,r+1}$, which on the open locus sends a configuration of lines and points to the positions of the lines, thought of as a configuration of points in $\bC$.
\null\hfill$\square$
}

\medskip

\noindent
Bottman--Oblomkov's result has a direct implication for $\ol{2M}_\bn$.
This implication did not appear in their paper, but it is a straightforward translation from the complex to the real picture.

\medskip

\noindent
{\bf Corollary of Theorem 1.1, \cite{bottman_oblomkov}.}
{\it
There is a canonical way to endow $\ol{2M}_\bn$ with the structure of a (compact) manifold with g-corners, in the sense of \cite{joyce}.
\null\hfill$\square$
}

\medskip

\noindent
A manifold with g-corners (short for ``generalized corners'') can be thought of as a manifold with a smooth structure (in particular, with a well-behaved notion of tangent bundle) that is modeled on polytopes that are not necessarily simple.
Alternately, one can think of a manifold with g-corners as the differential-topological, positive-real analogue of a toric variety.

\begin{remark}
\label{rem:W_n_manifold_with_boundary}
An immediate consequence of this corollary is that $\ol{2M}_\bn$ is a topological manifold with boundary.
In fact, if $X$ is a compact topological manifold with boundary whose interior is homeomorphic to $\bR^k$, then $X$ is a closed ball.
It follows that $\ol{2M}_\bn$ is homeomorphic to $\ol B^{|\bn|+r-3}$.
\null\hfill$\triangle$
\end{remark}

\subsection{The combinatorial models $W_\bn^\tree$ and $W_\bn^\br$}
\label{ss:2-associahedra}

In \S\ref{ss:witch_curves}, we described the domain moduli spaces $\ol{2M}_\bn$ for $\Symp$, which parametrize witch curves.
The precise definition of $\ol{2M}_\bn$ proceeds by first defining the stratum corresponding to each element of $W_\bn$, and then defining a topology on the union of these strata by formulating a notion of a ``Gromov-convergent sequence''.
In this subsection, we will describe the equivalent posets $W_\bn^\tree \simeq W_\bn^\br \eqqcolon W_\bn$.
The precise definitions of $W_\bn^\tree$ and $W_\bn^\br$ can be found in Definitions 3.1 and 3.11 of \cite{bottman_2-associahedra}.
These definitions are rather technical and by now are well-established, so we will limit ourselves to sketching them.

We begin with $W_\bn^\tree$.
Recall from \S\ref{ss:associahedra_and_OP} that $K_r^\tree$ consists of stable rooted planar trees.
Moreover, recall that there is a correspondence between $K_r^\tree$ and the combinatorial type of a nodal tree of disks: one replaces each disk by a vertex, adds an interior edge for every nodal point, and adds an exterior edge for every boundary marked point.
$W_\bn^\tree$ is an analogous construction, but where we are summarizing the combinatorial information of a witch curve instead of a nodal tree of disks.
We must now incorporate data that record the sphere components, the seams, the attachment point between spheres, and the marked points that appear on seams.
Given a witch curve, we translate it into a tree-like object by doing the following:
\begin{itemize}
\item
Replace a sphere with $k$ seams by a corolla of $k$ solid edges (i.e.\ a vertex with $k$ attached edges).

\smallskip

\item
If the south pole of a sphere is attached to the $i$-th seam of another sphere, add a dashed edge connecting the bottom point of the first sphere's corolla to the top point of the $i$-th solid edge in the second sphere's corolla.

\smallskip

\item
If a sphere has $j$ input marked points on its $i$-th seam, add an exterior dashed edge attached to the top point of the $i$-th solid edge in the sphere's corolla.
\end{itemize}

\medskip

A naive definition of $W_\bn^\tree$ might allow for all trees whose edges alternate between solid and dashed, subject to a suitable stability condition.
However, this would include many trees that do not correspond to degenerations in $\ol{2M}_\bn$.
The key to defining trees corresponding to legitimate degenerations is to introduce an auxiliary tree, the \emph{seam tree}; to disambiguate, we refer the tree we had been considering as the \emph{bubble tree}.
The seam tree is a stable rooted planar tree, which tracks hown the seams have collided.
It enforces the necessary coherences in the bubble tree.
In particular, the seam tree is necessary in order to produce a poset of degenerations which has the recursive structure described in \S\ref{ss:witch_curves}, where strata naturally decompose as products of fiber products.

\medskip

\noindent
{\bf Sketch definition of $W_\bn^\tree$, the model for $W_\bn$ consisting of stable tree-pairs.}
$W_\bn^\tree$ is the set of stable tree-pairs of type $\bn$, where the latter is a datum $2T = T_b \sr{f}{\to} T_s$.
\begin{itemize}
\item The \emph{bubble tree} $T_b$ is a planar rooted tree whose edges alternate between solid and dashed.
We impose a stability condition which corresponds to a fact that a screen with one seam has finitely many automorphisms if and only if the seam has at least two marked points, and a screen with two or more seams has finitely many automorphisms if and only if there is at least one marked seam.

\smallskip

\item
The \emph{seam tree} $T_s$ is a planar rooted tree.

\smallskip

\item
The \emph{coherence map} is a map $f\colon T_b \to T_s$ of trees, which contracts all dashed edges and all solid corollas with only a single solid edge.
Every solid corolla with $k \geq 2$ edges is mapped bijectively by $f$ to a corolla in $T_s$ with $k$ edges.
\null\hfill$\triangle$
\end{itemize}

\medskip

\noindent
The middle pentagon in Figure \ref{fig:rosetta} illustrates all stable tree-pairs in the case $\bn = (2,0,0)$.

\medskip

Next, we turn to the model $W_\bn^\br$, which a posteriori is equivalent to $W_\bn^\tree$.
Recall from \S\ref{ss:associahedra_and_OP} that $K_r^\br$ consists of legal bracketings of $1,\ldots, r$.
Moreover, recall that there is a correspondence between $K_r^\br$ and the combinatorial type of a nodal tree of disks: for every disk, one tabulates all the input marked points that are either on this disk or on a disk further from the output marked point, and includes a bracket containing the corresponding numbers.
We define $W_\bn^\br$ in an analogous fashion.
Given a witch curve, we translate it into a \emph{2-bracketing} by doing the following:
\begin{itemize}
\item
Label the seams from left to right by $1, \ldots, r$.
Label the marked points on the $i$-th seam from bottom to top by $1, \ldots, n_i$.

\smallskip

\item
Consider one of the sphere components $C$ in the witch curve we are considering.
Define $B$ to be the subset of $\{1,\ldots,r\}$ corresponding to the seams that appear either on $C$ or on a sphere component further from the output marked point than $C$.
Similarly, for every $i \in B$, define $2B_i$ to be the subset of $\{1,\ldots,n_i\}$ corresponding to the input marked points that appear either on $C$ or on a sphere component further from the output marked point than $C$.
$\bigl(B,(2B_i)_i\bigr)$ is the \emph{2-bracket corresponding to $C$}.

\smallskip

\item
Define the resulting 2-bracketing to be the collection of 2-brackets corresponding to sphere components in the given witch curve, together with the information of the order in which bubbles appear along each seam.
\end{itemize}

\medskip

\noindent
In fact, the resulting 2-bracketing also contains the information of the fashion in which the seams collided, which takes the form of a bracketing in $K_r^\br$.
This information is redundant unless there are collisions of unmarked seams.

\medskip

\noindent
{\bf Sketch definition of $W_\bn^\br$, the model for $W_\bn$ consisting of 2-bracketings.}
A \emph{2-bracket of $\bn$} is a pair $\btB = (B, (2B_i))$\label{p:btB} consisting of a 1-bracket $B \subset \{1,\ldots,r\}$ and a consecutive subset $2B_i \subset \{1,\ldots,n_i\}$ for every $i \in B$ such that at least one $2B_i$ is nonempty.
$W_\bn^\br$ is the set of 2-bracketings of $\bn$.
The latter is a pair $(\sB, \stB)$, where $\sB$ is a bracketing of $r$ and $\stB$ is a collection of 2-brackets of $\bn$ (together with partial orders reflects the order in which bubbles appear on seams) that satisfies these properties:
\begin{itemize}
\item
The 2-brackets in $\stB$ are nested, in the sense that if two have nonempty intersection, one must contain the other.
Moreover, if $\bigl(B,(2B_i)_i\bigr)$ is a 2-bracket in $\stB$, then $B$ must be an element of $\sB$.

\smallskip

\item
We impose technical conditions that are too complicated to state precisely here, but which reflect (i) the fact that marked points can only appear on unfused seams and (ii) the natural coherences amongst partial orders.
\null\hfill$\triangle$
\end{itemize}

\medskip

\noindent
The right pentagon in Figure \ref{fig:rosetta} illustrates all 2-bracketings in the case $\bn = (2,0,0)$.

Finally, we summarize the properties of the 2-associahedra.

\medskip

\noindent
{\bf Theorem 4.1, \cite{bottman_2-associahedra}.}
{\it
For any $r \geq 1$ and $\bn \in \bZ^r_{\geq0}\setminus\{\bzero\}$, the 2-associahedron $W_\bn$ is a poset, the collection of which satisfies the following properties:
\begin{itemize}
\item[] \textsc{(abstract polytope)}
$\wh{W_\bn}$, which denotes $W_\bn$ with a formal, minimal element of dimension $-1$, is an abstract polytope of dimension $|\bn| + r - 3$.

\smallskip

\item[] \textsc{(forgetful)}
$W_\bn$ is equipped with a \emph{forgetful map} $\pi\colon W_\bn \to K_r$, which is a surjective map of posets.

\smallskip

\item[] \textsc{(recursive)}
Each closed face of $W_\bn$ decomposes as a canonical way as a product of fiber products of lower-dimensional 2-associahedra, where the fiber products are with respect to the forgetful maps $\pi$.
\null\hfill$\square$
\end{itemize}
}

\medskip

\begin{remark}
The \textsc{(abstract polytope)} property says that $W_\bn$ shares several combinatorial properties with face posets of convex polytopes of dimension $|\bn|+r-3$.
It is not directly relevant to the construction of $\Symp$, but it does direct our intuition.

The forgetful map $\pi\colon W_\bn \to K_r$ has simple descriptions in both models: $\pi^\tree\colon W_\bn^\tree \to K_r^\tree$ remembers only the seam tree, while $\pi^\br\colon W_\bn^\br \to K_r^\br$ remembers the underlying 1-bracketing.
\null\hfill$\triangle$
\end{remark}

\subsection{Relative 2-operads and $(A_\infty,2)$-categories}
\label{ss:relative_2-operad}

As we described in \S\ref{ss:fuk}, the associahedra form an operad in the category of topological spaces.
Since the structure maps of the Fukaya category are defined by counting nodal disks, the Fukaya category is an $A_\infty$-category.

We might hope that a similar story holds for $\Symp$.
However, the collection of spaces $\bigl(\ol{2M}_\bn\bigr)_{r\geq 1,\bn \in \bZ_{\geq0}^r\setminus\{\bzero\}}$ does not form an operad.
At the most basic level, these spaces are not indexed by the positive integers.
A more nontrivial reason that these domain moduli spaces do not form an operad is that, as we explained in \S\ref{ss:witch_curves}, the strata of $\ol{2M}_\bn$ decompose as products of \emph{fiber products of} lower-dimensional 2-associahedra.
In fact, 2-associahedra form a 2-categorical version of an operad, called a \emph{relative 2-operad}.
This structure was defined by Bottman--Carmeli in \cite{bottman_carmeli} with the specific example of $\bigl(\ol{2M}_\bn\bigr)$ in mind.

\medskip

\noindent
{\bf Definition-Proposition 2.3, \cite{bottman_carmeli}.}
The 2-associahedra $(\ol{2M}_\bn)$, together with the forgetful maps $\pi\colon \ol{2M}_\bn \to \ol M_r$ and certain of the structure maps $\Gamma_{2T}$, form a 2-operad relative to the realized associahedra $(\ol M_r)$.
\null\hfill$\triangle$

\medskip

\noindent
The relative 2-operadic structure of the 2-associahedra consists of the forgetful maps from the 2-associahedra to the associahedra noted in Theorem 4.1 from \cite{bottman_2-associahedra} (mentioned above), along with the structure maps established in the same theorem from products of fiber products of 2-associahedra to other 2-associahedra.
We illustrate one of these structure maps in the case of $\ol{2M}_{300}$ below (figure borrowed from \cite[p.\ 6]{bottman_2-associahedra}):

\begin{figure}[H]
\centering
\def\svgwidth{0.9\columnwidth}
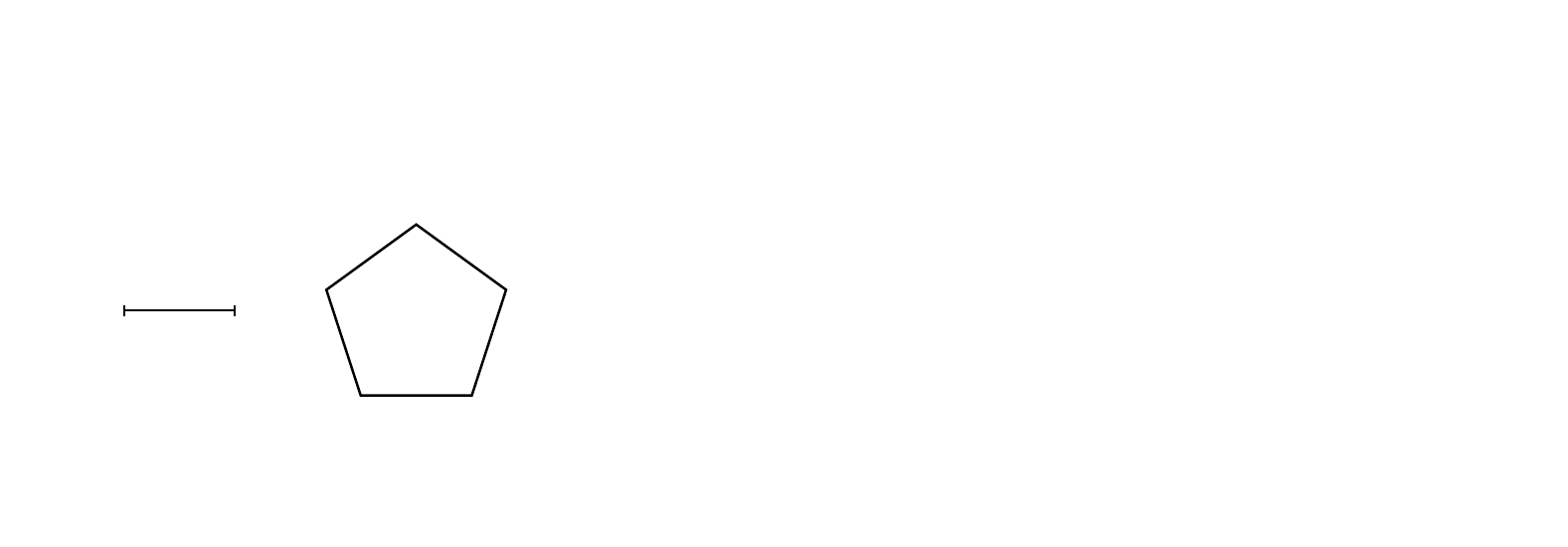
\caption{}
\end{figure}

\noindent
The relative 2-operadic structure of the 2-associahedra enabled the second author and Carmeli to define in \cite{bottman_carmeli} the notion of an \emph{$(A_\infty,2)$-category}.
When reading the following definition, one should keep in mind the primordial relative 2-operad, $\bigl(\bigl(\ol M_r\bigr), \bigl(\ol{2M}_\bn\bigr)\bigr)$.

\begin{definition}
\label{def:2op}
A \emph{(nonsymmetric) relative 2-operad} in a category $\cC$ with finite limits is a pair
\begin{align}
\Bigl((P_r)_{r\geq 1}, (Q_\bm)_{\bm \in \bZ^r_{\geq0}\setminus\{\bzero\},r \geq 1}\Bigr),
\end{align}
where $(P_r)_{r\geq 1}$ is a nonsymmetric operad in $\cC$, and where $(Q_\bm) \subset \cC$ is a collection of objects together with a family of structure morphisms
\begin{gather}
\label{eq:Gamma_def}
\Gamma_{\bm,(\bn^a_i)}
\colon
Q_\bm \times \prod_{1\leq i\leq r} \prod^{P_{s_i}}_{1\leq a\leq m_i} Q_{\bn_i^a} \to Q_{\sum_a \bn_1^a,\ldots,\sum_a \bn_r^a},
\\
r, s_1, \ldots s_r \geq 1,
\quad
\bm \in \bZ_{\geq0}^r\setminus\{\bzero\},
\quad
\bn_i^a \in \bZ_{\geq0}^{s_i}\setminus\{\bzero\}.
\nonumber
\end{gather}
(Here the subscript in $Q_{\sum_a \bn_1^a,\ldots,\sum_a \bn_r^a}$ denotes the concatenation of $\sum_a \bn_1^a$, $\sum_a \bn_2^a$, etc., which is a vector of length $\sum_i s_i$, and the superscript $ P_{s_i}$ indicates that we are taking a fibre product.)

We require these objects and morphisms to satisfy the following axioms.
\begin{itemize}
\item[] {\sc(projections)} $\bigl((P_r),(Q_\bm)\bigr)$ is equipped with projection morphisms
\begin{align}
\pi_\bm\colon Q_\bm \to P_r,
\qquad
r \geq 1,
\:
\bm \in \bZ^r_{\geq0}\setminus\{\bzero\}
\end{align}
that intertwine the structure morphisms $\Gamma_{\bm,(\bn_i^a)}$ with those in the underlying operad $(P_r)$.

\smallskip

\item[] {\sc(associative), (unit)} 
The structure maps satisfy a straightforward associativity condition, and there is a unit map $1 \to Q_1$ satisfying the obvious properties.
\null\hfill$\triangle$
\end{itemize}
\end{definition}

\begin{definition}
\label{def:lin_cat_over_rel_2-operad}
Let $\bK$ be a ring.
An \emph{$\bK$-linear category over a  relative 2-operad $\bigl((P_r),(Q_\bn)\bigr)$ in \textsf{Top}} consists of: 
\begin{itemize}
\item A category $(\Ob,\Mor,s,t)$.

\smallskip

\item For each pair of morphisms $L,K\colon M\to N$, a complex of free $\bK$-modules $2\Mor(L,K)$.

\smallskip

\item
Composition maps
\begin{align}
C_*(Q_\bm)\otimes \bigotimes_{{1\leq i \leq r}\atop{1\leq j\leq m_i}}  2\Mor(L_i^{j-1},L_i^j)\to 2\Mor(L_1^0\circ\cdots\circ L_r^0,L_1^{m_1}\circ\cdots\circ L_r^{m_r}),
\end{align}
where $C_*(Q_\bm)$ denotes the complex of singular chains in $Q_\bm$ with coefficients in $\bK$.
\end{itemize}
\noindent
We require the composition maps to be associative.\footnote{In fact, this condition is not entirely straightforward.
We invite the interested reader to consult \cite{bottman_carmeli}.}
\null\hfill$\triangle$
\end{definition}

\begin{definition}
\label{def:A_infty_2_cat}
An \emph{$\bK$-linear $(A_\infty,2)$-category} is an $\bK$-linear category over the relative 2-operad $\bigl((\ol M_r),(\ol{2M}_\bn)\bigr)$.
\null\hfill$\triangle$
\end{definition}

\begin{remark}
Observe that by passing to homology, i.e.\ applying $H$ to $2\Mor(L,K)$, we can extract an $\bK$-linear 2-category from an $\bK$-linear $(A_\infty,2)$-category.
(By ``$\bK$-linear 2-category'', we mean a category enriched in $\bK$-linear categories.)
In particular, note that the commutativity of horizontal and vertical composition of 2-morphisms follows from the fact that the two ways to compose correspond to the following two points in $W_{22}$, which can be connected by a path.
\begin{figure}[H]
\centering
\def\svgwidth{0.6\columnwidth}
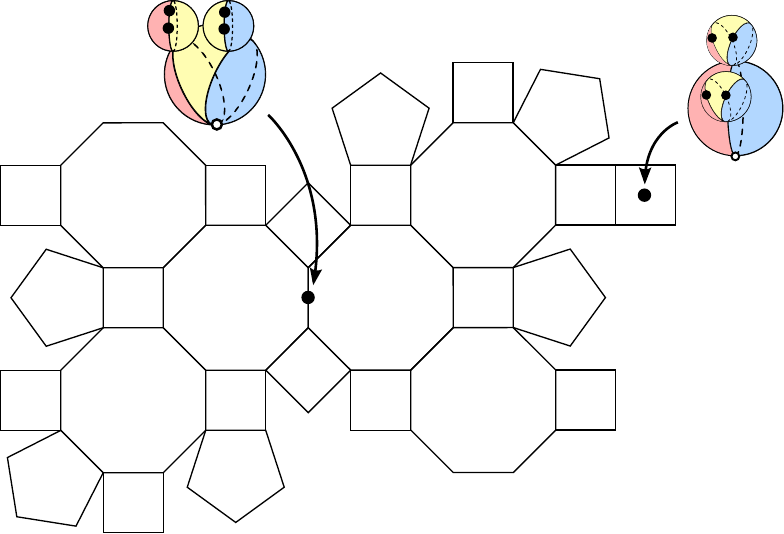
\caption{}
\end{figure}
\null\hfill$\triangle$
\end{remark}

\subsection{Analytical aspects of the construction of moduli spaces of witch balls}
\label{ss:analysis}

As we explained at the end of the introduction to \S\ref{s:symp}, the operations on 2-morphisms in $\Symp$ will be defined by counting rigid witch balls.
To do this, one needs to develop a regularization theory for moduli spaces of witch balls.
Analytic issues arise from the witch ball's ``singularity'', the point where all the domain's seams intersect tangentially, and from the related phenomenon that in a moduli space of witch balls, the width of one of the strips in the domain can shrink to zero.
In \cite{bottman_figure_eight_singularity} and \cite{bottman_wehrheim}, Bottman--Wehrheim made progress toward overcoming these challenges in the case of figure eight bubbles; the same analysis applies to general witch balls.
The analytic core of these results, which we will describe below, is a strengthening of the strip-shrinking estimates in \cite{wehrheim_woodward_geometric_composition}.

The first result is a ``removal of singularity'' for figure eight bubbles.
Such a bubble can be viewed as a tuple of finite-energy pseudoholomorphic maps
\begin{equation*} \label{eq:8}
w_0:\bR\times(-\infty,0]\to M_0, \qquad
w_1:\bR\times[0,1]\to M_1, \qquad
w_2:\bR\times[0, \infty)\to M_2
\end{equation*}
satisfying the seam conditions $(w_0(s,0),w_1(s,0))\in L_{01}$ and $(w_1(s,1),w_2(s,0))\in L_{12}$ for $s \in \bR$.
The second author established the following property of figure eights, as conjectured in \cite{wehrheim2010quilted}.

\begin{theorem}[Removal of singularity, \cite{bottman_figure_eight_singularity}]
\label{thm:rem_sing}
If the composition $L_{01} \circ L_{12}$ is cleanly immersed, then $w_0$ resp.\ $w_2$ extend to continuous maps on $D^2 \cong (\bR \times (-\infty, 0]) \cup \{\infty\}$ resp.\ $D^2 \cong (\bR \times [0, \infty)) \cup \{\infty\}$, and $w_1(s, -)$ converges to constant paths as $s\to\pm\infty$.
\null\hfill$\square$
\end{theorem}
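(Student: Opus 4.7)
The plan is to reduce the problem to a width-independent elliptic estimate on a quilted neighborhood of the singular point, from which exponential energy decay and continuous extension will follow by standard Floer-theoretic arguments, in the spirit of Wehrheim--Woodward's strip-shrinking analysis in \cite{wehrheim_woodward_geometric_composition}. First I would compactify the domain to a disk with the south pole representing $\infty$, and work in cylindrical/half-polar coordinates near that pole. After this conformal change, the $w_0$- and $w_2$-patches become small half-disks meeting at a point, while the $w_1$-patch becomes a thin lens-shaped region wedged between two tangent circular arcs whose ``effective width'' $\delta(s)$ tends to $0$ as $s \to \infty$. The seam conditions persist as Lagrangian boundary conditions coupling the three patches along these arcs.

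The core technical step is a \emph{width-independent mean value inequality}: there exist constants $\hbar,C>0$ such that whenever the local quilted energy on a slice $[s_0-1,s_0+1]$ is smaller than $\hbar$, one has $|dw_j(s_0,t)|^2 \leq C \cdot E(\bw;[s_0-1,s_0+1]\times(\text{full cross-section}))$, \emph{with $C$ independent of $\delta(s_0)$}. To prove this, I would run a Heinz-type iteration on each patch and propagate the resulting estimate across the seams by locally straightening $L_{01}$ and $L_{12}$ in Weinstein neighborhoods, keeping careful track of how the constants depend on $\delta$ as $\delta\to0$. The cleanly immersed hypothesis on $L_{01}\circ L_{12}$ enters precisely here: near each limit point of the composition, one may choose coordinates in which the fused boundary condition becomes linear on a finite number of sheets, which is what is needed to absorb the degenerating strip into a single non-degenerate local model.

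Granted the mean value inequality, the remaining argument is routine. Finite total energy together with the mean value inequality forces $E(\bw;[s,\infty)\times\cdot)\to 0$ as $s\to\infty$. An isoperimetric inequality for quilted loops with Lagrangian seams --- obtained by pairing the action functional against a primitive of the symplectic form adapted to the correspondences --- upgrades this to exponential decay of the local energy, hence exponential decay of the gradients by the mean value inequality again. Gradient decay implies that $w_0$ and $w_2$ extend continuously to the closed disk and that the arc-lengths of the paths $w_1(s,-)$ shrink to zero, forcing $w_1(s,-)$ to converge uniformly to a constant path. The limit value of this constant path, read off via either seam condition, lies in (the immersion of) $L_{01}\circ L_{12}$, and the clean immersion assumption ensures that this limit is unambiguously defined as a point in the ambient product.

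The main obstacle is clearly the width-independent mean value inequality. In standard Floer theory the mean value inequality relies on the target having bounded geometry and the boundary condition being a single embedded Lagrangian; here the ``effective boundary'' is the fused Lagrangian $L_{01}\circ L_{12}$, which is only immersed and approached through a degenerate family of genuinely three-patched configurations. Producing the estimate simultaneously for all three patches with constants uniform in the vanishing middle width --- and in a manner compatible with the cleanly immersed structure of the composition --- is where most of the real work of \cite{bottman_figure_eight_singularity} must lie; everything downstream is essentially a translation of classical removal-of-singularity to the quilted setting.
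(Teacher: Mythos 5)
Your outline follows essentially the same route as the paper: the survey attributes the result to width-independent elliptic (mean value) inequalities strengthening the Wehrheim--Woodward strip-shrinking estimates of \cite{wehrheim_woodward_geometric_composition}, followed by energy decay and a standard removal-of-singularity argument, which is exactly your plan. Like the survey itself, you defer the genuinely hard step --- the width-independent estimate uniform in the degenerating middle width, where the cleanly immersed hypothesis enters --- to the analysis carried out in \cite{bottman_figure_eight_singularity}, so the proposal is an accurate sketch of the actual proof rather than an independent alternative.
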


The second result concerns strip-shrinking, a phenomenon new to quilted Floer theory: in a moduli space of quilted maps, the width of a strip or annulus in the domain of a pseudoholomorphic quilt may shrink to zero, as in the figure to the right.
To understand the topology of moduli spaces of maps from such domains, we need a ``Gromov Compactness Theorem'': given a sequence of quilts in which strip-shrinking occurs and in which the energy is bounded, a subsequence of the maps must converge $\cC^\infty_\loc$ away from finitely many points where the gradient blows up, and at each blowup point a tree of quilted spheres forms.
We depict this type of degeneration below:

\begin{figure}[H]
\centering
\includegraphics[width=0.425\columnwidth]{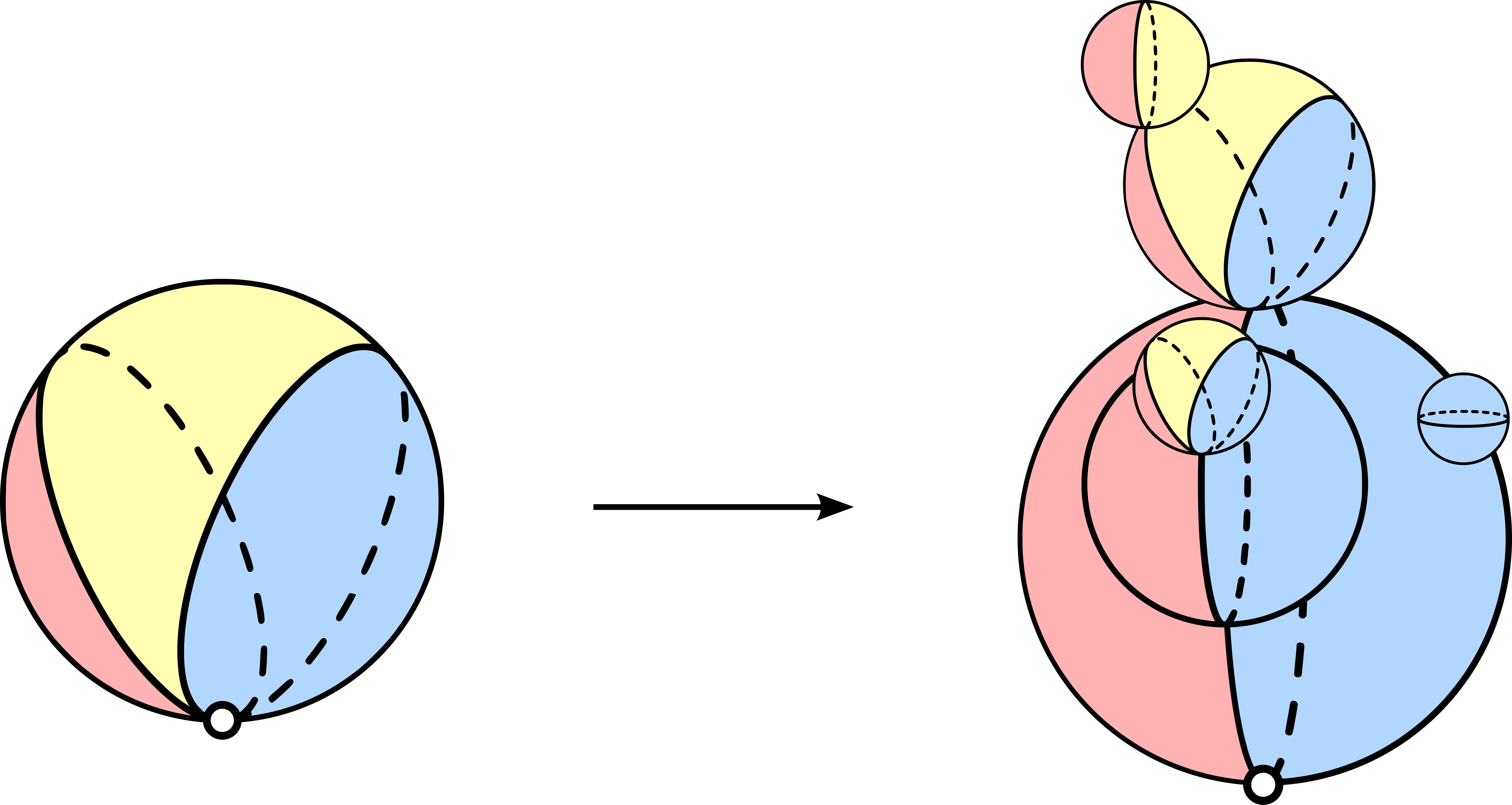}
\caption{}
\end{figure}

\noindent
Bottman--Wehrheim established full $\cC^\infty_\loc$-convergence in the following theorem.

\begin{theorem}[Gromov compactness, \cite{bottman_wehrheim}]
Say that $\ul{Q}^\nu$ is a sequence of pseudoholomorphic quilted maps, whose domains have a strip $Q_1^\nu$ of width $\delta^\nu \to 0$.
Denote the target of $Q_1^\nu$ by $M_1$, and the targets of the neighboring patches $M_0, M_2$; call the Lagrangians defining the adjacent seam conditions $L_{01}, L_{12}$.
Under the assumptions of Theorem \ref{thm:rem_sing}, there is a subsequence that converges up to bubbling to a punctured quilt, and the energy that concentrates at each puncture is captured in a bubble tree consisting of disks, spheres, and figure eight quilts.
\null\hfill$\square$
\end{theorem}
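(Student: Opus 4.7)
The plan is to adapt the classical Gromov compactness scheme (extract a subsequence, detect bubble points, rescale, remove singularities, and iterate), with the essential new ingredient being \emph{width-independent} elliptic control over the shrinking middle strip $Q_1^\nu$. First, I would establish a mean-value inequality for the energy density $e(\ul{Q}^\nu) \coloneqq |\nabla Q_0^\nu|^2 + |\nabla Q_1^\nu|^2 + |\nabla Q_2^\nu|^2$ that is uniform in $\nu$ (in particular, independent of $\delta^\nu$) on balls whose radius is small but independent of $\nu$. This is the analytical heart of the argument: by recasting the pair $(Q_0^\nu, Q_1^\nu)$ on a half-disk as a single pseudoholomorphic map into $M_0^- \times M_1$ with boundary condition $L_{01}$, and similarly $(Q_1^\nu, Q_2^\nu)$, one can apply standard monotonicity on each side of the strip; the task is then to patch these one-sided estimates into a two-sided estimate that does not deteriorate as $\delta^\nu \to 0$. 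This strengthens \cite{wehrheim_woodward_geometric_composition} and is exactly the type of estimate developed in \cite{bottman_figure_eight_singularity}.

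With such a width-independent mean value inequality in hand, I would follow the standard argument to produce a finite ``bubbling set'' $Z \subset $ (domain) where $e(\ul{Q}^\nu) \to \infty$, obtain $\cC^\infty_\loc$-convergence of a subsequence on the complement to a limit quilted map $\ul{Q}^\infty$ whose middle patch $Q_1^\infty$ has collapsed to a seam labeled by $L_{01} \circ L_{12}$, and show that away from $Z$ the limit satisfies the expected seam conditions. Away from the shrinking strip (i.e.\ at a bubble point of $Q_0^\nu$ or $Q_2^\nu$ not near the strip), the usual rescaling produces either a holomorphic sphere or, if the bubble point lies on the external boundary, a holomorphic disk with boundary on the relevant Lagrangian; these rescaling arguments are word-for-word those of classical Gromov compactness.

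The new phenomenon occurs at a bubble point $z_0$ on the collapsed seam. There, I would rescale the quilt at $z_0$ using the gradient blowup rate $c_\nu \coloneqq \|\nabla \ul{Q}^\nu(z_\nu)\|$ for a carefully chosen sequence $z_\nu \to z_0$, and compare the scales $c_\nu$ and $1/\delta^\nu$. Three regimes must be treated:
\begin{itemize}
\item $c_\nu \delta^\nu \to \infty$: the middle strip effectively widens to all of $\bR \times [0,\infty)$ under rescaling, and the rescaled sequence converges (after possibly translating) to a holomorphic sphere or disk in one of the $M_i$ (or a holomorphic strip in $M_1$ if the bubble lies in the interior of the middle patch).
\item $c_\nu \delta^\nu \to c \in (0, \infty)$: this is the figure-eight regime; after rescaling by $c_\nu$ and $\delta^\nu$ commensurately, the triple $(Q_0^\nu, Q_1^\nu, Q_2^\nu)$ converges on compact subsets to a nonconstant solution on $\bR \times (-\infty, 0] \sqcup \bR \times [0,1] \sqcup \bR \times [0, \infty)$ with the figure-eight seam conditions. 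Applying Theorem \ref{thm:rem_sing} then extends this limit across the puncture, yielding an honest figure eight bubble.
\item $c_\nu \delta^\nu \to 0$: this must be excluded, because width-independent monotonicity would force a nontrivial pseudoholomorphic strip of zero width, hence a nonconstant limit with zero energy, a contradiction.
\end{itemize}

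Finally, I would run the standard inductive ``soft'' rescaling to produce a bubble tree at each point of $Z$, with nodes of three types (disk, sphere, figure-eight quilt), and verify energy conservation: the total energy of the limit plus the energies of all bubbles equals the limit of $E(\ul{Q}^\nu)$. The main obstacle is the first step, the width-independent estimate on the shrinking middle strip; once this is in hand the rest is a careful but essentially standard adaptation of Gromov's scheme. A secondary technical point is the ``no loss of energy'' statement in the figure-eight regime, which relies precisely on the removability theorem being available under the clean composition hypothesis.
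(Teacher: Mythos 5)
First, a point of comparison: the survey does not prove this statement — it is quoted from \cite{bottman_wehrheim} (hence the $\square$), so your proposal can only be measured against the strategy the survey describes, namely width-independent elliptic estimates strengthening \cite{wehrheim_woodward_geometric_composition}, rescaling at a rate commensurate with the strip width to produce figure eight bubbles, and the removal of singularity of Theorem \ref{thm:rem_sing}. Your outline follows that strategy in broad strokes, but one step is wrong as stated: the regime $c_\nu\delta^\nu \to 0$ cannot simply be ``excluded.'' After rescaling by $c_\nu$, the middle strip has width $c_\nu\delta^\nu \to 0$ and gradient bounded by roughly $1$, so it carries no energy in the limit; but the two \emph{outer} patches need not be constant, and by the same width-independent estimates they limit to a pseudoholomorphic half-plane into $M_0^-\times M_2$ with boundary on the cleanly immersed composition $L_{01}\circ L_{12}$, i.e.\ a disk bubble on the composed correspondence (one of the ``disks'' the theorem explicitly allows). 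Your contradiction argument (``nonconstant limit with zero energy'') conflates the collapsing middle patch with the outer patches that actually carry the energy; the genuine subtlety in this regime is the opposite one, namely that the point where the rescaled gradient equals $1$ may lie in the vanishing strip, so one must rule out the limit being constant and energy being lost — this is a delicate point, not a vacuous case. (A smaller slip of the same kind: in the regime $c_\nu\delta^\nu\to\infty$ a strip bubble in $M_1$ cannot arise, since at most one seam can remain at finite rescaled distance.)

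Second, you treat ``no loss of energy'' as a secondary point that follows from removability, but removal of singularity only controls a single finite-energy figure eight once it exists; energy conservation in the bubble-tree decomposition requires decay estimates on neck/annulus regions that are \emph{uniform in the strip width} (and, in \cite{bottman_wehrheim}, for varying widths), so that no energy escapes between the base quilt, the figure eight, and the collapsing strip. Establishing these uniform estimates, together with the width-independent mean value inequality you correctly identify (and which builds on \cite{bottman_figure_eight_singularity}), is the analytic core of the cited result; without them your final energy-conservation step does not go through. With the $c_\nu\delta^\nu\to 0$ regime corrected and those uniform estimates supplied, the remaining scheme — Hofer rescaling, folding across seams, iteration to build the bubble tree — is the standard Gromov compactness argument and matches the intended proof.
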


The major outstanding step toward the definition of $\Symp$ is the completion of a regularization theory for moduli spaces of witch balls.
Classical regularization techniques are not sufficient, except in specialized situations.

\begin{remark}
There are concrete obstructions to regularizing moduli spaces of pseudoholomorphic curves by classical means.
For instance, when the symplectic manifold has spheres with negative Chern number, the standard argument for regularizing moduli spaces of Floer cylinders by choosing a ``generic'' almost complex structure breaks down.
Indeed, the presence of multiply-covered spheres of negative Chern number lead to components of the moduli space of dimension higher than one would expect from standard index formulas
(see \cite[\S5.1]{salamon_lectures} for a detailed discussion of this phenomenon).
The same phenomenon occurs in Lagrangian Floer theory in the presence of  multiply-covered discs of negative index.
It seems that the only analogous phenomenon for witch balls occurs for components with only one seam, as these are the only ones that can be multiply covered. Of course, since one studies the space of \emph{stable maps} associated to witch balls, in which sphere and disc bubbles arise, the transversality obstructions associated to such multiply-covered configurations will affect the transversality theory for the definition of $\Symp$.
\null\hfill$\triangle$
\end{remark}

\subsection{Interpretations of existing structures in terms of $\Symp$}
\label{ss:interpretations}

Quite a number of existing constructions can be interpreted as substructures of the symplectic $(A_\infty,2)$-category.
In this subsection, we will explain some of them.

\subsubsection{The Fukaya $A_\infty$-category, the $A_\infty$-functors $\Phi_{L_{12}}$, and the geometric composition $A_\infty$-bifunctor}
\label{sss:specialization_to_bifunctor}

As we explained in the discussion around Figure \ref{fig:witch_specializations}, the pseudoholomorphic disks whose counts define the structure maps in $\Fuk M$ are just two-patch witch balls mapping to $\pt$ and $M$.
This is reflected and extended by the fact that in $\Symp$, the vertical composition maps in $\hom(M_0,M_1) = \Fuk(M_0^-\times M_1)$ are exactly the $A_\infty$-compositions in $\Fuk(M_0^-\times M_1)$.

Next, recall that in \S\ref{ss:approaching_Phi}, our initial attempted definition of a functor $\Phi_{L_{12}}\colon \Fuk M_1 \to \Fuk M_2$ proceeded by counting two-patch quilted disks (see the second illustration in Figure \ref{fig:witch_specializations}).
These quilted disks are exactly three-patch witch balls mapping to $\pt$, $M_1$, and $M_2$, with no marked points on the right seam.
This is reflected by the fact that in $\Symp$, there is a \emph{horizontal composition $A_\infty$-bifunctor}
\begin{align}
\bigl(
\Fuk(M_0^-\times M_1),
\Fuk(M_1^-\times M_2)
\bigr)
\to
\Fuk(M_0^-\times M_2),
\end{align}
whose structure constants are given by counts of three-patch witch balls.

\subsubsection{The closed-open string map}

\label{sss:CO}

The diagonal Lagrangian
\begin{equation}
  \Delta_M \subset M^- \times M
\end{equation}
is a canonically-defined Lagrangian which corresponds to the identity object of the endomorphism category $\Fuk(M^-\times M)$ of $M$ in $\Symp$.
When $M$ is compact, the Floer cohomology of the diagonal is known to be isomorphic to the ordinary cohomology \cite{fooo_antisymplectic_involution}, while in the noncompact case, it is expected to correspond to the symplectic cohomology $SH^*(M)$ originally introduced by Hofer and Floer (there is a discrepancy in the literature, as we use \emph{cohomology} to refer to the group that they referred to as \emph{homology}).
While the construction of this group is now understood in large generality (c.f.\ \cite{groman_floer_thy_open_mfds}), this expectation is asserted as a well-known result in \cite[\S8]{ganatra_thesis} for the special class of \emph{Liouville domain}.

On the other hand, there is the \emph{closed-open string map}
\begin{equation}
\CO
\colon
SH^*(M)
\to
HH^*(\Fuk M),
\end{equation}
which is a homomorphism from symplectic cohomology to the Hochschild cohomology of the Fukaya $A_\infty$-category of $M$.
This is an important tool for studying deformations of $\Fuk M$ which goes back to Fukaya, Oh, Ohta, and Ono and to Seidel.

The closed-open map is defined by counting disks with one incoming cylindrical end and any number of incoming boundary marked points, and one outgoing boundary marked point, as on the left of the following figure:
\begin{figure}[H]
\centering
\def\svgwidth{4in}
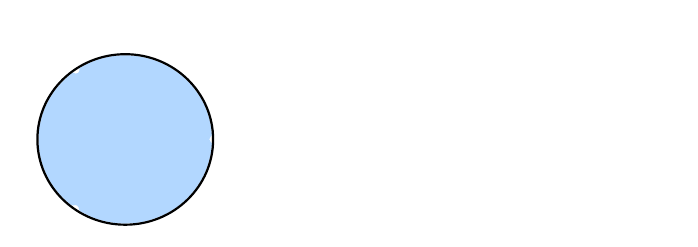
\caption{
\label{fig:CO}
}
\end{figure}
On the other hand, the figure on the right depicts an equivalent quilt, which moreover is one of the contributions to the specialization of the composition $A_\infty$-bifunctor in \S\ref{sss:CO} with $M_0 \coloneqq \pt, M_1 \coloneqq M \eqqcolon M_2$.
The conclusion we can draw from this is that the natural lift of $\CO$ to an $A_\infty$-homomorphism is part of the structure of $\Symp$.
That is, the following square commutes:

\begin{align}
\xymatrix{
SC^*(M) \ar[r] \ar@{<->}[d]_\simeq
&
CC^*(\Fuk M) \ar@{<->}[d]^\simeq
\\
CF^*(\Delta_M,\Delta_M) \ar[r]
&
\hom(\id_{\Fuk M}, \id_{\Fuk M})
}
\end{align}

\noindent
This point of view was (at least implicitly) suggested in \cite{ganatra_thesis} and \cite{rs:openclosed}.

\subsubsection{The open-closed string map, and symplectic cohomology as a module over a decategorification of $\Symp$}

Similarly to the closed-open string map described in \S\ref{sss:CO}, there is a homomorphism $\OC\colon \CC_*(\Fuk M) \to SC^*(M)$ from the Hochschild chain complex of the Fukaya category to the symplectic cochain complex, which is a crucial ingredient in the first author's generation criterion for $\Fuk M$ (which has so far been proven in the wrapped \cite{abouzaid_generation} and monotone \cite{rs:openclosed} settings).
This homomorphism is known as the \emph{open-closed string map}, and it is defined by counting disks with boundary punctures and one outgoing cylindrical end, as depicted below:

\begin{figure}[H]
\centering
\def\svgwidth{1.375in}
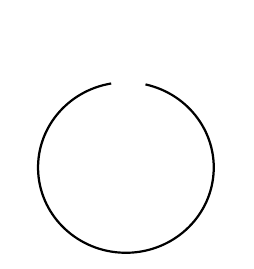
\caption{
\label{fig:OC}
}
\end{figure}

The open-closed string map also has an interpretation in terms of $\Symp$ --- but this interpretation is notably different than that of the closed-open string map.
Observe that the punctured disk in Figure \ref{fig:OC} is a special case of the following sort of quilted cylinder:

\begin{figure}[H]
\centering
\def\svgwidth{0.5\columnwidth}
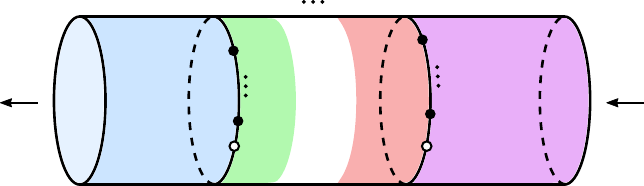
\caption{
\label{fig:SC_module}
}
\end{figure}

\noindent
Indeed, a cylinder of this form with one seam and patches mapping to $\pt$ and $M$, we get exactly the disk in Figure \ref{fig:OC}.

Counting quilted cylinders as in Figure \ref{fig:SC_module} has a clear algebraic interpretation.
We think of feeding in Hochschild chains in
\begin{align}
\CC_*(\Fuk(M_1^-\times M_2)),
\ldots,
\CC_*(\Fuk(M_{r-1}^-\times M_r))
\end{align}
and obtaining a map
\begin{align}
\SC^*(M_1)
\to
\SC^*(M_r).
\end{align}
Counting quilted cylinders as in Figure \ref{fig:SC_module} exhibits $\bigoplus_M \SC^*(M)$ as an $A_\infty$-module over $\Symp_{\CC_*}$, where the latter objects is the decategorification of $\Symp$ to an $A_\infty$-category whose morphism complexes are given by
\begin{align}
\hom(M_1,M_2)
\coloneqq
\CC_*(\Fuk(M_1^-\times M_2)).
\end{align}

%%% Local Variables:
%%% mode: latex
%%% TeX-master: "functoriality_in_categorical_symplectic_geometry"
%%% End:

\section{Applications}
\label{s:applications}

In this final section, we will survey a number of developments which either use the theory of pseudoholomorphic quilts directly, or take inspiration from it without actually implementing its constructions.

\subsection{Categorification of Dehn twists}
\label{sec:categ-dehn-twists}

The study of Dehn twists in symplectic topology started with Arnold's generalization in \cite{Arnold1995} of the fact that the monodromy of a family of complex curves which acquire a nodal singularity is given by the Dehn twist around the vanishing circle to higher dimensions.
Arnold constructed a symplectomorphism of the disc cotangent bundle of the $n$-sphere, which essentially wraps the fibres around the base, showed that it models the monodromy of a family of smooth projective varieties near ordinary double point singularities.

In \cite{seidel_les} Seidel then showed that Arnold's model symplectomorphisms are not isotopic to the identity, relative the boundary, by computing their action on Floer homology groups: more precisely, imposing some technical conditions on the ambient symplectic manifold, he proved that, if $V$ is a Lagrangian sphere, and $L$ a Lagrangian supporting an object of the Fukaya category, there is an exact triangle
\begin{equation}
\label{eq:seidel_triangle}
\begin{tikzcd}
HF^*(V,L) \otimes V \ar[rr] & & L \ar[dl] \\
& \tau_V L \ar[ul]
\end{tikzcd}
\end{equation}
in the Fukaya category.
This result was a milestone in the study of rigidity phenomena in symplectic topology, and has motivated key developments in the study of homological mirror symmetry, categorification of braid invariants, and, as we shall discuss below, symplectic constructions of invariants of knots and $3$-manifolds.

Wehrheim and Woodward extended the reach of Seidel's result in two ways:
\begin{enumerate}
\item
They generalised the result by replacing $V$ by a spherically fibered Lagrangian correspondence, i.e.\ a Lagrangian $C \subset N^- \times M$ whose projection to $M$ is an embedding and whose projection to $N$ is a sphere bundle, and

\smallskip

\item
They formulated an exact sequence depending only the $C$, and not on any auxiliary Lagrangian in $M$.
\end{enumerate}

\noindent
They geometrically formulate their result in terms of a variant of the extended Fukaya category of $M^- \times M$ (c.f.\ \S\ref{ss:def_of_Phi_L12}); objects of this category are cyclic generalized correspondences starting and ending at the symplectic manifold $M$.
This simplest such correspondence is the empty correspondence, which we denote $\Delta_M$ because it corresponds to the diagonal of $M$.

We need three additional geometric constructions to formulate Wehrheim and Woodward's lift of the Seidel exact triangle:
\begin{enumerate}
\item The graph of every symplectomorphism $\psi$ define a Lagrangian correspondence $\Delta_\psi$ in $ M^- \times M$.

\smallskip

\item Given a Lagrangian $C \subset N^- \times M$, the product  $C \times C$ is a Lagrangian in $M^- \times N \times  N^- \times M$, hence defines an object of $\Fuk(M \times M^-)$, which is denoted $C^t \# C $.

\smallskip

\item There is a canonical morphism $C^t \# C \to \Delta_M$, of degree $\dim N$, given by counting quilts of the following form.

\begin{figure}[H]
\centering
\def\svgwidth{0.25\columnwidth}
%% Creator: Inkscape 1.2 (dc2aeda, 2022-05-15), www.inkscape.org
%% PDF/EPS/PS + LaTeX output extension by Johan Engelen, 2010
%% Accompanies image file '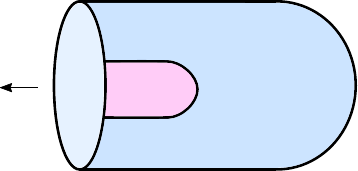' (pdf, eps, ps)
%%
%% To include the image in your LaTeX document, write
%%   \input{<filename>.pdf_tex}
%%  instead of
%%   \includegraphics{<filename>.pdf}
%% To scale the image, write
%%   \def\svgwidth{<desired width>}
%%   \input{<filename>.pdf_tex}
%%  instead of
%%   \includegraphics[width=<desired width>]{<filename>.pdf}
%%
%% Images with a different path to the parent latex file can
%% be accessed with the `import' package (which may need to be
%% installed) using
%%   \usepackage{import}
%% in the preamble, and then including the image with
%%   \import{<path to file>}{<filename>.pdf_tex}
%% Alternatively, one can specify
%%   \graphicspath{{<path to file>/}}
%% 
%% For more information, please see info/svg-inkscape on CTAN:
%%   http://tug.ctan.org/tex-archive/info/svg-inkscape
%%
\begingroup%
  \makeatletter%
  \providecommand\color[2][]{%
    \errmessage{(Inkscape) Color is used for the text in Inkscape, but the package 'color.sty' is not loaded}%
    \renewcommand\color[2][]{}%
  }%
  \providecommand\transparent[1]{%
    \errmessage{(Inkscape) Transparency is used (non-zero) for the text in Inkscape, but the package 'transparent.sty' is not loaded}%
    \renewcommand\transparent[1]{}%
  }%
  \providecommand\rotatebox[2]{#2}%
  \newcommand*\fsize{\dimexpr\f@size pt\relax}%
  \newcommand*\lineheight[1]{\fontsize{\fsize}{#1\fsize}\selectfont}%
  \ifx\svgwidth\undefined%
    \setlength{\unitlength}{102.81906668bp}%
    \ifx\svgscale\undefined%
      \relax%
    \else%
      \setlength{\unitlength}{\unitlength * \real{\svgscale}}%
    \fi%
  \else%
    \setlength{\unitlength}{\svgwidth}%
  \fi%
  \global\let\svgwidth\undefined%
  \global\let\svgscale\undefined%
  \makeatother%
  \begin{picture}(1,0.47831404)%
    \lineheight{1}%
    \setlength\tabcolsep{0pt}%
    \put(0,0){\includegraphics[width=\unitlength,page=1]{counit.pdf}}%
    \put(0.36053477,0.33951826){\makebox(0,0)[lt]{\lineheight{1.25}\smash{\begin{tabular}[t]{l}$C$\end{tabular}}}}%
  \end{picture}%
\endgroup%

\caption{
}
\end{figure}
\end{enumerate}

\noindent
The morphism $C^t \#C \to \Delta_M$ is simply the co-unit of the adjunction between $C$ and $C^t$.

Denoting by $\tau_C$ the \emph{fibred Dehn twist} associated to $C$, Wehrheim and Woodward's result is:

\begin{theorem}[Theorem 7.4, \cite{wehrheim_woodward_exact_triangle}]
The counit $C^{t} \# C  \to \Delta_M $  fits in an exact triangle in $\Fuk(M \times M^-)$
\begin{equation}
\begin{tikzcd}
C^t \# C[\dim N] \ar[rr] & & \Delta_M \ar[dl]
\\
& \Delta_{\tau_C}. \ar[ul]
\end{tikzcd}
\end{equation}
\null\hfill$\square$
\end{theorem}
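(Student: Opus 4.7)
The plan is to establish the triangle by constructing the two missing morphisms $\eta \colon \Delta_M \to \Delta_{\tau_C}$ and $\delta \colon \Delta_{\tau_C} \to C^t \# C[\dim N + 1]$ as explicit quilted Floer cocycles, and then verifying exactness by reducing to Seidel's original exact triangle \eqref{eq:seidel_triangle} via the MWW functoriality package (Theorem~\ref{thm:MWW_functors} together with composition-commutes-with-categorification); throughout, let $\varepsilon \colon C^t \# C[\dim N] \to \Delta_M$ denote the counit morphism supplied by the statement.

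First I would construct $\eta$ as a quilted Floer cocycle. Since $\tau_C$ is the identity outside a tubular neighbourhood of $C$, the intersection $\Delta_M \cap \Delta_{\tau_C}$ contains a distinguished ``far from $C$'' component, a copy of $M \setminus \nu(C)$, whose Poincar\'{e}-dual class in $HF^*(\Delta_M, \Delta_{\tau_C})$ represents $\eta$. The connecting morphism $\delta$ is defined by counting pseudoholomorphic quilts whose domain has three patches (mapping to $M$, $N$, $M$) with a ``Y''-type seam structure in which a pair of seams labelled $C$ and $C^t$ emerges from a single seam labelled by the graph of $\tau_C$, in direct analogy with the Y-maps of Section~\ref{ss:Y-map}.

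Next I would verify exactness by pairing on the left with an arbitrary embedded test Lagrangian $L \subset M$, viewed as a generalized correspondence $\pt \to M$. The composition $A_\infty$-bifunctor of Section~\ref{sss:specialization_to_bifunctor} sends the three objects of the triangle to $HF^*(V, L) \otimes V$, $L$, and $\tau_C(L)$, where $V$ denotes the spherical fibre of $\pi_N \colon C \to N$; this uses MWW's composition-commutes-with-categorification result in the form that $L \circ C^t \# C$ decomposes as the tensor of $V$ with the Floer complex $HF^*(V, L)$. The induced maps match the three arrows of Seidel's exact triangle \eqref{eq:seidel_triangle}, which is exact by \cite{seidel_les}, so the triangle in $\Fuk^\#(M \times M^-)$ is exact after pairing with each embedded $L$.

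The main obstacle is upgrading ``exactness after pairing with each $L$'' to genuine exactness in the extended Fukaya category of the product. This requires a Yoneda-type detection principle: the candidate morphism $\cone(\varepsilon) \to \Delta_{\tau_C}$, assembled from $\eta$ and $\delta$ together with null-homotopies provided by coherent counts of one-parameter families of quilts, is a quasi-isomorphism because it becomes one after composition with every embedded test Lagrangian. The key analytic point throughout is the absence of figure-eight bubbling in the relevant quilted moduli spaces, which is ensured by the monotonicity and minimal Maslov index $\geq 3$ hypotheses of Section~\ref{sec:corr-sympl-topol}; without these, one would need to deform the triangle by bounding cochains as in Section~\ref{ss:anom-lagr-floer}, and the natural home for the resulting formulation would be the symplectic $(A_\infty,2)$-category of Section~\ref{s:symp}.
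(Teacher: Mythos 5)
Your strategy has a gap at its core: the reduction to Seidel's triangle only makes sense when $N$ is a point. For a genuinely fibered correspondence $C \subset N^- \times M$ with $\dim N > 0$, pairing with a test Lagrangian $L$ gives $L \circ (C^t \# C) = (L \circ C^t)\circ C$, a (generalized) Lagrangian obtained by passing through $N$; this is \emph{not} of the form $HF^*(V,L)\otimes V$ for a single spherical fibre $V$, so the claimed decomposition fails and Seidel's triangle \eqref{eq:seidel_triangle} cannot be invoked. That is exactly the new content of Wehrheim--Woodward's theorem: their proof does not deduce the fibered case from the sphere case, but generalizes Seidel's argument directly, defining the morphisms by counts over specific quilted surfaces attached to the fibered Dehn twist and proving exactness by Seidel's scheme (a homotopy exhibiting the vanishing of the composite together with an energy-filtration/low-energy argument identifying the cone on the counit with $\Delta_{\tau_C}$). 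Note also that the survey records the logical direction the other way around: Seidel's triangle is recovered as a consequence of the correspondence-level statement (Corollary \ref{cor:LES-bimodules}), precisely because the latter is strictly stronger.

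Even in the special case $N=\pt$, the step you flag as ``the main obstacle'' is where the argument breaks. Exactness in $\Fuk^\#(M^-\times M)$ is not detected by pairing with embedded test Lagrangians $L \subset M$: the associated modules (equivalently, product Lagrangians) do not split-generate, nor do they detect quasi-isomorphisms, since the diagonal and the graphs $\Delta_{\tau_C}$ are in general not built from products; a morphism that becomes an isomorphism after every such pairing need not be one. To make a Yoneda-type detection work you would have to test against all objects of $\Fuk^\#(M^-\times M)$, which is no easier than proving the triangle there directly --- and this is why Wehrheim--Woodward work at the level of $M^-\times M$ from the start. Finally, your construction of $\eta$ via the ``far from $C$'' component of $\Delta_M \cap \Delta_{\tau_C}$ is not well-defined as stated: $\tau_C$ is the identity on an open set, so that intersection is codimension zero and supports no Floer cocycle without a perturbation or Morse--Bott framework; in the actual proof this morphism arises from a quilt count, not from a Poincar\'e-dual class of a non-transverse intersection component.
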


\noindent
We have formulated this result in a slightly stronger way than Wehrheim--Woodward, by specifying one of the morphisms in the exact triangle, which determines the isomorphism class of $\Delta_{\tau_C}$.

Since it is stated at the level of the product $M \times M^-$, rather than on $M$, the quilt formalism yield a categorification of Seidel's result, which lifts the exact triangle of Floer groups to an exact triangle of bimodules.
More precisely, by passing from Fukaya categories to their categories of bimodules as in \S\ref{ss:bimodules}, we have:

\begin{corollary}[Theorem 7.4, \cite{wehrheim_woodward_exact_triangle}] \label{cor:LES-bimodules}
  There is an exact triangle in the category of bimodules over $\Fuk M$
\begin{equation}
\begin{tikzcd}
CF^*(L_0, C) \otimes CF^*(L_1, C)  \ar[rr] & & CF^*(L_0, L_1) \ar[dl]
\\
& CF^*(L_0, \tau_C L_1). \ar[ul]
\end{tikzcd}
\end{equation}
\null\hfill$\square$
\end{corollary}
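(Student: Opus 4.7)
The plan is to extend the bimodule construction of \S\ref{ss:bimodules} from single correspondences to generalised correspondences, obtaining an exact $A_\infty$-functor from $\Fuk^\#(M\times M^-)$ to the category of $A_\infty$-bimodules over $\Fuk M$; this sends a generalised correspondence $\mathcal{L} = (M \to N_1 \to \cdots \to M)$ to the bimodule
\begin{equation}
\sM_\mathcal{L}(L_0, L_1) \coloneqq CF^*(L_0, \mathcal{L}, L_1),
\end{equation}
with structure maps given by counts of the natural multi-seam analogues of the quilts depicted in \S\ref{ss:bimodules}. Because these quilt counts depend linearly on the morphism inputs at each boundary marked point, the functor is exact, and so Wehrheim--Woodward's triangle $C^t \# C[\dim N] \to \Delta_M \to \Delta_{\tau_C}$ produces an exact triangle
\begin{equation}
\sM_{C^t \# C}[\dim N] \to \sM_{\Delta_M} \to \sM_{\Delta_{\tau_C}}
\end{equation}
of bimodules over $\Fuk M$. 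What remains is to identify these three bimodules with the explicit descriptions appearing in the statement.

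The identifications involving the graph correspondences are direct applications of strip-shrinking (Equation \eqref{eq:strip_shrinking}): since $L_0 \circ \Delta_M = L_0$ tautologically, one has $\sM_{\Delta_M}(L_0, L_1) \simeq CF^*(L_0, L_1)$; since $\Delta_{\tau_C} \circ L_1 = \tau_C L_1$ is embedded (as $\tau_C$ is a diffeomorphism), one has $\sM_{\Delta_{\tau_C}}(L_0, L_1) \simeq CF^*(L_0, \tau_C L_1)$.

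The main obstacle is the identification $\sM_{C^t \# C}(L_0, L_1) \simeq CF^*(L_0, C) \otimes CF^*(L_1, C)$. The relevant quilt has three patches, mapping to $M$, $N$, $M$, separated by two seams labelled by $C^t$ and $C$. To identify it with a tensor product, the strategy is to exploit the sphere bundle structure $\pi_N\colon C\to N$: one may either decompose the middle $N$-patch directly into a sum over fibre cycles and recognise the resulting pieces as the quilted invariants computing $CF^*(L_i, C)$, or proceed more algebraically by invoking the adjunction $(\Phi_C, \Phi_{C^t}[\dim N])$ of \S\ref{ss:adjunction} (available precisely because $C\to N$ is a sphere bundle) together with the ``spherical object'' structure of $C$ relative to $N$. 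Either approach requires careful bookkeeping of the Euler class of the sphere fibre, which ultimately accounts for the $[\dim N]$ shift appearing in the conclusion.
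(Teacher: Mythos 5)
Your overall architecture is the paper's own proof: the corollary is obtained by applying the quilt-defined bimodule construction of \S\ref{ss:bimodules} (extended to generalized correspondences, i.e.\ regarded as an $A_\infty$-functor out of $\Fuk^\#(M^-\times M)$, which therefore preserves exact triangles) to the triangle $C^t\#C[\dim N]\to\Delta_M\to\Delta_{\tau_C}$, and then identifying the second and third bimodules with $CF^*(L_0,L_1)$ and $CF^*(L_0,\tau_C L_1)$ by strip-shrinking for the compositions with $\Delta_M$ and the graph $\Delta_{\tau_C}$, which are always embedded. Those two identifications are exactly what the paper implicitly uses.

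Where you diverge is the first term, and the extra machinery you propose there would not work. The bimodule associated to $C^t\#C$ evaluates at $(L_0,L_1)$ to the quilted Floer complex $CF^*(L_0,C^t,C,L_1)$, a three-patch quilt whose middle patch maps to $N$; this quilted complex is what the displayed expression denotes, and the paper requires no further identification beyond that. A literal external tensor splitting into a factor depending only on $L_0$ times a factor depending only on $L_1$ holds precisely when the middle patch maps to a point, i.e.\ in Seidel's case $N=\pt$, where the two outer strips decouple; for general $N$ the two seams are coupled through the $N$-patch. (Test case: $M=N\times P$, $C=\Delta_N\times V$ with $V\subset P$ a Lagrangian sphere, and split Lagrangians $L_i=A_i\times B_i$; the quilted complex is then $CF^*_N(A_0,A_1)\otimes CF^*_P(B_0,V)\otimes CF^*_P(V,B_1)$, whose first factor depends jointly on both inputs, so no external tensor product of a left and a right module can reproduce it.) Hence your proposed ``decomposition of the middle $N$-patch into a sum over fibre cycles'' is not an operation the quilted complex admits, and no Euler-class bookkeeping can repair it. Two further corrections: the adjunction of \S\ref{ss:adjunction} is not ``available precisely because $C\to N$ is a sphere bundle'' --- it holds for an arbitrary correspondence and its transpose, and carries no shift --- and the $[\dim N]$ is not something to be extracted in the corollary: it is the degree of the counit quilt $C^t\#C\to\Delta_M$, already part of the theorem being fed into the bimodule functor, and it is suppressed (along with all gradings) in the corollary's display.
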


\noindent
In fact, Seidel's triangle \eqref{eq:seidel_triangle} is a consequence of this corollary.

\subsection{Khovanov homology}
In \cite{SeidelSmith2006} Seidel and Smith introduced an invariant of oriented links in $S^3$ as follows: they associate to each natural number $n$ a symplectic  manifold $\cS_n$, equipped with a symplectic action (up to Hamiltonian isotopy) of the braid group on $n$ letters, i.e.\ a homomorphism
\begin{equation} 
\Phi
\colon
\mathrm{Braid}_{n}
\to
\pi_0\mathrm{Symp}(\cS_n).
\end{equation}
They construct as well a Lagrangian $L^{\Cap}_{n}$ in a symplectic manifold $\cS_n$, thus obtain a homology group associated to each oriented knot $K$, which is presented as the closure of a braid $\beta$ with $n$ strands (see Figure \ref{fig:braid_closure}) from Lagrangian Floer cohomology:
\begin{equation}
\mathrm{Kh}_{symp}(\beta)
\coloneqq
HF^*(L^{\Cap}_{n}, \Phi_{\beta} L^{\Cap}_{n}).
\end{equation}
\begin{figure}[h]
\centering
\begin{tikzpicture}[scale=0.5]
  \draw[thick] (-3.5,0.5) .. controls (-4,0) and (-4,-.5) .. (-4,-2) .. controls (-4,-6) and (4,-6) .. (4,-2);
  \draw[thick,->] (4,-2) -- (4,0);
  \draw[thick] (4,0) -- (4,2) .. controls (4,6) and (-4,6) .. (-4,2) .. controls (-4,1) and (-1,-1) .. (-1,-2) .. controls (-1,-3) and (1,-3) .. (1,-2);
  \draw[thick,->] (1,-2) -- (1,0);
   \draw[thick] (1,0) -- (1,2) .. controls (1,3) and (-1,3) .. (-1,2) -- (-1,1) .. controls (-1,.5) and (-1,0) .. (-1.6,-.6);
  \draw[thick] (-1.9,-.9) .. controls (-2,-1) and (-2,-1.5) .. (-2,-2) .. controls (-2,-4) and (2,-4) .. (2,-2);
 \draw[thick,->] (2,-2) -- (2,0);
  \draw[thick] (2,0)  -- (2,2) .. controls (2,4) and (-2,4) .. (-2,2) -- (-2,1) .. controls (-2,0.5) and (-2,.4) .. (-2.4,0);
  \draw[thick] (-2.6,-.2) .. controls (-3,-.6) and (-3,-1) .. (-3,-2) .. controls (-3,-5) and (3,-5) .. (3,-2);
 \draw[thick,->] (3,-2) -- (3,0);
   \draw[thick] (3,0)  -- (3,2) .. controls (3,5) and (-3,5) .. (-3,2) -- (-3,1.5) .. controls (-3,1) and (-3,1) ..(-3.1, .9);
\end{tikzpicture}
\caption{\label{fig:braid_closure}
The closure of the product of positive braid generators $\sigma_3\sigma_2\sigma_1$.}
\end{figure}

The essential problem is now to prove that the above group does not depend on the choice of presentation of $K$ as a braid closure, i.e.\ that it defines a knot invariant, which is called \emph{symplectic Khovanov homology} (see Theorem \ref{thm:Skh=Kh} below).
This can be analyzed using classical Markov moves \cite{birman_book} which can be used to relate any two braid closure presentations of a knot.
The first move is a conjugation, which can be implemented by a Hamiltonian isotopy in the ambient symplectic manifold $\cS_n$.

The second move is a stabilization, and entails comparing Floer homology groups in $\cS_n$ and $\cS_{n+2}$.
In Seidel--Smith's construction of their invariant, this move can be described in terms of a pair of Lagrangian correspondences $\cup_1$ and $\cup_2$ in $\cS^-_n \times \cS_{n+1}$, which are $S^2$ fibered over $\cS_n$, that meet cleanly along a copy of $\cS_n$.
Writing $\tau_2$ for the fibered Dehn twist about $\cup_2$, Seidel--Smith prove:
\begin{proposition}[Lemma 44 of \cite{SeidelSmith2006}]
Given a pair of Lagrangians $L$ and $L'$ in $\cS_n$, there is a natural isomorphism
\begin{equation}
HF^*(L,L') \cong HF^*(\cup_1 L, \tau_2 \cup_1 L').
\end{equation}
\qed
\end{proposition}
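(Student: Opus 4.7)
The plan is to deduce the isomorphism from the Wehrheim--Woodward exact triangle for fibered Dehn twists (the Corollary stated above as Corollary \ref{cor:LES-bimodules}), applied to the triple of objects $L_0 = \cup_1 L$, $L_1 = \cup_1 L'$, and $C = \cup_2$ in the symplectic manifold $\cS_{n+1}$. This immediately produces an exact triangle
\begin{equation}
HF^*(\cup_1 L, \cup_2) \otimes HF^*(\cup_1 L', \cup_2)[\dim \cS_n] \to HF^*(\cup_1 L, \cup_1 L') \to HF^*(\cup_1 L, \tau_2 \cup_1 L') \to,
\end{equation}
so it suffices to (a) identify the middle term with $HF^*(L, L')$, and (b) show that the leftmost term vanishes, whence the connecting map gives the desired isomorphism.

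For (a), I would apply the strip-shrinking isomorphism \eqref{eq:strip_shrinking} twice. Viewing $\cup_1 L$ and $\cup_1 L'$ as quilted Lagrangians $\pt \xrightarrow{L} \cS_n \xrightarrow{\cup_1} \cS_{n+1}$ and $\pt \xrightarrow{L'} \cS_n \xrightarrow{\cup_1} \cS_{n+1}$, respectively, one has
\begin{equation}
HF^*(\cup_1 L, \cup_1 L') \cong HF^*(L, \cup_1, \cup_1^T, L') \cong HF^*(L, \cup_1 \circ \cup_1^T, L').
\end{equation}
The hypothesis that $\cup_1$ projects as an $S^2$-bundle to $\cS_n$ and as an embedding to $\cS_{n+1}$ implies that the geometric composition $\cup_1 \circ \cup_1^T$ coincides with the diagonal $\Delta_{\cS_n}$, so the right-hand side reduces to $HF^*(L, L')$. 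The only subtlety here is verifying the monotonicity and embedded-composition hypotheses of Wehrheim--Woodward required to apply \eqref{eq:strip_shrinking} in Seidel--Smith's ambient manifolds $\cS_n$.

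For (b), I would again invoke strip-shrinking to get $HF^*(\cup_1 L, \cup_2) \cong HF^*_{\cS_n}(L, \cup_1^T \circ \cup_2)$, and then use the clean intersection hypothesis $\cup_1 \cap \cup_2 \cong \cS_n$ (together with the fact that both $\cup_i$ project as embeddings into $\cS_{n+1}$) to analyze the composed correspondence $\cup_1^T \circ \cup_2 \subset \cS_n^- \times \cS_n$. The expectation is that this composition is Hamiltonian-isotopic to something displaceable from $L$, or more directly that a dimension count using the clean intersection forces the intersection to be empty, so that $HF^*(\cup_1 L, \cup_2) = 0$; the symmetric argument handles $HF^*(\cup_1 L', \cup_2)$.

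The main obstacle will be step (b): unlike step (a), where $\cup_1 \circ \cup_1^T = \Delta_{\cS_n}$ is forced by the $S^2$-fibration structure alone, the vanishing in (b) relies on the specific geometry of Seidel--Smith's pair $\cup_1, \cup_2$ and on the fact that $\cup_2$ is the image of $\cup_1$ under a Hamiltonian isotopy that (roughly) swaps the role of two strands. Verifying that the composition $\cup_1^T \circ \cup_2$ is either empty, displaceable, or otherwise has trivial Floer cohomology with all inputs $L$ requires a careful Morse-theoretic or direct intersection-theoretic analysis of the Seidel--Smith construction, and the clean (rather than transverse) nature of $\cup_1 \cap \cup_2$ means one may also need to work with a perturbed version of the strip-shrinking statement, of the type developed in the Wehrheim--Woodward literature. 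Once (b) is in hand, the long exact sequence of the triangle and the identification in (a) yield the naturality and the isomorphism simultaneously.
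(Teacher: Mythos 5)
There is a genuine gap, in fact two, and they sit exactly at your steps (a) and (b). For (a), the composition $\cup_1\circ\cup_1^T$ is the diagonal only set-theoretically: the fiber product $\cup_1\times_{\cS_{n+1}}\cup_1^T$ is a copy of the coisotropic image $V_1\subset\cS_{n+1}$, i.e.\ an $S^2$-bundle over $\Delta_{\cS_n}$, so it has excess dimension $2$ and the embedded-composition hypothesis of \eqref{eq:strip_shrinking} fails precisely here. The correct answer is not $HF^*(L,L')$ but (after a Morse--Bott/clean-intersection analysis, since $\cup_1L\cap\cup_1L'$ is a union of $2$-spheres, one over each point of $L\cap L'$) a group of the form $HF^*(L,L')\otimes H^*(S^2)$. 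For (b), the vanishing you hope for is false: the relevant composition $\cup_2^T\circ\cup_1$ is governed by the clean intersection $V_1\cap V_2\cong\cS_n$, which maps isomorphically to $\cS_n$ under either bundle projection, so this composition \emph{is} embedded and is (Hamiltonian isotopic to) the diagonal correspondence --- the geometric ``snake'' relation. Hence the first term of the Dehn-twist triangle is itself a copy of $HF^*(L,L')$ (up to shift), not zero. The actual mechanism is therefore not a collapse of the triangle by vanishing, but the statement that the map $HF^*(L,L')\to HF^*(L,L')\otimes H^*(S^2)$ appearing in the triangle is a split injection whose cokernel is identified with $HF^*(\cup_1L,\tau_2\cup_1L')$; your argument, as written, would instead ``prove'' that the twisted group equals the untwisted one with an extra $H^*(S^2)$ factor, which is wrong.

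Beyond these two points, note that the route through Corollary \ref{cor:LES-bimodules} and quilted strip-shrinking is not available off the shelf in this setting: the paper itself remarks that the correspondences $\cup_i$ (and the manifolds $\cS_n$) are noncompact, which is why the quilt machinery has been difficult to apply directly to Seidel--Smith's spaces, and the monotonicity/minimal-Maslov hypotheses under which \eqref{eq:strip_shrinking} is proved do not hold verbatim. The cited proof (Lemma 44 of Seidel--Smith) proceeds quite differently: one works in a local fibered model near $V_1$, observes that applying $\tau_2$ tilts the sphere fibers so that the clean $S^2$-worth of intersections of $\cup_1L$ with $\cup_1L'$ becomes a single transverse intersection point for each point of $L\cap L'$, and then matches holomorphic strips directly using adapted almost complex structures and energy/action estimates. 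If you want to salvage your approach, you would need (i) a clean-intersection (excess-dimension) version of strip-shrinking to compute the middle term, (ii) the snake-relation identification of $\cup_2^T\circ\cup_1$ with the diagonal to compute the first term, and (iii) an argument that the resulting map $HF^*(L,L')\to HF^*(L,L')\otimes H^*(S^2)$ is injective --- none of which is supplied by the vanishing claim in your step (b).
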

This establishes the key step in the proof that $\mathrm{Kh}_{symp}$ defines a knot invariant because, under the homomorphism $\mathrm{Braid}_{n} \to
\pi_0\mathrm{Symp}(\cS_n)$ studied by Seidel and Smith, the image of each elementary braid is a fibered Dehn twist about an $S^2$-fibered Lagrangian $\cup_i$ in $\cS^-_n \times \cS_{n+1}$.
This was exploited by Rezazadegan \cite{reza_khovanov} to extend Seidel and Smith's theory to a tangle invariant.

Because the Lagrangian correspondences $\cup_i$ are noncompact, it has been difficult to directly use the theory of pseudoholomorphic quilts in this specific context.
Nonetheless, a version of Corollary  has \ref{cor:LES-bimodules} proved essential in implementing the comparison between Seidel--Smith's theory and Khovanov homology which was initiated in \cite{abouzaid_symp_arc_algebra_formal}:

\begin{theorem}[\cite{abouzaid2019khovanov}] \label{thm:Skh=Kh}
Over a field of characteristic $0$, the knot invariant $ \mathrm{Kh}_{symp}$ agrees with Khovanov categorification of the Jones polynomial (after collapsing the bigrading to a single grading).
\null\hfill$\square$
\end{theorem}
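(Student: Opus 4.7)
The plan is to reduce the comparison between $\mathrm{Kh}_{symp}$ and Khovanov homology to a comparison of two algebraic structures from which both invariants can be recovered: on the symplectic side, the full subcategory $\cA_n^{symp} \subset \Fuk \cS_n$ generated by $L^\cap_\alpha$ as $\alpha$ ranges over crossingless matchings of $2n$ points; on the combinatorial side, Khovanov's arc algebra $H_n$. The first step will be to lift Seidel--Smith's original cohomological identification $H^*(\cA_n^{symp}) \simeq H_n$ to a quasi-isomorphism of $A_\infty$-algebras. Over a field of characteristic $0$, one may use intrinsic formality: $H_n$ is a graded algebra concentrated in degrees $0$ to $n$, equipped with enough automorphisms (arising from the Lie group action on the nilpotent slice that underlies $\cS_n$) that the higher $A_\infty$-operations can be trivialised by a coherent gauge transformation, at least after base change to $\bQ$.

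Next, I would reinterpret both sides in terms of tangle invariants, using the bimodule formalism of \S\ref{ss:bimodules}. For each elementary tangle --- a cup, cap, or crossing $\sigma_i^{\pm 1}$ --- one obtains on the symplectic side a bimodule over $(\cA_n^{symp}, \cA_{n'}^{symp})$ built from the Lagrangian correspondences $\cup_i$, $\tau_i$, and their transposes. On the Khovanov side, Khovanov constructed isomorphic bimodules combinatorially. To match them, the cups and caps reduce to computing $HF^*(L^\cap_\alpha, \cup_i \circ L^\cap_\beta)$, which is essentially what Seidel--Smith did. For the crossings, the key input is Corollary \ref{cor:LES-bimodules}: applied to the spherically-fibered correspondences $\cup_i$, it yields an exact triangle of bimodules
\begin{equation}
CF^*(L_0, \cup_i) \otimes CF^*(\cup_i, L_1) \to CF^*(L_0, L_1) \to CF^*(L_0, \tau_{\cup_i} L_1),
\end{equation}
which is precisely the Khovanov cube-of-resolutions relation expressing a crossing as a cone on the saddle cobordism.

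The third step is to assemble these local matchings into an equivalence of knot invariants. Using the composition bifunctor of \S\ref{sss:specialization_to_bifunctor}, tensor composition of Wehrheim--Woodward bimodules realizes the stacking of elementary tangles; by the Markov invariance already built into the symplectic construction (the first Markov move is a Hamiltonian isotopy, the second is encoded precisely in the Dehn twist triangle together with the cup/cap correspondences), the resulting invariant of a braid closure agrees with the iterated tensor product on Khovanov's side. One then concludes by noting that both invariants are given by the Hochschild-type homology of the resulting $\cA_n^{symp}$-bimodule, closed up by the cup Lagrangian $L^\Cap_n$.

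The main obstacle I expect is the coherent matching of the bimodules at the level of $A_\infty$-bifunctors, not just individual bimodule quasi-isomorphisms. Individual formality in characteristic $0$ gives the equivalence of the \emph{objects} $\cA_n^{symp} \simeq H_n$, but transporting the entire collection of tangle bimodules through this equivalence requires controlling the higher homotopies that arise when one iterates the geometric composition in \S\ref{ss:fukaya} and the Dehn twist triangle. Concretely, one must check that the homotopies witnessing Markov II invariance, which come from degenerations of families of quilts involving the spherically-fibered correspondences $\cup_i$, are compatible with the chosen formality quasi-isomorphism. Addressing this is essentially the role played by the noncompact analogue of the structures in \S\ref{s:symp}, together with the characteristic-zero hypothesis which ensures all relevant obstruction classes in Hochschild cohomology vanish for degree reasons.
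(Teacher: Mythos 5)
The theorem you are addressing is quoted in this survey from \cite{abouzaid2019khovanov}; the paper gives no proof, only the remark that a version of Corollary \ref{cor:LES-bimodules} was essential to the comparison, which was initiated in \cite{abouzaid_symp_arc_algebra_formal}. Your overall architecture --- identify the endomorphism algebra of the cup Lagrangians with Khovanov's arc algebra $H_n$, match the bimodules attached to elementary tangles, and use the fibered Dehn twist triangle to produce the cube-of-resolutions cone at each crossing --- does mirror the actual Abouzaid--Smith strategy. But there is a genuine gap at the first and most important step: the formality of the symplectic arc algebra. You assert that $H_n$ is intrinsically formal over a characteristic-zero field, so that Seidel--Smith's ring-level identification upgrades automatically to an $A_\infty$-quasi-isomorphism. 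This is not true, and it is exactly why \cite{abouzaid_symp_arc_algebra_formal} is a separate, substantial paper: the relevant Hochschild cohomology groups of $H_n$ do not vanish for degree reasons, and no amount of averaging over automorphisms of the nilpotent slice trivializes the higher products. Formality is proved there by a purity argument: one constructs a degree-one Hochschild class (an ``nc vector field'') from the geometry of the slice, shows that its action on Floer cohomology has pure weights, and deduces formality from purity. The characteristic-zero hypothesis enters through that eigenvalue argument, not through a general obstruction-theoretic vanishing, so your stated mechanism for the key step would fail.

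A secondary issue is that you invoke the quilted bimodule formalism of \S\ref{ss:bimodules}, the composition bifunctor of \S\ref{sss:specialization_to_bifunctor}, and Corollary \ref{cor:LES-bimodules} as stated, but the correspondences $\cup_i$ are noncompact and the ambient spaces $\cS_n$ are not closed monotone manifolds, so the Wehrheim--Woodward hypotheses do not apply; the survey itself flags that quilt theory has been difficult to use directly in this setting, and says only that a \emph{version} of the corollary is available there. Abouzaid--Smith implement the cup functors and the crossing cones by other means adapted to the noncompact setting, so to follow your outline you would either have to justify the quilt machinery in this context or substitute the arguments they actually use. Granting the formality theorem as input and the exact triangle in the appropriate setting, your assembly of the local identifications via braid closures and Markov moves is consistent with the known proof.
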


\subsection{Floer field theory}
\label{ss:floer_field_theory}

Building on work of Donaldson, Floer, Fukaya, Donaldson, Salamon--Wehrheim, and others, Wehrheim--Woodward use quilted Floer theory to construct in \cite{wehrheim_woodward_floer_field_theory} a $(2+1)$-dimensional connected category-valued field theory.
More specifically, for coprime positive $r, d$, Wehrheim--Woodward construct a functor
\begin{align}
&
\Phi
\colon
\text{(compact connected oriented 2-manifolds, 3-bordisms)}
\\
&
\hspace{2in}
\lra
\text{ ($A_\infty$-categories, homotopy classes of $A_\infty$-functors)}.
\nonumber
\end{align}
$\Phi$ is defined on objects and morphisms in the following way:
\begin{itemize}
\item
Fix a (compact, connected, oriented) surface $X$.
Up to isomorphism, there is a unique degree-$d$ $U(r)$-bundle $P$ on $X$.
$\Phi$ sends a (compact connected oriented) surface $X$ to $\Fuk^\#(M(X))$, where $M(X)$ is the representation variety of central-curvature connections with fixed determinant.
The Hodge pairing equips $M(X)$ with a monotone symplectic form.

\smallskip

\item
Suppose that $Y$ is a 3-cobordism from $X_-$ to $X_+$ that is \emph{elementary}, i.e.\ it is either a cylinder or it represents a handle attachment.
Define $L(Y)$ to be the moduli space of central-curvature fixed-determinant connections.
Then restriction defines a Lagrangian correspondence
\begin{align}
L(Y)
\hra
M(X_-)^- \times M(X_+),
\end{align}
and the machinery of \S\ref{ss:def_of_Phi_L12} then produces an $A_\infty$-functor
\begin{align}
\Phi(Y)
\coloneqq
\Phi^\#_{L(Y)}
\colon
M(X_-)
\to
M(X_+).
\end{align}

When $Y$ is not elementary, we can choose a decomposition into elementary cobordisms $Y_1, \ldots, Y_m$, and define $\Phi(Y)$ to be the composition $\Phi(Y_m) \circ \cdots \circ \Phi(Y_1)$.
We of course need to know that this definition is independent (up to $A_\infty$-homotopy) of the choice of decomposition, and this follows from the strip-shrinking analysis described in \S\ref{ss:qHF} and the fact that any two decompositions differ by a sequence of \emph{Cerf moves}.
\end{itemize}
(This is part of a long history of work on the Atiyah--Floer conjecture, as explained in \cite{wehrheim_floer_field_philosophy}.)

Following an idea proposed by Kronheimer--Mrowka in \cite{kronheimer_mrowka:knot_homology_from_instantons}, Wehrheim--Woodward defined an invariant $\ol{HF}_{r,d}(Y)$ of closed oriented 3-manifolds $Y$ via the following procedure:
\begin{itemize}
\item
Define $\ol Y$ to be the result of connect-summing $Y$ with the toric cylinder $[-1,1] \times T^2$.

\smallskip

\item
The representation variety $M(T^2)$ is a point, so the functor $\Phi$ associates to $\ol Y$ an endofunctor of $\Fuk^\#(\pt)$.
We now define the 3-manifold invariant like so:
\begin{align}
\ol{HF}(Y)
\coloneqq
H\bigl(\hom\bigl(\Phi\bigl(\ol Y\bigr)\bigr)(\pt), \pt\bigr).
\end{align}
Equivalently, $\ol{HF}(Y)$ is the quilted Floer cohomology of the sequence $\pt \sr{L(Y_1)}{\lra} \cdots \sr{L(Y_m)}{\lra} \pt$, where $Y_1,\ldots,Y_m$ are the elementary cobordisms in a chosen decomposition of $\ol Y$.
\end{itemize}
An alternate construction applies to 3-manifolds $Y$ equipped with a nonseparating embedded compact connected oriented surface $X \subset Y$, c.f.\ \cite[Definition 4.3.1]{wehrheim_woodward_floer_field_theory}.

See \cite{wehrheim_floer_field_philosophy,lekili_heegaard_floer,auroux_gokova} for work toward an interpretation of Heegaard Floer homology in terms of quilted Floer cohomology.
In a different direction, Ivan Smith incorporated insights from mirror symmetry in order to study the instanton Floer homology of a 3-manifold fibered by genus-2 curves in \cite{smith:pencils}.

\subsection{Homological mirror symmetry}
\label{sec:homol-mirr-symm}

We return to mirror symmetry as a motivating source of structures in Floer theory, and note a particular mystery whose solution is provided by the functorial structures discussed so far: on the algebro-geometric side, the fact that coherent sheaves are locally given by the datum of modules over commutative rings allows one to assign to a pair of coherent sheaves a tensor product, which is a coherent sheaf on the same space, that is locally given by the tensor product of the corresponding modules.
This can be formulated as the existence of a natural functor
\begin{equation} \label{eq:monoidal_structure_DbCoh}
D^b(X) \otimes D^b(X)
\to
D^b(X)
\end{equation}
together with structures witnessing the associativity and commutativity of the tensor product construction.

This functor has no straightforward analogue in symplectic geometry, because there is no universal way of assign to a pair of Lagrangian submanifolds of a given symplectic manifold a putative tensor product that is a submanifold of the same space.
From the perspective of the theory discussed in \S\ref{s:symp}, such a functor would arise most naturally from a Lagrangian correspondence
\begin{equation} 
\Gamma^{\otimes}
\subset
M \times M \times M^-
\end{equation}
that is invariant under the involution which permutes the first two factors, and that satisfies the property that the two possible ways of composing $M$ with itself (by pairing the $M^-$ summand of the first factor with either of the two $M$ summands of the second factor) agree:
\begin{equation}
\Gamma^{\otimes} \circ_1 \Gamma^{\otimes}
\subset
M \times M \times M \times M^{-}
\supset
\Gamma^{\otimes} \circ_2 \Gamma^{\otimes}.
\end{equation}

In his thesis \cite{subotic}, Subotic considered this problem for $M$ a surface of genus $1$, and found that the construction of $\Gamma^{\otimes}$ depends on two choices: a projection map $\pi$ from $M$ to the circle, and a section thereof.
Writing $B$ for the base circle, the symplectic form canonically identifies the universal cover of the fibre $X_b$ over each point $b \in B$, based at its intersection with the chosen section, with the cotangent fibre $T^*_b B$, based at the origin: the pairing of a vector in $T_b B$ with a point in $\wt M_b$ is given by measuring the (symplectic) area of the (infinitesimal) region obtained by transporting the associated path in $M_b$, in the direction of the chosen vector, along some local trivialisation of the projection map $M \to B$ in a neighbourhood of $b$. This procedure identifies the deck transformations on $\wt M_b$ with translation by the multiples of a nonzero covector in $T^*_b B$, and hence equips the fibre $M_b$ with the structure of an abelian group.  

Subotic's key idea at this stage is to define a correspondence using fibrewise addition:
\begin{equation}
\Gamma^{\otimes}
\coloneqq
\bigl\{
(p,q,r) \in M\times M \times M^-
\:\big|\:
\pi(p) = \pi(q) = \pi(r), p + q = r
\bigr\},
\end{equation}
where the last equation makes sense because the first imposes the condition that the three points $(p,q,r)$ lie over the same point in $B$.
An application of the Arnold--Liouville theorem shows that this is a Lagrangian correspondence, and it is straightforward to prove that it satisfies the desired commutativity and associativity properties.
The main result of \cite{subotic} is:

\begin{theorem} \label{thm:Subotic}
The Lagrangian correspondence $\Gamma^{\otimes}$ induces a symmetric monoidal structure of the Donaldson-Fukaya category of a $2$-torus, which corresponds, under mirror symmetry, to the tensor product of bounded complexes of coherent sheaves, on the mirror elliptic curve.
\null\hfill$\square$
\end{theorem}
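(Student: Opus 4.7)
The plan is to combine three ingredients: the Lagrangian and group-theoretic verification that the excerpt sketches for $\Gamma^\otimes$, the Ma'u--Wehrheim--Woodward machinery of Section~\ref{s:quilted_floer} that converts $\Gamma^\otimes$ into a bifunctor, and an explicit matching against the Polishchuk--Zaslow mirror dictionary.

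First, I would verify that $\Gamma^\otimes$ is Lagrangian. In the Arnold--Liouville action--angle coordinates adapted to $\pi$, the symplectic form reads $\omega = db \wedge d\theta$; the tangent space to $\Gamma^\otimes$ at a point $(p_1,p_2,p_1+p_2)$ consists of triples of tangent vectors with common base component and with fiber components satisfying $d\theta_1 + d\theta_2 = d\theta_3$. A direct calculation shows that $\omega \oplus \omega \oplus (-\omega)$ vanishes on this subspace. Commutativity of fiber addition gives the $\mathbb{Z}/2$-invariance of $\Gamma^\otimes$ under swapping the first two factors, and the associativity of fiber addition shows that the two iterated compositions agree:
\begin{equation}
\Gamma^\otimes \circ_1 \Gamma^\otimes
=
\Gamma^\otimes \circ_2 \Gamma^\otimes
=
\bigl\{(p_1,p_2,p_3;\,p_1+p_2+p_3) \:\big|\: \pi(p_1)=\pi(p_2)=\pi(p_3)\bigr\}.
\end{equation}

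Second, I would apply Theorem~\ref{thm:MWW_functors} to $\Gamma^\otimes \subset (M\times M)^- \times M$ to produce an $A_\infty$-functor $\Phi^\#_{\Gamma^\otimes}$, and precompose it with the exterior-product embedding $(L_1,L_2) \mapsto L_1 \times L_2$ from $\Fuk(M) \times \Fuk(M)$ into $\Fuk^\#(M\times M)$. Passing to cohomology yields a bifunctor $\otimes$ on the Donaldson--Fukaya category of $M$. Combining the equalities from the previous step with the composition-commutes-with-categorification result of \cite{mww} produces natural isomorphisms $(L_1 \otimes L_2) \otimes L_3 \simeq L_1 \otimes (L_2 \otimes L_3)$ and $L_1 \otimes L_2 \simeq L_2 \otimes L_1$. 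Because we work only at the cohomology level, the pentagon, hexagon, and unit axioms reduce to equalities of classes induced by equalities of Lagrangian correspondences, and the unit object is represented by the zero-section of $\pi$.

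Third, I would identify the resulting symmetric monoidal structure with the derived tensor product on $D^b(E)$ under Polishchuk--Zaslow's equivalence \cite{polishchuk_zaslow}. On the generating set of straight-line Lagrangians $L_{a,b}$ of rational slope equipped with unitary local systems $\xi$, the geometric composition of $L_{a_1,b_1}^{\xi_1} \times L_{a_2,b_2}^{\xi_2}$ with $\Gamma^\otimes$ yields a new line Lagrangian whose slope is the sum of the input slopes and whose monodromy is the product of the input monodromies --- precisely the data encoding the tensor product of the corresponding line bundles on $E$. A quilted disk count then reproduces the theta-function identities underlying Polishchuk--Zaslow's triple products, exhibiting the bifunctor on morphism spaces as mirror to the derived tensor product.

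The principal difficulty will be the third step: a full identification requires matching the quilted-disk counts defining $\Phi^\#_{\Gamma^\otimes}$ on morphism spaces (as on the left of Figure~\ref{fig:witch_specializations}, with patches mapping to $M \times M$ and $M$ and seam on $\Gamma^\otimes$) against the explicit $m_r$ operations computed in \cite{polishchuk_zaslow} from theta-functions, tracking Novikov weights, signs, and spin structures. A secondary obstacle is that $\Gamma^\otimes \circ_i \Gamma^\otimes$ is not cut out transversally in the sense demanded by \cite{mww}, so one must perturb by fiberwise Hamiltonians that preserve the group structure or, alternatively, appeal to Fukaya's more general framework of Section~\ref{ss:fukaya} in which non-transverse compositions are accommodated by the transfer of bounding cochains (Theorem~\ref{thm:fukaya_bounding_cochain_transfer}).
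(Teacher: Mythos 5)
The paper does not actually prove Theorem \ref{thm:Subotic}: it is quoted as the main result of Subotic's thesis \cite{subotic}, and the surrounding text only sketches the construction of $\Gamma^{\otimes}$ (the choice of projection and section, the identification of the fibres with groups via the symplectic form, and the assertion that $\Gamma^{\otimes}$ is Lagrangian by Arnold--Liouville and satisfies the commutativity and associativity identities). Your first two steps track that sketch closely: the action--angle verification that $\Gamma^{\otimes}$ is Lagrangian, the symmetry and associativity of fibrewise addition, and the passage through correspondence-induced functors to a bifunctor on the Donaldson--Fukaya category are all consistent with what the paper indicates. Two technical corrections, though. First, the monotonicity and minimal-Maslov hypotheses of Theorem \ref{thm:MWW_functors} do not literally hold for $T^2$; what saves you is asphericity ($\pi_2(M,L)=0$ for the linear Lagrangians involved), which excludes bubbling, and this should be said rather than invoking \cite{mww} verbatim. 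Second, your worry that $\Gamma^{\otimes}\circ_i\Gamma^{\otimes}$ is not cut out transversally is unfounded: fibrewise addition makes the projection of $\Gamma^{\otimes}$ to its output factor a submersion, so these associativity compositions are automatically transverse and embedded, and no perturbation or appeal to Theorem \ref{thm:fukaya_bounding_cochain_transfer} is needed there. Where embeddedness genuinely fails is elsewhere: for higher-rank objects (lines of slope $p/q$ with $q>1$) the object-level composition $(L_1\times L_2)\circ\Gamma^{\otimes}$ is a multisection which can be multiply covered (e.g.\ the fibrewise double of a slope-$1/2$ line gives two slope-$1$ circles each with multiplicity two, mirror to the decomposition of the tensor square of a rank-two bundle), so your object-level dictionary ``slope adds, monodromy multiplies'' is literally correct only for integer slopes and needs the immersed/local-system bookkeeping in general.

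The genuine gap is the one you flag yourself, and it carries essentially all the content of the second clause of the statement: the identification, under the Polishchuk--Zaslow equivalence \cite{polishchuk_zaslow}, of the induced product with the derived tensor product on the mirror elliptic curve. This requires matching the bifunctor on morphism spaces --- the quilted-disc counts with seam on $\Gamma^{\otimes}$, including Novikov weights, signs, spin structures and local systems --- against the theta-function multiplication rules on the sheaf side, and doing so not only for invertible sheaves but for higher-rank bundles and torsion sheaves (vertical fibres), where the non-embedded compositions above enter. One must also verify that the isomorphisms supplied by ``composition commutes with categorification'' assemble into coherent associativity and symmetry constraints with the chosen section as unit; at the cohomology level this is plausible but is an argument, not an automatic consequence of the set-theoretic equalities of correspondences. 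As written, your third step is a plan rather than a proof, so the proposal establishes the monoidal structure in outline but leaves the mirror identification --- the heart of the theorem --- unproved.
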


In subsequent work, Pascaleff \cite{pascaleff:poisson} explored the formal structure giving rise to correspondences of the form $\Gamma^{\otimes}$ from local actions of a Lie group, and identified Weinstein's notion of a \emph{symplectic groupoid} as a general notion encompassing generalisations of Subotic's construction to higher dimensional tori, cotangent bundles, and twisted versions thereof.
Under suitable technical assumptions, one can then prove the analogue of Theorem \ref{thm:Subotic}.

In order to lift Theorem \ref{thm:Subotic} from cohomology to the chain level, one needs an appropriate formulation of what it means to have a \emph{symmetric monoidal $A_\infty$-category}.
We expect that such a lift contains substantially more information than the cohomological version: concretely, consider a Lagrangian $L$ which is a unit for the monoidal structure, in the sense that
\begin{equation}
\Gamma^{\otimes} \circ_{1,2} (L \times L)
=
L.
\end{equation}
The outcome of Theorem \ref{thm:Subotic} is that the multiplicative structure of the Floer cohomology ring of such a Lagrangian is commutative.
In characteristic $0$, commutativity is essentially a property of $A_\infty$-algebras, and there is little additional information to be extracted.
On the other hand, the theory of (coherently) homotopy commutative algebras in finite characteristics is rich enough that one can extract from it the Steenrod operations on the cohomology of spaces.
One is therefore led to formulate the following:

\begin{conjecture}
The Steenrod operations on the cohomology of a closed manifold can be extracted from the Floer theory of its cotangent bundle, and the geometry of fibrewise addition.
\null\hfill$\square$
\end{conjecture}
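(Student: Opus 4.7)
The plan is to elevate Subotic--Pascaleff's correspondence $\Gamma^\otimes$ from a cohomological structure on the Donaldson--Fukaya category to a genuine $E_\infty$-algebra structure on the Floer cochain complex $CF^*(0_Q,0_Q)$ of the zero section $0_Q \subset T^*Q$, and then to invoke the classical fact that an $E_\infty$-algebra over $\bF_p$ carries canonical Steenrod (and Dyer--Lashof) operations on its cohomology. First I would equip $T^*Q$ with its symplectic-groupoid structure given by fibrewise addition, producing the Lagrangian correspondence
\begin{equation*}
\Gamma^\otimes
\coloneqq
\bigl\{(\alpha,\beta,\alpha+\beta) \in T^*Q \times T^*Q \times (T^*Q)^- \:\big|\: \pi(\alpha)=\pi(\beta)\bigr\},
\end{equation*}
which is tautologically symmetric under the swap of its first two factors and has $0_Q$ as a strict unit (the identity $\Gamma^\otimes \circ_{1,2}(0_Q\times 0_Q)=0_Q$ follows from $0+0=0$ in each fibre). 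Composition with $\Gamma^\otimes$ via the bifunctor of \S\ref{ss:fukaya}, or its expected symmetric monoidal refinement within $\Symp$ as suggested in \S\ref{sec:homol-mirr-symm}, then produces a commutative-up-to-homotopy product on $CF^*(0_Q,0_Q)$ whose cohomological avatar recovers Subotic's multiplication.

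Second, I would construct the required hierarchy of higher symmetric operations by counting quilted surfaces with $\Gamma^\otimes$-seams implementing $r$-fold fibrewise addition. For each $r\geq 1$, the relevant domain moduli space is a witch-type surface with $r$ input ends and a single output end, with internal seams labelled by $\Gamma^\otimes$ and with real parameters that vary the relative positions of the seams and the input ends; heuristically this moduli space should be homotopy equivalent to the $r$-th space of an $E_\infty$-operad, with $\Sigma_r$-action by permutation of inputs. Its compactification is controlled by the relative $2$-operadic formalism of \S\ref{ss:relative_2-operad}, and the strip-shrinking and figure-eight analysis of \S\ref{ss:analysis} is precisely what guarantees that the $0_Q$-bubbles which form at the boundary are absorbed correctly into the algebraic structure.

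Third, to identify the resulting operations with the classical Steenrod operations on $H^*(Q;\bF_p)$, I would use an Abbondandolo--Schwarz / PSS-type quasi-isomorphism $CF^*(0_Q,0_Q) \simeq C^*(Q;\bF_p)$ and verify that it intertwines the Floer-theoretic operad action with the standard $E_\infty$-structure on singular cochains. At the level of individual operations, this should reduce to a Morse-theoretic computation: a neck-stretching that takes $\Gamma^\otimes$ to the diagonal in $Q\times Q\times Q$ should degenerate each $r$-input quilted Floer operation into a count of $\Sigma_r$-equivariant trees of gradient flow-lines on $Q$, matching the May--Steenrod construction of $P^i$ and $\beta P^i$.

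The hard part will be the construction of the $E_\infty$-action itself at the chain level. Coherent commutativity, unlike coherent associativity, has never been implemented in Floer theory, and carrying it out here requires three substantial ingredients that do not yet exist in the literature: (i) a genuinely symmetric monoidal enhancement of $\Symp$, in which permutation of the input ends of a quilted surface acts compatibly with composition along $\Gamma^\otimes$; (ii) a $\Sigma_r$-equivariant transversality package for moduli of quilts labelled by the noncompact Lagrangian $\Gamma^\otimes$, whose iterated self-compositions are not embedded along the fibre directions of $T^*Q$; and (iii) a chain-level comparison between the Floer and Morse $E_\infty$-structures that is strict enough to single out individual power operations rather than merely the underlying homotopy type. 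Each of these steps appears to demand substantial new technology, but the conclusion --- a symplectic interpretation of the mod-$p$ Steenrod algebra as universal operations on Floer cohomology of cotangent-bundle zero sections --- would be striking.
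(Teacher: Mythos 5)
The statement you are addressing is a \emph{conjecture}: the paper offers no proof of it, only the motivation in \S\ref{sec:homol-mirr-symm} that a chain-level (coherently homotopy-commutative) refinement of Subotic's and Pascaleff's monoidal structure in finite characteristic should be rich enough to encode Steenrod operations. So there is no argument of the authors to compare yours against, and your text should be judged as a research program rather than a proof. As a program it is aligned with the paper's intent --- use $\Gamma^{\otimes}$ given by fibrewise addition on $T^*Q$ to produce an $E_\infty$-type structure on $CF^*(0_Q,0_Q)$, then transport the resulting power operations to $H^*(Q;\bF_p)$ via a PSS/Abbondandolo--Schwarz comparison --- but it does not close the conjecture, and you say so yourself: items (i)--(iii) in your last paragraph (a symmetric monoidal enhancement of $\Symp$, a $\Sigma_r$-equivariant transversality package for quilts with the noncompact seam $\Gamma^{\otimes}$, and a chain-level comparison with the singular-cochain $E_\infty$-structure sharp enough to identify individual operations) are precisely the open content of the conjecture, not lemmas you can cite. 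In particular, even the definition of a ``symmetric monoidal $A_\infty$-category'' adequate for this purpose is flagged in the paper as missing, and no regularization theory currently handles families of quilts with colliding seams, let alone equivariantly and with noncompact Lagrangian labels; your appeal to the relative $2$-operad formalism and to the strip-shrinking/figure-eight analysis supplies plausibility, not proofs, since those results are established only for compact targets and do not by themselves produce a $\Sigma_r$-equivariant operad action at chain level.

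Two further points deserve care if you pursue this. First, your claim that the putative $r$-input moduli spaces ``should be homotopy equivalent to the $r$-th space of an $E_\infty$-operad'' is exactly what must be proved; the witch-ball/$2$-associahedron spaces of \S\ref{s:symp} are built for \emph{planar} (nonsymmetric) structures, so genuine new domain moduli with free $\Sigma_r$-symmetry (or an equivariant cellular model of $E\Sigma_r$) must be constructed, and the boundary combinatorics need not be governed by the existing relative $2$-operad. Second, the noncompactness of $T^*Q$ and of $\Gamma^{\otimes}$ forces a wrapped or otherwise tamed setup with $C^0$-estimates adapted to the seam condition; the self-compositions of $\Gamma^{\otimes}$ are embedded here (fibrewise addition is a groupoid multiplication), so your worry about immersed compositions is misplaced, but energy control and compactness for quilts with these seams is a genuine analytic issue you have not addressed. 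In short: the approach is the expected one, but the proposal leaves the conjecture open.
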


\subsection{$\Fuk$ of $G$-manifolds}
We have already discussed, in Definition \ref{def:symp_red}, the fact that a Hamiltonian $G$-action on a symplectic manifold $M$ induces a correspondence between $M$ and its Hamiltonian reduction.
In \cite{evans2019generating}, Evans and Lekili considered instead the correspondence associated to the action $G \times M \to M$:
\begin{equation}
\Gamma_G
\coloneqq
\bigl\{
(g,a,x,y)
\:\big|\:
a = \mu(g\cdot x),
y = g\cdot x
\bigr\}
\subset
\left(T^* G \right)^- \times M^- \times M.
\end{equation}
By applying the Wehrheim--Woodward formalism, this correspondence gives rise to a functor
\begin{equation}
\Fuk T^*G
\to
\Fuk(M^- \times M).
\end{equation}
Evans and Lekili observed that this functor maps
\begin{enumerate}
\item
the cotangent fibre at the identity element of $G$ to the diagonal $\Delta_M$, and

\smallskip

\item
the image of the zero section of $T^*G$ agrees with the composition $ \Lambda_G \circ \Lambda_G^{t}$, of the moment correspondence and its adjoint.
\end{enumerate}
The domain of this functor is well-understood to be equivalent to a subcategory of the category of modules over the chains $C_{-*} \Omega G$ \cite{abouzaid:based_loops} (the paper \cite{evans2019generating} systematically misstates this to be the category of bimodules, but the arguments are unaffected).
This implies in particular that the $0$-section $G$ is equivalent, in the Fukaya category of $T^*G$, to a complex built from copies of the cotangent fibre.
Applying the functor therefore implies that $ \Lambda_G \circ \Lambda_G^{t}$ is built from the diagonal $\Delta_M$.
In general, the diagonal object may formally decompose into summands $\bigoplus_{\alpha} \Delta_{M,\alpha} $ corresponding to the maximal decomposition of the quantum-cohomology ring $QH^*(M)$ into a direct sum of rings, and an elementary algebra argument shows that $ \Lambda_G \circ \Lambda_G^{t}$ can in fact be built from those summands $\Delta_{M,\alpha} $ which are not Floer-theoretically orthogonal to it.

The main result of \cite{evans2019generating} is that, under mild assumptions on $G$, this procedure can be inverted:
\begin{theorem}[Theorem 1.2.2 of \cite{evans2019generating}]
If $\bK$ is a field of characteristic relatively prime to the torsion subgroup of the cohomology group $H^*(G)$, then every summand $\Delta_{M,\alpha}$ which is not Floer-theoretically orthogonal to $ \Lambda_G \circ \Lambda_G^{t} $ lies in the category which it split-generates.
\end{theorem}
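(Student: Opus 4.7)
The plan is to propagate a split-generation relation from $\Fuk^\#(T^*G)$ along the $A_\infty$-functor $F \coloneqq \Phi^\#_{\Gamma_G}\colon \Fuk^\#(T^*G) \to \Fuk^\#(M^-\times M)$ attached to the action correspondence $\Gamma_G$. Since $F$ sends the cotangent fibre $T^*_e G$ to $\Delta_M$ and the zero section $G$ to $\Lambda_G \circ \Lambda_G^t$, any split-generation relation between zero section and cotangent fibre in $T^*G$ will, after applying $F$, produce a split-generation relation between $\Lambda_G \circ \Lambda_G^t$ and $\Delta_M$ in $\Fuk^\#(M^-\times M)$. The statement in question is then obtained by decomposing $\Delta_M$ along the block decomposition of $QH^*(M)$ and tracking which blocks actually receive a contribution from $\Lambda_G \circ \Lambda_G^t$.

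First I would invoke Abouzaid's identification of $\Fuk^\#(T^*G)$ with the category of perfect modules over the DG-algebra $C_{-*}\Omega G$ of chains on the based loop space. Under this identification the cotangent fibre corresponds to the free rank-one module and the zero section corresponds to the augmentation module, whose underlying cohomology is $H^*(G;\bK)$. The free module tautologically split-generates, so it suffices to show that, under the characteristic hypothesis, the augmentation module also split-generates---equivalently, that the free module is a summand of some perfect module built out of copies of the augmentation. This is precisely where the characteristic assumption enters: when $\on{char}\bK$ is coprime to the torsion of $H^*(G;\bZ)$, the algebra $H^*(G;\bK)$ is a finite-dimensional exterior algebra on odd-degree generators, and a bar--cobar / Koszul-duality argument then produces the desired resolution.

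Next, applying $F$ (which preserves $A_\infty$-colimits and hence split-generation), I conclude that $\Lambda_G \circ \Lambda_G^t$ split-generates $\Delta_M$ in $\Fuk^\#(M^-\times M)$. I then combine this with the maximal block decomposition $\Delta_M \simeq \bigoplus_\alpha \Delta_{M,\alpha}$ coming from the idempotent decomposition of $QH^*(M) \simeq HF^*(\Delta_M,\Delta_M)$. Each such idempotent acts on the whole split-closure of $\Delta_M$ and in particular on $\Lambda_G \circ \Lambda_G^t$, endowing the latter with a corresponding decomposition into pieces $(\Lambda_G\circ\Lambda_G^t)_\alpha$ lying in the subcategory generated by $\Delta_{M,\alpha}$. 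An elementary Yoneda argument shows that $(\Lambda_G\circ\Lambda_G^t)_\alpha$ vanishes exactly when $\Delta_{M,\alpha}$ is Floer-orthogonal to $\Lambda_G\circ\Lambda_G^t$, and that in the non-orthogonal case this piece already split-generates the whole block, so that $\Delta_{M,\alpha}$ lies in the split-closure of $\Lambda_G\circ\Lambda_G^t$, as desired.

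The main obstacle I anticipate is the second step---establishing the Koszul-duality resolution over a field of positive characteristic. In characteristic zero the algebra $C_{-*}\Omega G$ is formal and the argument is essentially routine, but in characteristic $p$ one has to track secondary operations (Bockstein and higher Steenrod-type corrections) which generically obstruct the exterior/polynomial Koszul duality. The torsion-coprime hypothesis is precisely what makes these corrections vanish on the associated graded, so some care is needed to promote that graded statement into a genuine statement of perfect-module equivalence at the level of $C_{-*}\Omega G$.
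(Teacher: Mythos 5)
There is a genuine gap, and it sits at the pivot of your whole strategy: the claim that, over $\bK$ with the stated characteristic hypothesis, the augmentation module split-generates the free module over $C_{-*}\Omega G$ (equivalently, that the zero section split-generates the cotangent fibre in the Fukaya category of $T^*G$). This is false for every compact Lie group of positive dimension, regardless of characteristic. Any object obtained from the augmentation module $\bK$ by finitely many shifts, cones and summands has finite-dimensional total homology over $\bK$, whereas the free module has homology $H_{-*}(\Omega G;\bK)$, which is infinite-dimensional; already for $G=S^1$, where $C_{-*}\Omega G\simeq \bK[t,t^{-1}]$, the split-closure of the augmentation consists of complexes with torsion (finite-dimensional) homology and can never contain the free module. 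Koszul duality identifies the split-closure of $\bK$ with perfect $C^*(BG)$-modules, but it does not put the free module inside it, so no bar--cobar argument in any characteristic can rescue the step. Consequently the conclusion you push through the functor --- that $\Lambda_G\circ\Lambda_G^t$ split-generates $\Delta_M$ --- is not available, and your final block-decomposition step, which asserts that each non-orthogonal piece ``already split-generates the whole block,'' is exactly the content of the theorem and is left unproved once the upstream claim collapses.

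The correct logic runs in the opposite direction, which is precisely why the theorem is stated only for the summands $\Delta_{M,\alpha}$ not Floer-orthogonal to $\Lambda_G\circ\Lambda_G^t$. What transports through the functor $\Fuk T^*G \to \Fuk(M^-\times M)$ is the easy generation statement: the cotangent fibre (the free module) builds the zero section (the augmentation), hence $\Delta_M$ --- and in fact the non-orthogonal summands $\Delta_{M,\alpha}$ --- build $\Lambda_G\circ\Lambda_G^t$. The theorem is the nontrivial \emph{inversion} of this relation, and it is proved downstairs, not upstairs: as the survey indicates, it is a purely algebraic consequence of the fact that $QH^*(M)\simeq HF^*(\Delta_M,\Delta_M)$ is finite-dimensional and commutative, a structural feature with no analogue over $T^*G$ (where the relevant endomorphism algebra is infinite-dimensional, which is exactly why the inversion fails there). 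The characteristic hypothesis enters in setting up the equivalence with $C_{-*}\Omega G$-modules and the structure of the relevant algebras, not in reversing the direction of Koszul duality as your sketch requires.
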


\noindent
While we do not reproduce their argument here, we note that it is a completely algebraic consequence, given the above discussion, of the fact that the quantum cohomology of a closed manifold is finite-dimensional and commutative.

Assuming that the Hamiltonian reduction of $M$ by $G$ is smooth, this result, combined with those discussed in Section \ref{sec:conj-sympl-barr} below, yield a description of the summands of $\Fuk M$ on which $ \Lambda_G \circ \Lambda_G^{t} $ acts non-trivially in terms of the Fukaya category of the reduction.
Evans and Lekili restricted themselves to the case in which this reduction is a point, and concluded:

\begin{corollary}
If $\mu^{-1}(0)$ is a free orbit of $G$, then it split-generates an orthogonal factor of $\Fuk M$.
\end{corollary}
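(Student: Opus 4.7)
The plan is to apply Theorem 1.2.2 of Evans--Lekili in the special case where $M/\!\!/G$ reduces to a point. The hypothesis that $\mu^{-1}(0)$ is a single free $G$-orbit immediately implies that $M/\!\!/G = \mu^{-1}(0)/G$ is a point, and in particular forces $\dim M = 2\dim G$, so that $L \coloneqq \mu^{-1}(0)$ is a closed Lagrangian submanifold of $M$ diffeomorphic to $G$. Under this specialisation, the moment correspondence $\Lambda_G \subset M/\!\!/G \times M^-$ becomes simply the Lagrangian $L$ itself, regarded as a generalized correspondence $\pt \to M$, and its transpose $\Lambda_G^t$ is the same Lagrangian viewed as a correspondence $M \to \pt$.

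Next, I would compute the geometric composition $\Lambda_G \circ \Lambda_G^t$. Since the intermediate symplectic manifold is $\pt$, the fibre product defining the composition is trivial, and one obtains $\Lambda_G \circ \Lambda_G^t = L \times L \subset M^- \times M$, i.e.\ the outer product of $L$ with itself. Under the correspondence-to-functor dictionary explained in \S\ref{ss:bimodules}--\S\ref{sec:corr-sympl-topol}, the object $L \times L$ of $\Fuk(M^-\times M)$ acts on $\Fuk M$ by the convolution functor $X \mapsto HF^*(X,L) \otimes L$, whose essential image is contained in the thick subcategory split-generated by $L$ itself.

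The decomposition $\Delta_M \simeq \bigoplus_\alpha \Delta_{M,\alpha}$ arising from the idempotent decomposition of $QH^*(M) \simeq HH^*(\Fuk M)$ is exactly what yields the orthogonal decomposition of $\Fuk M$ into factors $\Fuk_\alpha M$. The Floer-non-orthogonality of $\Delta_{M,\alpha}$ with $\Lambda_G \circ \Lambda_G^t = L \times L$ translates, across this dictionary, into the statement that $L$ has non-zero image in the factor $\Fuk_\alpha M$. Evans--Lekili's Theorem 1.2.2 then asserts that every such $\Delta_{M,\alpha}$ lies in the thick subcategory split-generated by $L \times L$; transporting this back to $\Fuk M$, each such $\Fuk_\alpha M$ lies in the subcategory split-generated by $L$. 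The direct sum of these $\Fuk_\alpha M$ is by construction an orthogonal factor of $\Fuk M$ (being a direct summand in the idempotent decomposition of the identity), and is split-generated by $L$.

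The main obstacle --- though already subsumed by the results cited --- is the translation between the bimodule-level statement of Theorem 1.2.2 and the object-level statement of the corollary. One needs to verify (i) that the geometric composition $\Lambda_G \circ \Lambda_G^t$ really does correspond, in $\Fuk(M^-\times M)$, to the convolution functor with $L$, which is an instance of the composition-commutes-with-categorification theorem of Ma'u--Wehrheim--Woodward in the degenerate case of composition through a point, and (ii) that split-generation at the bimodule level transfers to split-generation at the object level across this equivalence, which is formal once the action of $\Fuk(M^-\times M)$ on $\Fuk M$ is set up as in \S\ref{ss:bimodules}. With this dictionary in place, the corollary is a formal consequence of the theorem.
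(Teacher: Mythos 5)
Your proposal is correct and takes essentially the same route the paper (following Evans--Lekili) intends: specialize their Theorem 1.2.2 to the point-reduction case, where $\Lambda_G \circ \Lambda_G^t$ becomes $L \times L$ with $L = \mu^{-1}(0)$, and transfer split-generation of the non-orthogonal diagonal summands $\Delta_{M,\alpha}$ by $L \times L$ into split-generation of the corresponding orthogonal factor of $\Fuk M$ by $L$ via the convolution action of $\Fuk(M^- \times M)$. The transfer step you flag as the "main obstacle" is exactly the standard argument the survey leaves implicit (convolution with $L \times L$ lands in the subcategory split-generated by $L$, since $HF^*(X,L)$ has finite rank), so nothing is missing.
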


The most straightforward class of examples satisfying this condition are toric varieties, where the group $G$ is a torus $\mathbb{T}^n$. We invite the reader to peruse \cite{evans2019generating} for how one can derive explicit computations about Fukaya categories of toric varieties from the above result, with minimal effort.

\subsection{A formal group structure on $MC(M)$}

In \cite{seidel_formal}, Paul Seidel used ideas from the theory of pseudoholomorphic quilts to define a new invariant of compact monotone symplectic manifolds.
This invariant is denoted $MC(M)$.
To introduce it, we begin by reviewing the notion of Maurer--Cartan elements, in the setting of \cite{seidel_formal}.

Suppose that $\cA$ is an $A_\infty$-ring, and that $N$ is an adic ring.
(``Adic'' means that $N$ is a nonunital commutative ring and that the map $N \to \varprojlim_m N/N^m$ is an isomorphism.
Some standard examples are $q\bZ[[q]]$, $q\bZ[q]/q^{m+1}$, $q\bF_p[[q]]$, and $p\bZ_p$.)
Now define $\cA\hat\otimes N$ to be the inverse limit $\varprojlim_m \cA \otimes (N/N^m)$, which is the right way to formulate ``$\cA$ with coefficients in $N$''.

\begin{definition}
A \emph{Maurer--Cartan element in $\cA \hat\otimes N$} is an element $\gamma \in \cA^1\hat\otimes N$ satisfying the Maurer--Cartan equation
\begin{align}
\label{eq:MC}
\sum_{d\geq1}
\mu_d(\gamma,\ldots,\gamma)
=
0.	
\end{align}
We say that	two solutions $\gamma, \wt\gamma$ of \eqref{eq:MC} are (gauge-)equivalent if there exists $h \in \cA^0 \hat\otimes N$ satisfying
\begin{align}
\label{eq:gauge_equivalence}
\sum_{p, q \geq 0}
\mu_{p+q+1}\Bigl(\underbrace{\gamma,\ldots,\gamma}_p,h,\underbrace{\wt\gamma,\ldots,\wt\gamma}_q\Bigr)
=
\gamma - \wt\gamma.
\end{align}
(\eqref{eq:gauge_equivalence} can be interpreted as saying that $\gamma - \wt\gamma$ is exact with respect to the differential on $\cA$ deformed by $\gamma$ and $\wt\gamma$.)
We now define $MC(\cA;N)$ to be the set of equivalence classes of Maurer--Cartan elements in $\cA \hat\otimes N$.
\null\hfill$\triangle$
\end{definition}

This construction is functorial in $N$, so we can think of $MC(\cA;-)$ as a functor from adic rings to sets.
Seidel's main contribution in \cite{seidel_formal} is to show that when $\cC \coloneqq QC^*(M)$ is a suitable model for the integral quantum cochain complex of $M$, thought of as an $A_\infty$-ring, then $MC(\cC;-)$ can be upgraded to a functor from adic rings to \emph{groups}.
Such a functor is, in Seidel's parlance, a \emph{formal group}, so we obtain an invariant of compact monotone symplectic manifolds denoted $MC(X)$ and valued in formal groups.

The group structure on $MC(M)$ is defined by counting certain pseudoholomorphic spheres in $M$.
While these spheres are conventional, unquilted pseudoholomorphic curves, the motivation for the construction comes from quilted Floer theory --- particularly, from the composition bifunctor
\begin{align}
C
\colon
\bigl(
\Fuk(M_1^-\times M_2),
\Fuk(M_2^-\times M_3)
\bigr)
\to
\Fuk(M_1^-\times M_3),
\end{align}
where the objects in this version of the Fukaya category are Lagrangians equipped with bounding cochains, as in \S\ref{sss:strip-shrinking} and \S\ref{ss:fukaya}.
A version of this functor has been defined by Fukaya, as explained in \S\ref{ss:fukaya}, and in the monotone setting, a version of this functor was constructed by Ma'u--Wehrheim--Woodward in \cite{mww}.
Ma'u--Wehrheim--Woodward's approach is expected to extend to the general compact setting, in which it will act on objects by sending $\bigl((L_{12},b_{12}),(L_{23},b_{23})\bigr)$ to $(L_{12}\circ L_{23}, b_{13})$, where $b_{13}$ is defined to be the result of counting two-seam witch balls with patches mapping to $M_1, M_2, M_3$, and with arbitrarily many insertions of $b_{12}$ and $b_{23}$ on its seams.
When we specialize to $M_1=M_2=M_3$ and $L_{12}=L_{23}=\Delta_M$, we obtain an operation that sends $\bigl((\Delta_M,b),(\Delta_M,b')\bigr)$ to $(\Delta,b'')$, where $b''$ is the result of counting pseudoholomorphic spheres in $M$ with arbitrarily many insertions of $b$ resp.\ $b'$ arranged on two circles.
We can think of this as an operation on $QC^*(M)$.
One of Seidel's insights is that this operation descends to $MC(M)$, and that it is associative as a product on $MC(M)$.
Before descending to $MC(M)$, this operation is not associative.

This invariant is quite new, and its properties and applications have not been explored fully.
One intriguing relationship with quantum Steenrod squares arises when we work with coefficients in $q\bF_p[q]/q^{p+1}$: then $p$-th power map on Maurer--Cartan elements intertwines with the $t^{\frac{p-1}2}$-component of the quantum Steenrod square.

\subsection{A Barr--Beck theorem in Floer theory}
\label{sec:conj-sympl-barr}

\subsubsection{Adjunction, monads, and the Barr--Beck theorem}

Our purpose in this section is to recall a formalism, going back to category theorists in the 1960's, for answering the following question, which we shall discuss more specifically the case of Fukaya categories in \S\ref{sss:barr-beck_and_fuk} below.
\begin{question}
Given a functor $X \to Y$, when is $Y$ ``computable'' from $X$, together with some additional data?
\end{question}

The simplest example to have in mind is the case where $X$ decomposes as the (orthogonal) union of $Y$ and another category.
In this case, there is a functor in the other direction, so that the composite endofunctor of $X$ is idempotent, and $Y$ can readily be recovered from this data.

A more sophisticated example is the case in which the domain is the category of modules over a ground ring $\bk$, and the target is the category of modules over a $\bk$-algebra $A$, and the functor assigns to each $\bk$-module its tensor product with $A$, considered as an $A$-module.
We again have a functor in the other direction, which assigns to each $A$-module the underlying $\bk$-module, and consider the composite endofunctor $T$ which assigns to each $\bk$-module its tensor product with $A$, now considered only as a $\bk$-module.

In this case, the functor $T$ is not idempotent, but multiplication in $A$ defines a map
\begin{equation}
\mu :  A \otimes A \otimes M \to A \otimes M
\end{equation}
which gives a natural transformation from $T^2$ to $T$.
Moreover, the identity element of $A$ gives
\begin{equation}
  \eta : M \to A \otimes M
\end{equation}
which defines a natural transformation from the identity to $T$.
The axioms for an algebra that are taught to undergraduates are then reflected in properties of the functors $\mu$ and $\eta$, and the content of the theory which we shall describe is that the category of $A$-modules can be recovered from the endo-functor $T$ and these operations.

Note that, in the above discussion, we used the existence of a functor in the other direction.
The general framework in which this functor can be placed is that of an adjunction $L \dashv R$:
\begin{align}
\label{eq:adjoint_pair}
\xymatrix{
X \ar@/^/[r]^L & Y. \ar@/^/[l]^R
}
\end{align}
We shall presently formulate what it means for such an adjuction to be \emph{monadic}, before introducing the Barr--Beck theorem, which is a convenient characterization of this property.

Denote the unit and counit of the adjunction by $\eta\colon \id_X \to RL$ and $\eps\colon LR \to \id_Y$.
This data gives rise to a monoid object $(T, \mu, \eta)$ in the category of endofunctors of $X$ (a \emph{monad} on $X$), i.e.\ the following data (subject to two straightforward coherence conditions):
\begin{itemize}
\item
An endofunctor $T\colon X \to X$.

\smallskip

\item
Two natural transformations $\eta\colon \id_X \to T$ (``identity'') and $\mu\colon T^2 \to T$ (``composition'').
\end{itemize}
Indeed, we define the monad associated to $L \dashv R$ by
\begin{align}
(T, \mu, \eta)
\coloneqq
(RL, R\eps L, \eta).
\end{align}

It turns out that every monad comes from an adjunction (as Eilenberg--Moore and Kleisli proved in 1965, via completely different constructions).
One way to see this is by considering the \emph{Eilenberg--Moore category}
$\mod\, T$
associated to a monad $(T,\mu,\eta)$ on $X$.
As the notation suggests, $\mod\, T$ is the category of \emph{$T$-modules}, i.e.\ pairs $(x, h)$ of $x \in X$ and $h\colon Tx \to x$ (the ``structure map'') satisfying diagrams representing the associative and unit laws.
Then there is an adjunction
\begin{align}
\xymatrix{
X \ar@/^1pc/[r]^{L^T} & \mod\, T, \ar@/^1pc/[l]^{R^T}
}
\end{align}
which the reader may either work out as an exercise or find in \cite[Theorem 1]{maclane_working}.
This adjunction $L^T \dashv R^T$ has the property that its associated monad is exactly $(T,\mu,\nu)$.

\begin{remark}
The Eilenberg--Moore category is typically denoted $X^T$, and its elements referred to as \emph{$T$-algebras}.
We use the terminology of $T$-modules in part because it enables us to refer to \emph{perfect $T$-modules} later in this subsection.
\null\hfill$\triangle$
\end{remark}

Given an adjunction as in \eqref{eq:adjoint_pair}, we can define the associated \emph{comparison functor} $K\colon Y \to \mod\, T$, which acts on objects by sending $y \in Y$ to $(Ry, R\eps_y)$.
We say that $L \dashv R$ is a \emph{monadic adjunction} if $K\colon Y \to \mod\, T$ is an equivalence.
Monadic adjunctions are thus those with the property that if we form the associated monad, and then form the adjunction associated to that monad via the Eilenberg--Moore category, we return to the adjunction we started with.

The Barr--Beck theorem\footnote{A more historically-accurate name may be ``Beck's precise tripleability theorem'', as in \cite[Theorem 10]{maclane_working}.
However, modern authors typically use the name ``Barr--Beck theorem'' (see e.g.\ \cite[Theorem 4.7.3.5]{lurie2012higher}).} provides a criterion for monadicity that is often easier to check than by working from the definition.
We state a weak version of this theorem, because it is easier to state and sufficient for our purposes.
(This is the ``Weak Tripleability Theorem'' from Beck's thesis.)

\begin{theorem}[paraphrase of Exercise VI.7.3, \cite{maclane_working}]
\label{thm:barr--beck}
An adjunction $L \dashv R$ is monadic if the following conditions hold:
\begin{itemize}
\item[(BB1)]
$R$ is conservative, i.e.\ it reflects isomorphisms.
(That is, if $Rf$ is an isomorphism, then $f$ must be an isomorphism.)

\smallskip

\item[(BB2)]
$Y$ has and $L$ preserves all coequalizers.
\null\hfill$\square$
\end{itemize}
\end{theorem}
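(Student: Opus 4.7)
The plan is to construct a candidate inverse $F\colon \mod\, T \to Y$ to the comparison functor $K$, then verify that $(F, K)$ assembles into an adjoint equivalence. For a $T$-module $(x,h)$, define $F(x,h)$ to be the coequalizer in $Y$ of the parallel pair $L(h),\, \epsilon_{Lx}\colon LTx \rightrightarrows Lx$, whose existence is guaranteed by (BB2). Functoriality of $F$ is routine, and the adjunction $F \dashv K$ follows formally from the universal property: morphisms $F(x,h) \to y$ correspond to morphisms $Lx \to y$ coequalizing the pair, which via $L \dashv R$ correspond to morphisms $x \to Ry$ intertwining the structure maps, i.e.\ to morphisms $(x,h) \to K(y) = (Ry, R\epsilon_y)$ in $\mod\, T$.

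Next I would verify that the counit $\nu\colon FK \Rightarrow \id_Y$ is a natural isomorphism. For $y \in Y$, $FK(y)$ is the coequalizer of $\epsilon_{LRy},\, LR\epsilon_y\colon LRLRy \rightrightarrows LRy$, and the candidate is $\epsilon_y\colon LRy \to y$, which coequalizes the pair by naturality of $\epsilon$. The key observation is that the pair $TR\epsilon_y,\, \mu_{Ry}\colon T^2 Ry \rightrightarrows TRy$ obtained by applying $R$ admits a split coequalizer with coequalizer $R\epsilon_y$: the splittings $\eta_{TRy}$ and $\eta_{Ry}$ satisfy the required identities by the triangle identity for $L \dashv R$ and the monad unit axiom. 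Since split coequalizers are absolute, this presentation is preserved by every functor; combining this absoluteness with (BB2) that $L$ preserves the relevant coequalizer lets one identify $\epsilon_y\colon LRy \to y$ as the desired coequalizer in $Y$.

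For the unit $\iota\colon \id_{\mod\, T} \Rightarrow KF$, I would factor through the forgetful functor: $R = U^T \circ K$, where $U^T\colon \mod\, T \to X$ is conservative, so (BB1) forces $K$ to be conservative. Evaluating $R$ on $\iota_{(x,h)}$ reduces, by a parallel split-coequalizer computation in $X$ using now the $T$-module axioms on $(x,h)$ to provide the splittings, to $\id_x$. Conservativity of $K$ then promotes this to the statement that $\iota_{(x,h)}$ is itself an isomorphism in $\mod\, T$, and the proof concludes.

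The main obstacle will be the careful bookkeeping in the counit step: one must verify that the split-coequalizer presentation in $X$ aligns with the parallel pair in $Y$ whose coequalizer we wish to identify, and that the $L$-preservation in (BB2) is invoked at precisely the right stage of the argument. Once the counit is established as a natural iso, conservativity from (BB1) handles the unit mechanically, yielding the monadicity of $L \dashv R$.
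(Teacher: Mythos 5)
Your proof skeleton is the standard Beck-style argument (the paper itself gives no proof of this statement --- it quotes MacLane's Exercise VI.7.3 --- so the only comparison is with that standard argument): build $F(x,h)$ as the coequalizer of $Lh,\epsilon_{Lx}\colon LTx \rightrightarrows Lx$, establish $F \dashv K$, and identify unit and counit using split coequalizers and conservativity. But the step you yourself flag as the main obstacle does fail as written. To identify $\epsilon_y$ as the coequalizer of $\epsilon_{LRy}, LR\epsilon_y\colon LRLRy \rightrightarrows LRy$ you invoke ``absoluteness plus (BB2) that $L$ preserves the relevant coequalizer.'' Preservation of coequalizers by $L$ is automatic (it is a left adjoint) and so carries no content; and applying $L$ to the split coequalizer in $X$ only produces a coequalizer of the pair $LTR\epsilon_y, L\mu_{Ry}\colon LT^2Ry \rightrightarrows LTRy$ with coequalizer $LR\epsilon_y$ --- one level up from the pair you need, which is \emph{not} in the image of $L$ (the map $\epsilon_{LRy}$ is not $L$ of any morphism of $X$ in general). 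So absoluteness does not transport the split presentation into $Y$, and nothing in your argument identifies the coequalizer of $(\epsilon_{LRy}, LR\epsilon_y)$ with $y$. The same gap recurs in the unit step: $RF(x,h)$ cannot be identified with $x$ unless $R$ carries the defining coequalizer of $F(x,h)$ to a coequalizer in $X$. (A smaller slip: it is conservativity of the forgetful functor $U^T$, not of $K$, that upgrades ``the underlying $X$-morphism of the unit is an isomorphism'' to an isomorphism of $T$-modules.)

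The missing ingredient is preservation by $R$, not $L$; indeed, with ``$L$ preserves'' the statement is false, so no argument could close the gap. Take $X=\mathsf{Set}$, $Y$ the category of torsion-free abelian groups, $L$ the free abelian group functor, $R$ the underlying-set functor: $R$ is conservative, $Y$ has all coequalizers, $L$ preserves them, yet $\mod\, T$ is all of $\mathsf{Ab}$ and the comparison functor is the non-essentially-surjective inclusion. MacLane's exercise (and the verification the paper actually performs in \S\ref{sss:barr-beck_and_fuk}, where it is checked that $\Phi^\#_{L_{12}^T}=R$ preserves colimits) requires that $Y$ have and \emph{$R$} preserve the relevant coequalizers; reflexive pairs suffice, and both pairs appearing in your proof are reflexive and become split after applying $R$. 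With that hypothesis your outline closes correctly: let $q$ be the coequalizer in $Y$ of $(\epsilon_{LRy}, LR\epsilon_y)$ and $t$ the induced map to $y$; applying $R$, the preserved coequalizer $Rq$ and the split coequalizer $R\epsilon_y$ present the same object, so $Rt$ is an isomorphism, and (BB1) makes $t$ one; the unit is handled the same way using the split coequalizer $h\colon Tx \to x$ and conservativity of $U^T$. So the fix is to replace $L$ by $R$ in (BB2) and rerun your last two paragraphs; the absoluteness observation is then exactly what makes the comparison in $X$ work.
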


\noindent
We now make some comments about these two conditions.
\begin{enumerate}
\item
An example of a conservative functor is the forgetful functor from $Grp$ to $Set$, because a group homomorphism is an isomorphism if and only if it is a bijection.
A nonexample is the forgetful functor from $Top$ to $Set$, because there are continuous bijections that are not homeomorphisms.
Note also that conservativity does not imply the property that if $RX \simeq RY$, then $X \simeq Y$: for instance, if we consider the forgetful functor from $Grp$ to $Set$, this latter property is not satisfied, because two finite groups of the same cardinality are not necessarily isomorphic.

\smallskip

\item
A \emph{coequalizer} is a colimit of a parallel pair of morphisms.
That is, a coequalizer of the pair $f, g\colon A \rightrightarrows B$ is an morphism $h\colon B \to C$ such that $h\circ f = h\circ g$, and such that for any other such morphism $h'\colon B \to C'$, there exists a morphism $C \to C'$ making the following diagram commute:
\begin{align}
\xymatrix{
A \ar@<0.5ex>[r]^f \ar@<-0.5ex>[r]_g & B \ar[r]^h \ar[dr]_{h'} & C \ar@{-->}[d]
\\
&&
C'.
}
\end{align}
In the category $Grp$, the coequalizer of $f, g\colon A \to B$ is the quotient of $B$ by the normal closure of the set of elements of the form $f(x)g(x)^{-1}$.
One should think about a coequalizer as the generalization to an arbitrary category of the notion of the quotient by an equivalence relation.
\end{enumerate}

\subsubsection{Barr--Beck and the Fukaya category}
\label{sss:barr-beck_and_fuk}

As we explained in \S\ref{ss:adjunction}, the functors $\Phi^\#_{L_{12}}$ and $\Phi^\#_{L_{12}^T}$ are expected to form an adjunction.
The same is true of the extended versions of these functors:

\begin{align}
\label{eq:WW_adjunction}
\xymatrix{
\Fuk^\#(M_1) \ar@/^1pc/[r]^{\Phi_{L_{12}}^\#} & \Fuk^\#(M_2), \ar@/^1pc/[l]^{\Phi_{L_{12}^T}^\#}
}
\end{align}

\noindent
It is natural to ask when this adjunction is monadic, and in particular, when the hypotheses of Theorem \ref{thm:barr--beck} hold.

\begin{remark}
We stated Theorem \ref{thm:barr--beck} for an adjoint pair of functors, rather than an adjoint pair of $A_\infty$-functors (c.f.\ Equation \eqref{eq:putative_adjunction}).
Since our main goal in this section is to describe some geometric ideas regarding the relevance of the Barr--Beck theorem to symplectic topology, we leave the precise formulation to future work by the present authors.
\null\hfill$\triangle$
\end{remark}

The fundamental difficulty in applying Theorem \ref{thm:barr--beck} to \eqref{eq:WW_adjunction} is that checking condition (BB2) a priori involves a computation for each Lagrangian in $M_2$.
This is not a reasonable computation to directly perform, because we do not have any concrete description of the collection of all Lagrangians in any symplectic manifold of dimension greater than $2$.

Quilted Floer theory provides us with a solution: consider the ``quilted closed-closed map''
\begin{align}
\label{eq:QCC}
SH^*(M_1)
\otimes
HH_*(CF^*(L_{12},L_{12}),CF^*(L_{12},L_{12}))
\to
SH^*(M_2)
\end{align}
which is defined by counting quilts of the following form:
\begin{figure}[H]
\centering
\def\svgwidth{0.15\columnwidth}
%% Creator: Inkscape 1.2 (dc2aeda, 2022-05-15), www.inkscape.org
%% PDF/EPS/PS + LaTeX output extension by Johan Engelen, 2010
%% Accompanies image file '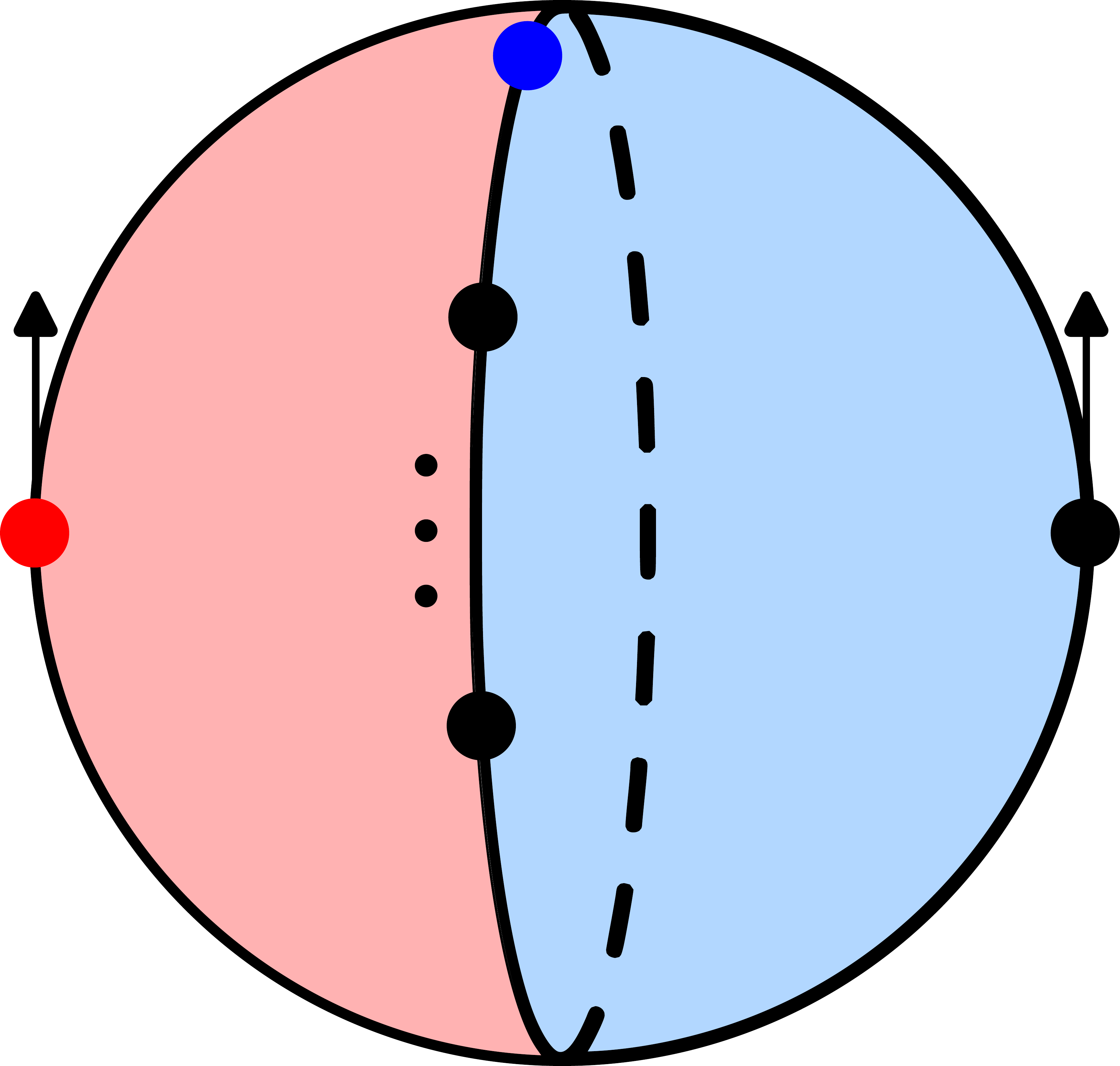' (pdf, eps, ps)
%%
%% To include the image in your LaTeX document, write
%%   \input{<filename>.pdf_tex}
%%  instead of
%%   \includegraphics{<filename>.pdf}
%% To scale the image, write
%%   \def\svgwidth{<desired width>}
%%   \input{<filename>.pdf_tex}
%%  instead of
%%   \includegraphics[width=<desired width>]{<filename>.pdf}
%%
%% Images with a different path to the parent latex file can
%% be accessed with the `import' package (which may need to be
%% installed) using
%%   \usepackage{import}
%% in the preamble, and then including the image with
%%   \import{<path to file>}{<filename>.pdf_tex}
%% Alternatively, one can specify
%%   \graphicspath{{<path to file>/}}
%% 
%% For more information, please see info/svg-inkscape on CTAN:
%%   http://tug.ctan.org/tex-archive/info/svg-inkscape
%%
\begingroup%
  \makeatletter%
  \providecommand\color[2][]{%
    \errmessage{(Inkscape) Color is used for the text in Inkscape, but the package 'color.sty' is not loaded}%
    \renewcommand\color[2][]{}%
  }%
  \providecommand\transparent[1]{%
    \errmessage{(Inkscape) Transparency is used (non-zero) for the text in Inkscape, but the package 'transparent.sty' is not loaded}%
    \renewcommand\transparent[1]{}%
  }%
  \providecommand\rotatebox[2]{#2}%
  \newcommand*\fsize{\dimexpr\f@size pt\relax}%
  \newcommand*\lineheight[1]{\fontsize{\fsize}{#1\fsize}\selectfont}%
  \ifx\svgwidth\undefined%
    \setlength{\unitlength}{1019.71545bp}%
    \ifx\svgscale\undefined%
      \relax%
    \else%
      \setlength{\unitlength}{\unitlength * \real{\svgscale}}%
    \fi%
  \else%
    \setlength{\unitlength}{\svgwidth}%
  \fi%
  \global\let\svgwidth\undefined%
  \global\let\svgscale\undefined%
  \makeatother%
  \begin{picture}(1,0.95179379)%
    \lineheight{1}%
    \setlength\tabcolsep{0pt}%
    \put(0,0){\includegraphics[width=\unitlength,page=1]{QCC.pdf}}%
    \put(0.66325768,0.41632785){\makebox(0,0)[lt]{\lineheight{1.25}\smash{\begin{tabular}[t]{l}$M_1$\end{tabular}}}}%
    \put(0.13277953,0.41632785){\makebox(0,0)[lt]{\lineheight{1.25}\smash{\begin{tabular}[t]{l}$M_2$\end{tabular}}}}%
  \end{picture}%
\endgroup%

\caption{}
\end{figure}

\noindent
In this figure, the red marked point is the output, the arrows on the output and the interior input marked points indicate a choice of framing, and the blue marked point is the distinguished element of the Hochschild chain.
The framings and the blue marked point are all aligned.

\begin{expectation}
\label{exp:SBB}
If the unit $\id_{M_2} \in SH^*(M_2)$ lies in the image of the quilted closed-closed map \eqref{eq:QCC}, then 
there is then a quasiisomorphism
\begin{align}
\Fuk M_2
\simeq
\mod\,\bigl(\Phi^\#_{L_{12}^T}\circ\Phi^\#_{L_{12}}\bigr)
\end{align}
\null\hfill$\triangle$
\end{expectation}

\medskip

\noindent
{\bf Heuristic of the proof.}
Hypothesis (BB2) is straightforward: $\Fuk^\#(M_2)$ is a pretriangulated $A_\infty$-category, so it has finite colimits, hence it has coequalizers.
The adjunction $\Phi_{L_{12}^T}^\# \dashv \Phi_{L_{12}}^\#$ implies that $\Phi_{L_{12}^T}^\#$ preserves colimits, hence $\Phi_{L_{12}^T}^\#$ preserves coequalizers.

The rest of our discussion is devoted to hypothesis (BB1).

\medskip

\noindent
{\bf Step 1:}
{\it We explain how the existence of a certain commutative diagram for every $L_2 \in \Fuk M_2$ implies hypothesis (BB1).}

\medskip

\noindent
The desired commutative diagram is analogous to the ``Cardy relation'' in \cite{a:geocrit}, and it has the following form:
\begin{align}
\label{eq:BB_diagram}
\xymatrix{
SH^*(M_1)
\otimes
HH_*(CF^*(L_{12}),CF^*(L_{12}))
\ar[r]
\ar[d]_{QCC}
&
HH_*(CF^*(L_{12}),CF^*(L_2\#L_{12}^T))
\ar[d]
\\
SH^*(M_2)
\ar[r]_{CO}
&
HF^*(L_2).
}
\end{align}
Here we are abbreviating by $CF^*(L)$ the $A_\infty$-algebra associated to $L$, and we are regarding $CF^*(L_2\#L_{12}^T)$ as an $A_\infty$-bimodule over $CF^*(L_{12})$.
By assumption, $QCC$ hits the unit $\id_{M_2} \in SH^*(M_2)$.
Since $CO$ is unital, the composition $CO\circ QCC$ hits the unit $\id_{L_2} \in HF^*(L_2)$.
Commutativity implies that the map
\begin{align}
HH_*(CF^*(L_{12}),CF^*(L_2\#L_{12}^T))
\to
HF^*(L_2)
\end{align}
also hits the unit $\id_{L_2}$.

We now fix a morphism $f \in CF^*(\ul K_2, \ul K_2')$ in $\Fuk^\# M_2$ such that $\Phi_{L_{12}^T}^\#(f)$ is an isomorphism; to establish hypothesis (BB1), we must show that $f$ is an isomorphism.
To simplify the exposition, assume that $\ul K_2 = K_2$ and $\ul K_2' = K_2'$ are Lagrangians, rather than genuinely generalized Lagrangians.
Set $L_2 \coloneqq \cone(f)$ in \eqref{eq:BB_diagram}.
Since $\Phi_{L_{12}^\#}(f)$ is an isomorphism, $\cone\bigl(\Phi_{L_{12}^T}^\#(f)\bigr)$ is the zero object, but
$\Phi_{L_{12}^T}^\#$ commutes with colimits, so $\Phi_{L_{12}^T}^\#\bigl(\cone(f)\bigr) = \cone(f) \# L_{12}^T$ is the zero object.
It follows that the upper-right entry in \eqref{eq:BB_diagram} is zero.
Since the right-hand map in \eqref{eq:BB_diagram} hits the unit, $\id_{L_2}$ must be zero, hence $L_2 = \cone(f)$ is the zero object.
It follows that $f$ is an isomorphism, so $\Phi_{L_{12}^T}^\#$ is indeed conservative.

\medskip

\noindent
{\bf Step 2:}
{\it We construct the commutative diagram used in the previous step.}

\medskip

\noindent
We will define the maps appearing in \eqref{eq:BB_diagram}, as well as the chain homotopy which descends to an isomorphism between the compositions, by counting certain quilts.
As a first step toward defining these quilts, consider the following 1-dimensional family of quilted surfaces:

\begin{figure}[H]
\centering
\def\svgwidth{0.95\columnwidth}
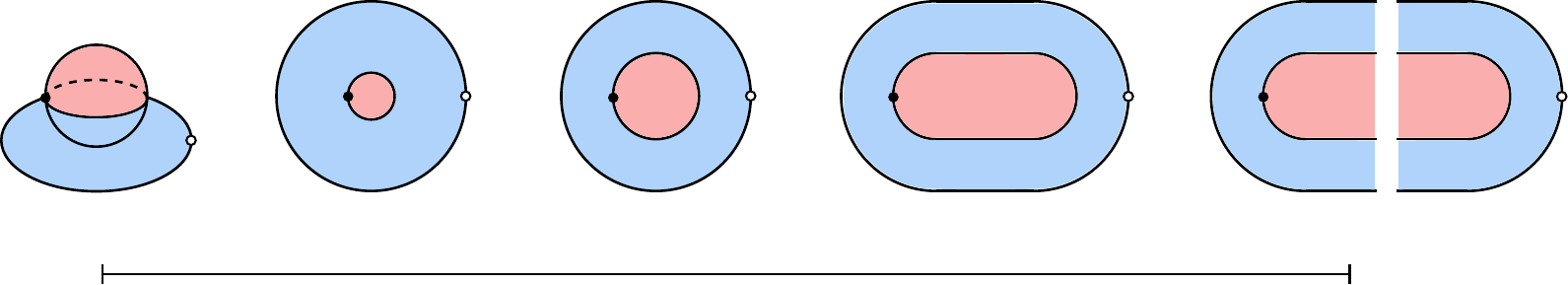
\caption{
\label{fig:bb_family}
}
\end{figure}

\noindent
The surface corresponding to the midpoint of the interval is the radius-2 disk with an interior circle of radius 1, with marked points on the interior and boundary circles that are opposite from one another.
The left half of the interval corresponds to letting the radius of the interior circle vary in the interval $[0,1]$, such that when the radius becomes 0, we bubble off a quilted sphere.
The right half of the interval corresponds to inserting a rectangular portion into the disk; the right endpoint corresponds to (quilted) Floer breaking.
We do not quotient out by any automorphisms.

Next, we construct a higher-dimensional family of quilted surfaces by adding arbitrarily-many marked points to the seam, and adding a framed interior marked point in the inner patch.
We do not allow the positions of the seam marked points to vary, but we do fix the position of the interior marked point, in the way illustrated in the diagram below.

Finally, we consider the pseudoholomorphic quilts whose domains are these quilted surfaces.
The inner patch maps to $M_1$, and the outer patch maps to $M_2$.
We impose a seam condition in $L_{12}$, and a boundary condition in $L_2$.
The framing of the interior puncture always points to the left.
Counting the rigid quilts of this form defines a chain homotopy between the rigid quilts that live over the endpoints of the interval in Figure \ref{fig:bb_family}, thus proving the commutativity of \eqref{eq:BB_diagram}.
\null\hfill``$\square$''

\medskip

To clarify the situation further, we describe the relationship of this result with the split-generation criterion of  \cite{a:geocrit}:
When $M_1 = \pt$, $M_2 = M$, and $L$ is a Lagrangian in $M$, the quilted count in Equation \eqref{eq:QCC} specializes to the open-closed map defined in \cite{a:geocrit}.
Moreover, \eqref{eq:BB_diagram} is the key diagram (1.5) in \cite{a:geocrit}).
It follows that in this case, the hypothesis of Expectation \ref{exp:SBB}  specializes to the hypothesis in the  main result of \cite{a:geocrit} that the open-closed map hits the identity.
However, it turns out that the conclusion that we obtain by applying Barr--Beck is not exactly identical.

To see this, let us specialize to the case that $M$ is closed.
Conflating $\Fuk M$ and $\Fuk^\# M$ and identifying $\Fuk\pt \simeq \perf_\bk$, Barr--Beck says in this case that the following adjunction is monoidal:
\begin{align}
\xymatrix{
\perf_\bk \ar@/^/[r]^{\Phi_{L}} & \Fuk M. \ar@/^/[l]^{\Phi_{L^T}}
}
\end{align}
That is, $\Fuk M$ is equivalent to the Eilenberg--Moore category $\perf_\bk\,\bigl(\Phi_{L^T}\circ\Phi_L\bigr)$ of $CF^*(L)$-modules whose underlying $\bk$-modules have finite rank cohomology (this is also known as the category of \emph{pseudoperfect} modules).

The conclusion of the generation criterion in \cite{a:geocrit} (as applied to closed symplectic manifolds \cite{rs:openclosed}) is instead that $\Fuk M$ is equivalent to the category of \emph{perfect} modules over $CF^*(L)$, i.e.\ those which are built from the free module $CF^*(L)$ using finitely many cones and summands.

For a general $A_\infty$-algebra $\cA$, the categories of pseudoperfect and perfect modules are not equivalent.
Assuming that the cohomology of $\cA$  is finite dimensional (which is automatic for closed Lagrangians), every perfect module is automatically pseudoperfect, and this inclusion is an equivalence if we impose the additional property that $\cA$ is \emph{smooth}, in the sense that the diagonal bimodule admits a split-resolution.

The fact that the hypothesis of Expectation \ref{exp:SBB} implies that the Floer algebra $CF^*(L)$ is smooth (c.f.\ \cite[Theorem 1.2]{ganatra_thesis}) thus relates the generation criterion to Expectation \ref{exp:SBB}.

It is desirable to have an extension of Expectation \ref{exp:SBB} which directly generalises the generation criterion.
Its formulation requires one to introduce the category $\perf\, T$ of \emph{perfect modules over a monad}, i.e.\ the subcategory of $T$-algebras which are obtained from an object of $X$ by taking its image under $T$ (and passing to the closure under cone and summands).
The analogue of Expectation \ref{exp:SBB} is then formulated in terms of the ``quilted open closed map,'' which is the composite
\begin{equation} \label{eq:QOC}
\begin{tikzcd}[row sep = 10]
       HH_*(\Fuk M_1)
\otimes
HH_*(CF^*(L_{12},L_{12}),CF^*(L_{12},L_{12}))   \ar[d] \\ SH^*(M_1) \otimes
HH_*(CF^*(L_{12},L_{12}),CF^*(L_{12},L_{12}))  \ar[d] \\ SH^*(M_2).
\end{tikzcd}
\end{equation}

\begin{expectation}
\label{exp:SBBP}
If the unit $\id_{M_2} \in SH^*(M_2)$ lies in the image of the quilted open-closed map \eqref{eq:QCC}, then the Eilenberg--Moore comparison map lifts to the category of perfect modules, and this lift is an equivalence:
\begin{equation}
\begin{tikzcd}
& \perf\,\bigl(\Phi_{L^T}\circ\Phi_{L}\bigr) \ar[d]
\\
\Fuk M_2 \ar[r] \ar[ur, dashed] & \mod\,\bigl(\Phi_{L^T}\circ\Phi_L\bigr).
\end{tikzcd}
\end{equation}
\null\hfill$\triangle$
\end{expectation}

%%% Local Variables:
%%% mode: latex
%%% TeX-master: "functoriality_in_categorical_symplectic_geometry"
%%% End:

\bibliographystyle{alpha}
\small
\bibliography{biblio}

\end{document}